\numberwithin{equation}{section}
\numberwithin{figure}{section}
\theoremstyle{plain}
\newtheorem{thm}{\protect\theoremname}[section]
  \theoremstyle{plain}
  \newtheorem{lem}[thm]{\protect\lemmaname}
  \theoremstyle{plain}
  \newtheorem{prop}[thm]{\protect\propositionname}
  \theoremstyle{plain}
  \newtheorem{cor}[thm]{\protect\corollaryname}
  \providecommand{\corollaryname}{Corollary}
  \providecommand{\lemmaname}{Lemma}
  \providecommand{\propositionname}{Proposition}
\providecommand{\theoremname}{Theorem}
\begin{document}

\title[Trace formula]{A Gutzwiller type trace formula for the magnetic Dirac operator }

\author{Nikhil Savale}
\begin{abstract}
For manifolds including metric-contact manifolds with non-resonant
Reeb flow, we prove a Gutzwiller type trace formula for the associated
magnetic Dirac operator involving contributions from Reeb orbits on
the base. As an application, we prove a semiclassical limit formula
for the eta invariant. 
\end{abstract}

\address{Laboratoire Jacques Louis Lions (LJLL), Université Pierre et Marie
Curie - Paris 6 (UPMC), 4 place Jussieu 75252 Paris, Cedex 05.}

\email{savale@ljll.math.upmc.fr}

\thanks{The author acknowledges the financial support of the Agence Nationale
de la Recherche, projet ANR-15-CE40-0018 (Sub-Riemannian Geometry
and Interactions).}

\maketitle

\section{Introduction}

The trace formulas of Gutzwiller \cite{Gutzwiller-book} and Duistermaat-Guillemin
\cite{Duistermaat-Guillemin} are a clear statement of the semiclassical
correspondence, expressing the spectrum of ($h$-) pseudo-differential
operators in terms of periodic orbits of the underlying Hamiltonian
dynamics as $h\rightarrow0$. We refer to \cite{CdV-tracesurvey,Uribe-Trfor.}
for a historical survey of trace formulas and the associated calculus
of Fourier integral operators. For non-scalar pseudo-differential
operators this calculus is often unavailable due to the non-diagonalizability
of the principal symbol $\sigma\left(A\right)$. Indeed when the eigenvalues
of $\sigma\left(A\right)$ are not smooth functions on the cotangent
space, their corresponding Hamiltonian dynamics is not well-defined.
The purpose of this article is to investigate the trace formula in
one such case.

More precisely, let $\left(X,g^{TX}\right)$ be an oriented Riemannian
manifold of odd dimension $n=2m+1$ equipped with a spin structure.
Let $S$ be the corresponding spin bundle and let $L$ be an auxiliary
Hermitian line bundle. Fix a unitary connection $A_{0}$ on $L$ and
let $a\in\Omega^{1}\left(X;\mathbb{R}\right)$ be a contact one form
(i.e. one satisfying $a\wedge\left(da\right)^{m}>0$). This gives
a family of unitary connections on $L$ via $\nabla^{h}=A_{0}+\frac{i}{h}a$
and a corresponding family of coupled magnetic Dirac operators 
\begin{equation}
D_{h}\coloneqq hD_{A_{0}}+ic\left(a\right):\:C^{\infty}\left(S\otimes L\right)\rightarrow C^{\infty}\left(S\otimes L\right)\label{eq:Semiclassical Magnetic Dirac}
\end{equation}
for $h\in\left(0,1\right]$. 

Define the contact hyperplane $H=\textrm{ker}\left(a\right)\subset TX$
as well as the Reeb vector field $R$ via $i_{R}da=0$, $i_{R}a=1$.
We shall now further assume that the Reeb flow of $a$ is non-resonant.
To state this assumption, let $\gamma$ denote a Reeb orbit. For a
fixed point $p\in\gamma$, the linearized Poincare return map $P_{\gamma}:T_{p}X\rightarrow T_{p}X$
has $R_{p}$ as an eigenvector with eigenvalue $1$ and restricts
to a symplectic map on the contact hyperplane $P_{\gamma}^{+}:H_{p}\rightarrow H_{p}$.
We call the Reeb orbit $\gamma$ non-degenerate if $P_{\gamma}^{+}$
has $n-1$ distinct eigenvalues not equal to 1. There now exists a
symplectic basis for $H_{p}$ in which $P_{\gamma}^{+}$ decomposes
as
\begin{equation}
P_{\gamma}^{+}=\left[\bigoplus_{j=1}^{N_{e}}P_{\gamma;\beta_{j}}^{+,e}\right]\oplus\left[\bigoplus_{j=1}^{N_{h}^{+}}P_{\gamma;\alpha_{j}^{+}}^{+,h}\right]\oplus\left[\bigoplus_{j=1}^{N_{h}^{-}}-P_{\gamma;\alpha_{j}^{-}}^{+,h}\right]\oplus\left[\bigoplus_{j=1}^{N_{l}}P_{\gamma;\alpha_{j}^{0},\beta_{j}^{0}}^{+,l}\right]\label{eq: symplectic decomposition}
\end{equation}
for 
\begin{align}
P_{\gamma;\beta}^{+,e} & =\begin{bmatrix}\cos\beta & -\sin\beta\\
\sin\beta & \cos\beta
\end{bmatrix},\quad\beta\in\left(0,2\pi\right)\label{eq: elliptic map}\\
P_{\gamma;\alpha}^{+,h} & =\begin{bmatrix}e^{\alpha} & 0\\
0 & e^{-\alpha}
\end{bmatrix},\quad\alpha>0\label{eq: hyperbolic map}\\
P_{\gamma;\alpha^{0},\beta^{0}}^{+,l} & =\begin{bmatrix}e^{-\alpha^{0}}\cos\beta^{0} & 0 & -e^{-\alpha^{0}}\sin\beta^{0} & 0\\
0 & e^{\alpha^{0}}\cos\beta^{0} & 0 & -e^{\alpha^{0}}\sin\beta^{0}\\
e^{-\alpha^{0}}\sin\beta^{0} & 0 & e^{-\alpha^{0}}\cos\beta^{0} & 0\\
0 & e^{\alpha^{0}}\sin\beta^{0} & 0 & e^{\alpha^{0}}\cos\beta^{0}
\end{bmatrix},\quad\alpha^{0}>0,\beta^{0}\in\left(0,\pi\right).\label{eq: loxodromic map}
\end{align}
We note that the summands in the decomposition \prettyref{eq: symplectic decomposition}
each correspond to: a pair of elliptic eigenvalues $e^{\pm i\beta}$
(of $P_{\gamma;\beta}^{+,e}$), a pair of positive/negative hyperbolic
eigenvalues $\pm e^{\pm\alpha}$ (of $\pm P_{\gamma;\alpha}^{+,h}$)
and a quartet of loxodromic eigenvalues $e^{\pm\alpha^{0}\pm i\beta^{0}}$
(of $P_{\gamma;\alpha^{0},\beta^{0}}^{+,l}$). We call the Reeb orbit
$\gamma$ non-resonant if the two sets 
\begin{eqnarray*}
\left\{ \alpha_{j}^{+}\right\} _{j=1}^{N_{h}^{+}}\cup\left\{ \alpha_{j}^{-}\right\} _{j=1}^{N_{h}^{-}}\cup\left\{ \alpha_{j}^{0}\right\} _{j=1}^{N_{l}} &  & \textrm{ and}\\
\left\{ 2\pi\right\} \cup\left\{ \beta_{j}\right\} _{j=1}^{N_{e}}\cup\left\{ \beta_{j}^{0}\right\} _{j=1}^{N_{l}}
\end{eqnarray*}
are rationally ($\mathbb{Q}-$) independent. We call the Reeb flow
of $a$ non-resonant if all its Reeb orbits are non-resonant.

Next, we shall assume that the metric $g$ is $strongly\:suitable$
to the contact form $a$. To define this, consider the contracted
endomorphism $\mathfrak{J}:T_{x}X\rightarrow T_{x}X$ defined at each
point $x\in X$ via 
\begin{equation}
da\left(v_{1},v_{2}\right)=g^{TX}\left(v_{1},\mathfrak{J}v_{2}\right),\quad\forall v_{1},v_{2}\in T_{x}X.\label{eq: frak J definition}
\end{equation}
The contact assumption on the one form $a$ implies that $\mathfrak{J}$
has a one dimensional kernel spanned by the Reeb vector field $R$.
The endomorphism $\mathfrak{J}$ is clearly anti-symmetric with respect
to the metric 
\[
g^{TX}\left(v_{1},\mathfrak{J}v_{2}\right)=-g^{TX}\left(\mathfrak{J}v_{1},v_{2}\right)
\]
and hence its non-zero eigenvalues come in purely imaginary pairs
$\pm i\mu$ ; $\mu>0$. We now say that the metric is $strongly\:suitable$
to the contact form $a$ if the spectrum of $\mathfrak{J}_{x}$ is
independent of $x$: there exist positive constants $0<\mu_{1}\leq\mu_{2}\leq\ldots\leq\mu_{m}$
such that 
\begin{equation}
\textrm{Spec}\left(\mathfrak{J}_{x}\right)=\left\{ 0,\pm i\mu_{1},\pm i\mu_{2},\ldots,\pm i\mu_{m}\right\} ,\quad\forall x\in X.\label{eq:Diagonalizability assumption-1}
\end{equation}
We note that this is a slight strengthening of the suitability assumption
from \cite{Savale2017-Koszul} wherein $\textrm{Spec}\left(\mathfrak{J}_{x}\right)$
was allowed to vary in $x$ with one single function $\nu\left(x\right)\in C^{\infty}\left(X\right)$.
Here are two examples of strongly suitable suitable metrics.
\begin{enumerate}
\item The dimension of the manifold $\textrm{dim }X=3$. In this case a
metric $g^{TX}$ is strongly suitable if the magnetic field $\left|da\right|=\mu_{1}$
has constant strength.
\item There is a smooth endomorphism $J:TX\rightarrow TX$, such that\\
 $\left(X^{2m+1},a,g^{TX},J\right)$ is a metric contact manifold.
That is, we have 
\begin{eqnarray}
J^{2}v_{1} & = & -v_{1}+a\left(v_{1}\right)R,\nonumber \\
g^{TX}\left(v_{1},Jv_{2}\right) & = & da\left(v_{1},v_{2}\right),\quad\forall v_{1},v_{2}\in T_{x}X.\label{eq: metric contact structure-1}
\end{eqnarray}
 In this case the nonzero eigenvalues of $\mathfrak{J}_{x}=J_{x}$
are $\pm i$ (each with multiplicity $m$). For any given contact
form $a$ there exists an infinite dimensional space of $\left(g^{TX},J\right)$
satisfying \prettyref{eq: metric contact structure-1}. This case
in particular includes all strictly pseudo-convex CR manifolds.
\end{enumerate}
Our first result is now a Gutzwiller type trace formula for the magnetic
Dirac operator \prettyref{eq:Semiclassical Magnetic Dirac}. To state
it precisely choose $f\in C_{c}^{\infty}\left(-\sqrt{2\mu_{1}},\sqrt{2\mu_{1}}\right)$.
Let $\theta\in C_{c}^{\infty}\left(\mathbb{R};\left[0,1\right]\right)$
be any compactly supported supported function, such that $\theta=1$
near $0$, and set
\begin{eqnarray*}
\mathcal{F}^{-1}\theta\left(x\right) & \coloneqq & \check{\theta}\left(x\right)=\frac{1}{2\pi}\int e^{ix\xi}\theta\left(\xi\right)d\xi\\
\mathcal{F}_{h}^{-1}\theta\left(x\right) & \coloneqq & \frac{1}{h}\check{\theta}\left(\frac{x}{h}\right)=\frac{1}{2\pi h}\int e^{\frac{i}{h}x\xi}\theta\left(\xi\right)d\xi
\end{eqnarray*}
to be its classical and semi-classical inverse Fourier transforms
respectively. We shall then prove. 
\begin{thm}
\label{thm:main trace expansion-1}Let $a$ be a non-resonant contact
form and $g^{TX}$ a strongly suitable metric. We then have a trace
expansion 
\begin{align}
\textrm{tr}\left[f\left(\frac{D}{\sqrt{h}}\right)\left(\mathcal{F}_{h}^{-1}\theta\right)\left(\lambda\sqrt{h}-D\right)\right] & =\label{eq: Main trace expansion-1}\\
\textrm{tr}\left[f\left(\frac{D}{\sqrt{h}}\right)\frac{1}{h}\check{\theta}\left(\frac{\lambda\sqrt{h}-D}{h}\right)\right] & =h^{-m-1}\left(\sum_{j=0}^{N}f\left(\lambda\right)u_{j}\left(\lambda\right)h^{j/2}\right)\label{eq: gutwiller local term}\\
 & +\sum_{\gamma}e^{\frac{i}{h}T_{\gamma}}e^{i\frac{\pi}{2}\mathfrak{m}_{\gamma}}\sum_{j=0}^{N-2m-2}h^{j/2}\sum_{k=0}^{j}\lambda^{k}A_{\gamma,j,k}\theta\left(L_{\gamma}\right)\label{eq: gutzwiller dynamical term}\\
 & +O\left(h^{N/2-m-1}\right)\label{eq: gutzwiller remainder}
\end{align}
for each $N\in\mathbb{N}$,$\lambda\in\mathbb{R}$. Here the second
line on the right hand side above is a sum over the Reeb orbits of
$a$. Furthermore; the terms appearing on the right hand side are
as follows
\begin{enumerate}
\item each $u_{j}$ is a polynomial function in $\lambda$
\item each $A_{\gamma,j,k}$ is a differential operator on $\mathbb{R}$
of order between $k$ and $j$
\item $T_{\gamma}$ and $L_{\gamma}$ denote the period and Riemannian length
of the Reeb orbit respectively
\item $\mathfrak{m}_{\gamma}$ denotes the Maslov index of a metaplectic
lift of $P_{\gamma}^{+}$.
\end{enumerate}
Finally, the leading contribution of each Reeb orbit $\gamma$ is
given by the multiplication operator
\[
A_{\gamma,0,0}\theta=\frac{L_{\gamma}^{\#}}{2\pi}\frac{1}{\sqrt{\left|\det\left(1-P_{\gamma}^{+}\right)\right|}}\theta
\]
with $L_{\gamma}^{\#}$ denoting the primitive length of the orbit.
\end{thm}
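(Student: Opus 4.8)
The plan is to convert the left side into a Duistermaat--Guillemin type computation for an effective semiclassical operator over the base $X$, whose classical dynamics is the Reeb flow, and then read off the two sums by stationary phase.

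\emph{Step 1 (microlocalization).} The semiclassical principal symbol of $D_h$ is $ic(\xi+a)$, elliptic away from the characteristic manifold $\Sigma=\{\xi=-a\}\cong X$, where $D_h^2/h\gtrsim h^{-1}$; hence $f(D/\sqrt h)$, with $f$ compactly supported, is microlocalized to an $h$-dependent neighborhood of $\Sigma$ modulo $O(h^\infty)$. By strong suitability the transverse model of $D_h^2/h$ along $\Sigma$ is an orthogonal sum of harmonic/Landau oscillators with the \emph{constant} frequencies $\mu_1\le\dots\le\mu_m$, so the spectrum near $0$ splits into bands separated by the gaps $2\mu_j$; since $\mathrm{supp}\,f\subset(-\sqrt{2\mu_1},\sqrt{2\mu_1})$, the operator $f(D/\sqrt h)$ isolates the lowest band, lying strictly below the first gap $2\mu_1$.

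\emph{Step 2 (effective Hamiltonian --- the main obstacle).} Conjugating $D_h$ microlocally near $\Sigma$ (modulo $O(h^\infty)$) to a form block-diagonal with respect to the Landau decomposition, and taking the lowest block, I would obtain an effective operator $\mathcal D_h^{\mathrm{eff}}=\sqrt h\,\widehat{\mathcal H}$, where $\widehat{\mathcal H}$ is a covariant semiclassical (parameter $\sqrt h$) pseudodifferential operator acting on sections of a finite rank bundle $\mathscr E\to X$ (the lowest Landau bundle tensored with $L$ equipped with its large-field connection $A_0+\tfrac ih a$); its principal symbol is $\mathcal H(x,\xi)=\langle\xi,R_x\rangle/|R_x|$ (the Reeb Hamiltonian normalized to unit speed), whose Hamilton flow projects to the unit-speed Reeb flow, and the holonomy of $\mathscr E$ around a closed Reeb orbit $\gamma$ is $e^{\frac ih\oint_\gamma a}=e^{\frac ih T_\gamma}$ up to a fixed unitary factor. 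This reduction is carried out by a Birkhoff normal-form / Grushin-type argument in the spirit of \cite{Savale2017-Koszul}, and the strong suitability hypothesis is exactly what makes the normal form close up and globalize over $X$ in spite of the non-diagonalizability of $\sigma(D_h)$ and the non-smoothness of its eigenvalues. I expect this step to be the crux of the proof.

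\emph{Step 3 (wave trace and stationary phase).} Granting Step 2, and writing $f(D/\sqrt h)=f(\widehat{\mathcal H})+O(h^\infty)$ on the relevant range together with $(\mathcal F_h^{-1}\theta)(\lambda\sqrt h-D)=\tfrac1{2\pi h}\int e^{\frac i{\sqrt h}\xi(\lambda-\widehat{\mathcal H})}\theta(\xi)\,d\xi+O(h^\infty)$, the left side becomes $\tfrac1{2\pi h}\int\theta(\xi)\,\mathrm{tr}[f(\widehat{\mathcal H})e^{\frac i{\sqrt h}\xi(\lambda-\widehat{\mathcal H})}]\,d\xi+O(h^\infty)$. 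I would construct the propagator $e^{-\frac i{\sqrt h}\xi\widehat{\mathcal H}}$ as a semiclassical Fourier integral operator associated to the graph of the time-$\xi$ flow of $\mathcal H$ on $T^*X$, twisted by parallel transport in $\mathscr E$ (which contributes the factor $e^{\frac ih T_\gamma}$), by the usual H\"ormander--Duistermaat--Guillemin transport-equation construction, and compute its trace via the FIO trace formula plus stationary phase in $\xi$. The critical set is $\xi=0$ (the diagonal) and $\xi$ a period of a closed Reeb orbit $\gamma$ or of one of its iterates $\gamma^k$; non-resonance guarantees that each such fixed-point manifold is clean and, crucially, that $\det(1-P_\gamma^+)\neq 0$ for \emph{every} iterate (the rational independence involving $2\pi$ being precisely the condition that no iterate has $1$ as an eigenvalue of its transverse Poincar\'e map), so the transverse $2m$-dimensional stationary phase is non-degenerate; the support of $\theta$ leaves only finitely many orbits. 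The $\xi\approx 0$ contribution, after stationary phase, gives the local term $h^{-m-1}\sum_{j} f(\lambda)u_j(\lambda)h^{j/2}$ with $u_j$ polynomial in $\lambda$ (polynomiality because $\lambda$ enters the phase only through $\lambda\xi$ with $\xi$ near $0$, and the factor $f(\lambda)$ because $f(\widehat{\mathcal H})$ is evaluated on the shell $\{\mathcal H=\lambda\}$). Near $\xi=T_\gamma$ the transverse Gaussian integral produces $|\det(1-P_\gamma^+)|^{-1/2}$ together with the phase $e^{i\frac\pi2\mathfrak m_\gamma}$, where $\mathfrak m_\gamma$ is the Maslov index of the metaplectic lift of $P_\gamma^+$ (the product of the metaplectic propagators attached to the elliptic, hyperbolic and loxodromic blocks of \prettyref{eq: symplectic decomposition}); the integration along the orbit contributes $L_\gamma^\#/2\pi$ (the primitive length times the Jacobian from the $\xi$-stationary phase) and the $\mathscr E$-holonomy contributes $e^{\frac ih T_\gamma}$, which yields $A_{\gamma,0,0}\theta=\frac{L_\gamma^\#}{2\pi}|\det(1-P_\gamma^+)|^{-1/2}\theta$. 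The higher coefficients $A_{\gamma,j,k}$ come from the subprincipal terms in the FIO amplitude together with the Taylor expansion of $e^{i\lambda\xi/\sqrt h}$: each factor $\xi^k$ becomes $k$ derivatives falling on $\theta$, which simultaneously explains why $A_{\gamma,j,k}$ is a differential operator in the $\theta$-variable of order between $k$ and $j$ and why only non-negative half-integer powers of $h$ survive. Finally, truncating the FIO amplitude at order $N$ and using the $L^2\to L^2$ and trace-norm estimates for semiclassical FIOs gives the remainder $O(h^{N/2-m-1})$.
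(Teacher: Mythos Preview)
Your conceptual picture is right: the lowest Landau level gives a scalar effective Hamiltonian whose flow is the Reeb flow, the holonomy of the large-field connection supplies the $e^{\frac{i}{h}T_\gamma}$ phase, and the orbit contributions then come from a standard scalar Gutzwiller computation. Where your proposal diverges from the paper, and where there is a genuine gap, is Step~2.

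You assert that strong suitability lets the Birkhoff/Grushin reduction ``close up and globalize over $X$,'' producing a single effective operator $\widehat{\mathcal H}$ on a bundle over $X$ to which the FIO calculus of Step~3 applies. The paper does \emph{not} do this and gives no indication that it can be done. The obstacle is exactly the one you flag but then set aside: the eigenvalues $\pm|\xi+a|$ of $\sigma(D)$ are not smooth at $\Sigma$, so there is no global smooth scalar Hamiltonian generating a classical flow, and no global FIO parametrix for $e^{-\frac{i}{\sqrt h}\xi D/\sqrt h}$ is available to plug into Step~3. Strong suitability makes the \emph{transverse} Landau frequencies constant, but it does not by itself diagonalize the Clifford symbol smoothly in a full neighborhood of $\Sigma$ over all of $X$.

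The paper's route around this is structurally different from yours. First it splits the trace via a microlocal $(\Omega,\tau,\delta)$-partition adapted to the Reeb dynamics. Away from the Reeb orbits it shows the pieces are $O(h^\infty)$ by constructing \emph{trapping functions} $g_u$ and unitary conjugators $\mathtt v_u$ (built from an explicit ``almost diagonalization'' of Clifford multiplication on $S^{n-1}$, Lemma~2.1) so that $|\{ \mathtt v_u^* d\,\mathtt v_u, g_u\}|$ can be made arbitrarily small; this is what replaces the missing global propagation/FIO calculus for the non-scalar symbol. Near each orbit it passes to a tubular model $S^1\times\mathbb R^{2m}$, performs a Birkhoff normal form there (this is where non-resonance enters, via the formal normal form for the return map), and only \emph{then} projects onto the lowest Landau level to obtain a scalar effective Hamiltonian $\bar D_0=-\tfrac{1}{L_\gamma}(\xi_0+\bar\varphi)^W+\alpha_0^W$ on $S^1_{x_0}\times\mathbb R^m_{x''}$, to which the usual scalar trace formula is applied orbit by orbit. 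So the effective Hamiltonian is local to each orbit, not global over $X$, and the ``away from orbits'' contribution is killed by a separate argument rather than absorbed into an FIO remainder.

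In short: your Step~3 is essentially what the paper does at the very end, but only after the partition and the local normal form have replaced your global Step~2. If you want to pursue your route, the burden is to actually construct the global lowest-Landau projector and effective operator over $X$ and justify the FIO calculus for its propagator; the paper neither proves nor claims this, and its elaborate trapping-function machinery exists precisely because a direct global reduction is not available.
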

An immediate consequence of the above trace formula is a little o
estimate on the dimension of the kernel of $D_{h}$
\begin{equation}
k_{h}\coloneqq\textrm{dim ker}\left(D_{h}\right)=o\left(h^{-m}\right).\label{eq:dimension kernel estimate}
\end{equation}
As another application, we shall prove a semiclassical limit formula
for the (rescaled) eta invariant of the magnetic Dirac operator $D_{h}$.
To state this, first let $R^{\perp}\subset TX$ denote the $2m$-dimensional
orthogonal complement to the Reeb vector field. We may now define
the endomorphisms $\left(\nabla^{TX}\mathfrak{J}\right)^{0}:R^{\perp}\rightarrow R^{\perp}$,
$\left|\mathfrak{J}\right|:R^{\perp}\rightarrow R^{\perp}$, via
\begin{align}
\left(\nabla^{TX}\mathfrak{J}\right)^{0}v\coloneqq & \left(\nabla_{v}^{TX}\mathfrak{J}\right)R,\quad\forall v\in R^{\perp},\nonumber \\
\left|\mathfrak{J}\right|\coloneqq & \sqrt{-\mathfrak{J}^{2}}.\label{eq: abs. ctct. end.}
\end{align}
 We then have the following.
\begin{thm}
\label{thm: eta semiclassical limit}Let $a$ be a non-resonant contact
form and $g^{TX}$ a strongly suitable metric. The rescaled eta invariant
of the Dirac operator \prettyref{eq:Semiclassical Magnetic Dirac}
satisfies 
\begin{equation}
\lim_{h\rightarrow0}h^{m}\eta\left(D_{h}\right)=-\frac{1}{2}\frac{1}{\left(2\pi\right)^{m+1}}\frac{1}{m!}\int_{X}\left[\textrm{tr }\left|\mathfrak{J}\right|^{-1}\left(\nabla^{TX}\mathfrak{J}\right)^{0}\right]a\wedge\left(da\right)^{m}.\label{eq: eta formula}
\end{equation}
\end{thm}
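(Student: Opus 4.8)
The plan is to obtain \prettyref{eq: eta formula} from the heat-kernel representation of the eta invariant,
\[
\eta(D_{h})=\frac{1}{\sqrt{\pi}}\int_{0}^{\infty}t^{-1/2}\,\textrm{tr}\!\left(D_{h}\,e^{-tD_{h}^{2}}\right)dt,
\]
combined with the semiclassical microlocal machinery already built for \prettyref{thm:main trace expansion-1} and in \cite{Savale2017-Koszul}. The first move is the rescaling $t=s/h$, which rewrites the integral as $\tfrac{1}{\sqrt{\pi}}\int_{0}^{\infty}s^{-1/2}\,\textrm{tr}\bigl(\tfrac{D_{h}}{\sqrt{h}}\,e^{-sD_{h}^{2}/h}\bigr)\,ds$ and places us in the regime where the spectrum of $D_{h}^{2}/h$ clusters, uniformly over $X$ thanks to the strong suitability hypothesis \prettyref{eq:Diagonalizability assumption-1}, into the Landau bands $\{0\}\cup\{2k\mu_{1},\dots\}$. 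Two cancellations then do most of the work: (i) the factor $D_{h}$ vanishes linearly at $0$, so the dangerous near-kernel part of the spectrum is automatically down-weighted (this is also why one works with the heat representation rather than feeding $\textrm{sgn}$ directly into \prettyref{thm:main trace expansion-1}, where the $O(h)$-scale smoothing is too coarse to resolve the asymmetry near zero); and (ii) the principal symbol of $D_{h}/\sqrt{h}$ is a single Clifford generator, whose fibrewise trace against the Gaussian ground states of the oscillator model vanishes, so the leading (Weyl) term of $\textrm{tr}\bigl(\tfrac{D_{h}}{\sqrt{h}}\,e^{-sD_{h}^{2}/h}\bigr)$ is zero and, moreover, the symmetric (in $\nu\mapsto-\nu$) parts of all higher bands cancel. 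What survives at leading order in $h^{m}$ is therefore exactly the spectral asymmetry of the lowest band.

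Next I would identify the limiting lowest-band model. Using the Getzler-type rescaling of $D_{h}$ about the characteristic variety $\xi=-a/h$ — the one underlying the proof of \prettyref{thm:main trace expansion-1} — the lowest band is modelled fibrewise by the ground state of a harmonic oscillator with frequencies $\mu_{1},\dots,\mu_{m}$ (read off from $\mathfrak{J}$ via \prettyref{eq: frak J definition}) tensored with a Dirac-type operator along the Reeb direction, and the first subprincipal correction — the term of order $\sqrt{h}$ relative to the principal part — is produced by the $x$-variation of $\mathfrak{J}$ through the Reeb field, i.e.\ by the endomorphism $(\nabla^{TX}\mathfrak{J})^{0}$ of \prettyref{eq: abs. ctct. end.}. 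Carrying out the Gaussian (Mehler) integral over the oscillator variables, which contributes the determinantal factor $(\mu_{1}\cdots\mu_{m})^{-1}$ and, after pairing with the contact measure, the factor $\tfrac{1}{m!}a\wedge(da)^{m}$ together with the powers of $2\pi$, and then performing the residual fibrewise Clifford trace against $(\nabla^{TX}\mathfrak{J})^{0}$, produces a limiting asymmetry density proportional to $\textrm{tr}\,|\mathfrak{J}|^{-1}(\nabla^{TX}\mathfrak{J})^{0}$. Finally the elementary identity $\int_{0}^{\infty}s^{-1/2}e^{-s\nu^{2}}\,ds=\sqrt{\pi}/|\nu|$ converts this density into its $\textrm{sgn}$-weighted integral — i.e.\ into the eta invariant of the limiting model — and fixes the remaining sign $-\tfrac12$; assembling everything yields \prettyref{eq: eta formula}.

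The main obstacle is the uniformity needed to interchange $\lim_{h\to0}$ with the $s$-integral. At small $s$ one must show that cancellation (ii) persists at the level of the full semiclassical heat parametrix, so that $h^{m}\,\textrm{tr}\bigl(\tfrac{D_{h}}{\sqrt{h}}\,e^{-sD_{h}^{2}/h}\bigr)$ stays integrable against $s^{-1/2}$ near $s=0$ uniformly in $h$ (equivalently, that $\textrm{tr}(D_{h}e^{-tD_{h}^{2}})=O(t^{1/2})$ as $t\to0$ with controlled constants); at large $s$ one must exploit the uniform spectral gap above the lowest band to get exponential decay, and here the parametrix from \prettyref{thm:main trace expansion-1} together with the estimate $\dim\ker D_{h}=o(h^{-m})$ of \prettyref{eq:dimension kernel estimate} are the essential inputs controlling the bottom of the spectrum and the spectral flow through zero. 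I expect the closed Reeb orbits to enter this argument only through the oscillatory, $O(h^{1/2})$-small dynamical terms of \prettyref{thm:main trace expansion-1}, hence not to affect the limit — consistent with the purely local form of \prettyref{eq: eta formula}. A secondary technical point is the careful bookkeeping in the Mehler/Clifford computation of the second step, which is where the precise normalising constant $-\tfrac{1}{2}(2\pi)^{-m-1}(m!)^{-1}$ is pinned down.
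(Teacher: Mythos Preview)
Your overall strategy matches the paper's: heat-kernel representation, the rescaling $t\mapsto t/h$, vanishing of the leading coefficient by parity (the paper phrases this as $u_{0}$ being an even distribution in $s$), identification of the surviving $O(h^{-m})$ term with a Getzler/Mehler computation of the second coefficient $u_{1}$ (this is the paper's Section~\ref{sec:Local trace expansion second term}, culminating in \prettyref{eq: tr u11} and Corollary~\ref{cor: convergence improper integral}), and the use of \prettyref{thm:main trace expansion-1} to justify passing to the limit.

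Where the paper differs from your sketch is in how it packages the uniformity step, and the difference is worth knowing. Rather than arguing directly with the $s$-integral, the paper splits $\int_{0}^{\infty}=\int_{0}^{\varepsilon}+\int_{\varepsilon}^{\infty}$ at a \emph{fixed} $\varepsilon$. The small-$s$ piece is disposed of by a crude a priori bound $\textrm{tr}\bigl(\tfrac{D}{\sqrt{h}}e^{-tD^{2}/h}\bigr)=O(h^{-m}e^{ct})$ imported from \cite{Savale-Asmptotics}, giving $O(\sqrt{\varepsilon}\,h^{-m})$---no cancellation argument needed there. The large-$s$ piece is rewritten exactly as $\textrm{tr}\,E_{\varepsilon}(D/\sqrt{h})$ with $E(x)=\textrm{sign}(x)\,\textrm{erfc}(|x|)$, i.e.\ as a single functional trace of a Schwartz-outside-zero, odd function. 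The crux is then a separate lemma (the two-term ``irregular functional trace'' expansion, \prettyref{eq: irreg funct tr exp}) which says $\textrm{tr}\,f^{\pm}(D/\sqrt{h})=h^{-m-1/2}u_{0}^{\pm}(f)+h^{-m}u_{1}^{\pm}(f)+o(h^{-m})$; this is a Tauberian statement whose proof uses \prettyref{thm:main trace expansion-1} to obtain the improved local Weyl law $N(-\epsilon h,\epsilon h)=O(\epsilon h^{-m})$ and then a mollification/truncation argument. Your appeal to a ``uniform spectral gap above the lowest band'' would localize to $|s|<\sqrt{2\mu_{1}}$ but does not by itself control the asymmetry \emph{within} that window at scale $h$---that is precisely what the Tauberian lemma supplies. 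So your plan is sound, but the erfc rewriting plus the irregular-trace lemma is the concrete mechanism the paper uses in place of your more heuristic large-$s$/small-$s$ discussion.
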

Before proceeding further we look at the limit formula formula above
in the two special cases mentioned earlier.
\begin{enumerate}
\item The dimension of the manifold $\textrm{dim }X=3$ and $\left|da\right|=\mu_{1}$
has constant strength. In this case the limit \prettyref{eq: eta formula}
is given by the volume integral
\[
\lim_{h\rightarrow0}h^{m}\eta\left(D_{h}\right)=-\frac{\mu_{1}}{8\pi^{2}}\int_{X}\left[i_{R}d^{*}da\right]dx.
\]
\item There is a smooth endomorphism $J:TX\rightarrow TX$, such that\\
 $\left(X^{2m+1},a,g^{TX},J\right)$ is a metric contact manifold
\prettyref{eq: metric contact structure-1}. In this case the limit
\prettyref{eq: eta formula} is simply the volume 
\[
\lim_{h\rightarrow0}h^{m}\eta\left(D_{h}\right)=-\frac{m}{2}\frac{1}{\left(2\pi\right)^{m+1}}\textrm{vol}\left(X\right).
\]
\end{enumerate}
$\qquad$A small time trace formula \prettyref{eq: Main trace expansion-1}
was already proved in \cite{Savale2017-Koszul} assuming $\theta$
to be supported sufficiently close to the origin; much of this article
attempts to extend the arguments therein to large supports. By the
construction of appropriate trapping functions it is shown that the
formula of \cite{Savale2017-Koszul} extends to large time when microlocalized
away from the Reeb orbits. Near the Reeb orbits, the trace is studied
via understanding the Birkhoff normal form of $D_{h}$ near each orbit,
using which it is reduced to the trace of a scalar effective Hamiltonian.
The Birkhoff normal form procedure here combines the one in \cite{Savale2017-Koszul}
with ones for scalar Hamiltonians \cite{Guillemin-wavetraceinv.,Guillemin-Paul2010,Iant-Sjostrand-Zwroski,Zelditch-ellipticgeodesics,Zelditch98-waveinv}
near periodic Hamiltonian orbits and hence requires the non-resonance
assumption. The semiclassical asymptotics for the Dirac operator considered
here were originally motivated by Taubes's proof of the three dimensional
Weinstein conjecture \cite{Taubes-Weinstein} on the existence of
Reeb orbits. The existence of Reeb orbits, or the necessity of dynamical
contributions \prettyref{eq: gutzwiller dynamical term}, is still
unresolved in higher dimensions. 

The behavior of the eta invariant of Dirac operators has been studied
under various operations (cf. \cite{Goette-2012-etasurvey} for a
survey) and the formula \prettyref{eq: eta formula} adds to a long
list. A more precise relation between the eta invariant and the dynamics
of geodesic flow has been studied on compact hyperbolic manifolds
\cite{Millson78} and locally symmetric spaces of non-compact type
\cite{Moscovici-Stanton-eta}. The proof of such precise relations
on general negatively curved manifolds is the subject of the hypo-elliptic
Laplacian program of Bismut \cite{Bismut-hypoelliptic-Dirac,Bismut-hypoelliptic-eta}.

Under the well known correspondence between semi-classical and microlocal
analysis, the operator \prettyref{eq:Semiclassical Magnetic Dirac}
corresponds to a hypo elliptic sub-Riemannian (sR) Dirac operator
on the product $X\times S^{1}$. The Reeb orbits on $X$ correspond
to singular geodesics on the quasi-contact product suggesting a more
general trace formula for sR Dirac operators. The eigenvalues of the
symbol of the sR Dirac operator being the square root of the symbol
of the sR Laplacian up to sign, similar trace formulas could be expected
for the half-wave equation of the sR Laplacian. A systematic study
of spectral asymptotics for sR Laplacians and related dynamics has
been recently undertaken \cite{Colin-de-Verdiere-Hillairet-TrelatI,Colin-de-Verdiere-Hillairet-TrelatII}.

The paper is organized as follows. In \prettyref{sec:Preliminaries}
we begin with the preliminaries of Dirac operators, Clifford representations
and semi-classical analysis used in the paper. In \prettyref{sec:Dynamical-partitions}
we breakup the trace \prettyref{eq: Main trace expansion-1} using
a partition of unity adapted to the Reeb dynamics. By the construction
of appropriate trapping functions it is shown here that the trace
does not have non-local contributions when microlocalized away from
the Reeb orbits. In \prettyref{sec: Birkhoff normal form near Reeb orbit}
we generalize the Birkhoff normal form of \cite{Savale2017-Koszul}
to one in a neighborhood of each Reeb orbit. This normal form is then
used, via the construction of a similar trapping functions to reduce
the trace asymptotics to $S^{1}\times\mathbb{R}^{2m}$ in \prettyref{sec:Reduction to S1 times R2m}
leading to a proof of \prettyref{thm:main trace expansion-1} in \prettyref{sec: Trace Formula}.
In \prettyref{sec:Local trace expansion second term} we compute the
second term in the local trace expansion of \prettyref{eq: gutwiller local term}.
This leads to the semi-classical limit formula for the eta invariant
\prettyref{eq: eta formula} in the final \prettyref{sec:Semiclassical-limit-of eta}. 

\section{\label{sec:Preliminaries}Preliminaries}

\subsection{Spectral invariants of the Dirac operator}

\noindent Here we review the basic facts about Dirac operators used
throughout the paper with \cite{Berline-Getzler-Vergne} providing
a standard reference. Consider a compact, oriented, Riemannian manifold
$\left(X,g^{TX}\right)$ of odd dimension $n=2m+1$. Let $X$ be equipped
with spin structure, i.e. a principal $\textrm{Spin}\left(n\right)$
bundle $\textrm{Spin}\left(TX\right)\rightarrow SO\left(TX\right)$
with an equivariant double covering of the principal $SO\left(n\right)$-bundle
of orthonormal frames $SO\left(TX\right)$. The corresponding spin
bundle $S=\textrm{Spin}\left(TX\right)\times_{\textrm{Spin}\left(n\right)}S_{2m}$
is associated to the unique irreducible representation of $\textrm{Spin}\left(n\right)$.
Let $\nabla^{TX}$ denote the Levi-Civita connection on $TX$. This
lifts to the spin connection $\nabla^{S}$ on the spin bundle $S$.
The Clifford multiplication endomorphism $c:T^{*}X\rightarrow S\otimes S^{*}$
may be defined (see \prettyref{subsec:Clifford algebra}) satisfying
\begin{align*}
c(a)^{2}=-|a|^{2}, & \quad\forall a\in T^{*}X.
\end{align*}
Let $L$ be a Hermitian line bundle on $X$. Let $A_{0}$ be a fixed
unitary connection on $L$ and let $a\in\Omega^{1}(X;\mathbb{R})$
be a 1-form on $X$. This gives a family $\nabla^{h}=A_{0}+\frac{i}{h}a$
of unitary connections on $L$. We denote by $\nabla^{S\otimes L}=\nabla^{S}\otimes1+1\otimes\nabla^{h}$
the tensor product connection on $S\otimes L.$ Each such connection
defines a coupled Dirac operator 
\begin{align*}
D_{h}\coloneqq hD_{A_{0}}+ic\left(a\right)=hc\circ\left(\nabla^{S\otimes L}\right):C^{\infty}(X;S\otimes L)\rightarrow C^{\infty}(X;S\otimes L)
\end{align*}
for $h\in\left(0,1\right]$. The operator $D_{h}$ is elliptic and
self-adjoint. It hence possesses a discrete spectrum of eigenvalues. 

We define the eta function of $D_{h}$ by the formula
\begin{align}
\eta\left(D_{h},s\right)\coloneqq & \sum_{\begin{subarray}{l}
\quad\:\lambda\neq0\\
\lambda\in\textrm{Spec}\left(D_{h}\right)
\end{subarray}}\textrm{sign}(\lambda)|\lambda|^{-s}=\frac{1}{\Gamma\left(\frac{s+1}{2}\right)}\int_{0}^{\infty}t^{\frac{s-1}{2}}\textrm{tr}\left(D_{h}e^{-tD_{h}^{2}}\right)dt,\label{eq:eta invariant definition}
\end{align}
$\forall s\in\mathbb{C}$. Here, and in the remainder of the paper,
we use the convention that $\textrm{Spec}(D_{h})$ denotes a multiset
with each eigenvalue of $D_{h}$ being counted with its multiplicity.
The above series converges for $\textrm{Re}(s)>n.$ It was shown in
\cite{APSI,APSIII} that the eta function possesses a meromorphic
continuation to the entire complex $s$-plane and has no pole at zero.
Its value at zero is defined to be the eta invariant of the Dirac
operator
\[
\eta_{h}\coloneqq\eta\left(D_{h},0\right).
\]
By including the zero eigenvalue in \prettyref{eq:eta invariant definition},
with an appropriate convention, we may define a variant known as the
reduced eta invariant by 
\begin{align*}
\bar{\eta}_{h}\coloneqq & \frac{1}{2}\left\{ k_{h}+\eta_{h}\right\} .
\end{align*}

The eta invariant is unchanged under positive scaling
\begin{equation}
\eta\left(D_{h},0\right)=\eta\left(cD_{h},0\right);\quad\forall c>0.\label{eq: eta scale invariant}
\end{equation}
Let $L_{t,h}$ denote the Schwartz kernel of the operator $D_{h}e^{-tD_{h}^{2}}$
on the product $X\times X$. Throughout the paper all Schwartz kernels
will be defined with respect to the Riemannian volume density. Denote
by $\textrm{tr}\left(L_{t,h}\left(x,x\right)\right)$ the point-wise
trace of $L_{t,h}$ along the diagonal. We may now analogously define
the function 
\begin{align}
\eta\left(D_{h},s,x\right)= & \frac{1}{\Gamma\left(\frac{s+1}{2}\right)}\int_{0}^{\infty}t^{\frac{s-1}{2}}\textrm{tr}\left(L_{t,h}\left(x,x\right)\right)dt,\label{eq:eta function diagonal}
\end{align}
$\forall s\in\mathbb{C},\,x\in X$. In \cite{Bismut-Freed-II} theorem
2.6, it was shown that for $\textrm{Re}(s)>-2$, the function $\eta\left(D_{h},s,x\right)$
is holomorphic in $s$ and smooth in $x$. From \prettyref{eq:eta function diagonal}
it is clear that this is equivalent to 
\begin{align}
\textrm{tr}\left(L_{t,h}\right)= & O\left(t^{\frac{1}{2}}\right),\quad\textrm{as}\:t\rightarrow0.\label{eq:pointwise trace asymp as t->0}
\end{align}
The eta invariant is then given by the convergent integral 
\begin{equation}
\eta_{h}=\int_{0}^{\infty}\frac{1}{\sqrt{\pi t}}\textrm{tr}\left(D_{h}e^{-tD_{h}^{2}}\right)dt.\label{eq: eta integral}
\end{equation}

\subsection{\label{subsec:Clifford algebra}Clifford algebra and and its representations}

Here we review the construction of the spin representation of the
Clifford algebra. The following being standard, is merely used to
setup our conventions.

Consider a real vector space $V$ of even dimension $2m$ with metric
$\left\langle ,\right\rangle $. Recall that its Clifford algebra
$Cl\left(V\right)$ is defined as the quotient of the tensor algebra
$T\left(V\right):=\oplus_{j=0}^{\infty}V^{\otimes j}$ by the ideal
generated from the relations $v\otimes v+\left|v\right|^{2}=0$. Fix
a compatible almost complex structure $J$ and split $V\otimes\mathbb{C}=V^{1,0}\oplus V^{0,1}$
into the $\pm i$ eigenspaces of $J$. The complexification $V\otimes\mathbb{C}$
carries an induced $\mathbb{C}$-bilinear inner product $\left\langle ,\right\rangle _{\mathbb{C}}$
as well as an induced Hermitian inner product $h^{\mathbb{C}}\left(,\right)$.
Next, define $S_{2m}=\Lambda^{*}V^{1,0}$. Clearly $S_{2m}$ is a
complex vector space of dimension $2^{m}$ on which the unique irreducible
(spin)-representation of the Clifford algebra $Cl\left(V\right)\otimes\mathbb{C}$
is defined by the rule
\[
c_{2m}\left(v\right)\omega=\sqrt{2}\left(v^{1,0}\wedge\omega-\iota_{v^{0,1}}\omega\right),\quad v\in V,\omega\in S_{2m}.
\]
The contraction above is taken with respect to $\left\langle ,\right\rangle _{\mathbb{C}}$.
It is clear that $c_{2m}\left(v\right):\Lambda^{\textrm{even/odd}}\rightarrow\Lambda^{\textrm{odd/even}}$
switches the odd and even factors. For the Clifford algebra $Cl\left(W\right)\otimes\mathbb{C}$
of an odd dimensional vector space $W=V\oplus\mathbb{R}\left[e_{0}\right]$
there are exactly two irreducible representations. The first (spin)-representation
$S_{2m+1}=\Lambda^{*}V^{1,0}$ is defined via 
\begin{eqnarray}
c_{2m+1}\left(v\right) & = & c_{2m}\left(v\right),\quad v\in V\nonumber \\
c_{2m+1}\left(e_{0}\right)\omega_{\textrm{even/odd}} & = & \pm\frac{1}{i}\omega_{\textrm{even/odd}}\label{eq:odd clifford representation}
\end{eqnarray}
while the other corresponds to the opposite sign convention in \prettyref{eq:odd clifford representation}
above. We shall often use the shorthand's $c=c_{2m}=c_{2m+1}$ with
the index $2m$, $2m+1$ implicitly understood.

Pick an orthonormal basis $e_{1},e_{2},\ldots,e_{2m}$ for $V$ in
which the almost complex structure is given by $Je_{j}=e_{j+m}$,
$1\leq j\leq m$. An $h^{\mathbb{C}}$-orthonormal basis for $V^{1,0}$
is now given by $w_{j}=\frac{1}{\sqrt{2}}\left(e_{j+m}+ie_{j}\right)$,
$1\leq j\leq m$. A basis for $S_{2m}$ and $S_{2m+1}^{\pm}$ is given
by
\begin{equation}
w_{k}=w_{1}^{k_{1}}\wedge\ldots\wedge w_{m}^{k_{m}},\:\textrm{with }k=\left(k_{1},k_{2},\ldots,k_{m}\right)\in\left\{ 0,1\right\} ^{m}.\label{eq: basis spin representation}
\end{equation}
 Ordering the above chosen bases lexicographically in $k$, we may
define the Clifford matrices, of rank $2^{m}$, via 
\begin{eqnarray*}
\gamma_{j}^{m} & = & c\left(e_{j}\right),\quad0\leq j\leq2m,
\end{eqnarray*}
for each $m$ . We note that the above is a slightly different convention
from \cite{Savale2017-Koszul} adopted to simplify some formulas in
\prettyref{sec:Local trace expansion second term}. Again, we often
write $\gamma_{j}^{m}=\gamma_{j}$ with the index $m$ implicitly
understood. Giving representations of the Clifford algebra, these
matrices satisfy the relation 
\begin{equation}
\gamma_{i}\gamma_{j}+\gamma_{j}\gamma_{i}=-2\delta_{ij}.\label{eq: Clifford relations}
\end{equation}
We also set $\sigma_{j}=i\gamma_{j}$. 

Next, one may further define the Clifford quantization map on the
exterior algebra
\begin{eqnarray}
c:\Lambda^{*}W\otimes\mathbb{C} & \rightarrow & \textrm{End}\left(S_{2m}\right)\nonumber \\
c\left(e_{0}^{k_{0}}\wedge\ldots\wedge e_{2m}^{k_{2m}}\right) & = & c\left(e_{0}\right)^{k_{0}}\ldots c\left(e_{2m}\right)^{k_{2m}}.\label{eq: clifford quantization}
\end{eqnarray}
An easy computation yields 
\begin{eqnarray*}
\gamma_{0}\left(\gamma_{1}\gamma_{m+1}\right)\ldots\left(\gamma_{m}\gamma_{2m}\right) & = & \frac{1}{i^{m+1}}
\end{eqnarray*}
and hence 
\[
\textrm{tr}\left[\gamma_{0}\ldots\gamma_{2m}\right]=\frac{1}{i^{m+1}}2^{m}.
\]

Furthermore, if $e_{0}\wedge\ldots\wedge e_{2m}$ is designated to
give a positive orientation for $W$ then for $\omega\in\Lambda^{k}W$
we have 
\begin{eqnarray}
c\left(\ast\omega\right) & = & i^{m+1}\left(-1\right)^{\frac{k\left(k+1\right)}{2}}c\left(\omega\right)\label{eq:clifford quantization hodge dual}\\
c\left(\omega\right)^{*} & = & \left(-1\right)^{\frac{k\left(k+1\right)}{2}}c\left(\omega\right)\label{eq:clifford quantization adjoint}
\end{eqnarray}
under the Hodge star and $h^{\mathbb{C}}$-adjoint. The Clifford quantization
map \prettyref{eq: clifford quantization} is a linear surjection
with kernel spanned by elements of the form $\ast\omega-i^{m+1}\left(-1\right)^{\frac{k\left(k+1\right)}{2}}\omega$.
Thus, in particular one has linear isomorphisms 
\begin{equation}
c:\Lambda^{\textrm{even/odd}}W\otimes\mathbb{C}\rightarrow\textrm{End}\left(S_{2m}\right).\label{eq:clifford algebra is matrix algebra}
\end{equation}

Next, given $\left(r_{1},\ldots,r_{m}\right)\in\mathbb{R}^{m}\setminus0$,
we define
\begin{eqnarray}
I_{r} & \coloneqq & \left\{ j|r_{j}\neq0\right\} \subset\left\{ 1,2,\ldots,m\right\} \label{eq: I_r-1}\\
Z_{r} & \coloneqq & \left|I_{r}\right|\label{eq: Z_r-1}\\
V_{r} & \coloneqq & \bigoplus_{j\in I_{r}}\mathbb{C}\left[w_{j}\right]\subset V^{1,0}\label{eq: V_r-1}\\
\textrm{and }\quad w_{r} & \coloneqq & \sum_{j=1}^{m}r_{j}w_{j}\in V_{r}.\label{eq: w_r-1}
\end{eqnarray}
Clearly, $\left\Vert w_{r}\right\Vert =\left|r\right|$. Denoting
by $w_{r}^{\perp}$ the $h^{\mathbb{C}}$-orthogonal complement of
$w_{r}\subset V_{r}$, one clearly has $V_{r}=\mathbb{C}\left[w_{r}\right]\oplus w_{r}^{\perp}$.
We set
\begin{eqnarray}
\mathtt{i}_{r}:\Lambda^{*}V_{r} & \rightarrow & \Lambda^{*}V_{r},\quad\textrm{ via}\label{eq: involution}\\
\mathtt{i}_{r}\left(\omega\right) & \coloneqq & \frac{w_{r}}{\left|r\right|}\wedge\omega\nonumber \\
\mathtt{i}_{r}\left(\frac{w_{r}}{\left|r\right|}\wedge\omega\right) & \coloneqq & \omega\nonumber 
\end{eqnarray}
for $\omega\in\Lambda^{*}w_{r}^{\perp}$. Clearly, $\mathtt{i}_{r}^{2}=1$
and $\mathtt{i}_{r}$ is a linear isomorphism between 
\begin{eqnarray*}
\mathtt{i}_{r}:\Lambda^{\textrm{even}}V_{r} & \rightarrow & \Lambda^{\textrm{odd}}V_{r}\\
\mathtt{i}_{r}:\Lambda^{\textrm{odd}}V_{r} & \rightarrow & \Lambda^{\textrm{even}}V_{r}.
\end{eqnarray*}
Next, the endomorphism 
\begin{eqnarray}
c\left(\frac{w_{r}-\bar{w}_{r}}{\sqrt{2}}\right)=\left(w_{r}\wedge+\iota_{\bar{w}_{r}}\right):\Lambda^{\textrm{*}}V_{r} & \rightarrow & \Lambda^{\textrm{*}}V_{r}\label{eq: clifford multiplication partial vector}
\end{eqnarray}
has the form
\begin{equation}
c\left(\frac{w_{r}-\bar{w}_{r}}{\sqrt{2}}\right)=\begin{bmatrix} & \left|r\right|\mathtt{i}_{r}\\
\left|r\right|\mathtt{i}_{r}
\end{bmatrix}\label{eq: clifford multiplication blocks}
\end{equation}
with respect to the decomposition $\Lambda^{\textrm{*}}V_{r}=\Lambda^{\textrm{odd}}V_{r}\oplus\Lambda^{\textrm{even}}V_{r}$.
This finally allows us to write the eigenspaces of \prettyref{eq: clifford multiplication partial vector}
as 
\begin{equation}
V_{r}^{\pm}=\left(1\pm\mathtt{i}_{r}\right)\left(\Lambda^{\textrm{even}}V_{r}\right)\label{eq: eigenspaces clifford mult.}
\end{equation}
with eigenvalue $\pm\left|r\right|$ respectively.

Finally we shall need an almost diagonalizability result for the restriction
of Clifford multiplication to the sphere. Define $S\left(W\right)=\left\{ v\in W|\left|v\right|=1\right\} $
as well as the restriction 
\begin{align}
c:S\left(W\right) & \rightarrow\mathfrak{u}\left(S_{2m+1}\right)\label{eq:restriction clifford sphere}\\
c\left(v\right)^{2}= & -\textrm{Id}.\nonumber 
\end{align}
The restriction of the spin bundle $S_{2m+1}$ to the sphere $S\left(W\right)$
splits $\left.S_{2m+1}\right|_{S\left(W\right)}=S_{+}\left(W\right)\oplus S_{-}\left(W\right)$
into the $\pm i$ eigenspaces of the $c$ respectively. The summands
$S_{+}\left(W\right),S_{-}\left(W\right)$ maybe identified with the
(non-trivial) bundle of positive and negative spinors on the sphere.
The restriction $c$ \prettyref{eq:restriction clifford sphere} is
hence not globally diagonalizable over the sphere. We now identify
$S\left(W\right)=\left\{ \theta_{0}e_{0}+\ldots\theta_{2m}e_{2m}\in W|\theta_{0}^{2}+\ldots+\theta_{2m}^{2}=1\right\} $
with the standard sphere in $S^{n-1}\subset\mathbb{R}^{n}$ using
the chosen basis for $W$; with the induced basis \prettyref{eq: basis spin representation}
of $S_{2m+1}$ giving identifications $\mathfrak{u}\left(S_{2m+1}\right)=\mathfrak{u}\left(\mathbb{C}^{2^{m}}\right)$,
$U\left(S_{2m+1}\right)=U\left(\mathbb{C}^{2^{m}}\right)$. Thus
\begin{equation}
c\left(\theta\right)\coloneqq c\left(\theta_{0}e_{0}+\ldots\theta_{2m}e_{2m}\right)=\sum_{j=0}^{2m}\theta_{j}\gamma_{j}\in C^{\infty}\left(S^{n-1},\mathfrak{u}\left(\mathbb{C}^{2^{m}}\right)\right)\label{eq:clifford mult rest. sphere}
\end{equation}
 in this trivialization/coordinates. We now have.
\begin{lem}
\label{lem: almost diagonalization lemma}For each $\rho\in\left(0,\frac{1}{8}\right)$,
there exist smooth family of maps/functions $\mathtt{v}_{t}^{\rho}\in C^{\infty}\left(S^{n-1};U\left(\mathbb{C}^{2^{m}}\right)\right)$
; $a_{0,t}^{\rho},a_{1,t}^{\rho}\in C^{\infty}\left(\left[-1,1\right]_{\theta_{0}}\right)$
, $t\in\left[0,1\right]$, such that 
\begin{enumerate}
\item $\left|a_{j,t}^{\rho}\right|\leq\left(\frac{8}{\rho}\right)^{1/2},\left|\partial_{\theta_{0}}a_{j,t}^{\rho}\right|\leq\left(\frac{8}{\rho}\right)^{2}$
, $t\in\left[0,1\right]$, $j=0,1$.
\item $\left\Vert \partial_{t}\mathtt{v}_{t}^{\rho}\right\Vert \leq\left(\frac{8}{\rho}\right)^{2}$,
$\left\Vert \partial_{\theta_{j}}\mathtt{v}_{t}^{\rho}\right\Vert \leq\left(\frac{8}{\rho}\right)^{4}$,
$t\in\left[0,1\right]$, $j=0,\ldots,2m$.
\item $\,$
\begin{equation}
a_{0,t}^{\rho}\left(\theta_{0}\right)=\begin{cases}
\theta_{0}; & t\in\left[0,\frac{1}{2}\right]\\
1; & t=1,\:\theta_{0}<1-\rho,
\end{cases}\label{eq: a0 interpolates}
\end{equation}
\begin{equation}
a_{1,t}^{\rho}\left(\theta_{0}\right)=\begin{cases}
-1; & t\in\left[0,\frac{1}{2}\right]\\
0; & t=1,\:\theta_{0}<1-\rho,
\end{cases}\label{eq: a1 interpolates}
\end{equation}
\begin{equation}
\mathtt{v}_{t}^{\rho}=\sigma_{0};\quad t\in\left[0,\frac{1}{2}\right],\label{eq: v interpolates}
\end{equation}
\item we have the almost diagonalizability equation
\begin{equation}
\mathtt{v}_{t}^{\rho}\left(\theta\right)^{*}c\left(\theta\right)\mathtt{v}_{t}^{\rho}\left(\theta\right)=a_{0,t}^{\rho}\left(\theta_{0}\right)\gamma_{0}+a_{1,t}^{\rho}\left(\theta_{0}\right)\left[\sum_{j=1}^{2m}\theta_{j}\gamma_{j}\right].\label{eq:almost diagonalization}
\end{equation}
\end{enumerate}
\end{lem}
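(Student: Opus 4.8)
The plan is to realise $\mathtt v^{\rho}_{t}$ as spin lifts of rotations of $S^{n-1}$ that sweep $\theta$ towards the pole $e_{0}$, and to extract $a^{\rho}_{0,t},a^{\rho}_{1,t}$ from the equivariance of Clifford multiplication. The starting point is that $\sigma_{0}=i\gamma_{0}$ is self-adjoint, unitary, and implements on the spin representation the reflection $\mathfrak r\colon(\theta_{0},\theta_{1},\dots,\theta_{2m})\mapsto(\theta_{0},-\theta_{1},\dots,-\theta_{2m})$: from $\sigma_{0}\gamma_{0}\sigma_{0}=\gamma_{0}$ and $\sigma_{0}\gamma_{j}\sigma_{0}=-\gamma_{j}$ $(1\le j\le2m)$ one gets $\sigma_{0}^{*}c(\theta)\sigma_{0}=c(\mathfrak r\theta)=\theta_{0}\gamma_{0}-\sum_{j=1}^{2m}\theta_{j}\gamma_{j}$, which is already \prettyref{eq:almost diagonalization} with $a_{0}=\theta_{0}$, $a_{1}=-1$, $\mathtt v=\sigma_{0}$ --- the $t\in[0,\tfrac12]$ piece of \prettyref{eq: a0 interpolates}--\prettyref{eq: v interpolates}. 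More generally any spin lift $\mathtt g$ of $g\in SO(n)$ obeys $\mathtt g^{*}c(v)\mathtt g=c(g^{-1}v)$; and squaring the right-hand side of \prettyref{eq:almost diagonalization}, using that $\gamma_{0}$ and $\sum_{j\ge1}\theta_{j}\gamma_{j}$ anticommute and square to $-1$ and $-(1-\theta_{0}^{2})$, shows any solution must satisfy $a_{0}^{2}+(1-\theta_{0}^{2})a_{1}^{2}=1$. This cuts the admissible right-hand sides down to the one-parameter family $c\bigl(\cos\phi\,e_{0}-\sin\phi\,\widehat{\theta'}\bigr)$, $\phi\in\mathbb R$ (where $\widehat{\theta'}=(\theta_{1},\dots,\theta_{2m})/\sqrt{1-\theta_{0}^{2}}$ and $a_{0}=\cos\phi$, $a_{1}=-\sin\phi/\sqrt{1-\theta_{0}^{2}}$), so the task reduces to producing, smoothly in $(t,\theta)$, a conjugation that sweeps $\phi$ from $\varphi\coloneqq\arccos\theta_{0}$ down to $0$.

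Away from the poles write $\theta=\cos\varphi\,e_{0}+\sin\varphi\,\widehat{\theta'}$ and let $\mathtt q_{\psi}(\theta)=\exp\!\bigl(\tfrac{\psi}{2}\,c(e_{0})c(\widehat{\theta'})\bigr)$ be the spin lift of the rotation by $\psi$ in the oriented plane $\mathrm{span}(e_{0},\widehat{\theta'})$ --- note $c(e_{0})c(\widehat{\theta'})$ is skew-Hermitian with square $-1$. Fix cut-offs $\chi\in C^{\infty}([0,1];[0,1])$ with $\chi\equiv0$ near $[0,\tfrac12]$ and $\chi\equiv1$ near $1$, and $\psi_{\rho}\in C^{\infty}([-1,1];[0,1])$ with $\psi_{\rho}\equiv1$ on $\{|\theta_{0}|\le1-\rho\}$, $\textrm{supp}\,\psi_{\rho}\subset\{|\theta_{0}|<1-\tfrac{\rho}{2}\}$ and $|\psi_{\rho}^{(k)}|\le C_{k}\rho^{-k}$; put $\psi_{t}(\theta_{0})=\chi(t)\psi_{\rho}(\theta_{0})\arccos\theta_{0}$ and
\[
\mathtt v^{\rho}_{t}(\theta)=\sigma_{0}\,\mathtt q_{-\psi_{t}(\theta_{0})}(\theta).
\]
Evaluating $\mathtt q_{-\psi}^{*}c(e_{0})\mathtt q_{-\psi}$ and $\mathtt q_{-\psi}^{*}c(\widehat{\theta'})\mathtt q_{-\psi}$ and using linearity together with $\sigma_{0}^{*}c(\theta)\sigma_{0}=c(\mathfrak r\theta)$, one finds $(\mathtt v^{\rho}_{t})^{*}c(\theta)\mathtt v^{\rho}_{t}=c\bigl(\cos\phi_{t}\,e_{0}-\sin\phi_{t}\,\widehat{\theta'}\bigr)=\cos\phi_{t}\,\gamma_{0}-\dfrac{\sin\phi_{t}}{\sqrt{1-\theta_{0}^{2}}}\sum_{j=1}^{2m}\theta_{j}\gamma_{j}$ with $\phi_{t}(\theta_{0})=\bigl(1-\chi(t)\psi_{\rho}(\theta_{0})\bigr)\arccos\theta_{0}$; thus $a^{\rho}_{0,t}=\cos\phi_{t}$ and $a^{\rho}_{1,t}=-\sin\phi_{t}/\sqrt{1-\theta_{0}^{2}}$ depend on $\theta_{0}$ only, and \prettyref{eq:almost diagonalization} holds by construction.

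The interpolation conditions \prettyref{eq: a0 interpolates}--\prettyref{eq: v interpolates} are then read off at once: $\chi\equiv0$ for $t\le\tfrac12$ gives $\psi_{t}\equiv0$, $\mathtt q_{0}=\mathrm{Id}$, hence $\mathtt v^{\rho}_{t}=\sigma_{0}$, $\phi_{t}=\arccos\theta_{0}$, $a_{0}=\theta_{0}$, $a_{1}=-1$; at $t=1$ on $\{|\theta_{0}|\le1-\rho\}$, $\chi\psi_{\rho}=1$ forces $\phi_{t}=0$, so $a_{0}=1$, $a_{1}=0$; and on $\{|\theta_{0}|\ge1-\tfrac{\rho}{2}\}$, $\psi_{\rho}\equiv0$ makes $\psi_{t}\equiv0$ and $\mathtt v^{\rho}_{t}\equiv\sigma_{0}$, so the singular factor $\widehat{\theta'}$ disappears and $\mathtt v^{\rho}_{t},a^{\rho}_{j,t}$ extend smoothly across the poles (on the rest of $S^{n-1}$ everything is manifestly smooth). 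For the estimates (1)--(2) one uses that $a^{\rho}_{1,t}$ differs from $-1$ only where $|\theta_{0}|<1-\tfrac{\rho}{2}$, i.e.\ $\sqrt{1-\theta_{0}^{2}}\ge\sqrt{\rho/2}$, whence $|a^{\rho}_{1,t}|\le(8/\rho)^{1/2}$ and $|a^{\rho}_{0,t}|\le1$; differentiating and noting the cancellation of the apparent $(1-\theta_{0}^{2})^{-1/2}$-singularities (e.g.\ $\partial_{\theta_{0}}a^{\rho}_{0,t}=-\sin\phi_{t}\,\partial_{\theta_{0}}\phi_{t}\equiv1$ near the poles), the $\theta_{0}$- and $t$-derivatives are dominated by $|\psi_{\rho}'|\le C\rho^{-1}$, $(1-\theta_{0}^{2})^{-1/2}\le(\rho/2)^{-1/2}$ and $\|c(v)\|=1$ for $|v|=1$, giving the stated powers of $8/\rho$ with room to spare.

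The single genuine obstacle --- and the reason the cut-off $\psi_{\rho}$, hence the word ``almost'', cannot be removed --- is topological: as recalled just before the statement the $(+i)$-eigenbundle $S_{+}(W)$ of $c$ over $S^{n-1}$ is non-trivial, so there is no smooth global $\mathtt w$ with $\mathtt w^{*}c(\theta)\mathtt w=\gamma_{0}$; concretely the plane field $\theta\mapsto\mathrm{span}(e_{0},\widehat{\theta'})$ and the generator $c(e_{0})c(\widehat{\theta'})$ are singular at the poles, so the diagonalised form $a_{0}=1$, $a_{1}=0$ can only be enforced off a $\rho$-neighbourhood of the pole(s), with $\mathtt v^{\rho}_{t}$ staying equal to $\sigma_{0}$ there. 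Everything else is bookkeeping: \prettyref{eq:almost diagonalization} holds exactly with $a^{\rho}_{0,t},a^{\rho}_{1,t}$ \emph{smooth} functions of $\theta_{0}$ automatically, since the construction never leaves the one-parameter family $c(\cos\phi\,e_{0}-\sin\phi\,\widehat{\theta'})$ and reduces to $a_{1}\equiv-1$ near the poles; and the only slightly delicate point in the estimates is carrying the $\rho^{-1}$ from $\psi_{\rho}'$ alongside the $\rho^{-1/2}$ from $(1-\theta_{0}^{2})^{-1/2}$ through the transition band $\{1-\rho\le|\theta_{0}|\le1-\tfrac{\rho}{2}\}$. (An equivalent coordinate-free construction of $\mathtt v^{\rho}_{1}$ off the caps conjugates the constant projection $\tfrac12(1-i\gamma_{0})$ to the smooth family $\tfrac12(1-i\,c(\theta))$ over the contractible set $\{|\theta_{0}|\le1-\tfrac{\rho}{2}\}$ by Kato's transport equation, which makes the role of contractibility, and of the puncture at the pole, transparent.)
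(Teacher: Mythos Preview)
Your spin-rotation idea is attractive and morally close to the paper's argument, but there is a genuine gap at the \emph{south} pole $\theta_{0}=-1$. You cut off $\psi_{\rho}$ on $\{|\theta_{0}|\ge 1-\tfrac{\rho}{2}\}$ to kill the singular factor $\widehat{\theta'}$ at \emph{both} poles. But then at $t=1$ and $\theta_{0}$ near $-1$ you have $\psi_{\rho}=0$, hence $\phi_{1}=\arccos\theta_{0}$ and $a^{\rho}_{0,1}(\theta_{0})=\theta_{0}\approx -1$, $a^{\rho}_{1,1}=-1$; this violates condition (3), which requires $a^{\rho}_{0,1}=1$, $a^{\rho}_{1,1}=0$ for \emph{all} $\theta_{0}<1-\rho$, including a neighbourhood of $-1$. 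If instead you cut off only near the north pole (so $\psi_{\rho}\equiv 1$ near $\theta_{0}=-1$), then at $t=1$ one has $\psi_{1}(-1)=\pi$ and $\sigma_{0}\mathtt q_{-\psi_{1}}=i\bigl(\cos(\psi_{1}/2)\gamma_{0}\pm\sin(\psi_{1}/2)c(\widehat{\theta'})\bigr)$ carries the nonvanishing coefficient $\sin(\pi/2)=1$ in front of the singular $\widehat{\theta'}$, so $\mathtt v^{\rho}_{t}$ fails to extend smoothly across $\theta_{0}=-1$. Either way your construction as written does not deliver the lemma.

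The paper sidesteps this by using a Clifford \emph{reflection} rather than a rotation: it writes down the explicit unitary $\mathtt v(\theta)=\sqrt{\tfrac{1-\theta_{0}}{2}}\,\sigma_{0}-\tfrac{1}{\sqrt{2(1-\theta_{0})}}\sum_{j\ge1}\theta_{j}\sigma_{j}$, which equals $i\,c\bigl(\sin(\varphi/2)\,e_{0}-\cos(\varphi/2)\,\widehat{\theta'}\bigr)$ and diagonalises $c(\theta)$ to $-\gamma_{0}$ on $S^{n-1}\setminus\{\theta_{0}=1\}$. The crucial point is that the coefficient $\cos(\varphi/2)$ of $\widehat{\theta'}$ \emph{vanishes} at the south pole ($\varphi=\pi$), so $\mathtt v$ is automatically smooth there; only the north pole is bad. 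The paper then precomposes $\mathtt v$ with a smooth $t$-dependent retraction $\chi^{\rho}_{t}:S^{n-1}\to S^{n-1}\setminus\{\theta_{0}=1\}$ that is the identity on $\{\theta_{0}\le 1-\rho\}$ at $t=1$ and collapses everything to the south pole for $t\le\tfrac12$, and reads off $a^{\rho}_{0,t},a^{\rho}_{1,t}$ from the resulting conjugation. In your language the paper is effectively rotating by $\psi=\pi-\varphi$ rather than $\psi=\varphi$; it is precisely this choice that makes the $\widehat{\theta'}$-coefficient vanish at $\varphi=\pi$. Your argument can be repaired along these lines, but as it stands the south-pole behaviour is not handled.
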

\begin{proof}
The matrix 
\begin{align}
\mathtt{v}: & S^{n-1}\setminus\left\{ \theta_{0}=1\right\} \rightarrow U\left(\mathbb{C}^{2^{m}}\right)\\
\mathtt{v}\left(\theta\right)\coloneqq & \sqrt{\frac{\left(1-\theta_{0}\right)}{2}}\sigma_{0}-\frac{\theta_{j}}{\sqrt{2\left(1-\theta_{0}\right)}}\sigma_{j}\label{eq: expansion v}
\end{align}
diagonalizes 
\begin{equation}
\mathtt{v}^{*}c\left(\theta\right)\mathtt{v}=-\gamma_{0}\label{eq:diagonalization away pole}
\end{equation}
away from the north-pole $\left\{ \theta_{0}=1\right\} $. To get
a map defined on the entire sphere, let $\chi_{1}^{\rho}\in C^{\infty}\left(\left[-1,1\right]_{\theta_{0}};\left[-1,1-\frac{\rho}{2}\right]\right)$
such that 
\begin{equation}
\chi_{1}^{\rho}\left(\theta_{0}\right)=\begin{cases}
\theta_{0}; & -1\leq\theta_{0}<1-\rho,\\
-1; & 1-\frac{\rho}{2}\leq\theta_{0}\leq1,
\end{cases}\label{eq:def rho1}
\end{equation}
with $\left|\left(\chi_{1}^{\rho}\right)'\right|\leq\frac{4}{\rho}$.
Further let $\chi_{0}\in C_{c}^{\infty}\left(\left[-1,1\right]_{t};\left[0,1\right]\right)$
with $\chi_{0}=1$ on $\left(-\frac{1}{2},\frac{1}{2}\right)$ and
$\left|\partial_{t}\chi_{0}\right|\leq4$. Finally set $\chi_{1,t}^{\rho}=\left[1-\chi_{0}\left(t\right)\right]^{2}\chi_{1}^{\rho}-\left[1-\left(1-\chi_{0}\left(t\right)\right)^{2}\right]\in C^{\infty}\left(\left[-1,1\right]_{\theta_{0}};\left[-1,1-\frac{\rho}{2}\right]\right)$
satisfying $\left|\left(\chi_{1,t}^{\rho}\right)'\right|\leq\frac{4}{\rho}$
, $\left|\partial_{t}\chi_{1,t}^{\rho}\right|\leq8$. Now $\chi_{2,t}^{\rho}\left(\theta_{0}\right)=\sqrt{\frac{1-\chi_{1,t}^{\rho}\left(\theta_{0}\right)^{2}}{1-\theta_{0}^{2}}}\in C^{\infty}\left(\left[-1,1\right]_{\theta_{0}}\right)$
satisfies $\left|\chi_{2,t}^{\rho}\right|\leq\left(\frac{2}{\rho}\right)^{1/2}$,
$\left|\left(\chi_{2,t}^{\rho}\right)'\right|\leq\left(\frac{4}{\rho}\right)^{2}$,
$\left|\partial_{t}\chi_{2,t}^{\rho}\right|\leq\left(\frac{4}{\rho}\right)^{2}$.
The family
\begin{align}
\chi_{t}^{\rho}:S^{n-1} & \rightarrow S^{n-1}\setminus\left\{ \theta_{0}=1\right\} \nonumber \\
\chi_{t}^{\rho}\left(\theta\right) & \coloneqq\left(\chi_{1,t}^{\rho}\left(\theta_{0}\right),\chi_{2,t}^{\rho}\left(\theta_{0}\right)\theta_{1},\ldots,\chi_{2,t}^{\rho}\left(\theta_{0}\right)\theta_{2m}\right)\label{eq:def Tepsilon}
\end{align}
now defines a family of maps on the entire sphere
\begin{align}
\mathtt{v}_{t}^{\rho}:S^{n-1} & \rightarrow U\left(\mathbb{C}^{2^{m}}\right)\nonumber \\
\mathtt{v}_{t}^{\rho}\left(\theta\right) & \coloneqq\mathtt{v}\left(\chi_{t}^{\rho}\left(\theta\right)\right).\label{eq: approximate diag  matrix.}
\end{align}
The equation \prettyref{eq:almost diagonalization} now follows from
\prettyref{eq: expansion v}, \prettyref{eq:diagonalization away pole},
\prettyref{eq:def Tepsilon} and \prettyref{eq: approximate diag  matrix.}
with 
\begin{align*}
a_{0,t}^{\rho}= & -\theta_{0}\chi_{1,t}^{\rho}-\left(1-\theta_{0}^{2}\right)\chi_{2,t}^{\rho}\\
a_{1,t}^{\rho}= & \chi_{1,t}^{\rho}-\theta_{0}\chi_{2,t}^{\rho}.
\end{align*}
\end{proof}

\subsubsection{Magnetic Dirac operator on $\mathbb{R}^{m}$}

Here we recall the spectrum of the magnetic Dirac operator 
\begin{equation}
D_{\mathbb{R}^{m}}=\sum_{j=1}^{m}\left(\frac{\mu_{j}}{2}\right)^{\frac{1}{2}}\left[\gamma_{2j}\left(h\partial_{x_{j}}\right)+i\gamma_{2j-1}x_{j}\right]\in\Psi_{\textrm{cl}}^{1}\left(\mathbb{R}^{m};\mathbb{C}^{2^{m}}\right).\label{eq: magnetic Dirac Rm}
\end{equation}
on $\mathbb{R}^{m}$ computed in \cite{Savale2017-Koszul}. Its square
is computed in terms of the harmonic oscillator 
\begin{eqnarray}
D_{\mathbb{R}^{m}}^{2} & = & \mathtt{H}_{2}-ih\mathtt{R}_{2m+1},\:\textrm{with}\label{eq:square Euclidean Dirac}\\
\mathtt{H}_{2} & =\frac{1}{2} & \sum_{j=1}^{m}\mu_{j}\left[-\left(h\partial_{x_{j}}\right)^{2}+x_{j}^{2}\right]\label{eq:Harmonic oscillator}\\
\mathtt{R}_{2m+1} & =\frac{1}{2} & \sum_{j=1}^{m}\mu_{j}\left[\gamma_{2j-1}\gamma_{2j}\right].\nonumber 
\end{eqnarray}
Define the lowering and raising operators $A_{j}=h\partial_{x_{j}}+x_{j},$
$A_{j}^{*}=-h\partial_{x_{j}}+x_{j}$ for $1\leq j\leq m$, and the
Hermite functions
\begin{align}
\psi_{\tau,k}\left(x\right) & \coloneqq\psi_{\tau}\left(x\right)\otimes w_{k}\nonumber \\
\psi_{\tau}\left(x\right) & \coloneqq\frac{1}{\left(\pi h\right)^{\frac{m}{4}}\left(2h\right)^{\frac{\left|\tau\right|}{2}}\sqrt{\tau!}}\left[\Pi_{j=1}^{m}\left(A_{j}^{*}\right)^{\tau_{j}}\right]e^{-\frac{\left|x\right|^{2}}{2h}},\label{eq: Hermite functions}\\
 & \qquad\qquad\qquad\qquad\qquad\textrm{for }\tau=\left(\tau_{1},\tau_{2},\ldots,\tau_{m}\right)\in\mathbb{N}_{0}^{m}.\nonumber 
\end{align}
We also set 
\[
E_{\tau}\coloneqq\bigoplus_{b\in\left\{ 0,1\right\} ^{I_{\tau}}}\mathbb{C}\left[\prod_{j\in I_{\tau}}\left(\frac{c\left(w_{j}\right)A_{j}}{\sqrt{2\tau_{j}h}}\right)^{b_{j}}\psi_{\tau,0}\right]
\]
with $I_{\tau}$, $V_{\tau}$ as in \prettyref{eq: I_r-1}, \prettyref{eq: V_r-1}.
One clearly has an isomorphism 
\begin{eqnarray*}
\mathscr{I}_{\tau}:\Lambda^{*}V_{\tau} & \rightarrow & E_{\tau}\\
\mathscr{I}_{\tau}\left(\bigwedge_{j\in I_{\tau}}w_{j}^{b_{j}}\right) & \coloneqq & \prod_{j\in I_{\tau}}\left(\frac{c\left(w_{j}\right)A_{j}}{\sqrt{2\tau_{j}h}}\right)^{b_{j}}\psi_{\tau,0}.
\end{eqnarray*}
If $\mathtt{i}_{\tau}\coloneqq\mathscr{I}_{\tau}\mathtt{i}_{r_{\tau}}\mathscr{I}_{\tau}^{-1}:E_{\tau}^{\textrm{even/odd}}\rightarrow E_{\tau}^{\textrm{odd/even}}$,
the restriction of $D_{\mathbb{R}^{m}}$ to $E_{\tau}$ is of the
form 
\begin{eqnarray}
D_{\mathbb{R}^{m}} & = & \begin{bmatrix} & \left|r_{\tau}\right|\mathtt{i}_{\tau}\\
\left|r_{\tau}\right|\mathtt{i}_{\tau}
\end{bmatrix}.\label{eq: Dirac operator 2 by 2 block}
\end{eqnarray}
We may set 
\begin{align}
E_{\tau}^{\textrm{even/odd}} & \coloneqq\mathscr{I}_{\tau}\left(\Lambda^{\textrm{even/odd}}V_{\tau}\right)\nonumber \\
E_{\tau}^{\pm} & =\mathscr{I}_{\tau}\left(V_{\tau}^{\pm}\right)\label{eq: eigenspaces model Dirac}
\end{align}
and observe the Landau decomposition 
\begin{equation}
L^{2}\left(\mathbb{R}^{m};\mathbb{C}^{2^{m}}\right)=\mathbb{C}\left[\psi_{0,0}\right]\oplus\bigoplus_{\tau\in\mathbb{N}_{0}^{m}\setminus0}\left(E_{\tau}^{\textrm{even}}\oplus E_{\tau}^{\textrm{odd}}\right).\label{eq: Landau Levels}
\end{equation}
The spectrum of \prettyref{eq: magnetic Dirac Rm} is given by Prop.
2.1 of \cite{Savale2017-Koszul}.
\begin{prop}
\label{prop:eigenspaces magnetic Dirac}An orthogonal decomposition
of $L^{2}\left(\mathbb{R}^{m};\mathbb{C}^{2^{m}}\right)$ consisting
of eigenspaces of the magnetic Dirac operator $D_{\mathbb{R}^{m}}$
\prettyref{eq: magnetic Dirac Rm} is given by
\[
L^{2}\left(\mathbb{R}^{m};\mathbb{C}^{2^{m}}\right)=\mathbb{C}\left[\psi_{0,0}\right]\oplus\bigoplus_{\tau\in\mathbb{N}_{0}^{m}\setminus0}\left(E_{\tau}^{+}\oplus E_{\tau}^{-}\right).
\]
Here $E_{\tau}^{\pm}$, as in \prettyref{eq: eigenspaces model Dirac},
have dimension $2^{Z_{\tau}-1}$ and correspond to the eigenvalues
$\pm\sqrt{\mu.\tau h}$ respectively.
\end{prop}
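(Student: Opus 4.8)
The proof of Proposition~\ref{prop:eigenspaces magnetic Dirac} rests on the algebraic identity \prettyref{eq:square Euclidean Dirac} expressing $D_{\mathbb{R}^m}^2$ in terms of the harmonic oscillator $\mathtt{H}_2$ and the zeroth order Clifford term $-ih\mathtt{R}_{2m+1}$. The plan is to first diagonalize $\mathtt{H}_2$ using the standard Hermite basis, then to observe that $D_{\mathbb{R}^m}$ preserves each Landau block $E_\tau$ appearing in the decomposition \prettyref{eq: Landau Levels}, and finally to compute its action there via the $2\times 2$ block form \prettyref{eq: Dirac operator 2 by 2 block}.

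Concretely, I would proceed as follows. First, recall that $\mathtt{H}_2$ acts on the Hermite function $\psi_{\tau,k}$ of \prettyref{eq: Hermite functions} with eigenvalue $\frac{h}{2}\sum_j \mu_j(2\tau_j+1) = h\,\mu.\tau + \tfrac{h}{2}\sum_j\mu_j$; the spinor index $k$ is irrelevant to $\mathtt{H}_2$. This gives the decomposition $L^2(\mathbb{R}^m;\mathbb{C}^{2^m}) = \bigoplus_\tau \bigl(\mathbb{C}[\psi_\tau]\otimes\mathbb{C}^{2^m}\bigr)$ into eigenspaces of $\mathtt{H}_2$. The key point is then that $D_{\mathbb{R}^m}$ maps the $\tau$-th such summand into the sum of the $\tau$-th and neighboring ones; but restricted to the raising/lowering structure, one checks using $A_j\psi_\tau \propto \psi_{\tau-e_j}$, $A_j^*\psi_\tau\propto\psi_{\tau+e_j}$ together with $c(w_j)=\tfrac{1}{\sqrt 2}(\gamma_{j+m}+i\gamma_j)$ that the spaces $E_\tau$ (generated from $\psi_{\tau,0}$ by applying the operators $c(w_j)A_j/\sqrt{2\tau_j h}$ for $j\in I_\tau$) are each $D_{\mathbb{R}^m}$-invariant, and of dimension $2^{Z_\tau}$. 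This is the content of the isomorphism $\mathscr{I}_\tau$ and formula \prettyref{eq: Dirac operator 2 by 2 block}: under $\mathscr{I}_\tau$, $D_{\mathbb{R}^m}|_{E_\tau}$ is intertwined with the off-diagonal block operator $|r_\tau|\,\mathtt{i}_{r_\tau}$ acting on $\Lambda^*V_\tau$, exactly as in the abstract Clifford computation \prettyref{eq: clifford multiplication blocks}--\prettyref{eq: eigenspaces clifford mult.}, with $r_\tau = (\sqrt{\mu_1\tau_1},\ldots,\sqrt{\mu_m\tau_m})$ so that $|r_\tau| = \sqrt{\mu.\tau}$ up to the $h$-normalization. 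Since $\mathtt{i}_{r_\tau}^2 = 1$, its eigenvalues on $\Lambda^*V_\tau$ are $\pm 1$ with eigenspaces $V_{r_\tau}^\pm = (1\pm\mathtt{i}_{r_\tau})(\Lambda^{\mathrm{even}}V_\tau)$ of dimension $2^{Z_\tau - 1}$ each, by \prettyref{eq: eigenspaces clifford mult.}. Transporting back by $\mathscr{I}_\tau$ gives $E_\tau^\pm$ with eigenvalue $\pm\sqrt{\mu.\tau\,h}$ and dimension $2^{Z_\tau-1}$, and the $\tau=0$ summand $\mathbb{C}[\psi_{0,0}]$ lies in the kernel. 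Summing over $\tau$ and invoking completeness of the Hermite basis yields the claimed orthogonal decomposition.

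The main obstacle is verifying the $D_{\mathbb{R}^m}$-invariance of each $E_\tau$ and the precise block form \prettyref{eq: Dirac operator 2 by 2 block}: one must carefully track how $\sum_j (\mu_j/2)^{1/2}[\gamma_{2j}(h\partial_{x_j}) + i\gamma_{2j-1}x_j]$ acts on a generator $\prod_{j\in B}(c(w_j)A_j/\sqrt{2\tau_j h})\psi_{\tau,0}$, using the commutation relations \prettyref{eq: Clifford relations} between the $\gamma$'s and the canonical commutation relations $[A_j, A_k^*] = 2h\delta_{jk}$, and in particular checking that the terms which would move $\tau$ off the block either vanish or recombine. Fortunately all of this is already carried out in \cite{Savale2017-Koszul} as Prop.~2.1, so here it suffices to invoke that reference after setting up the (slightly modified) Clifford conventions above; the genuinely new input is only the bookkeeping needed to match the present normalization of the $\gamma_j$ with the one used there, which affects signs but not the spectrum.
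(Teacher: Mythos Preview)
Your proposal is correct and aligns with the paper's own treatment: the paper does not give a proof of this proposition at all, but simply states it as Prop.~2.1 of \cite{Savale2017-Koszul} after recording the relevant definitions \prettyref{eq: Landau Levels}--\prettyref{eq: eigenspaces model Dirac}. Your sketch of the underlying argument (harmonic-oscillator diagonalization, $D_{\mathbb{R}^m}$-invariance of each $E_\tau$, and the block form \prettyref{eq: Dirac operator 2 by 2 block}) is accurate and more informative than what the paper provides, and you correctly conclude by invoking the same reference.
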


\subsection{The Semi-classical calculus}

Finally, here we review the semi-classical pseudo-differential calculus
used throughout the paper with \cite{GuilleminSternberg-Semiclassical,Zworski}
being the detailed references. Much of this being reviewed in \cite{Savale2017-Koszul},
we only highlight some new aspects. Let $\mathfrak{gl}\left(l\right)$
denote the space of all $l\times l$ complex matrices. For $A=\left(a_{ij}\right)\in\mathfrak{gl}\left(l\right)$
we denote $\left|A\right|=\max_{ij}\left|a_{ij}\right|$. Denote by
$\mathcal{S}\left(\mathbb{R}^{n};\mathbb{C}^{l}\right)$ the space
of Schwartz maps $f:\mathbb{R}^{n}\rightarrow\mathbb{C}^{l}$. We
define the symbol space $S^{m}\left(\mathbb{R}^{2n};\mathbb{C}^{l}\right)$
as the space of maps $a:\left(0,1\right]_{h}\rightarrow C^{\infty}\left(\mathbb{R}_{x,\xi}^{2n};\mathfrak{gl}\left(l\right)\right)$
such that each of the semi-norms 
\[
\left\Vert a\right\Vert _{\alpha,\beta}:=\text{sup}_{\substack{x,\xi}
,h}\langle\xi\rangle^{-m+|\beta|}\left|\partial_{x}^{\alpha}\partial_{\xi}^{\beta}a(x,\xi;h)\right|
\]
is finite $\forall\alpha,\beta\in\mathbb{N}_{0}^{n}$. Such a symbol
is said to lie in the more refined class $a\in S_{\textrm{cl}}^{m}\left(\mathbb{R}^{2n};\mathbb{C}^{l}\right)$
if there exists an $h$-independent sequence $a_{k}$, $k=0,1,\ldots$
of symbols such that $a-\left(\sum_{k=0}^{N}h^{k}a_{k}\right)\in h^{N+1}S^{m}\left(\mathbb{R}^{2n};\mathbb{C}^{l}\right),\;\forall N.$
The symbol classes $S^{m}\left(\mathbb{R}^{2n};\mathbb{C}^{l}\right)$,
$S_{\textrm{cl}}^{m}\left(\mathbb{R}^{2n};\mathbb{C}^{l}\right)$
as above can be Weyl quantized to define one-parameter families of
operators $a^{W}\in\Psi^{m}\left(\mathbb{R}^{2n};\mathbb{C}^{l}\right),\Psi_{\textrm{cl}}^{m}\left(\mathbb{R}^{2n};\mathbb{C}^{l}\right)$
with Schwartz kernels given by 
\[
a^{W}\coloneqq\frac{1}{\left(2\pi h\right)^{n}}\int e^{i\left(x-y\right).\xi/h}a\left(\frac{x+y}{2},\xi;h\right)d\xi
\]
This class of operators is closed under the standard operations of
composition and formal-adjoint. Furthermore the class is invariant
under changes of coordinates and basis for $\mathbb{C}^{l}$. This
allows one to define invariant classes of operators $\Psi^{m}\left(X;E\right),\Psi_{\textrm{cl}}^{m}\left(X;E\right)$
on $C^{\infty}\left(X;E\right)$ associated to any complex, Hermitian
vector bundle $\left(E,h^{E}\right)$ on a smooth compact manifold
$X$. 

For $A\in\Psi_{\textrm{cl}}^{m}\left(X;E\right)$, its principal symbol
is well-defined as an element in $\sigma\left(A\right)\in S^{m}\left(X;\textrm{End}\left(E\right)\right)\subset C^{\infty}\left(X;\textrm{End}\left(E\right)\right).$
One has that $\sigma\left(A\right)=0$ if and only if $A\in h\Psi_{\textrm{cl}}^{m}\left(X;E\right)$.
We remark that $\sigma\left(A\right)$ is the restriction of standard
symbol in \cite{Zworski} to the refined class $\Psi_{\textrm{cl}}^{m}\left(X;E\right)$
and is locally given by the first coefficient $a_{0}$ in the expansion
in $h$ of its Weyl symbol. The principal symbol satisfies the basic
relations $\sigma\left(AB\right)=\sigma\left(A\right)\sigma\left(B\right)$,
$\sigma\left(A^{*}\right)=\sigma\left(A\right)^{*}$ with the formal
adjoints being defined with respect to the same Hermitian metric $h^{E}$.
The principal symbol map has an inverse given by the quantization
map $\textrm{Op}:S^{m}\left(X;\textrm{End}\left(E\right)\right)\rightarrow\Psi_{\textrm{cl}}^{m}\left(X;E\right)$
satisfying $\sigma\left(\textrm{Op}\left(a\right)\right)=a\in S^{m}\left(X;\textrm{End}\left(E\right)\right)$.
We remark that this quantization map is non-canonical and depends
on the choice of an open cover, with local trivializations for $E$,
and a subordinate partition of unity. We often use the alternate notation
$\textrm{Op}\left(a\right)=a^{W}$. For a scalar function $b\in S^{0}\left(X\right)$,
it is clear from the multiplicative property of the symbol that $\left[a^{W},b^{W}\right]\in h\Psi_{\textrm{cl}}^{m-1}\left(X;E\right)$
and we define $H_{b}\left(a\right)\coloneqq\frac{i}{h}\sigma\left(\left[a^{W},b^{W}\right]\right)\in S^{m-1}\left(X;\textrm{End}\left(E\right)\right)$.
We note that $H_{b}\left(a\right)$ depends on the quantization scheme,
in particular the local trivializations used in defining $\textrm{Op}$.
However one has $H_{b}\left(a\right)=\left\{ a,b\right\} $ is given
by the Poisson bracket when both sides are computed in the same defining
trivialization.

The wavefront set of an operator $A\in\Psi_{\textrm{cl}}^{m}\left(X;E\right)$
can be defined invariantly as a subset $WF\left(A\right)\subset\overline{T^{*}X}$
of the fibrewise radial compactification of its cotangent bundle.
If the local Weyl symbol of $A$ is given by $a$ then $\left(x_{0},\xi_{0}\right)\notin WF\left(A\right)$
if and only if there exists an open neighborhood $\left(x_{0},\xi_{0};0\right)\in U\subset\overline{T^{*}X}\times\left(0,1\right]_{h}$
such that $a\in h^{\infty}\left\langle \xi\right\rangle ^{-\infty}C^{k}\left(U;\mathbb{C}^{l}\right)$
for all $k$. The wavefront set satisfies the basic properties $WF\left(A+B\right)\subset WF\left(A\right)\cap WF\left(B\right)$,
$WF\left(AB\right)\subset WF\left(A\right)\cap WF\left(B\right)$
and $WF\left(A^{*}\right)=WF\left(A\right)$. The wavefront set $WF\left(A\right)=\emptyset$
is empty if and only if $A\in h^{\infty}\Psi^{-\infty}\left(X;E\right)$.
We say that two operators $A=B$ microlocally on $U\subset\overline{T^{*}X}$
if $WF\left(A-B\right)\cap U=\emptyset$. 

An operator $A\in\Psi_{\textrm{cl}}^{m}\left(X;E\right)$ is said
to be elliptic if $\left\langle \xi\right\rangle ^{m}\sigma\left(A\right)^{-1}$
exists and is uniformly bounded on $T^{*}X$. If $A\in\Psi_{\textrm{cl}}^{m}\left(X;E\right)$,
$m>0$, is formally self-adjoint such that $A+i$ is elliptic then
it is essentially self-adjoint (with domain $C_{c}^{\infty}\left(X;E\right)$)
as an unbounded operator on $L^{2}\left(X;E\right)$. Its resolvent
$\left(A-z\right)^{-1}\in\Psi_{\textrm{cl}}^{-m}\left(X;E\right)$,
$z\in\mathbb{C}$, $\textrm{Im}z\neq0$, now exists and is pseudo-differential
by an application of Beals's lemma. Given a Schwartz function $f\in\mathcal{S}\left(\mathbb{R}\right)$,
the Helffer-Sjöstrand formula now expresses the function $f\left(A\right)$
of such an operator in terms of its resolvent and an almost analytic
continuation $\tilde{f}$ via
\[
f\left(A\right)=\frac{1}{\pi}\int_{\mathbb{C}}\bar{\partial}\tilde{f}\left(z\right)\left(A-z\right)^{-1}dzd\bar{z}.
\]
We then also have $WF\left(f\left(A\right)\right)\subset\Sigma_{\textrm{spt}\left(f\right)}^{A}\coloneqq\bigcup_{\lambda\in\textrm{spt}\left(f\right)}\Sigma_{\lambda}^{A}$
where 
\begin{equation}
\Sigma_{\lambda}^{A}=\left\{ \left(x,\xi\right)\in T^{*}X|\det\left(\sigma\left(A\right)\left(x,\xi\right)-\lambda I\right)=0\right\} .\label{eq:energy level}
\end{equation}
is classical $\lambda$-energy level of $A$. 

\subsubsection{The class $\Psi_{\delta}^{m}\left(X;E\right)$}

We shall need also more exotic class of scalar symbols $S_{\delta}^{m}\left(\mathbb{R}^{2n};\mathbb{C}\right)$
defined for each $0<\delta<\frac{1}{2}$. A function $a:\left(0,1\right]_{h}\rightarrow C^{\infty}\left(\mathbb{R}_{x,\xi}^{2n};\mathbb{C}\right)$
is said to be in this class if and only if 
\begin{equation}
\left\Vert a\right\Vert _{\alpha,\beta}:=\text{sup}_{\substack{x,\xi}
,h}\langle\xi\rangle^{-m+|\beta|}h^{\left(\left|\alpha\right|+\left|\beta\right|\right)\delta}\left|\partial_{x}^{\alpha}\partial_{\xi}^{\beta}a(x,\xi;h)\right|\label{eq: delta pseudodifferential estimates}
\end{equation}
is finite $\forall\alpha,\beta\in\mathbb{N}_{0}^{n}.$ This class
of operators is closed under the standard operations of composition,
adjoint and changes of coordinates allowing the definition of the
exotic pseudo-differential algebra $\Psi_{\delta}^{m}\left(X\right)$
on a compact manifold. The class $S_{\delta}^{m}\left(X\right)$ is
a family of functions $a:\left(0,1\right]_{h}\rightarrow C^{\infty}\left(T^{*}X;\mathbb{C}\right)$
satisfying the estimates \prettyref{eq: delta pseudodifferential estimates}
in every coordinate chart and induced trivialization. Such a family
can be quantized to $a^{W}\in\Psi_{\delta}^{m}\left(X\right)$ satisfying
$a^{W}b^{W}=\left(ab\right)^{W}+h^{1-2\delta}\Psi_{\delta}^{m+m'-1}\left(X\right)$,
$\frac{i}{h^{1-2\delta}}\sigma\left(\left[a^{W},b^{W}\right]\right)=\left[\left\{ a,b\right\} \right]$
for another $b\in S_{\delta}^{m'}\left(X\right)$. The operators in
$\Psi_{\delta}^{0}\left(X\right)$ are uniformly bounded on $L^{2}\left(X\right)$.
Finally, the wavefront an operator $A\in\Psi_{\delta}^{m}\left(X;E\right)$
is similarly defined and satisfies the same basic properties as before.

\section{\label{sec:Dynamical-partitions}Dynamical partitions}

The trace formula of \prettyref{thm:main trace expansion-1} was proved
in \cite{Savale2017-Koszul} assuming $\theta$ to be supported in
a sufficiently small interval near $0$. In this case only the local
contribution to the trace \prettyref{eq: gutwiller local term} appears.
It now thus suffices to consider $\theta$ supported away from $0$
and prove the following. 
\begin{lem}
\label{lem: O(h infty) LEMMA} For $\theta\in C_{c}^{\infty}\left(\left(T_{0},\infty\right);\left[0,1\right]\right)$,
$T_{0}>0$, one has 
\begin{align}
\textrm{tr}\left[f\left(\frac{D}{\sqrt{h}}\right)\left(\mathcal{F}_{h}^{-1}\theta\right)\left(\lambda\sqrt{h}-D\right)\right] & =\label{eq: non-local dynamical trace}\\
\textrm{tr}\left[f\left(\frac{D}{\sqrt{h}}\right)\frac{1}{h}\check{\theta}\left(\frac{\lambda\sqrt{h}-D}{h}\right)\right] & =\sum_{\gamma}e^{\frac{i}{h}L_{\gamma}}e^{i\frac{\pi}{2}\mathfrak{m}_{\gamma}}\sum_{j=0}^{N-2m-2}h^{j/2}\sum_{k=0}^{j}\lambda^{k}A_{\gamma,j,k}\theta\left(T_{\gamma}\right)\nonumber \\
 & +O\left(h^{N/2-m-1}\right)\label{eq: gutzwiller dyn. term. (hinfty lemma)}
\end{align}
 for all $\lambda\in\mathbb{R}$, with the right hand side above being
the same as the dynamical contribution \prettyref{eq: gutzwiller dynamical term}
in \prettyref{eq: gutzwiller remainder}.
\end{lem}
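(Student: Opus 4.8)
The plan is to realise the left-hand side as an oscillatory integral in a time variable, reduce it via the Landau band structure of $D^{2}$ to a \emph{scalar} semiclassical operator, and then run the usual Gutzwiller--Duistermaat--Guillemin argument with effective parameter $\hbar=\sqrt h$. First I would use the Fourier representation
\[
\frac{1}{h}\check\theta\!\left(\frac{\lambda\sqrt h-D}{h}\right)=\frac{1}{2\pi h}\int_{\mathbb R}\theta(t)\,e^{\frac ih\lambda\sqrt h\,t}\,e^{-\frac ih tD}\,dt
\]
to rewrite the trace as $\frac{1}{2\pi h}\int_{\mathbb R}\theta(t)\,e^{i\lambda t/\sqrt h}\,\textrm{tr}\!\left[f(D/\sqrt h)\,e^{-itD/h}\right]dt$. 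The cutoff $f(D/\sqrt h)$, with $\textrm{spt}(f)\subset(-\sqrt{2\mu_1},\sqrt{2\mu_1})$, is supported microlocally within a $\sqrt h$-neighbourhood of the characteristic set $\Sigma_0^{D}=\left\{\sigma(D)=0\right\}$; since $\sigma(D)=ic(\xi+a)$, this set is the graph of the one-form $-a$, with induced form $-da$ whose characteristic foliation is the Reeb foliation (this is where the Landau band analysis of \cite{Savale2017-Koszul} enters). On this tube the Koszul reduction of \cite{Savale2017-Koszul} conjugates $D$, on its lowest Landau bands, to $\sqrt h$ times a scalar $\hbar$-pseudodifferential operator $\mathcal D_{\textrm{eff}}$ ($\hbar=\sqrt h$) whose bicharacteristic flow is the cotangent lift of the Reeb flow; correspondingly $f(D/\sqrt h)=f(\mathcal D_{\textrm{eff}})$ and $e^{-itD/h}=e^{-it\mathcal D_{\textrm{eff}}/\hbar}$ become ordinary objects of the $\hbar$-calculus. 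In particular, since $\theta$ vanishes near $t=0$ there is no short-time (diagonal) contribution; and since the Reeb orbits are non-degenerate, hence isolated, only the finitely many orbits $\gamma_1,\dots,\gamma_L$ whose period lies in $\textrm{spt}(\theta)$ can contribute.

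Next I would insert a dynamical partition of unity $\textrm{Id}=\sum_{i=1}^{L}Q_i^{W}+Q_\infty^{W}$, with $Q_i\in S^0_\delta$ microsupported in a thin flow-invariant tube around the lift $\Gamma_i\subset\Sigma_0^{D}$ of $\gamma_i$ and $Q_\infty$ microsupported on $\Sigma_0^{D}$ away from every Reeb trajectory that is periodic with period at most $\max\textrm{spt}(\theta)$, and split the trace by linearity. The main point of the present section is that the $Q_\infty$ piece contributes $O(h^\infty)$: since no non-trivial Reeb trajectory in $WF\!\left(Q_\infty^{W}f(D/\sqrt h)\right)$ returns for $t\in\textrm{spt}(\theta)$, a propagation-of-singularities argument --- made uniform in $t\in\textrm{spt}(\theta)$ and quantitative by means of trapping functions $G\in S^0_\delta$ with $\left\{\sigma(\mathcal D_{\textrm{eff}}),G\right\}>0$ on that set, constructed as in \cite{Savale2017-Koszul} in the spirit of the explicit interpolations of Lemma~\ref{lem: almost diagonalization lemma} --- shows that the canonical relation of $Q_\infty^{W}f(D/\sqrt h)e^{-itD/h}$ is disjoint from the diagonal for all $t\in\textrm{spt}(\theta)$, so its trace, and hence the $t$-integral, is $O(h^\infty)$.

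For each $Q_i$ piece I would invoke the Birkhoff normal form around $\gamma_i$ developed in the following sections, obtained by combining the transverse Koszul normal form of \cite{Savale2017-Koszul} in the $2m$ contact directions with the classical scalar Birkhoff normal form along the orbit; this conjugates $\mathcal D_{\textrm{eff}}$ microlocally on the tube around $\Gamma_i$ to a model operator on $S^1\times\mathbb R^{2m}$, and is exactly the point where strong suitability (constancy of $\textrm{Spec}(\mathfrak J_x)$, so the transverse frequencies $\mu_j$ do not vary along $\gamma_i$) and non-resonance of $\gamma_i$ (rational independence of $\left\{\alpha_j^{\pm},\alpha_j^{0}\right\}$ and $\left\{2\pi,\beta_j,\beta_j^{0}\right\}$, which kills the resonant small denominators) are needed for the normal form to close to all orders. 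Restricting to the bottom band leaves a scalar $\hbar$-operator on $S^1$, whose twisted trace against $\theta(t)e^{i\lambda t/\sqrt h}$ I would evaluate by Poisson summation and stationary phase: the phase is stationary precisely at $t\in T_{\gamma_i}\mathbb Z$, producing the factor $\theta(T_{\gamma_i})$ and the oscillatory phase $e^{\frac ih L_{\gamma_i}}$ (carried, as in the scalar theory, by the holonomy of $\nabla^{h}$ around the orbit); the transverse Gaussian integral contributes the metaplectic Maslov factor $e^{i\frac\pi2\mathfrak m_{\gamma_i}}$, and, the transverse Hessian being $1-P_{\gamma_i}^{+}$, the leading amplitude $\frac{L_{\gamma_i}^{\#}}{2\pi}\left|\det(1-P_{\gamma_i}^{+})\right|^{-1/2}$; the lower-order terms assemble, as usual, into the differential operators $A_{\gamma,j,k}$ of order between $k$ and $j$ and of degree $k$ in $\lambda$. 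Summing the $Q_\infty$ and $Q_i$ contributions, and truncating the stationary-phase expansions at the order matched to the $h^{-m-1}$ scale of the problem, yields the asserted formula with remainder $O(h^{N/2-m-1})$.

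The step I expect to be the main obstacle is the Birkhoff normal form. For a scalar operator with smooth symbol this is classical, but here $\sigma(D)$ is \emph{non-diagonalisable} along $\Sigma_0^{D}$, so one cannot conjugate $D$ directly; one must first carry out the Grushin/Koszul reduction onto the bottom Landau band to obtain a genuine scalar symbol, and only then iterate the Birkhoff scheme along $\gamma_i$, all the while controlling the interaction between the transverse (harmonic-oscillator) and longitudinal (periodic-orbit) normalisations and the holonomy of $\nabla^{S\otimes L}$ around the orbit. A related difficulty concentrated here is the precise identification of $\mathfrak m_{\gamma_i}$ as the Maslov index of a \emph{metaplectic} lift of $P_{\gamma_i}^{+}$, which is what makes the oscillatory factors unambiguous; by contrast, once the scalar reduction is in hand the $Q_\infty$ estimate is a by-now-standard escape-function argument and the stationary-phase bookkeeping near the orbits is routine.
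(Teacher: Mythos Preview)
Your high-level architecture---dynamical partition, $O(h^{\infty})$ away from periodic orbits, Birkhoff normal form near each orbit, reduction to a scalar operator on the bottom Landau band, and stationary phase---matches the paper's, and your identification of the normal form as a central difficulty is correct. But you have the order of operations, and hence the location of the hard step, reversed. You assume that the Koszul reduction of \cite{Savale2017-Koszul} produces a \emph{global} scalar effective Hamiltonian $\mathcal{D}_{\textrm{eff}}$ on the $\sqrt{h}$-tube around $\Sigma_{0}^{D}$, after which the $Q_{\infty}$ contribution is a ``by-now-standard escape-function argument''. No such global scalar reduction is available: the eigenbundles $S_{\pm}(W)$ of $c(\theta)$ over the sphere are nontrivial, which is exactly why Lemma~\ref{lem: almost diagonalization lemma} gives only an \emph{almost} diagonalisation. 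The paper therefore does the partition \emph{before} any scalar reduction, working directly with the non-scalar symbol $d=\sigma(D)$, and its main new ingredient in Section~\ref{sec:Dynamical-partitions} is the construction of trapping functions for that non-scalar operator. This requires enlarging the class of admissible weights to pairs $(g,\mathtt{v})\in S_{\delta}^{0}\times S_{\delta}^{0}(X;U(S))$ and bounding $\left|H_{g,\mathtt{v}}d\right|=\sup\Vert\{\mathtt{v}^{*}d\mathtt{v},g\}\Vert$ (see \eqref{eq: exit time 1}--\eqref{eq: set of exit functions 2}); Proposition~\ref{prop: Large Extension time} then shows, via a lengthy case analysis using Lemma~\ref{lem: almost diagonalization lemma} and the bracket estimates \eqref{eq: shrinking tau consequences}, that the resulting trapping time can be made arbitrarily large. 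This is not routine, and your sketch effectively skips it.

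A secondary difference is that the paper works throughout with the Helffer--Sj\"ostrand resolvent representation rather than with the propagator $e^{-itD/h}$; the modification lemma (Lemma~\ref{lem:changing symbol near crit set}) and the final holomorphic-extension argument showing that the higher Landau bands contribute nothing (the $\mathtt{P}_{>0}$ piece in Section~\ref{sec: Trace Formula}) are phrased in resolvent language. Your propagator formulation could presumably be made to work, but the non-smoothness of the eigenvalues $\pm|\tilde{\theta}|$ of $\sigma(D)$ at $\Sigma_{0}^{D}$ means the FIO calculus for $e^{-itD/h}$ is not available off the shelf, which is another reason the paper prefers resolvents.
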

To prove \prettyref{lem: O(h infty) LEMMA} we shall split the trace
via a microlocal partition of unity adapted to the Reeb dynamics.
To this end we first need a description of the contact form in a neighborhood
of each Reeb orbit.

\subsection{Normal structure for the contact form}

Let $\gamma\subset X$ be a primitive closed Reeb orbit with period
$T_{\gamma}$. For a point $p\in\gamma$, the linearized Poincare
return map $P_{\gamma}^{+}:H_{p}\rightarrow H_{p}$ restricted to
the contact hyperplane then has the decomposition \prettyref{eq: symplectic decomposition}
as before. For each corresponding eigenvalue in this decomposition,
define the following model quadratic functions on $\mathbb{R}^{2m}$
\begin{eqnarray}
\textrm{Elliptic case:} & Q_{j}^{e}= & \frac{1}{2}\left(x_{j}^{2}+x_{j+m}^{2}\right),\quad1\leq j\leq N_{e}\nonumber \\
\textrm{Hyperbolic case:} & Q_{j}^{h}= & x_{N_{e}+j}x_{N_{e}+j+m},\quad1\leq j\leq N_{h}\nonumber \\
\textrm{Loxodromic case:} & Q_{j}^{l,\textrm{Re}}= & x_{m-2j+2}x_{2m-2j+1}-x_{m-2j+1}x_{2m-2j+2},\quad1\leq j\leq N_{l}\nonumber \\
 & Q_{j}^{l,\textrm{Im}}= & x_{m-2j+1}x_{2m-2j+1}+x_{m-2j+2}x_{2m-2j+2},\quad1\leq j\leq N_{l}\label{eq: quadratics-1}
\end{eqnarray}
Also let $Q^{h,-}=\frac{\pi}{2}\sum_{j=1}^{N_{h}^{-}}\left(x_{N_{e}+j}^{2}+x_{N_{e}+j+m}^{2}\right)$
be the quadratic whose Hamiltonian flow rotates negative hyperbolic
blocks by $\pi$. 

In the theorem below we let $\gamma^{0}\coloneqq S^{1}\times\left\{ 0\right\} \subset S^{1}\times\mathbb{R}^{2m}$.
We shall use $\theta$ or $x_{0}$ interchangeably to denote the circular
$S^{1}$ variable. We also let $\chi^{-}\in C_{c}^{\infty}\left(0,\frac{1}{2}\right)_{\theta}$,
$\chi^{+}\in C_{c}^{\infty}\left(\frac{1}{2},1\right)_{\theta}$ be
non-negative functions with total integral $1$. We now have the following
normal structure for the contact form $a$ near a nonresonant $\gamma$. 
\begin{prop}
\label{prop:normal structure contact form}There exists a diffeomorphism
$\kappa:\varOmega_{\gamma}^{0}\rightarrow\varOmega_{\gamma}$ between
some neighborhood of $\gamma^{0}\subset\varOmega_{\gamma}^{0}$ and
some neighborhood of the Reeb orbit $\gamma\subset\varOmega_{\gamma}$
such that 
\begin{equation}
\kappa^{*}a=\underbrace{\left(T_{\gamma}+\chi^{-}Q^{h,-}+\chi^{+}\varphi^{+}\right)}_{\eqqcolon\varphi}d\theta+\frac{1}{2}\sum_{j=1}^{m}\left(x_{j}dx_{j+m}-x_{j+m}dx_{j}\right)\label{eq: normal form contact a-1}
\end{equation}
modulo $O\left(Q^{\infty}\right)$. Here $\varphi^{+}=\varphi^{+}\left(Q\right)$
in \prettyref{eq: normal form contact a-1} is a function on $\mathbb{R}^{2m}$
of the quadratics \prettyref{eq: quadratics-1} with linear term
\begin{equation}
\varphi^{+}=\sum_{j=1}^{N_{e}}\beta_{j}Q_{j}^{e}+\sum_{j=1}^{N_{h}}\alpha_{j}Q_{j}^{h}+\sum_{j=1}^{N_{l}}\left(\alpha_{j}^{0}Q_{j}^{l,\textrm{Re}}+\beta_{j}^{0}Q_{j}^{l,\textrm{Im}}\right)+O\left(Q^{2}\right).\label{eq: function of quadratics-1}
\end{equation}
\end{prop}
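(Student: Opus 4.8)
The plan is to produce the normal form in two stages: first a linear/quadratic normalization along $\gamma$, then a Moser-type iteration to eliminate higher-order terms. First I would use a tubular neighborhood of $\gamma$ together with the standard symplectic normal form for the pair $\left(da|_{H},P_\gamma^+\right)$: since $da$ is closed and $\gamma$ has period $T_\gamma$, a Weinstein-type coframe gives coordinates $\left(\theta,x_1,\dots,x_{2m}\right)$ on $S^1\times\mathbb{R}^{2m}$ in which $a = \left(T_\gamma + O(x)\right)d\theta + \tfrac12\sum_j\left(x_j\,dx_{j+m}-x_{j+m}\,dx_j\right) + \left(\text{terms in }dx\right)$, after using the contact condition $i_R a=1$, $i_R da=0$ to kill the $dx$-component of the $d\theta$-less part up to the symplectic form of $H_p$. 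The monodromy of the linearized Reeb flow around $\gamma$ is exactly $P_\gamma^+$, whose symplectic normal form is supplied by \prettyref{eq: symplectic decomposition}--\prettyref{eq: loxodromic map}; the generator of the corresponding one-parameter family is, block-by-block, the Hamiltonian flow of the model quadratics $Q_j^e, Q_j^h, Q_j^{l,\mathrm{Re}}, Q_j^{l,\mathrm{Im}}$ in \prettyref{eq: quadratics-1}, which fixes the linear term \prettyref{eq: function of quadratics-1}. The negative-hyperbolic blocks carry a sign, i.e.\ the monodromy is $-P^{+,h}_{\gamma;\alpha}$; to realize this by an actual ($\theta$-dependent) family of linear symplectic maps one must insert a rotation by $\pi$ somewhere along the circle, which is precisely the role of $\chi^- Q^{h,-}$ — a bump $\chi^-$ supported in $\left(0,\tfrac12\right)$ whose Hamiltonian rotates the negative-hyperbolic planes by $\pi$ so that the total holonomy has the required sign.

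Second, having normalized to first order, I would run a Birkhoff/Moser normal form argument in the $x$-variables, localized near $\gamma^0 = S^1\times\{0\}$, to bring $\kappa^* a$ into the form $\varphi(\theta,Q)\,d\theta + \tfrac12\sum_j\left(x_j\,dx_{j+m}-x_{j+m}\,dx_j\right)$ modulo $O(Q^\infty)$, with $\varphi$ a function of $\theta$ and of the quadratics only. The key point is that at each order $N$ the obstruction to removing a term lives in the cokernel of the homological operator $\mathcal{L} = \partial_\theta + \{\cdot,\varphi^+\}$ acting on homogeneous polynomials in $x$ of degree $N$; solvability of $\mathcal{L}u = (\text{error})$ modulo terms that are functions of $Q$ is exactly where the non-resonance hypothesis enters. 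In the monomial basis adapted to the block decomposition, $\{\cdot,\varphi^+\}$ acts diagonally with eigenvalues that are integer combinations of $\{\alpha_j,\alpha_j^0\}$ (from hyperbolic/loxodromic blocks) and $\{2\pi,\beta_j,\beta_j^0\}$ (from elliptic/loxodromic blocks and the $d\theta$-periodicity); $\partial_\theta$ contributes $2\pi i\mathbb{Z}$. The $\mathbb{Q}$-independence of the two sets in the non-resonance definition guarantees that $\mathcal{L}$ is invertible on the complement of its kernel, and its kernel consists precisely of the Fourier-mode-zero polynomials that are functions of the $Q_j$'s — these are the surviving terms absorbed into $\varphi^+(Q)$. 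One then iterates and takes a Borel sum in $N$ to reach the $O(Q^\infty)$ statement.

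I expect the main obstacle to be the bookkeeping in the homological step: one must check that the non-resonance condition as stated — rational independence of exponents versus angles separately, not jointly — is exactly enough to invert $\mathcal{L}$ on all the relevant monomials, including the mixed hyperbolic–elliptic and loxodromic ones, and that no small-divisor issue arises (here there is none, since $\theta$ is a genuine $S^1$ variable so $\partial_\theta$ has a spectral gap and only finitely many Fourier modes interact at each polynomial degree). A secondary technical point is matching the global picture on $S^1$: the normal form must hold on all of $S^1\times\mathbb{R}^{2m}$ near $\gamma^0$, so the $\theta$-dependent conjugations used to put the linear part into constant-coefficient form (and to insert the $\pi$-rotation for negative hyperbolic blocks) must themselves be smooth and single-valued on the circle, which is why the construction routes the extra holonomy through the compactly supported bumps $\chi^\pm$ rather than a constant-coefficient generator. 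Finally, one records that the resulting $\varphi$ is automatically of the asserted shape $T_\gamma + \chi^- Q^{h,-} + \chi^+\varphi^+$ with $\varphi^+ = \varphi^+(Q)$, since the surviving Fourier-zero terms are functions of the $Q_j$ and the $\theta$-dependence of the remainder can be pushed into $\chi^+$ times a $\theta$-independent function of $Q$.
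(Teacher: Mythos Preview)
Your approach---running a Birkhoff normal form directly on the contact form by solving the homological equation $\mathcal{L}=\partial_\theta+\{\cdot,\varphi^+\}$ degree by degree---is viable in principle, but the paper takes a genuinely different and cleaner route that you should be aware of.

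The paper does not iterate on the contact form at all. Instead it fixes Darboux coordinates near a point $p\in\gamma$, takes the Poincar\'e section $\Sigma=\{z=0\}$, and reduces everything to the symplectic return map $P_\Sigma:(\Sigma,da)\to(\Sigma,da)$ together with the return time $T_\Sigma$, linked by the identity $P_\Sigma^*a-a=dT_\Sigma$. The non-resonance hypothesis is invoked exactly once, to quote the classical Birkhoff normal form for symplectomorphisms (Sternberg; Iantchenko--Sj\"ostrand): this gives $P_\Sigma=e^{H_{\varphi^+}}\circ e^{H_{Q^{h,-}}}$ modulo $O(Q^\infty)$ with $\varphi^+=\varphi^+(Q)$. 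From the identity above one then reads off $T_\Sigma$ explicitly. The final step is to \emph{write down} the model contact form in the statement, compute its return map and return time on $\{\theta=0\}$ by hand (this is where the specific bumps $\chi^\pm$ enter---they are simply a convenient way to manufacture a model Reeb flow whose return data equal $(P_\Sigma,T_\Sigma)$), observe that they agree with those of $a$, and conclude by a Moser argument.

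The payoff of the paper's route is that the entire iterative normal-form machinery is outsourced to a known result about symplectic maps, and the $\chi^\pm$ structure is a \emph{choice} of model rather than an output of the procedure. This is where your outline has a genuine soft spot: your iteration would produce some $\varphi=\varphi(\theta,Q)$ (or, if you first insert $\chi^-Q^{h,-}$ to handle the negative-hyperbolic holonomy and then iterate, something like $T_\gamma+\chi^-Q^{h,-}+\psi(Q)$ with $\psi$ $\theta$-independent), but not the specific form $T_\gamma+\chi^-Q^{h,-}+\chi^+\varphi^+(Q)$ with $\chi^+$ supported in $(\tfrac12,1)$. Your closing assertion that ``the $\theta$-dependence of the remainder can be pushed into $\chi^+$ times a $\theta$-independent function of $Q$'' does not follow from the homological step; to pass from your normal form to the one in the statement you would need to show that two contact forms with the same return data are locally equivalent---which is precisely the Moser argument the paper uses. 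So your direct iteration is not wrong, but it does not avoid the paper's endgame, and it trades a citation for a longer computation.
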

\begin{proof}
Choose Darboux coordinates $\left(x,y;z\right)$ centered at $p$
in which $a=dz+\frac{1}{2}\sum_{j=1}^{m}\left(x_{j}dx_{j+m}-x_{j+m}dx_{j}\right)$.
Then $\Sigma=\left\{ z=0\right\} \subset X$ defines a local Poincare
section transverse to the Reeb vector field $\partial_{z}$ in these
coordinates. The Reeb flow gives rise to a symplectic return map and
a return time function 
\begin{eqnarray}
P_{\Sigma}:\left(\Sigma,da\right) & \rightarrow & \left(\Sigma,da\right)\nonumber \\
T_{\Sigma}:\Sigma & \rightarrow & \mathbb{R}\label{eq: return map and time}
\end{eqnarray}
which satisfy the relation 
\begin{equation}
P_{\Sigma}^{*}a-a=dT_{\Sigma}\label{eq: return time map relation-1}
\end{equation}
 (cf. \cite{Francoise-Guillemin} Prop. 2.1). The linearization of
$P_{\Sigma}$ at $0$ being $P_{\gamma}^{+}$, has the same spectrum
$\textrm{Spec}\left(P_{\gamma}^{+}\right)$. Under the nonresonance
assumption, such a symplectic map is a composition of the Hamiltonian
diffeomorphisms 
\begin{equation}
P_{\Sigma}=e^{H_{\varphi^{+}}}\circ e^{H_{Q^{h,-}}},\label{eq: Poincare map is Hamiltonian-1}
\end{equation}
 modulo $O\left(Q^{\infty}\right)$, for a function $\varphi^{+}$
of the form \prettyref{eq: function of quadratics-1} (cf. \cite{Iantchenko-Sjostrand,Sternberg-61-dynbook}).
We now compute $\left(e^{H_{Q^{h,-}}}\right)^{*}a=a$ and
\begin{eqnarray}
\left.\frac{d}{dt}\left(e^{tH_{\varphi^{+}}}\right)^{*}a\right|_{t=0} & = & i_{H_{\varphi^{+}}}da+di_{H_{\varphi^{+}}}a\nonumber \\
 & = & d\varphi^{+}-d\left[\frac{1}{2}\sum_{j=1}^{m}\left(x_{j}\varphi_{x_{j}}^{+}+x_{j+m}\varphi_{x_{j+m}}^{+}\right)\right].\label{eq: lie derivative a-1}
\end{eqnarray}
From \prettyref{eq: return time map relation-1}, \prettyref{eq: Poincare map is Hamiltonian-1}
and \prettyref{eq: lie derivative a-1} we now have 
\begin{equation}
T_{\Sigma}=T_{\gamma}+\underbrace{\varphi^{+}-\frac{1}{2}\sum_{j=1}^{m}\left(x_{j}\varphi_{x_{j}}^{+}+x_{j+m}\varphi_{x_{j+m}}^{+}\right)}_{T_{\Sigma}^{+}\coloneqq}.\label{eq: return time-1}
\end{equation}

Next, let us compute the return map and return time, associated to
the Poincare section $\Sigma_{0}=\left\{ \theta=0\right\} $, for
the model contact form \prettyref{eq: normal form contact a-1} on
$S^{1}\times\mathbb{R}^{2m}$. Its Reeb vector field $R_{0}$ is easily
computed 
\begin{align}
R_{0} & =\begin{cases}
\frac{1}{T_{\gamma}}\left(\partial_{\theta}+\chi^{-}H_{Q^{h,-}}\right), & \theta\in\left(0,\frac{1}{2}\right)\\
\frac{1}{T_{\gamma}+\chi^{+}T_{\Sigma}^{+}}\left(\partial_{\theta}+\chi^{+}H_{\varphi^{+}}\right), & \theta\in\left(\frac{1}{2},1\right).
\end{cases}\label{eq: model Reeb-1}
\end{align}
To compute the return map and time, first note that each of the quadratics
\prettyref{eq: quadratics-1} Poisson commutes with $\varphi^{+}$
of the form \prettyref{eq: function of quadratics-1}. Hence each
of these quadratics is constant along the Hamilton flow of $H_{\varphi^{+}}$.
An easy calculation upon differentiating \prettyref{eq: function of quadratics-1}
yields that the quantity $\frac{1}{2}\sum_{j=1}^{m}\left(x_{j}\varphi_{x_{j}}^{+}+x_{j+m}\varphi_{x_{j+m}}^{+}\right)$
maybe expressed in terms of the same quadratic functions and is thus
also constant along the Hamilton flow of $H_{\varphi^{+}}$. Thus
$T_{\Sigma}^{+}$ \prettyref{eq: return time-1} is constant along
the Hamilton flow of $H_{\varphi^{+}}$. The return map and time of
\prettyref{eq: model Reeb-1} are now easily computed to be $e^{H_{\varphi^{+}}}\circ e^{H_{Q^{h,-}}}$
and $T_{\Sigma}$ respectively. 

Finally, with the return map and time of the Poincare section $\Sigma$
being the same as in the model case, a Moser style argument maybe
applied to complete the proof.
\end{proof}
In the proof above we have modified arguments from \cite{Guillemin-wavetraceinv.}
Thm. 2.7 from the elliptic case. A general non-degenerate case appears
for geodesic flows in \cite{Zelditch98-waveinv}. We shall call a
chart $\kappa:\varOmega_{\gamma}^{0}\rightarrow\varOmega_{\gamma}$
given by the Proposition above a Darboux-Reeb chart near $\gamma$. 

Next fix a constant $\delta\in\left(0,\frac{1}{2}\right)$. Define
a dilation on each Darboux-Reeb chart
\begin{align*}
\varrho_{\delta}:\varOmega_{\gamma}^{0} & \rightarrow\varOmega_{\gamma}^{0}\\
\varrho_{\delta}\left(x_{0};x_{1},\ldots,x_{2m}\right) & =\left(x_{0};h^{\delta}x_{1},\ldots,h^{\delta}x_{2m}\right)
\end{align*}
and also denote by $\varrho_{\delta}:\varOmega_{\gamma}\rightarrow\varOmega_{\gamma}$
the corresponding dilation of $\varOmega_{\gamma}$. For each subset
$S$ of $\varOmega_{\gamma}^{0}$ (or $\varOmega_{\gamma}$) we denote
by $S^{\delta}\coloneqq\varrho_{\delta}\left(S\right)$ its ($h$-dependent)
image under the dilation. We also denote by $\tilde{S}\subset T^{*}X$
the inverse image under the projection $\pi:T^{*}X\rightarrow X$.
Letting $\Gamma\coloneqq\left\{ \gamma_{v}\right\} _{v=1}^{M}$ be
the set of all primitive Reeb orbits, we set $\varOmega\coloneqq\cup_{v=1}^{M}\varOmega_{\gamma_{v}}$.
Below let $\Gamma\subset\Omega\subset\varOmega$ be any subcover of
the system of Darboux-Reeb charts and denote $C_{\varepsilon,T}\coloneqq B_{\mathbb{R}^{2m}}\left(\varepsilon\right)\times\left(-T,T\right)_{x_{0}}\subset\mathbb{R}_{x}^{n}$
the cylinder of radius $\varepsilon$ and height $T$ in Euclidean
space. We now have the following elementary lemma.
\begin{lem}
\label{lem: construction of Darboux charts}For each $\delta\in\left(0,\frac{1}{2}\right)$,
$T>0$ there exists an $\varepsilon>0$ of the following significance:
each point $x\in X\setminus\Omega^{\delta}$ has a Darboux chart $\varphi_{x}:N_{x}\xrightarrow{\sim}C_{\varepsilon h^{\delta},T}\subset\mathbb{R}^{n}$,
$N_{x}\subset X\setminus\Gamma$, centered at $x$ satisfying 
\begin{equation}
\left(\varphi_{x}^{-1}\right)^{*}a=dx_{0}+\sum_{j=1}^{m}\left(x_{j}dx_{j+m}-x_{j+m}dx_{j}\right).\label{eq: contact form long tubes}
\end{equation}
\end{lem}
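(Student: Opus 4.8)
The plan is to exploit the standard Darboux theorem for contact forms together with a scaling argument to obtain the uniform radius $\varepsilon$. First, since $a$ is a contact form, around any point $x\in X$ there is a Darboux chart $\psi_x\colon U_x\xrightarrow{\sim} V_x\subset\mathbb R^n$, $\psi_x(x)=0$, in which $a$ takes the normal form $dx_0+\sum_{j=1}^m(x_j\,dx_{j+m}-x_{j+m}\,dx_j)$; this is the contact Darboux theorem and requires no non-resonance hypothesis. The issue is that, a priori, the size of the Darboux neighbourhood depends on $x$, whereas we need a chart whose image is the cylinder $C_{\varepsilon h^\delta,T}$ of height $T$ (fixed, independent of $h$) and radius $\varepsilon h^\delta$ (shrinking with $h$). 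So I would first fix a finite cover of the compact manifold $X$ by Darboux charts $\psi_{x_1},\dots,\psi_{x_K}$ whose images each contain a fixed cylinder $C_{\varepsilon_0,T}$; by compactness such a finite cover with a uniform $\varepsilon_0>0$ and the prescribed height $T$ exists. For $h$ small enough, $\varepsilon h^\delta<\varepsilon_0$, so inside each of these charts one may recentre at any nearby point and restrict to the smaller cylinder $C_{\varepsilon h^\delta,T}$; thus every $x$ lies in a Darboux chart of the required shape.

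The second point is to ensure the chart can be chosen with $N_x\subset X\setminus\Gamma$, i.e.\ avoiding the (finitely many) primitive Reeb orbits, for points $x\in X\setminus\Omega^\delta$. Here I would use that $\Omega$ is an open neighbourhood of $\Gamma=\{\gamma_1,\dots,\gamma_M\}$, hence $\Gamma$ lies well inside $\Omega^\delta$ once we dilate — more precisely $\Gamma\subset\Omega^\delta$ since each $\gamma_v^0=S^1\times\{0\}$ is fixed by the dilation $\varrho_\delta$ and $\Omega_{\gamma_v}^0$ contains a neighbourhood of it. Consequently $X\setminus\Omega^\delta$ is a compact set disjoint from $\Gamma$, and its distance to $\Gamma$ is bounded below by some $d_0>0$ (independent of $h$, since $\Gamma$ and the complement $\Omega^\delta$ only grow as $h\to0$ in a controlled way along the normal directions — in fact one can simply note $X\setminus\Omega^\delta\subset X\setminus\Omega$ is $h$-independent if we take the subcover appropriately). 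Then for $h$ small enough that $\varepsilon h^\delta<d_0$, the cylinder $N_x$ of transverse radius $\varepsilon h^\delta$ about any $x\in X\setminus\Omega^\delta$ stays in $X\setminus\Gamma$, as required.

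Finally, one assembles these pieces: choose the finite Darboux cover with uniform base radius $\varepsilon_0$ and height $T$; set $\varepsilon$ to be, say, $\tfrac12\varepsilon_0$ (and shrink further if needed so that $\varepsilon h^\delta < \min(\varepsilon_0, d_0)$ for all $h\in(0,1]$, which one arranges by also demanding $\varepsilon<d_0$ and using $h^\delta\le 1$); for a given $x\in X\setminus\Omega^\delta$, pick an index $i$ with $x\in\psi_{x_i}(C_{\varepsilon_0/2,T})$, precompose $\psi_{x_i}$ with the affine translation sending $\psi_{x_i}(x)$ to the origin (which preserves the normal form of $a$ since the Darboux model is translation-invariant in the $x_0$-direction and the recentering in the transverse variables changes $a$ only by an exact form that can be absorbed — actually the linear symplectic structure $\sum dx_j\wedge dx_{j+m}$ is translation invariant, so a transverse translation followed by a linear symplectic adjustment keeps $a$ in the stated form), then restrict to the cylinder $C_{\varepsilon h^\delta,T}$. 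I expect the only genuinely delicate point to be this recentering step: a naive translation in the transverse variables does \emph{not} preserve the precise one-form $dx_0+\sum(x_j\,dx_{j+m}-x_{j+m}\,dx_j)$ on the nose, so one must either follow it with an affine symplectomorphism of $\mathbb R^{2m}$ (together with a shift in $x_0$ by a linear function) to restore the normal form, or simply invoke the contact Darboux theorem afresh at the new centre and use the uniform-radius statement from the finite cover to control the size — either way it is routine but needs to be stated carefully.
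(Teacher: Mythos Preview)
Your proposal has a genuine gap concerning the height $T$ of the cylinder. You assert that one can cover $X$ by finitely many Darboux charts each containing a cylinder $C_{\varepsilon_0,T}$ of the prescribed height $T$ and some uniform transverse radius $\varepsilon_0>0$. This is false in general: when $T$ exceeds the period $T_\gamma$ of a primitive Reeb orbit $\gamma$, the Reeb trajectory of length $2T$ through a point $x$ near $\gamma$ wraps around and returns arbitrarily close to itself; no embedded Darboux tube of height $T$ and fixed positive radius can exist near $\gamma$. The height $T$ is precisely the obstruction, not a triviality one dispatches by compactness.

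Relatedly, you have the inclusion backwards: since the dilation $\varrho_\delta$ shrinks the transverse coordinates by $h^\delta$, one has $\Omega^\delta\subset\Omega$, hence $X\setminus\Omega\subset X\setminus\Omega^\delta$, and the set $X\setminus\Omega^\delta$ contains points at transverse distance of order $h^\delta$ from $\Gamma$. There is no $h$-independent lower bound $d_0$ on the distance from $X\setminus\Omega^\delta$ to $\Gamma$; that distance shrinks like $h^\delta$, which is exactly why the tube radius is $\varepsilon h^\delta$ rather than a fixed $\varepsilon$.

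The paper's argument addresses this head-on. On the $h$-independent region $X\setminus\Omega_0$ (for a fixed slightly smaller $\Omega_0\subset\Omega$) compactness does give a uniform radius. The real work is on $\bar\Omega_0\setminus\Omega^\delta$: here one uses the Poincar\'e return map $P_\Sigma$ on a transverse section and the non-resonance hypothesis, which forces every difference $(P_\gamma^+)^k-(P_\gamma^+)^l$ of linearized iterates (for $|k|,|l|\le T/T_\gamma$) to be invertible. This yields a linear separation estimate $\|P_\Sigma^k(x)-P_\Sigma^l(x)\|\ge C\|(x_1,\dots,x_{2m})\|$, so for $x$ at transverse distance $\gtrsim h^\delta$ from $\gamma$ the first $N_T$ iterates of an $\varepsilon h^\delta$-ball are disjoint for some uniform $\varepsilon$. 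The Reeb flow-out of such a ball over time $2T$ is then embedded, and a Moser argument produces the Darboux chart. Non-resonance is essential here; your proposal never invokes it.
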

\begin{proof}
The Reeb trajectory $\gamma_{x}\coloneqq e^{tR}\left(x\right),-T<t<T$,
$x\in X\setminus\Omega^{\delta}$, being non-self-intersecting the
existence of a chart of height $T$ is similar to the Darboux theorem.
It only remains to show that one may choose a chart into a cylinder
of radius $\varepsilon h^{\delta}$ for $\varepsilon$ uniform in
$h$. By compactness, a radius of an $h$-independent size $\varepsilon=O\left(1\right)$
works for points in the $h$-independent set $x\in X\setminus\Omega_{0}$,
for $\Omega_{0}\subset\Omega$. For points $x\in\bar{\Omega}_{0}\setminus\Omega^{\delta}$,
non-resonance implies that the linearizations $\left(P_{\gamma}^{+}\right)^{k}-\left(P_{\gamma}^{+}\right)^{l}$,
$k,l\in\mathbb{Z}$, of the Poincare return maps $P_{\Sigma}^{k}-P_{\Sigma}^{l}$
\prettyref{eq: return map and time} at $0$ are invertible. Here
the Poincare sections are again given by $\left\{ x_{0}=0\right\} $
in terms of the Darboux-Reeb coordinates on $\Omega_{0}$. One may
hence shrink $\Omega_{0}$ to arrange $\left\Vert P_{\Sigma}^{k}\left(x\right)-P_{\Sigma}^{l}\left(x\right)\right\Vert \geq C\left\Vert \left(x_{1},\ldots x_{2m}\right)\right\Vert ,\:\forall x\in\bar{\Omega}_{0}$,
$\left|k\right|\leq N_{T},\left|l\right|\leq N_{T}$, where $N_{T}\coloneqq\max_{\gamma\in\Gamma}\frac{T}{T_{\gamma}}$.
From here one finds a uniform $\varepsilon$ such that $\forall x\in\left(\bar{\Omega}_{0}\setminus\Omega_{\gamma}^{\delta}\right)\cap\left\{ x_{0}=0\right\} $
the first $N$ iterates under $P_{\Sigma}$ of the ball $B_{\mathbb{R}^{2m}}\left(\varepsilon_{x}h^{\delta}\right)$
are disjoint. The Reeb flow-outs $e^{tR}\left[B_{\mathbb{R}^{2m}}\left(\varepsilon_{x}h^{\delta}\right)\right],-T<t<T$,
of the balls being non-self-intersecting, a chart satisfying \prettyref{eq: contact form long tubes}
comes from a Moser style argument. 
\end{proof}
For each Darboux chart $\varphi_{x}:N_{x}\xrightarrow{\sim}C_{\varepsilon h^{\delta},T}\subset\mathbb{R}^{n}$
as above we set $N_{x}^{0}\coloneqq\varphi_{x}^{-1}\left(C_{\frac{\varepsilon h^{\delta}}{8},\frac{T}{8}}\right)$.
The chart is called trivialized if it comes equipped with an orthonormal
trivialization of the spin bundle. Below for each $h$-independent
constant $c$ we denote by a shorthand the $h$-dependent constant
$c_{\delta}\coloneqq ch^{\delta}$ .

We now come to the construction of dynamical partitions. Below, the
energy levels $\Sigma_{I}^{D}$ above are as in \prettyref{eq:energy level}.
Let $T>0$, $\tau>0$, $\delta\in\left(0,\frac{1}{2}\right)$ and
$\Gamma\subset\Omega\subset\varOmega$ be a subcover of the system
of Darboux-Reeb charts as before. A $\left(\Omega,\tau,\delta\right)$-microlocal
partition of unity is defined to be a collection of zeroth-order pseudo-differential
operators $\mathcal{P}=\left\{ A_{u}\in\Psi_{\delta}^{0}\left(X\right)|0\leq u\leq N_{h}\right\} \cup\left\{ B_{v}\in\Psi_{\delta}^{0}\left(X\right)|1\leq v\leq M\right\} $
satisfying 
\begin{eqnarray}
\sum_{u=0}^{N_{h}}A_{u}+\sum_{v=1}^{M}B_{v} & = & 1\nonumber \\
N_{h} & = & O\left(h^{-\delta}\right)\nonumber \\
WF\left(A_{0}\right)\subset & U_{0}\subset & \overline{T^{*}X}\setminus\Sigma_{\left[-\frac{\tau_{\delta}}{64},\frac{\tau_{\delta}}{64}\right]}^{D}\nonumber \\
WF\left(A_{u}\right)\Subset & U_{u}\subset & \Sigma_{\left[-\tau_{\delta},\tau_{\delta}\right]}^{D}\cap\tilde{N}_{x_{u}}^{0},\;1\leq u\leq N\nonumber \\
WF\left(B_{v}\right)\Subset & V_{v}\subset & \Sigma_{\left[-\tau_{\delta},\tau_{\delta}\right]}^{D}\cap\tilde{\Omega}_{\gamma_{v}}^{\delta},\;1\leq v\leq M\label{eq: microlocal partition of unity-1-1-1}
\end{eqnarray}
for some open cover $\left\{ U_{u}\right\} _{u=0}^{N}\cup\left\{ V_{v}\right\} _{v=1}^{M}$
of $T^{*}X$ and for some collection of trivialized Darboux charts
$N\coloneqq\left\{ N_{x_{u}}\right\} _{u=1}^{N}\subset X\setminus\Gamma$
. For such a partition $\mathcal{P}$ define the pairs of indices
\begin{align}
I_{\mathcal{P}} & =\left\{ \left(u,u'\right)|u\leq u',\:WF\left(A_{u}\right)\cap WF\left(A_{u'}\right)\neq\emptyset\right\} \nonumber \\
J_{\mathcal{P}} & =\left\{ \left(u,v\right)|WF\left(A_{u}\right)\cap WF\left(B_{v}\right)\neq\emptyset\right\} .\label{eq: index sets}
\end{align}
 An augmentation $\left(\mathcal{P};\mathcal{V},\mathcal{W}\right)$
of this partition consists of an additional collection of open sets
$\mathcal{V}=\left\{ V_{uu'}^{1}\right\} _{\left(u,u'\right)\in I_{\mathcal{P}}}\cup\left\{ V_{uv}^{2}\right\} _{\left(u,v\right)\in J_{\mathcal{P}}}$,
$\mathcal{W}=\left\{ W_{uu'}^{1}\right\} _{\left(u,u'\right)\in I_{\mathcal{P}}}\cup\left\{ W_{uv}^{2}\right\} _{\left(u,v\right)\in J_{\mathcal{P}}}$
satisfying 
\begin{eqnarray}
WF\left(A_{u}\right)\cap WF\left(A_{u'}\right) & \subset & W_{uu'}^{1}\nonumber \\
 &  & \cap\nonumber \\
WF\left(A_{u}\right)\cup WF\left(A_{u'}\right) & \subset & V_{uu'}^{1}\Subset\Sigma_{\left[-2\tau_{\delta},2\tau_{\delta}\right]}^{D}\cap\tilde{N}_{x_{u}},\nonumber \\
WF\left(A_{u}\right)\cap WF\left(B_{v}\right) & \subset & W_{uv}^{2}\nonumber \\
 &  & \cap\nonumber \\
WF\left(A_{u}\right)\cup WF\left(B_{v}\right) & \subset & V_{uv}^{2}\Subset\Sigma_{\left[-2\tau_{\delta},2\tau_{\delta}\right]}^{D}\cap\tilde{N}_{x_{u}}.\label{eq:augmentation}
\end{eqnarray}
Next with $d=\sigma\left(D\right)$, for each pair of indices in \prettyref{eq: index sets}
we set 
\begin{align}
T_{uu'} & \coloneqq\frac{1}{\inf_{\left(g,\mathtt{v}\right)\in\mathcal{G}_{uu'}\times S_{\delta}^{0}\left(X;U\left(S\right)\right)}\left|H_{g,\mathtt{v}}d\right|},\label{eq: exit time 1}\\
S_{uv} & \coloneqq\frac{1}{\inf_{\left(g,\mathtt{v}\right)\in\mathcal{H}_{uv}\times S_{\delta}^{0}\left(X;U\left(S\right)\right)}\left|H_{g,\mathtt{v}}d\right|},\quad\textrm{ with}\label{eq: exit time 2}\\
\mathcal{G}_{uu'} & \coloneqq\left\{ g\in S_{\delta}^{0}\left(X;\left[0,1\right]\right)|\left.g\right|_{W_{uu'}^{1}}=1,\;\left.g\right|_{\left(V_{uu'}^{1}\right)^{c}}=0\right\} \label{eq: set of exit functions 1}\\
\mathcal{H}_{uv} & \coloneqq\left\{ g\in S_{\delta}^{0}\left(X;\left[0,1\right]\right)|\left.g\right|_{W_{uv}^{2}}=1,\;\left.g\right|_{\left(V_{uv}^{2}\right)^{c}}=0\right\} \label{eq: set of exit functions 2}
\end{align}
and $\left|H_{g,\mathtt{v}}d\right|\coloneqq\sup\left\Vert \left\{ \mathtt{v}^{*}d\mathtt{v},g\right\} \right\Vert $
with the bracket being computed in terms of the chosen and induced
trivialization/coordinates on $N_{x_{u}},\tilde{N}_{x_{u}}$. A function
in $\mathcal{G}_{uu'}$ or $\mathcal{H}_{uv}$ shall be referred to
as a trapping/microlocal weight function.

Finally, the \textit{extension/trapping time} of an augmented $\left(\Omega,\tau,\delta\right)$-partition
$\left(\mathcal{P};\mathcal{V},\mathcal{W}\right)$ is set to be 
\begin{equation}
T_{\left(\mathcal{P};\mathcal{V},\mathcal{W}\right)}\coloneqq\min\left\{ \min\left\{ T_{uu'}\right\} _{\left(u,u'\right)\in I_{\mathcal{P}}},\min\left\{ S_{uv}\right\} _{\left(u,v\right)\in J_{\mathcal{P}}}\right\} .\label{eq: total extension time}
\end{equation}

\begin{prop}
\label{prop: Large Extension time}Let $T>0$, $\delta\in\left(0,\frac{1}{2}\right)$
and $\Gamma\subset\Omega\subset\varOmega$ be a subcover. Then for
each $\tau$ sufficiently small one has an augmented $\left(\Omega,\tau,\delta\right)$-partition
of unity $\left(\mathcal{P};\mathcal{V},\mathcal{W}\right)$ with
\begin{equation}
T_{\left(\mathcal{P};\mathcal{V},\mathcal{W}\right)}>T.\label{eq: extension time arb large-1-1}
\end{equation}
\end{prop}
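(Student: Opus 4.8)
The plan is to build $\mathcal{P}$ out of the two systems of charts already at our disposal --- the Darboux--Reeb charts of Proposition~\ref{prop:normal structure contact form} around the closed orbits $\Gamma$, and the long thin Darboux tubes $\varphi_{x}:N_{x}\xrightarrow{\sim}C_{\varepsilon h^{\delta},T}$ produced by Lemma~\ref{lem: construction of Darboux charts} on the complement --- together with a single coarse, $h$-independent cell $A_{0}$ carrying the part of phase space bounded away from the characteristic manifold $\Sigma_{0}^{D}=\{\xi=-a(x)\}$; the augmentation $(\mathcal{V},\mathcal{W})$ is then obtained by fattening each cell slightly inside its ambient tube. The single dynamical input that forces the extension time to be large is this: $f(D/\sqrt{h})$ is microlocalized in an $O(\sqrt{h})$-neighbourhood of $\Sigma_{0}^{D}$ --- its wavefront set lies in $\Sigma^{D}_{[-\sqrt{2\mu_{1}h},\sqrt{2\mu_{1}h}]}\subset\Sigma^{D}_{[-\tau_{\delta}/64,\tau_{\delta}/64]}$ for $h$ small, since $\sqrt{h}\ll h^{\delta}$ --- while $\Sigma_{0}^{D}$, being the graph of $-a$ in $T^{*}X$, is coisotropic with one-dimensional null foliation equal to the Reeb flow. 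Hence the propagation relevant to the trace, $f(D/\sqrt{h})\,e^{-isD/h}$, travels to leading order along the Reeb flow; away from $\Gamma$ that flow sweeps through a tube $N_{x}$ of Reeb length comparable to $T$ in time comparable to $T$, and near a closed orbit it is periodic. A partition subordinate to these tubes therefore keeps every overlapping pair trapped in its shell for a time comparable to $T$.

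Concretely, I would first apply Lemma~\ref{lem: construction of Darboux charts} with $T$ replaced by a fixed large multiple $T'$, obtaining $\varepsilon>0$ and the tubes $N_{x}$ in which $a=dx_{0}+\sum_{j}(x_{j}\,dx_{j+m}-x_{j+m}\,dx_{j})$ and the Reeb vector field is $\partial_{x_{0}}$. Then I would select a finite subfamily $\{N_{x_{u}}^{0}\}_{u=1}^{N}$ of the inner tubes covering $X\setminus\Omega^{\delta}$, trivialize each over $S$, and let $\{A_{u}\}_{u\geq1}\cup\{B_{v}\}$ be a $\Psi_{\delta}^{0}$-quantization of a partition of unity subordinate to the open cover $\{\tilde{N}_{x_{u}}^{0}\cap\Sigma^{D}_{[-\tau_{\delta},\tau_{\delta}]}\}_{u}\cup\{\tilde{\Omega}_{\gamma_{v}}^{\delta}\cap\Sigma^{D}_{[-\tau_{\delta},\tau_{\delta}]}\}_{v}$ of $\Sigma^{D}_{[-\tau_{\delta},\tau_{\delta}]}$, completed by $A_{0}$ supported in some $U_{0}\Subset\overline{T^{*}X}\setminus\Sigma^{D}_{[-\tau_{\delta}/64,\tau_{\delta}/64]}$; this is an $(\Omega,\tau,\delta)$-partition $\mathcal{P}$, with $\tau$ to be taken small at the very end. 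For a pair $(u,u')\in I_{\mathcal{P}}$ with $u\geq1$, read everything in the $x_{u}$-chart: $WF(A_{u})\subset\tilde{N}_{x_{u}}^{0}$ has $|x_{0}|<T'/8$, and since any overlapping tube is related to $N_{x_{u}}$ by a Reeb-preserving contactomorphism --- an $x_{0}$-translation to leading order --- the union $WF(A_{u})\cup WF(A_{u'})$ lies in $\{|x_{0}|<T'/4\}$. Then take $V_{uu'}^{1}$ to be the cylinder $\{|x_{0}|<T'/2\}\cap\Sigma^{D}_{[-2\tau_{\delta},2\tau_{\delta}]}\cap\tilde{N}_{x_{u}}$ containing this union, $W_{uu'}^{1}=WF(A_{u})\cap WF(A_{u'})$, and proceed identically for $(u,v)\in J_{\mathcal{P}}$ inside $\tilde{N}_{x_{u}}$.

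For the trapping function on such a pair I would take $g=g_{0}(x_{0})\,g_{1}$, where $g_{0}\in C_{c}^{\infty}(-T'/2,T'/2)$ equals $1$ on $(-T'/4,T'/4)$ with $|g_{0}'|\leq8/T'$. Since the Reeb flow is translation in $x_{0}$ in this normal form, $g_{0}$ is an approximate integral of the (effective, lowest-band) Hamiltonian flow, and its contribution $(\partial_{\xi_{0}}d)\,g_{0}'(x_{0})$ to the Poisson bracket is a bounded matrix times $g_{0}'$, hence of norm $\lesssim1/T'$. The transverse-and-energy cutoff $g_{1}$, which has to localize to the $O(h^{\delta})$-tube and the $O(\tau_{\delta})$-energy shell, is the delicate factor: one conjugates $d$ by a $\mathtt{v}\in S_{\delta}^{0}(X;U(S))$ manufactured from the almost-diagonalization of Lemma~\ref{lem: almost diagonalization lemma} (with distinguished axis the Reeb covector $dx_{0}$), so that on $V_{uu'}^{1}$ the conjugate $\mathtt{v}^{*}d\mathtt{v}$ is a multiple of $\gamma_{0}$ up to an operator-norm remainder of size $O(\tau_{\delta})$, and then takes $g_{1}$ to be a function of the conserved quadratics of the transverse Landau model, so that $\{\mathtt{v}^{*}d\mathtt{v},g_{1}\}=O(\tau)$ modulo the $\Psi_{\delta}$-calculus error ($\delta<1/2$) and the $O(Q^{\infty})$ tail of the normal form (with $Q=O(h^{2\delta})$ on the dilated cell). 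Altogether $\sup\|\{\mathtt{v}^{*}d\mathtt{v},g\}\|\lesssim1/T'+C\tau$, which gives $T_{uu'}\geq T'/C$ once $\tau$ and $h$ are small; the same estimate gives $S_{uv}\geq T'/C$; near a closed orbit the Reeb flow is periodic, so there $g$ may be chosen Reeb-invariant and the bracket is $O(\tau)$; and the macroscopic cell $A_{0}$ never enters the minimum, because $WF(f(D/\sqrt{h}))\cap WF(A_{0})=\emptyset$, whence $f(D/\sqrt{h})A_{0}=O(h^{\infty})$ and all trace terms carrying $A_{0}$ vanish to every order. Choosing $T'=CT$ (and a little larger for the strict inequality) yields $T_{(\mathcal{P};\mathcal{V},\mathcal{W})}>T$.

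The main obstacle is precisely the construction of $(\mathtt{v},g_{1})$ in the previous step. Because the eigenvalues $\pm|\xi+a|$ of $d=\sigma(D)$ are not differentiable along $\Sigma_{0}^{D}$, no naive transverse-or-energy cutoff can be Poisson-bracketed against $d$ with an error that tends to $0$ as $h\to0$; this is exactly the role of Lemma~\ref{lem: almost diagonalization lemma}, which conjugates the Clifford symbol into the almost-diagonal shape $a_{0}\gamma_{0}+a_{1}\sum_{j\geq1}\theta_{j}\gamma_{j}$ with bounded, essentially one-variable coefficients, so that on the $O(h^{\delta})$-tube --- coarser than the $O(\sqrt{h})$-microsupport of $f(D/\sqrt{h})$ since $\delta<1/2$ --- the obstruction is confined to an $O(\tau_{\delta})$ remainder and one is reduced to a scalar model on $S^{1}\times\mathbb{R}^{2m}$ whose Reeb--Landau dynamics carries manifest trapping invariants. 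Making this reduction quantitative, with all errors controlled uniformly on the dilated cells, is the technical heart of the argument; a secondary but necessary point is that the whole construction must be uniform in the $h$-dependent number $N$ of tubes, which holds because the normal forms, the bounds of Lemma~\ref{lem: almost diagonalization lemma}, and the $\Psi_{\delta}$-constants are all uniform over the compact manifold $X$.
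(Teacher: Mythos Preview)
Your overall architecture matches the paper's --- the three cell types, the long Darboux tubes of Lemma~\ref{lem: construction of Darboux charts}, the almost-diagonalization of Lemma~\ref{lem: almost diagonalization lemma}, and a Reeb-time cutoff $g_0(x_0)$ with $|g_0'|\lesssim 1/T'$ --- and you rightly isolate the transverse/energy factor $g_1$ as the crux. But your construction of $g_1$ does not close, and the gap is a genuine missing ingredient rather than a detail.

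In the raw base Darboux coordinates of Lemma~\ref{lem: construction of Darboux charts}, the only way to localize \emph{along} $\Sigma_0^D$ in the $2m$ non-Reeb directions (as membership in $\mathcal{G}_{uu'}$ demands) is a cutoff in $(x_1,\ldots,x_{2m})$ at scale $\varepsilon_\delta$; yet even after conjugation to $\mathtt{v}^*d\mathtt{v}\approx|\xi+a|_g\,\sigma_0$ one has $\partial_{\xi_k}|\xi+a|_g=O(1)$ on the shell for $k\geq1$, so this cutoff contributes $O(1/\varepsilon_\delta)=O(h^{-\delta})$ to the bracket and the trapping time collapses. Your ``conserved quadratics of the transverse Landau model'' do not help: quadratics built from $\xi_k+a_k$ are \emph{normal} to $\Sigma_0^D$ and are already absorbed in the energy cutoff --- they do not localize along $\Sigma_0^D$. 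What the paper supplies, and you omit, is a further \emph{symplectic} normal form on each tube: the Hamiltonian symplectomorphism $\kappa_u=e^{H_{f_1}}\circ e^{H_{f_0}}$ imported from \cite{Savale2017-Koszul}~\S5, together with a Clifford conjugation $e^{ic_A}$, straightens $\Sigma_0^D$ to $\{\xi_0=x'=\xi'=0\}$ and puts the symbol into the shape $H_1+\sigma^jr_j$ with $r_j\in o_2$. In these coordinates the along-$\Sigma_0^D$ directions are $(x'',\xi'')$, and now the key vanishing $\{\tilde\theta_j,(x'',\xi'')\}\in o_1$ makes the transverse cutoff $\chi(16|(x'',\xi'')|/\varepsilon_\delta)$ contribute only $O(\tau/\varepsilon)$, small once $\tau$ is. Your allusion to ``a scalar model on $S^1\times\mathbb{R}^{2m}$'' points toward the later Birkhoff normal form near a closed orbit (Section~\ref{sec: Birkhoff normal form near Reeb orbit}), which is a different reduction and not what is used here.

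A second point you elide is the energy cutoff near the pole $\theta_0\to1$, where the almost-diagonalization of Lemma~\ref{lem: almost diagonalization lemma} fails and $\mathtt{v}^*d\mathtt{v}$ is not a multiple of $\gamma_0$. The paper does not cut in $|\tilde\theta|$ but in the deformed radius $\beta(\tilde\theta)=\big(|\tilde\theta_0|^2+(1-\varphi_\rho(\theta_0))|\tilde\theta'|^2\big)^{1/2}$, arranged so that on $\textrm{spt}\,\chi'(16\beta/\tau_\delta)$ either the diagonalization is exact ($\theta_0<1-\rho$) or $\beta$ reduces to $|\tilde\theta_0|$, whose bracket with the conjugated symbol is controlled by the second-order vanishing $\{\tilde\theta_0,\tilde\theta_j\}\in o_2$. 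This, together with the $t$-interpolation of $\mathtt{v}_t^\rho$ across the annulus $\tfrac{\tau_\delta}{16}\le|\tilde\theta|\le\tfrac{\tau_\delta}{8}$, is what makes the six-region case analysis \prettyref{eq: comm comp 1}--\prettyref{eq: comm comp 6} go through; the blanket claim that ``the obstruction is confined to an $O(\tau_\delta)$ remainder'' skips exactly this mechanism.
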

\begin{proof}
By \prettyref{lem: construction of Darboux charts} there exists $\varepsilon>0$
such that each $x\in X\setminus\Omega^{\delta}$ has a Darboux chart
$\varphi_{x}:N_{x}\xrightarrow{\sim}C_{\varepsilon h^{\delta},T}\subset\mathbb{R}^{n}$
centered at $x$ of radius $\varepsilon_{\delta}=\varepsilon h^{\delta}$
and height $T$. Next with $\left(x'',\xi''\right)=\left(x_{m+1},\ldots,x_{2m};\xi_{m+1},\ldots,\xi_{2m}\right)$
being a subset of the coordinates on $\mathbb{R}_{x,\xi}^{2n}$ set
$C_{\varepsilon_{\delta},T}''\coloneqq\left\{ x''^{2}+\xi''^{2}<\varepsilon_{\delta}^{2},\:-T<x_{0}<T\right\} \subset\mathbb{R}_{x,\xi}^{2n}$.
Also for each $\tau>0$, set 
\begin{align}
U_{\varepsilon_{\delta},\tau_{\delta},T}\coloneqq & \left\{ \xi_{0}^{2}+2\sum_{j=1}^{m}\mu_{j}\left(x_{j}^{2}+\xi_{j}^{2}\right)<\tau_{\delta}^{2},\right.\nonumber \\
 & \left.\qquad\:x''^{2}+\xi''^{2}<\left(\frac{\varepsilon_{\delta}}{2}\right)^{2},\:-\frac{T}{2}<x_{0}<\frac{T}{2}\right\} \subset C_{\varepsilon_{\delta},T}''.\label{eq: collar nbhd of charc}
\end{align}
Then by the preliminary Birkhoff normal form procedure of \cite{Savale2017-Koszul}
Sec. 5 (eqns 5.1, 5.5, 5.6, 5.7, 5.8) there exists $0<\tau\ll1$ sufficiently
small of the following significance: there is a neighborhood $M_{u}\subset\tilde{N}_{x_{u}}$
of $\tilde{N}_{x_{u}}^{0}\cap\Sigma_{0}^{D}$ , a Hamiltonian symplectomorphism
\begin{align*}
\kappa_{u}\coloneqq e^{H_{f_{1}}}\circ e^{H_{f_{0}}}: & U_{\varepsilon_{\delta},\tau_{\delta},T}\rightarrow M_{u}\\
\kappa_{u}\left(x_{0},0,x'';0,0,\xi''\right) & =\left(x_{0},-\frac{\xi''}{\sqrt{2}},\frac{x''}{\sqrt{2}};-1,\frac{x''}{\sqrt{2}},\frac{\xi''}{\sqrt{2}}\right)
\end{align*}
(see \cite{Savale2017-Koszul} pgs. 1812-1813 for $f_{0},f_{1}$)
in terms of the chosen coordinates on each, a self-adjoint endomorphism
$c_{A}\in C^{\infty}\left(U_{\varepsilon_{\delta},\tau_{\delta},T};i\mathfrak{u}\left(2^{m}\right)\right)$
and functions $\left\{ r_{j}\in C^{\infty}\left(U_{\varepsilon_{\delta},\tau_{\delta},T}\right)\right\} _{j=0}^{2m}$
vanishing to second order along $\Sigma_{0}^{D}$ such that 
\begin{align}
e^{ic_{A}}\left[\left(e^{H_{f_{1}}}\circ e^{H_{f_{0}}}\right)^{*}d\right]e^{-ic_{A}} & =H_{1}+\sigma_{j}r_{j},\quad\textrm{ with}\nonumber \\
H_{1} & \coloneqq\xi_{0}\sigma_{0}+\sum_{j=1}^{m}\left(2\mu_{j}\right)^{\frac{1}{2}}\left(x_{j}\sigma_{2j-1}+\xi_{j}\sigma_{2j}\right).\label{eq: conjugate symbol}
\end{align}
Taylor expand $r_{0}=\sum r_{00}\left(x_{0},x'';\xi_{0},\xi''\right)\xi_{0}^{2}+r_{j}^{1}x_{j}+r_{j}^{2}\xi_{j}$,
with $r_{j}^{1},r_{j}^{2}$ vanishing to first order along $\Sigma_{0}^{D}$.
A further conjugation of the above \prettyref{eq: conjugate symbol}
by $e^{\left[r_{j}^{1}\left(2\mu_{j}\right)^{-\frac{1}{2}}\sigma_{2j-1}+r_{j}^{2}\left(2\mu_{j}\right)^{-\frac{1}{2}}\sigma_{2j}\right]\sigma_{0}}$
sets $r_{j}^{1}=r_{j}^{2}=0$ while a symplectic change of variables
in $x_{0}$ sets $r_{00}=0$. Now set 
\begin{align}
\left(\tilde{\theta}_{0},\tilde{\theta}_{1},\ldots,\tilde{\theta}_{2m}\right) & \coloneqq\left(\xi_{0},\left(2\mu_{1}\right)^{\frac{1}{2}}x_{1},\left(2\mu_{1}\right)^{\frac{1}{2}}\xi_{1},\ldots,\left(2\mu_{m}\right)^{\frac{1}{2}}x_{m},\left(2\mu_{m}\right)^{\frac{1}{2}}\xi_{m}\right)\label{eq: components symbol}\\
 & \qquad+\left(0,r_{1},\ldots,r_{2m}\right)\nonumber \\
\tilde{\theta}' & =\left(\tilde{\theta}_{1},\ldots,\tilde{\theta}_{2m}\right)\nonumber 
\end{align}
 and note from \prettyref{eq: conjugate symbol} that the eigenvalues
of the symbol $d$ are $\pm\left|\tilde{\theta}\right|$. We clearly
have
\begin{align}
\kappa_{u}^{-1}\left(M_{u}\right)\cap\Sigma_{0}^{D} & =U_{\varepsilon_{\delta},\tau_{\delta},T}\cap\left\{ \tilde{\theta}=0\right\} \nonumber \\
 & =U_{\varepsilon_{\delta},\tau_{\delta},T}\cap\left\{ \xi_{0}=x_{1}=\xi_{1}=\ldots=x_{m}=\xi_{m}=0\right\} \label{eq:cords on charac}
\end{align}
and we may set 
\begin{align}
\theta_{j} & =\frac{\tilde{\theta}_{j}}{\left|\tilde{\theta}\right|}\in C^{\infty}\left(U_{\varepsilon_{\delta},\tau_{\delta},T}\setminus\Sigma_{0}^{D};S^{n-1}\right).\label{eq:circle valued function}
\end{align}
If we denote by $o_{N}$ the set of functions that vanish to order
$N$ along $\Sigma_{0}^{D}$, we have 
\begin{align}
\left\{ \tilde{\theta}_{0},x_{0}\right\} -1=o_{1}\nonumber \\
\left\{ \tilde{\theta}_{j},x_{0}\right\} =o_{1}, & j\geq1,\nonumber \\
\left\{ \tilde{\theta}_{j},x''\right\} =o_{1}, & j\geq0,\nonumber \\
\left\{ \tilde{\theta}_{j},\xi''\right\} =o_{1}, & j\geq0,\nonumber \\
\left\{ \tilde{\theta}_{0},\tilde{\theta}_{j}\right\} =o_{2}, & j\geq0,\nonumber \\
\left\{ \tilde{\theta}_{j},\tilde{\theta}_{k}\right\} \quad\textrm{or }\left\{ \tilde{\theta}_{j},\tilde{\theta}_{k}\right\} -1=o_{1}, & k>j\geq0,\nonumber \\
r_{j}=o_{2}, & j\geq0.\label{eq: orders of commutators}
\end{align}
By \prettyref{eq: collar nbhd of charc}, \prettyref{eq:cords on charac}
$U_{\varepsilon_{\delta},\tau_{\delta},T}$ denotes a collar neighborhood
of radius $\tau_{\delta}$ of $\Sigma_{0}^{D}$. Hence by shrinking
$\tau$ if necessary, we may assume
\begin{align*}
\left|\left\{ \tilde{\theta}_{j},x_{0}\right\} \right| & \leq2,\qquad j\geq0,\\
\left|\left\{ \left|\tilde{\theta}\right|,x_{0}\right\} \right| & \leq2,\qquad\left|\tilde{\theta}\right|\neq0,\\
\left|\left\{ \frac{\tilde{\theta}_{j}}{\left|\tilde{\theta}\right|},x_{0}\right\} \right| & \leq\frac{4}{\left|\tilde{\theta}\right|},\qquad\left|\tilde{\theta}\right|\neq0,\,j\geq0,\\
\left|\frac{1}{\varepsilon_{\delta}}\left\{ \tilde{\theta}_{j},\left|\left(x'',\xi''\right)\right|\right\} \right| & \leq\frac{1}{T},\qquad j\geq0,\\
\left|\frac{1}{\varepsilon_{\delta}}\left\{ \left|\tilde{\theta}\right|,\left|\left(x'',\xi''\right)\right|\right\} \right| & \leq\frac{1}{T},\qquad\left|\tilde{\theta}\right|\neq0,\\
\left|\frac{1}{\varepsilon_{\delta}}\left\{ \frac{\tilde{\theta}_{j}}{\left|\tilde{\theta}\right|},\left|\left(x'',\xi''\right)\right|\right\} \right| & \leq\frac{1}{T\left|\tilde{\theta}\right|},\qquad\left|\tilde{\theta}\right|\neq0,\,j\geq0,\\
\left|\left\{ \tilde{\theta}_{0},\tilde{\theta}_{j}\right\} \right| & \leq\frac{\left|\tilde{\theta}\right|}{T},\qquad j\geq0,\\
\left|\left\{ \tilde{\theta}_{0},\left|\tilde{\theta}\right|\right\} \right| & \leq\frac{\left|\tilde{\theta}\right|}{T},\qquad\left|\tilde{\theta}\right|\neq0,\ \ 
\end{align*}
\begin{align}
\left|\left\{ \tilde{\theta}_{0},\frac{\tilde{\theta}_{j}}{\left|\tilde{\theta}\right|}\right\} \right| & \leq\frac{1}{T},\qquad\left|\tilde{\theta}\right|\neq0,\,j\geq0,\nonumber \\
\frac{1}{4}\left[\xi_{0}^{2}+2\sum_{j=1}^{m}\mu_{j}\left(x_{j}^{2}+\xi_{j}^{2}\right)\right] & \leq\sum_{j=0}^{2m}\tilde{\theta}_{j}^{2}\leq4\left[\xi_{0}^{2}+2\sum_{j=1}^{m}\mu_{j}\left(x_{j}^{2}+\xi_{j}^{2}\right)\right]\label{eq: shrinking tau consequences}
\end{align}
on $U_{\varepsilon_{\delta},\tau_{\delta},T}$ and set 
\[
\tilde{U}_{\varepsilon_{\delta},\tau_{\delta},T}\coloneqq\left\{ \sum_{j=0}^{2m}\tilde{\theta}_{j}^{2}<\left(\frac{\tau_{\delta}}{8}\right)^{2},\:x''^{2}+\xi''^{2}<\left(\frac{\varepsilon_{\delta}}{8}\right)^{2},\:-\frac{T}{8}<x_{0}<\frac{T}{8}\right\} \subset U_{\varepsilon_{\delta},\tau_{\delta},T}.
\]

It is clear from the above construction that a finite set $\left\{ \kappa_{p_{u}}\left(\tilde{U}_{\frac{2\varepsilon_{\delta}}{3},\frac{2\tau_{\delta}}{3},\frac{2T}{3}}\right)\right\} _{u=1}^{N_{h}}$,
$N_{h}=O\left(h^{-\delta}\right)$, covers $\Sigma_{\left[-\frac{\tau_{\delta}}{16},\frac{\tau_{\delta}}{16}\right]}^{D}\setminus\tilde{\Omega}_{\gamma_{v}}^{\delta}$.
Next define 
\begin{eqnarray*}
U_{0} & = & \overline{T^{*}X}\setminus\Sigma_{\left[-\frac{\tau_{\delta}}{32},\frac{\tau_{\delta}}{32}\right]}^{D}\\
U_{u} & = & \kappa_{p_{u}}\left(\tilde{U}_{\frac{2\varepsilon_{\delta}}{3},\frac{2\tau_{\delta}}{3},\frac{2T}{3}}\right)\\
V_{v} & = & \Sigma_{\left[-\frac{\tau_{\delta}}{8},\frac{\tau_{\delta}}{8}\right]}^{D}\cap\tilde{\Omega}_{\gamma_{v}}^{\delta}.
\end{eqnarray*}
Choose $\mathcal{P}=\left\{ A_{u}\in\Psi_{\delta}^{0}\right\} _{0\leq u\leq N}\cup\left\{ B_{v}\in\Psi_{\delta}^{0}\left(X\right)\right\} _{1\leq v\leq M}$
to be any microlocal partition of unity subordinate to this cover.
We then augment this partition by

\begin{align*}
W_{uu'}^{1} & =\kappa_{p_{u}}\left(\tilde{U}_{\varepsilon_{\delta},\tau_{\delta},T}\right)\subset\tilde{N}_{x_{u}}\\
W_{uv}^{2} & =\kappa_{p_{u}}\left(\tilde{U}_{\varepsilon_{\delta},\tau_{\delta},T}\right)\subset\tilde{N}_{x_{u}}\\
V_{uu'}^{1} & =\kappa_{p_{u}}\left(\tilde{U}_{4\varepsilon_{\delta},4\tau_{\delta},4T}\right)\subset\tilde{N}_{x_{u}}\\
V_{uv}^{2} & =\kappa_{p_{u}}\left(\tilde{U}_{4\varepsilon_{\delta},4\tau_{\delta},4T}\right)\subset\tilde{N}_{x_{u}}
\end{align*}
where $\left(u,u'\right)\in I_{\mathcal{P}}$ and $\left(u,v\right)\in J_{\mathcal{P}}$
lie in the corresponding index sets. Clearly the above satisfy \prettyref{eq: microlocal partition of unity-1-1-1},
\prettyref{eq:augmentation}. 

It remains to verify \prettyref{eq: extension time arb large-1-1}.
To this end, let $\chi\in C_{c}^{\infty}\left(\left[-4,4\right];\left[0,1\right]\right)$,
be a cutoff such that $\chi=1$ on $\left[-2,2\right]$ and $\left|\chi'\right|\leq1$.
For $\rho\in\left(0,\frac{1}{8}\right)$ fixed, define a function
$\varphi_{\rho}\in C^{\infty}\left(\left[-1,1\right]_{\theta_{0}};\left[0,1\right]\right)$
such that $\varphi_{\rho}\left(\theta_{0}\right)=\begin{cases}
1; & \textrm{for }\theta_{0}\in\left[1-\rho,1\right]\\
0; & \textrm{for }\theta_{0}\in\left[-1,1-2\rho\right]
\end{cases}$ and $\left|\varphi_{\rho}'\right|\leq\frac{2}{\rho}$ . Set 
\begin{align*}
\beta\left(\tilde{\theta}\right) & \coloneqq\sqrt{\left|\tilde{\theta}\right|^{2}-\varphi_{\rho}\left(\theta_{0}\right)\left|\tilde{\theta}'\right|^{2}}\\
 & =\sqrt{\left|\tilde{\theta}_{0}\right|^{2}+\left(1-\varphi_{\rho}\right)\left|\tilde{\theta}'\right|^{2}}.
\end{align*}
For \textbf{$\theta_{0}\in\left[-1,1-2\rho\right]$}, $\varphi_{\rho}=0$
and $\beta\left(\tilde{\theta}\right)=\left|\tilde{\theta}\right|$.
While for \textbf{$\theta_{0}\in\left[1-2\rho,1\right]$,} we have
$\left|\tilde{\theta}\right|\geq\sqrt{\left|\tilde{\theta}\right|^{2}-\varphi_{\rho}\left(\theta_{0}\right)\left|\tilde{\theta}'\right|^{2}}=\beta\left(\tilde{\theta}\right)=\sqrt{\left|\tilde{\theta}_{0}\right|^{2}+\left(1-\varphi_{\rho}\right)\left|\tilde{\theta}'\right|^{2}}\geq\left|\tilde{\theta}_{0}\right|=\theta_{0}\left|\tilde{\theta}\right|\geq\frac{1}{2}\left|\tilde{\theta}\right|$
for $\rho\in\left(0,\frac{1}{8}\right)$ as chosen. Thus $\left|\tilde{\theta}\right|\geq\beta\left(\tilde{\theta}\right)\geq\frac{1}{2}\left|\tilde{\theta}\right|$in
both cases and we may for each $1\leq u\leq N$, define the microlocal
weight function 
\begin{align*}
g_{u}\coloneqq & \left(\kappa_{p_{u}}^{-1}\right)^{*}\chi\left(\frac{16\beta\left(\tilde{\theta}\right)}{\tau_{\delta}}\right)\chi\left(\frac{16x_{0}}{T}\right)\chi\left(\frac{16\left|\left(x'',\xi''\right)\right|}{\varepsilon_{\delta}}\right)\in C_{c}^{\infty}\left(\kappa_{p_{u}}\left(\tilde{U}_{\varepsilon_{\delta},\tau_{\delta},T}\right)\right)
\end{align*}
 in terms of the relevant coordinates on $\tilde{U}_{\varepsilon_{\delta},\tau_{\delta},T}$. 

Next, with $\mathtt{v}_{t}^{\rho}\in C^{\infty}\left(S^{n-1};U\left(\mathbb{C}^{2^{m}}\right)\right)$
as in \prettyref{lem: almost diagonalization lemma}, we choose for
each $1\leq u\leq N$ a symbol $\tilde{\mathtt{v}}_{u}\in S_{\delta}^{0}\left(X;U\left(S\right)\right)$
satisfying
\[
\tilde{\mathtt{v}}_{u}\coloneqq\begin{cases}
\mathtt{v}_{8\left|\tilde{\theta}\right|/\tau_{\delta}}^{\rho}\left(\theta\right); & \left|\tilde{\theta}\right|<\frac{\tau_{\delta}}{8}\\
\mathtt{v}_{1}^{\rho}\left(\theta\right); & \left|\tilde{\theta}\right|\geq\frac{\tau_{\delta}}{8},
\end{cases}
\]
on $\kappa_{p_{u}}\left(\tilde{U}_{\varepsilon_{\delta},\tau_{\delta},T}\right)$,
with $\tilde{\theta},\theta$ given by \prettyref{eq: components symbol},
\prettyref{eq:circle valued function}. Since the conjugate of the
symbol $d$ of the Dirac operator is $e^{ic_{A}}de^{-ic_{A}}=\sigma_{j}\tilde{\theta}_{j}=i\left|\tilde{\theta}\right|c\left(\theta\right)$
by \prettyref{eq: conjugate symbol} on $\kappa_{p_{u}}\left(\tilde{U}_{\varepsilon,\tau,T}\right)$,
we may compute from \prettyref{lem: almost diagonalization lemma}
\begin{equation}
\left(\tilde{\mathtt{v}}_{u}\right)^{*}e^{ic_{A}}de^{-ic_{A}}\tilde{\mathtt{v}}_{u}=\begin{cases}
\tilde{\theta}_{0}\sigma_{0}-\left[\sum_{j=1}^{2m}\tilde{\theta}_{j}\sigma_{j}\right]; & \left|\tilde{\theta}\right|\leq\frac{\tau}{16}\\
\left|\tilde{\theta}\right|\mathtt{v}_{1}^{\rho}\left(\theta\right)^{*}c\left(\theta\right)\mathtt{v}_{1}^{\rho}\left(\theta\right); & \left|\tilde{\theta}\right|\geq\frac{\tau}{8}
\end{cases}\label{eq: conjugate symbol interpolation}
\end{equation}
on $\kappa_{p_{u}}\left(\tilde{U}_{\varepsilon,\tau,T}\right)$. Furthermore;
\prettyref{lem: almost diagonalization lemma} also gives 
\[
\left|\tilde{\theta}\right|\mathtt{v}_{1}^{\rho}\left(\theta\right)^{*}c\left(\theta\right)\mathtt{v}_{1}^{\rho}\left(\theta\right)=\begin{cases}
\left|\tilde{\theta}\right|\sigma_{0}; & \theta_{0}\leq1-\rho,\\
\left|\tilde{\theta}\right|\left[a_{0,1}^{\rho}\left(\theta_{0}\right)\sigma_{0}+a_{1,1}^{\rho}\left(\theta_{0}\right)\sum_{j=1}^{2m}\theta_{j}\sigma_{j}\right]; & \theta_{0}>1-\rho.
\end{cases}
\]
Choose $\mathtt{v}_{u}\in S_{\delta}^{0}\left(X;U\left(S\right)\right)$
to be a symbol satisfying
\begin{equation}
\mathtt{v}_{u}=e^{-ic_{A}}\tilde{\mathtt{v}}_{u}\label{eq: actual conjugate symbol}
\end{equation}
 on $\kappa_{p_{u}}\left(\tilde{U}_{\varepsilon,\tau,T}\right)$. 

We now compute for $\left|\tilde{\theta}\right|>\frac{\tau_{\delta}}{8}$,
$\theta_{0}\leq1-2\rho$;
\begin{align}
\left|H_{g_{u},\mathtt{v}_{u}}\left(d\right)\right| & =\left|\left\{ \mathtt{v}_{u}^{*}d\mathtt{v}_{u},g_{u}\right\} \right|\nonumber \\
 & =\left|\left\{ \left|\tilde{\theta}\right|,g_{u}\right\} \sigma_{0}\right|\nonumber \\
 & =\left|\frac{16}{T}\left\{ \left|\tilde{\theta}\right|,x_{0}\right\} \frac{\chi'\left(\frac{16x_{0}}{T}\right)}{\chi\left(\frac{16x_{0}}{T}\right)}g_{u}\right.\nonumber \\
 & +\left.\frac{16}{\varepsilon_{\delta}}\left\{ \left|\tilde{\theta}\right|,\left|\left(x'',\xi''\right)\right|\right\} \frac{\chi'\left(\frac{16\left|\left(x'',\xi''\right)\right|}{\varepsilon_{\delta}}\right)}{\chi\left(\frac{16\left|\left(x'',\xi''\right)\right|}{\varepsilon_{\delta}}\right)}g_{u}\right|\nonumber \\
 & \leq\frac{64}{T}\label{eq: comm comp 1}
\end{align}
using \prettyref{eq: shrinking tau consequences}.

While for $\left|\tilde{\theta}\right|>\frac{\tau_{\delta}}{8}$,
$1-2\rho\leq\theta_{0}\leq1-\rho$;
\begin{align}
\left|H_{g_{u},\mathtt{v}_{u}}\left(d\right)\right| & =\left|\left\{ \mathtt{v}_{u}^{*}d\mathtt{v}_{u},g_{u}\right\} \right|\nonumber \\
 & =\left|\left\{ \left|\tilde{\theta}\right|,g_{u}\right\} \sigma_{0}\right|\nonumber \\
 & =\left|\frac{16}{T}\left\{ \left|\tilde{\theta}\right|,x_{0}\right\} \frac{\chi'\left(\frac{16x_{0}}{T}\right)}{\chi\left(\frac{16x_{0}}{T}\right)}g_{u}+\right.\nonumber \\
 & +\frac{16}{\varepsilon_{\delta}}\left\{ \left|\tilde{\theta}\right|,\left|\left(x'',\xi''\right)\right|\right\} \frac{\chi'\left(\frac{16\left|\left(x'',\xi''\right)\right|}{\varepsilon_{\delta}}\right)}{\chi\left(\frac{16\left|\left(x'',\xi''\right)\right|}{\varepsilon_{\delta}}\right)}g_{u}\nonumber \\
 & +\frac{8\varphi_{\rho}}{\beta\tau_{\delta}}\left\{ \left|\tilde{\theta}\right|,\left|\tilde{\theta}'\right|^{2}\right\} \frac{\chi'\left(\frac{16\beta\left(\tilde{\theta}\right)}{\tau_{\delta}}\right)}{\chi\left(\frac{16\beta\left(\tilde{\theta}\right)}{\tau_{\delta}}\right)}g_{u}\nonumber \\
 & \left.+\frac{8\varphi_{\rho}'}{\beta\tau_{\delta}}\left\{ \left|\tilde{\theta}\right|,\frac{\tilde{\theta}_{0}}{\left|\tilde{\theta}\right|}\right\} \left|\tilde{\theta}'\right|^{2}\frac{\chi'\left(\frac{16\beta\left(\tilde{\theta}\right)}{\tau_{\delta}}\right)}{\chi\left(\frac{16\beta\left(\tilde{\theta}\right)}{\tau_{\delta}}\right)}g_{u}\right|\nonumber \\
 & \leq\frac{8^{4}}{\rho T}\label{eq: comm comp 2}
\end{align}
using \prettyref{eq: shrinking tau consequences}.

Now for $\left|\tilde{\theta}\right|>\frac{\tau_{\delta}}{8}$, $\theta_{0}>1-\rho$;
we compute
\begin{align*}
\left|H_{g_{u},\mathtt{v}_{u}}\left(d\right)\right| & =\left|\left\{ \mathtt{v}_{u}^{*}d\mathtt{v}_{u},g_{u}\right\} \right|\\
 & =\left|\left\{ \left|\tilde{\theta}\right|\left[a_{0,1}^{\rho}\left(\theta_{0}\right)\sigma_{0}+a_{1,1}^{\rho}\left(\theta_{0}\right)\sum_{j=1}^{2m}\theta_{j}\sigma_{j}\right],g_{u}\right\} \right|\\
 & \leq\left|\left\{ \left|\tilde{\theta}\right|,g_{u}\right\} \right|+\left|\left|\tilde{\theta}\right|\left\{ a_{0,1}^{\rho}\left(\theta_{0}\right)\sigma_{0}+a_{1,1}^{\rho}\left(\theta_{0}\right)\sum_{j=1}^{2m}\theta_{j}\sigma_{j},g_{u}\right\} \right|
\end{align*}
with
\begin{align}
\left|\left\{ \left|\tilde{\theta}\right|,g_{u}\right\} \right| & =\left|\frac{16}{T}\left\{ \left|\tilde{\theta}\right|,x_{0}\right\} \frac{\chi'\left(\frac{16x_{0}}{T}\right)}{\chi\left(\frac{16x_{0}}{T}\right)}g_{u}\right.\nonumber \\
 & +\frac{16}{\varepsilon_{\delta}}\left\{ \left|\tilde{\theta}\right|,\left|\left(x'',\xi''\right)\right|\right\} \frac{\chi'\left(\frac{16\left|\left(x'',\xi''\right)\right|}{\varepsilon_{\delta}}\right)}{\chi\left(\frac{16\left|\left(x'',\xi''\right)\right|}{\varepsilon_{\delta}}\right)}g_{u}\nonumber \\
 & \left.+\frac{8}{\beta\tau_{\delta}}\left\{ \left|\tilde{\theta}\right|,\tilde{\theta}_{0}^{2}\right\} \frac{\chi'\left(\frac{16\beta\left(\tilde{\theta}\right)}{\tau_{\delta}}\right)}{\chi\left(\frac{16\beta\left(\tilde{\theta}\right)}{\tau_{\delta}}\right)}g_{u}\right|\nonumber \\
 & \leq\frac{8^{4}}{T}\quad\label{eq:comm comp 3}
\end{align}
and
\begin{align*}
 & \left|\left|\tilde{\theta}\right|\left\{ a_{0,1}^{\rho}\left(\theta_{0}\right)\sigma_{0}+a_{1,1}^{\rho}\left(\theta_{0}\right)\sum_{j=1}^{2m}\theta_{j}\sigma_{j},g_{u}\right\} \right|\\
 & =\left|\frac{16\left|\tilde{\theta}\right|}{T}\left(a_{0,1}^{\rho}\right)'\sigma_{0}\left\{ \frac{\tilde{\theta}_{0}}{\left|\tilde{\theta}\right|},x_{0}\right\} \frac{\chi'\left(\frac{16x_{0}}{T}\right)}{\chi\left(\frac{16x_{0}}{T}\right)}g_{u}\right.\\
 & +\frac{16\left|\tilde{\theta}\right|}{T}\left(a_{1,1}^{\rho}\right)'\left(\sum_{j=1}^{2m}\theta_{j}\sigma_{j}\right)\left\{ \frac{\tilde{\theta}_{0}}{\left|\tilde{\theta}\right|},x_{0}\right\} \frac{\chi'\left(\frac{16x_{0}}{T}\right)}{\chi\left(\frac{16x_{0}}{T}\right)}g_{u}\\
 & +\frac{16\left|\tilde{\theta}\right|}{T}a_{1,1}^{\rho}\left(\sum_{j=1}^{2m}\left\{ \frac{\tilde{\theta}_{j}}{\left|\tilde{\theta}\right|},x_{0}\right\} \sigma_{j}\right)\frac{\chi'\left(\frac{16x_{0}}{T}\right)}{\chi\left(\frac{16x_{0}}{T}\right)}g_{u}\\
 & +16\left|\tilde{\theta}\right|\left(a_{0,1}^{\rho}\right)'\sigma_{0}\frac{1}{\varepsilon_{\delta}}\left\{ \frac{\tilde{\theta}_{0}}{\left|\tilde{\theta}\right|},\left|\left(x'',\xi''\right)\right|\right\} \frac{\chi'\left(\frac{16\left|\left(x'',\xi''\right)\right|}{\varepsilon_{\delta}}\right)}{\chi\left(\frac{16\left|\left(x'',\xi''\right)\right|}{\varepsilon_{\delta}}\right)}g_{u}\\
 & +16\left|\tilde{\theta}\right|\left(a_{1,1}^{\rho}\right)'\left(\sum_{j=1}^{2m}\theta_{j}\sigma_{j}\right)\frac{1}{\varepsilon_{\delta}}\left\{ \frac{\tilde{\theta}_{0}}{\left|\tilde{\theta}\right|},\left|\left(x'',\xi''\right)\right|\right\} \frac{\chi'\left(\frac{16\left|\left(x'',\xi''\right)\right|}{\varepsilon_{\delta}}\right)}{\chi\left(\frac{16\left|\left(x'',\xi''\right)\right|}{\varepsilon_{\delta}}\right)}g_{u}\\
 & +16\left|\tilde{\theta}\right|a_{1,1}^{\rho}\left(\sum_{j=1}^{2m}\frac{1}{\varepsilon_{\delta}}\left\{ \frac{\tilde{\theta}_{j}}{\left|\tilde{\theta}\right|},\left|\left(x'',\xi''\right)\right|\right\} \sigma_{j}\right)\frac{\chi'\left(\frac{16\left|\left(x'',\xi''\right)\right|}{\varepsilon_{\delta}}\right)}{\chi\left(\frac{16\left|\left(x'',\xi''\right)\right|}{\varepsilon_{\delta}}\right)}g_{u}\\
 & +\left|\tilde{\theta}\right|\frac{8}{\beta\tau_{\delta}}\left(a_{0,1}^{\rho}\right)'\sigma_{0}\left\{ \frac{\tilde{\theta}_{0}}{\left|\tilde{\theta}\right|},\tilde{\theta}_{0}^{2}\right\} \frac{\chi'\left(\frac{16\beta\left(\tilde{\theta}\right)}{\tau_{\delta}}\right)}{\chi\left(\frac{16\beta\left(\tilde{\theta}\right)}{\tau_{\delta}}\right)}g_{u}
\end{align*}
\begin{align}
 & +\left|\tilde{\theta}\right|\frac{8}{\beta\tau_{\delta}}\left(a_{0,1}^{\rho}\right)'\left(\sum_{j=1}^{2m}\theta_{j}\sigma_{j}\right)\left\{ \frac{\tilde{\theta}_{0}}{\left|\tilde{\theta}\right|},\tilde{\theta}_{0}^{2}\right\} \frac{\chi'\left(\frac{16\beta\left(\tilde{\theta}\right)}{\tau_{\delta}}\right)}{\chi\left(\frac{16\beta\left(\tilde{\theta}\right)}{\tau_{\delta}}\right)}g_{u}\nonumber \\
 & \left.+\left|\tilde{\theta}\right|\frac{8}{\beta\tau_{\delta}}a_{1,1}^{\rho}\left(\sum_{j=1}^{2m}\left\{ \frac{\tilde{\theta}_{j}}{\left|\tilde{\theta}\right|},\tilde{\theta}_{0}^{2}\right\} \sigma_{j}\right)\frac{\chi'\left(\frac{16\beta\left(\tilde{\theta}\right)}{\tau_{\delta}}\right)}{\chi\left(\frac{16\beta\left(\tilde{\theta}\right)}{\tau_{\delta}}\right)}g_{u}\right|\nonumber \\
 & \leq\left(\frac{8}{\rho}\right)^{2}\frac{8^{4}}{T}.\label{eq: comm comp 4}
\end{align}
using \prettyref{lem: almost diagonalization lemma} and \prettyref{eq: shrinking tau consequences}. 

Now for $\frac{\tau_{\delta}}{16}\leq\left|\tilde{\theta}\right|\leq\frac{\tau_{\delta}}{8}$,
$\chi\left(\frac{16\beta\left(\tilde{\theta}\right)}{\tau_{\delta}}\right)=1$
and we may compute 
\begin{align*}
\left|H_{g_{u},\mathtt{v}_{u}}\left(d\right)\right| & =\left|\left\{ \mathtt{v}_{u}^{*}d\mathtt{v}_{u},g_{u}\right\} \right|\\
 & =\left|\left\{ \mathtt{v}_{8\left|\tilde{\theta}\right|/\tau_{\delta}}^{\rho}{}^{*}\left[\sigma_{j}\tilde{\theta}_{j}\right]\mathtt{v}_{8\left|\tilde{\theta}\right|/\tau_{\delta}}^{\rho},g_{u}\right\} \right|\\
 & =\left|\frac{16}{T}\mathtt{v}_{8\left|\tilde{\theta}\right|/\tau_{\delta}}^{\rho}{}^{*}\left[\sigma_{j}\left\{ \tilde{\theta}_{j},x_{0}\right\} \right]\mathtt{v}_{8\left|\tilde{\theta}\right|/\tau_{\delta}}^{\rho}\frac{\chi'\left(\frac{16x_{0}}{T}\right)}{\chi\left(\frac{16x_{0}}{T}\right)}g_{u}\right.\\
 & +\frac{128}{\tau_{\delta}T}\left[\left.\partial_{t}\mathtt{v}_{t}^{\rho}\right|_{t=8\left|\tilde{\theta}\right|/\tau_{\delta}}\right]^{*}\left[\sigma_{j}\tilde{\theta}_{j}\right]\left[\mathtt{v}_{8\left|\tilde{\theta}\right|/\tau_{\delta}}^{\rho}\right]\left\{ \left|\tilde{\theta}\right|,x_{0}\right\} \frac{\chi'\left(\frac{16x_{0}}{T}\right)}{\chi\left(\frac{16x_{0}}{T}\right)}g_{u}\\
 & +\frac{128}{\tau_{\delta}T}\left[\mathtt{v}_{8\left|\tilde{\theta}\right|/\tau_{\delta}}^{\rho}\right]^{*}\left[\sigma_{j}\tilde{\theta}_{j}\right]\left[\left.\partial_{t}\mathtt{v}_{t}^{\rho}\right|_{t=8\left|\tilde{\theta}\right|/\tau_{\delta}}\right]\left\{ \left|\tilde{\theta}\right|,x_{0}\right\} \frac{\chi'\left(\frac{16x_{0}}{T}\right)}{\chi\left(\frac{16x_{0}}{T}\right)}g_{u}\\
 & +\frac{16}{T}\left[\left.\partial_{\theta_{k}}\mathtt{v}_{t}^{\rho}\right|_{t=8\left|\tilde{\theta}\right|/\tau_{\delta}}\right]^{*}\left[\sigma_{j}\tilde{\theta}_{j}\right]\left[\mathtt{v}_{8\left|\tilde{\theta}\right|/\tau_{\delta}}^{\rho}\right]\left\{ \frac{\tilde{\theta}_{k}}{\left|\tilde{\theta}\right|},x_{0}\right\} \frac{\chi'\left(\frac{16x_{0}}{T}\right)}{\chi\left(\frac{16x_{0}}{T}\right)}g_{u}\\
 & +\frac{16}{T}\left[\mathtt{v}_{8\left|\tilde{\theta}\right|/\tau_{\delta}}^{\rho}\right]^{*}\left[\sigma_{j}\tilde{\theta}_{j}\right]\left[\left.\partial_{\theta_{k}}\mathtt{v}_{t}^{\rho}\right|_{t=8\left|\tilde{\theta}\right|/\tau_{\delta}}\right]\left\{ \frac{\tilde{\theta}_{k}}{\left|\tilde{\theta}\right|},x_{0}\right\} \frac{\chi'\left(\frac{16x_{0}}{T}\right)}{\chi\left(\frac{16x_{0}}{T}\right)}g_{u}\\
 & +\frac{16}{\varepsilon_{\delta}}\mathtt{v}_{8\left|\tilde{\theta}\right|/\tau_{\delta}}^{\rho}{}^{*}\left[\sigma_{j}\left\{ \tilde{\theta}_{j},\left|\left(x'',\xi''\right)\right|\right\} \right]\mathtt{v}_{8\left|\tilde{\theta}\right|/\tau_{\delta}}^{\rho}\frac{\chi'\left(\frac{16\left|\left(x'',\xi''\right)\right|}{\varepsilon_{\delta}}\right)}{\chi\left(\frac{16\left|\left(x'',\xi''\right)\right|}{\varepsilon_{\delta}}\right)}g_{u}
\end{align*}
\begin{align}
 & +\frac{128}{\tau_{\delta}\varepsilon_{\delta}}\left[\left.\partial_{t}\mathtt{v}_{t}^{\rho}\right|_{t=8\left|\tilde{\theta}\right|/\tau_{\delta}}\right]^{*}\left[\sigma_{j}\tilde{\theta}_{j}\right]\left[\mathtt{v}_{8\left|\tilde{\theta}\right|/\tau_{\delta}}^{\rho}\right]\left\{ \left|\tilde{\theta}\right|,\left|\left(x'',\xi''\right)\right|\right\} \frac{\chi'\left(\frac{16\left|\left(x'',\xi''\right)\right|}{\varepsilon_{\delta}}\right)}{\chi\left(\frac{16\left|\left(x'',\xi''\right)\right|}{\varepsilon_{\delta}}\right)}g_{u}\nonumber \\
 & +\frac{128}{\tau_{\delta}\varepsilon_{\delta}}\left[\mathtt{v}_{8\left|\tilde{\theta}\right|/\tau_{\delta}}^{\rho}\right]^{*}\left[\sigma_{j}\tilde{\theta}_{j}\right]\left[\left.\partial_{t}\mathtt{v}_{t}^{\rho}\right|_{t=8\left|\tilde{\theta}\right|/\tau_{\delta}}\right]\left\{ \left|\tilde{\theta}\right|,\left|\left(x'',\xi''\right)\right|\right\} \frac{\chi'\left(\frac{16\left|\left(x'',\xi''\right)\right|}{\varepsilon_{\delta}}\right)}{\chi\left(\frac{16\left|\left(x'',\xi''\right)\right|}{\varepsilon_{\delta}}\right)}g_{u}\nonumber \\
 & +\frac{16}{\varepsilon_{\delta}}\left[\left.\partial_{\theta_{k}}\mathtt{v}_{t}^{\rho}\right|_{t=8\left|\tilde{\theta}\right|/\tau_{\delta}}\right]^{*}\left[\sigma_{j}\tilde{\theta}_{j}\right]\left[\mathtt{v}_{8\left|\tilde{\theta}\right|/\tau_{\delta}}^{\rho}\right]\left\{ \frac{\tilde{\theta}_{k}}{\left|\tilde{\theta}\right|},\left|\left(x'',\xi''\right)\right|\right\} \frac{\chi'\left(\frac{16\left|\left(x'',\xi''\right)\right|}{\varepsilon_{\delta}}\right)}{\chi\left(\frac{16\left|\left(x'',\xi''\right)\right|}{\varepsilon_{\delta}}\right)}g_{u}\nonumber \\
 & \left.+\frac{16}{\varepsilon_{\delta}}\left[\mathtt{v}_{8\left|\tilde{\theta}\right|/\tau_{\delta}}^{\rho}\right]^{*}\left[\sigma_{j}\tilde{\theta}_{j}\right]\left[\left.\partial_{\theta_{k}}\mathtt{v}_{t}^{\rho}\right|_{t=8\left|\tilde{\theta}\right|/\tau_{\delta}}\right]\left\{ \frac{\tilde{\theta}_{k}}{\left|\tilde{\theta}\right|},\left|\left(x'',\xi''\right)\right|\right\} \frac{\chi'\left(\frac{16\left|\left(x'',\xi''\right)\right|}{\varepsilon_{\delta}}\right)}{\chi\left(\frac{16\left|\left(x'',\xi''\right)\right|}{\varepsilon_{\delta}}\right)}g_{u}\right|\nonumber \\
\leq & \left(\frac{8}{\rho}\right)^{4}\frac{8^{4}}{T}\label{eq: comm comp 5}
\end{align}
using \prettyref{lem: almost diagonalization lemma} and \prettyref{eq: shrinking tau consequences}.

Finally for $\left|\tilde{\theta}\right|\leq\frac{\tau_{\delta}}{16}$
again $\chi\left(\frac{16\beta\left(\tilde{\theta}\right)}{\tau_{\delta}}\right)=1$
and we may use \prettyref{eq: conjugate symbol interpolation} to
compute 
\begin{align}
\left|H_{g_{u},\mathtt{v}_{u}}\left(d\right)\right| & =\left|\left\{ \mathtt{v}_{u}^{*}d\mathtt{v}_{u},g_{u}\right\} \right|\nonumber \\
 & =\left|\left\{ \tilde{\theta}_{0}\sigma_{0}-\left[\sum_{j=1}^{2m}\tilde{\theta}_{j}\sigma_{j}\right],g_{u}\right\} \right|\nonumber \\
 & =\left|\frac{16}{T}\left\{ \tilde{\theta}_{0},x_{0}\right\} \sigma_{0}\frac{\chi'\left(\frac{16x_{0}}{T}\right)}{\chi\left(\frac{16x_{0}}{T}\right)}g_{u}\right.\nonumber \\
 & -\frac{16}{T}\left(\sum_{j=1}^{2m}\left\{ \tilde{\theta}_{j},x_{0}\right\} \sigma_{j}\right)\frac{\chi'\left(\frac{16x_{0}}{T}\right)}{\chi\left(\frac{16x_{0}}{T}\right)}g_{u}\nonumber \\
 & +\frac{16}{\varepsilon_{\delta}}\left\{ \tilde{\theta}_{0},\left|\left(x'',\xi''\right)\right|\right\} \sigma_{0}\frac{\chi'\left(\frac{16\left|\left(x'',\xi''\right)\right|}{\varepsilon_{\delta}}\right)}{\chi\left(\frac{16\left|\left(x'',\xi''\right)\right|}{\varepsilon_{\delta}}\right)}g_{u}\nonumber \\
 & \left.-\frac{16}{\varepsilon_{\delta}}\left(\sum_{j=1}^{2m}\left\{ \tilde{\theta}_{j},\left|\left(x'',\xi''\right)\right|\right\} \sigma_{j}\right)\frac{\chi'\left(\frac{16\left|\left(x'',\xi''\right)\right|}{\varepsilon_{\delta}}\right)}{\chi\left(\frac{16\left|\left(x'',\xi''\right)\right|}{\varepsilon_{\delta}}\right)}g_{u}\right|\nonumber \\
 & \leq\frac{8^{2}}{T}\label{eq: comm comp 6}
\end{align}
using \prettyref{eq: shrinking tau consequences}. Since $\rho\in\left(0,\frac{1}{8}\right)$
is fixed and $T$ arbitrary, the proposition follows from \prettyref{eq: comm comp 1}-\prettyref{eq: comm comp 6}.
\end{proof}
Next, given an augmented $\left(\Omega,\tau,\delta\right)$-partition
of unity $\left(\mathcal{P};\mathcal{V},\mathcal{W}\right)$ the trace
\prettyref{eq: non-local dynamical trace} from the Helffer-Sjöstrand
formula is clearly the sum of traces of the following four kinds
\begin{align}
\mathcal{T}_{A_{u},A_{v}}^{\theta}\left(D\right)\coloneqq & \frac{1}{\pi}\int_{\mathbb{C}}\bar{\partial}\tilde{f}\left(z\right)\check{\theta}\left(\frac{\lambda-z}{\sqrt{h}}\right)\textrm{tr }\left[A_{u}\left(\frac{1}{\sqrt{h}}D-z\right)^{-1}A_{v}\right]dzd\bar{z}\nonumber \\
\mathcal{T}_{A_{u},B_{v}}^{\theta}\left(D\right)\coloneqq & \frac{1}{\pi}\int_{\mathbb{C}}\bar{\partial}\tilde{f}\left(z\right)\check{\theta}\left(\frac{\lambda-z}{\sqrt{h}}\right)\textrm{tr }\left[A_{u}\left(\frac{1}{\sqrt{h}}D-z\right)^{-1}B_{v}\right]dzd\bar{z}\nonumber \\
\mathcal{T}_{B_{v},A_{u}}^{\theta}\left(D\right)\coloneqq & \frac{1}{\pi}\int_{\mathbb{C}}\bar{\partial}\tilde{f}\left(z\right)\check{\theta}\left(\frac{\lambda-z}{\sqrt{h}}\right)\textrm{tr }\left[B_{v}\left(\frac{1}{\sqrt{h}}D-z\right)^{-1}A_{u}\right]dzd\bar{z}\nonumber \\
\mathcal{T}_{B_{u},B_{v}}^{\theta}\left(D\right)\coloneqq & \frac{1}{\pi}\int_{\mathbb{C}}\bar{\partial}\tilde{f}\left(z\right)\check{\theta}\left(\frac{\lambda-z}{\sqrt{h}}\right)\textrm{tr }\left[B_{u}\left(\frac{1}{\sqrt{h}}D-z\right)^{-1}A_{v}\right]dzd\bar{z}.\label{eq: four traces}
\end{align}
Next we state a modification of \cite{Savale2017-Koszul} Lemma 3.3.
Below $V_{uu'}^{1},\,W_{uu'}^{1},\,T_{uu'}$ are as in \prettyref{eq:augmentation},
\prettyref{eq: exit time 1}.
\begin{lem}
\label{lem:changing symbol near crit set} Let $D'\in\Psi_{\textrm{cl}}^{1}\left(X;S\right)$
be essentially self-adjoint such that $D=D'$ microlocally on $V_{uu'}^{1}$
. Then for $\theta\in C_{c}^{\infty}\left(\left(T_{0},T_{uu'}\right);\left[0,1\right]\right)$
one has 
\[
\mathcal{T}_{A_{u},A_{v}}^{\theta}\left(D\right)=\mathcal{T}_{A_{u},A_{v}}^{\theta}\left(D'\right)\quad\textrm{mod }h^{\infty}.
\]
\end{lem}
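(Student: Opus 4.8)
The plan is to adapt the microlocal finite--propagation argument of \cite{Savale2017-Koszul} Lemma~3.3; indeed the entire apparatus of augmented partitions and exit times developed above is engineered precisely so that this lemma goes through, the only new points being the bookkeeping for the augmented tube $V_{uu'}^{1}\supset W_{uu'}^{1}$ and for the $h$--dependent supports of the exotic calculus $\Psi_{\delta}^{0}$. First I would unfold the resolvent representation \prettyref{eq: four traces} back into the half--wave group: since $\check\theta$ is entire and $\theta$ is supported in $\left(T_{0},T_{uu'}\right)$, the Helffer--Sj\"ostrand formula for $f\!\left(D/\sqrt{h}\right)$ together with $\check\theta\left(\tfrac{\lambda-D/\sqrt{h}}{\sqrt{h}}\right)=\tfrac{1}{2\pi}\int e^{i\lambda\xi/\sqrt{h}}e^{-i\xi D/h}\theta\left(\xi\right)d\xi$ gives
\begin{equation*}
\mathcal{T}_{A_{u},A_{v}}^{\theta}\left(D\right)=\frac{1}{2\pi}\int e^{i\lambda\xi/\sqrt{h}}\,\theta\left(\xi\right)\,\textrm{tr}\left[A_{u}\,f\!\left(\tfrac{D}{\sqrt{h}}\right)e^{-i\xi D/h}A_{v}\right]d\xi ,
\end{equation*}
so that only finite propagation times $\xi\in\textrm{supp}\,\theta$, all strictly below $T_{uu'}$, occur. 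With $E\coloneqq D-D'$ one has $WF\left(E\right)\cap V_{uu'}^{1}=\emptyset$, and it suffices to show, uniformly for $\xi\in\textrm{supp}\,\theta$,
\begin{equation*}
\textrm{tr}\left[A_{u}f\!\left(\tfrac{D}{\sqrt{h}}\right)e^{-i\xi D/h}A_{v}\right]=\textrm{tr}\left[A_{u}f\!\left(\tfrac{D'}{\sqrt{h}}\right)e^{-i\xi D'/h}A_{v}\right]+O\left(h^{\infty}\right),
\end{equation*}
and then integrate against $\theta$.

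Next I would localise everything microlocally inside $V_{uu'}^{1}$. Fix $T'$ with $\sup\textrm{supp}\,\theta<T'<T_{uu'}$; by the definition \prettyref{eq: exit time 1} of the exit time there is a pair $\left(g,\mathtt{v}\right)\in\mathcal{G}_{uu'}\times S_{\delta}^{0}\left(X;U\left(S\right)\right)$ with $\sup\left\Vert\left\{\mathtt{v}^{*}d\mathtt{v},g\right\}\right\Vert<1/T'$, the bracket computed in the chosen trivialisation on $N_{x_{u}}$. By \prettyref{lem: almost diagonalization lemma} and the pre--normal form \prettyref{eq: conjugate symbol}, conjugation by $\mathtt{v}^{W}$ turns $D$ into an operator whose symbol is block--scalar with eigenvalues $\pm\left|d\right|$ up to a remainder vanishing to second order on $\Sigma_{0}^{D}$; hence, as $g\equiv1$ on $W_{uu'}^{1}\supset WF\left(A_{u}\right)\cap WF\left(A_{v}\right)$ and $g\equiv0$ off $V_{uu'}^{1}$, the function $g$ changes at rate $<1/T'+o\left(1\right)$ along every null bicharacteristic of $D$, so the time--$\left[-T',T'\right]$ flow--out of $W_{uu'}^{1}$ stays inside $\left\{g>0\right\}\subseteq V_{uu'}^{1}$. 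Picking a nested family of cut--off symbols $\psi,\psi_{1},\psi_{2}\in S_{\delta}^{0}\left(X;\left[0,1\right]\right)$ equal to $1$ on that flow--out and supported in $V_{uu'}^{1}$, Egorov's theorem for the conjugated generator, iterated in the $\Psi_{\delta}$ calculus exactly as in \cite{Savale2017-Koszul} Lemma~3.3, lets one insert these cut--offs: inside the trace, $e^{-i\xi D/h}$ may be replaced by $\psi_{1}^{W}e^{-i\xi D/h}\psi_{2}^{W}$ and a $\psi^{W}$ moved up against each of $A_{u},A_{v}$ and $f\!\left(D/\sqrt{h}\right)$, all modulo $O\left(h^{\infty}\right)$.

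Finally I would swap $D$ for $D'$. On $V_{uu'}^{1}$ they agree microlocally, so $\psi^{W}\left(D-D'\right)\psi^{W}=O\left(h^{\infty}\right)$, and via Helffer--Sj\"ostrand together with the resolvent identity sandwiched between cut--offs one gets $\psi^{W}f\!\left(\tfrac{D}{\sqrt{h}}\right)\psi^{W}=\psi^{W}f\!\left(\tfrac{D'}{\sqrt{h}}\right)\psi^{W}+O\left(h^{\infty}\right)$. Duhamel's formula
\begin{equation*}
\psi_{1}^{W}\!\left(e^{-i\xi D/h}-e^{-i\xi D'/h}\right)\!\psi_{2}^{W}=-\frac{i}{h}\int_{0}^{\xi}\psi_{1}^{W}e^{-i\left(\xi-s\right)D/h}\left(D-D'\right)e^{-isD'/h}\psi_{2}^{W}\,ds ,
\end{equation*}
together with a further insertion of trapping cut--offs on either side of $\left(D-D'\right)$ — licit by the same finite--propagation argument, since the flow--outs of the supports under both $e^{-i\xi D/h}$ and $e^{-i\xi D'/h}$ remain in $V_{uu'}^{1}$ for times $\leq\xi<T'$ — produces the factor $\psi^{W}\left(D-D'\right)\psi^{W}=O\left(h^{\infty}\right)$, so that $\psi_{1}^{W}e^{-i\xi D/h}\psi_{2}^{W}=\psi_{1}^{W}e^{-i\xi D'/h}\psi_{2}^{W}+O\left(h^{\infty}\right)$. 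Assembling the three steps and integrating against $\theta\left(\xi\right)$ then yields $\mathcal{T}_{A_{u},A_{v}}^{\theta}\left(D\right)=\mathcal{T}_{A_{u},A_{v}}^{\theta}\left(D'\right)\bmod h^{\infty}$.

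The hard part — and the reason the scalar theory does not apply verbatim — is that $D$ is not of scalar principal type: the eigenvalues $\pm\left|d\right|$ of its symbol fail to be smooth exactly on the characteristic variety $\Sigma_{0}^{D}$, which is where $WF\left(A_{u}\right)$, $WF\left(A_{v}\right)$, $W_{uu'}^{1}$ and all the relevant dynamics are concentrated. Thus Egorov's theorem and finite propagation for $e^{-i\xi D/h}$ cannot be read off a single Hamilton flow but must be routed through the microlocal almost--diagonaliser $\mathtt{v}$ of \prettyref{lem: almost diagonalization lemma} and the pre--normal form \prettyref{eq: conjugate symbol}, and in particular through the interpolating functions $a_{0,t}^{\rho},a_{1,t}^{\rho}$ that make the conjugated operator genuinely scalar where $\left|d\right|$ degenerates. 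Carrying this through the iterated Duhamel/commutator expansion while keeping every symbol in $\Psi_{\delta}^{0}$ — the cut--offs having $\left(x,\xi\right)$--derivatives that grow like negative powers of $h^{\delta}$ after the dilation $\varrho_{\delta}$ — is the principal bookkeeping difficulty, and it is here that the restriction $\delta\in\left(0,\tfrac{1}{2}\right)$ and the quantitative gradient bounds (1)--(2) of \prettyref{lem: almost diagonalization lemma} enter.
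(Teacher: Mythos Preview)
Your approach is correct and is essentially the same as the paper's: reduce to the finite--propagation/trapping--function argument of \cite{Savale2017-Koszul} Lemma~3.3, handling the new unitary $\mathtt{v}$ in the definition \prettyref{eq: exit time 1} and the exotic class $\Psi_{\delta}^{0}$. The paper's presentation is slightly more economical in that it quantizes $\mathtt{v}$ to a unitary $\mathtt{V}\in\Psi_{\delta}^{0}$, conjugates $D,D',A_{u},A_{v}$ by $\mathtt{V}$ once at the outset (noting $H_{g,\mathtt{v}}d=H_{g}\left(\mathtt{v}^{*}d\mathtt{v}\right)$, so the old trapping--time definition applied to $\mathtt{V}^{*}D\mathtt{V}$ recovers $T_{uu'}$), and then invokes the cited lemma verbatim for the conjugated operators --- whereas you interleave the conjugation with the Duhamel/Egorov steps, which amounts to re-deriving what that cited lemma already contains.
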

\begin{proof}
The lemma is essentially the same as \cite{Savale2017-Koszul} Lemma
3.3 with a couple of changes. First our cutoffs lie in the more exotic
class $\Psi_{\delta}^{0}\left(X\right)$. However these have the same
basic composition and wavefront properties needed in the proof of
\cite{Savale2017-Koszul}. Next our definition of trapping time \prettyref{eq: exit time 1}
is more general than that in \cite{Savale2017-Koszul} eq. 3.5 since
an additional conjugation by a unitary symbol $\mathtt{v}\in S_{\delta}^{0}\left(X;U\left(E\right)\right)$
is allowed in the definition \prettyref{eq: exit time 1} here. This
is however easily overcome; let $\theta\in C_{c}^{\infty}\left(\left(T_{0}',T_{uu'}^{0}\right);\left[0,1\right]\right)$
be such that $T_{0}<T_{0}',\:T_{uu'}^{0}<T_{uu'}$. There hence exists
$\left(g,\mathtt{v}\right)\in\mathcal{G}_{uu'}\times S^{0}\left(X;U\left(S\right)\right)$
with $\left|H_{g,\mathtt{v}}d\right|<\frac{1}{S_{uu'}}$. We choose
$\mathtt{V}\in\Psi_{\delta}^{0}\left(X;S\right)$ unitary with $\sigma\left(\mathtt{V}\right)=\left[\mathtt{v}\right]$
and note $H_{g,\mathtt{v}}d=H_{g}\left(\mathtt{V}^{*}d\mathtt{V}\right)$
in terms a quantization defined using the chosen coordinates/trivialization
on $N_{x_{u}}$. Now, the proof of \cite{Savale2017-Koszul} Lemma
3.3 carries through with the conjugates $\mathtt{V}^{*}D\mathtt{V}$,
$\mathtt{V}^{*}D'\mathtt{V}$, $\mathtt{V}^{*}A_{u}\mathtt{V}$ and
$\mathtt{V}^{*}A_{v}\mathtt{V}$.
\end{proof}
We also note that similar lemmas as above hold for the traces $\mathcal{T}_{A_{u},B_{v}}^{\theta}\left(D\right)$
and $\mathcal{T}_{B_{v},A_{u}}^{\theta}\left(D\right)$ in \prettyref{eq: four traces}.
Next we show that the first three traces in \prettyref{eq: four traces}
are $O\left(h^{\infty}\right)$ when $\textrm{spt}\left(\theta\right)$
is contained within the extension time.
\begin{lem}
\label{lem: three traces h infty}Let $\left(\mathcal{P};\mathcal{V},\mathcal{W}\right)$
be an augmented $\left(\Omega,\tau,\delta\right)$-partition of unity.
Then for each $\theta\in C_{c}^{\infty}\left(\left(T_{0},T\right);\left[-1,1\right]\right)$
with $T<T_{\left(\mathcal{P};\mathcal{V},\mathcal{W}\right)}$ one
has 
\[
\mathcal{T}_{A_{u},A_{v}}^{\theta}\left(D\right),\:\mathcal{T}_{A_{u},B_{v}}^{\theta}\left(D\right),\textrm{ }\mathcal{T}_{B_{v},A_{u}}^{\theta}\left(D\right)=O\left(h^{\infty}\right).
\]
\end{lem}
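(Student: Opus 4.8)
The plan is to reduce all three traces, via the cyclicity of the trace, to products of two pseudodifferential operators whose wavefront sets are either disjoint (off the index sets $I_{\mathcal P},J_{\mathcal P}$) or confined to one of the overlap sets $W^1_{uu'},W^2_{uv}$, and then to dispatch the latter case by a non-return argument for the half-wave propagator of a trapping modification of $D$, feeding the comparison through \prettyref{lem:changing symbol near crit set}.

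First I would rewrite the traces in \prettyref{eq: four traces} in half-wave form. Since $\theta$ is compactly supported, $\check\theta$ is entire, so $z\mapsto\check\theta\!\left(\tfrac{\lambda-z}{\sqrt h}\right)\tilde f(z)$ is an almost analytic extension of $x\mapsto\check\theta\!\left(\tfrac{\lambda-x}{\sqrt h}\right)f(x)$, and the Helffer--Sj\"ostrand formula together with $[e^{-itD/h},f(D/\sqrt h)]=0$ and cyclicity of the trace collapses the first of them to
\[
\mathcal T^\theta_{A_u,A_v}(D)=\frac{1}{2\pi}\int_{\mathbb R}e^{\frac{i}{\sqrt h}\lambda t}\,\theta(t)\,\textrm{tr}\!\left[A_vA_u\,e^{-\frac{i}{h}tD}f\!\left(\tfrac{D}{\sqrt h}\right)\right]dt ,
\]
with $A_vA_u$ replaced by $A_uB_v$, resp.\ $B_vA_u$, for the other two. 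If the relevant pair of indices does not lie in $I_{\mathcal P}$ (resp.\ $J_{\mathcal P}$) then $WF(A_vA_u)\subset WF(A_u)\cap WF(A_v)=\emptyset$ (resp.\ $WF(A_uB_v)=\emptyset$), so $A_vA_u\in h^\infty\Psi^{-\infty}$ and, $e^{-itD/h}$ being unitary and $f(D/\sqrt h)$ bounded, the trace is $O(h^\infty)$.

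Second, fix a pair in $I_{\mathcal P}$, say $(u,u')$, and set $B=A_{u'}A_u$, microsupported in $W^1_{uu'}\subset V^1_{uu'}\Subset\Sigma^D_{[-2\tau_\delta,2\tau_\delta]}\cap\tilde N_{x_u}$. Since $\textrm{spt}(\theta)\subset(T_0,T)$ with $T<T_{(\mathcal P;\mathcal V,\mathcal W)}\le T_{uu'}$, we may choose $T_0<T'<T''<T_{uu'}$ with $\textrm{spt}(\theta)\subset(T',T'')$ and a pair $(g,\mathtt{v})\in\mathcal G_{uu'}\times S_\delta^0(X;U(S))$ with $|H_{g,\mathtt{v}}d|<1/T''$; lift $\mathtt{v}$ to a unitary $\mathtt{V}\in\Psi_\delta^0(X;S)$, $\sigma(\mathtt{V})=[\mathtt{v}]$, so that in the chart $N_{x_u}$ the Hamiltonian flow of $\sigma(\mathtt{V}^*D\mathtt{V})=\mathtt{v}^*d\mathtt{v}$ moves $g$ at speed $<1/T''$. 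Now build $D'\in\Psi_{\textrm{cl}}^1(X;S)$, essentially self-adjoint, with $\mathtt{V}^*D'\mathtt{V}=\mathtt{V}^*D\mathtt{V}$ microlocally on $V^1_{uu'}$ and $\mathtt{V}^*D'\mathtt{V}$ equal to the smooth scalar model $\xi_0\sigma_0$ outside, glued with $g$, and $D'=D$ away from $N_{x_u}$. Two facts then hold: the symbol $\sigma(\mathtt{V}^*D'\mathtt{V})$ is smooth along the part of $\Sigma^D_0$ that the trace sees --- by \prettyref{lem: almost diagonalization lemma} the non-smooth locus of $\sigma(D)$ is confined to the thin cone $\{\theta_0>1-\rho\}$, which the chart construction places outside the modification --- so $e^{-\frac i h t\mathtt{V}^*D'\mathtt{V}}$ is a genuine semiclassical Fourier integral operator and Egorov holds in $\Psi_\delta^\bullet$; and the $\sigma(\mathtt{V}^*D'\mathtt{V})$-flow has no closed bicharacteristic of period $\le T$ meeting $W^1_{uu'}$ --- a trajectory out of $W^1_{uu'}$ that leaves $V^1_{uu'}$ thereafter follows the $x_0$-translation, exits the tube (in time $<T''$, since $g$ cannot fall from $1$ to $0$ faster than that) and, that flow being monotone in $x_0$, never returns; while a trajectory that stays in $V^1_{uu'}\subset\tilde N_{x_u}$ is a $D$-bicharacteristic, and $\tilde N_{x_u}$ carries none of period $\le T$ because $x_u\notin\Gamma$ and the charts of \prettyref{lem: construction of Darboux charts} are built on Reeb tubes with no self-intersections over the relevant length (the bicharacteristic flow of $D$ on $\Sigma^D_0$ projecting to the Reeb flow on the base). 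Passing to the conjugated operator $\mathtt{V}^*D'\mathtt{V}$ (which does not change the trace) and applying the standard stationary-phase evaluation of the trace of a wave propagator against an energy cutoff, we get $\textrm{tr}[B\,e^{-\frac i h tD'}f(D'/\sqrt h)]=O(h^\infty)$ for each $t\in\textrm{spt}(\theta)$, uniformly, hence the $t$-integral is $O(h^\infty)$. Finally \prettyref{lem:changing symbol near crit set} (and its stated analogues for $\mathcal T^\theta_{A_u,B_v}$, $\mathcal T^\theta_{B_v,A_u}$) gives $\mathcal T^\theta_{A_u,A_v}(D)=\mathcal T^\theta_{A_u,A_v}(D')$ mod $h^\infty$, since $D=D'$ microlocally on $V^1_{uu'}$ and $\textrm{spt}(\theta)\subset(T_0,T_{uu'})$; summing over the finitely many pairs completes the proof.

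The main obstacle is precisely the obstruction this whole section is built to circumvent: $\sigma(D)$ is not smooth along $\Sigma^D_0$ --- its eigenvalues $\pm|\tilde\theta|$ are only Lipschitz there --- so there is no Fourier integral propagator for $e^{-itD/h}$ and Egorov fails exactly where the dynamics lives. The remedy has two ingredients, both already available: conjugate by the unitary symbol of \prettyref{lem: almost diagonalization lemma}, which pushes the singularity into a controllable thin cone, and exploit \prettyref{lem:changing symbol near crit set} together with the largeness of the trapping time to replace $D$ by a smooth-symbol, genuinely non-returning $D'$ away from the set $V^1_{uu'}$ on which the trace still "sees" the singularity. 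The residual care is bookkeeping in the exotic calculus: each composition in $\Psi_\delta$ costs $h^{1-2\delta}$ rather than $h$, and all constants must stay uniform across the $O(h^{-\delta})$ members of the partition --- harmless here, since only $O(h^\infty)$ is claimed.
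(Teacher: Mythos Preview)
Your framework matches the paper's: disjoint wavefront sets give $O(h^\infty)$ trivially, and for overlapping pairs the hypothesis $\textrm{spt}(\theta)\subset(T_0,T_{uu'})$ (resp.\ $(T_0,S_{uv})$) of \prettyref{lem:changing symbol near crit set} is exactly what $T<T_{(\mathcal P;\mathcal V,\mathcal W)}$ ensures via \prettyref{eq: total extension time}. The paper's own proof stops precisely there and defers to \cite{Savale2017-Koszul} Lemma~3.1 for the rest.

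Where you go further and argue $\mathcal T^\theta(D')=O(h^\infty)$ by wave propagation, there is a real gap. You assert that $e^{-it\mathtt V^*D'\mathtt V/h}$ is a semiclassical Fourier integral operator and invoke ``the $\sigma(\mathtt V^*D'\mathtt V)$-flow'' and its closed bicharacteristics; but on $V^1_{uu'}$ one still has $D'=D$, whose principal symbol has eigenvalues $\pm|\tilde\theta|$ crossing non-smoothly on $\Sigma_0^D$ \emph{regardless of any unitary conjugation}. \prettyref{lem: almost diagonalization lemma} diagonalises only for $|\tilde\theta|\ge\tau_\delta/8$ away from the polar cone; for $|\tilde\theta|\le\tau_\delta/16$ equation \prettyref{eq: conjugate symbol interpolation} hands back the full Clifford matrix $\tilde\theta_0\sigma_0-\sum_{j\ge1}\tilde\theta_j\sigma_j$, so the eigenvalue crossing sits squarely inside the region the trace sees. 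There is thus no single Hamiltonian vector field, no bicharacteristic relation, and the ``standard stationary-phase evaluation of the trace of a wave propagator'' does not apply. (Your parenthetical identification of a ``bicharacteristic flow of $D$ on $\Sigma_0^D$'' with the Reeb flow is likewise off: $\Sigma_0^D$ is coisotropic and its null foliation is the lifted Reeb flow, but that is not the Hamilton flow of any smooth eigenvalue of $\sigma(D)$.) You correctly name this obstruction in your last paragraph, but your proposed remedy does not remove it on $V^1_{uu'}$. The trapping-function apparatus of this section---the weight $g$ with $|H_{g,\mathtt v}d|<1/T$---is designed precisely to \emph{bypass} the FIO calculus via iterated resolvent/commutator estimates (this is already what the proof of \prettyref{lem:changing symbol near crit set} encodes), and in \cite{Savale2017-Koszul} the modified $D'$ is an explicit model on $\mathbb R^n$ whose trace is handled directly, not by propagation of singularities.
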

\begin{proof}
The proof is the same as \cite{Savale2017-Koszul} Lemma 3.1 (cf.
eq. 3.2). One only has to quantify the smallness of $\textrm{spt}\left(\theta\right)$
assumed therein. The proof in \cite{Savale2017-Koszul} carries through
in so far as $\textrm{spt}\left(\theta\right)$ is contained in each
of $\left\{ \left(T_{0},T_{uu'}\right)\right\} _{\left(u,u'\right)\in I_{\mathcal{P}}}$
, $\left\{ \left(T_{0},S_{uv}\right)\right\} _{\left(u,v\right)\in J_{\mathcal{P}}}$
as required by \prettyref{lem:changing symbol near crit set}. This
is guaranteed for $T<T_{\left(\mathcal{P};\mathcal{V},\mathcal{W}\right)}$
by \prettyref{eq: total extension time}.
\end{proof}
Given $\theta,\Omega$ there exists by \prettyref{prop: Large Extension time}
an $\left(\Omega,\tau,\delta\right)$-partition of unity with an extension
time large enough to guarantee the hypothesis of \prettyref{lem: three traces h infty}.
Splitting the trace in such fashion, it then suffices to consider
the asymptotics of the fourth trace $\mathcal{T}_{B_{u},B_{v}}^{\theta}\left(D\right)$
in \prettyref{eq: four traces}. Since $B_{u}$ and $B_{v}$ have
disjoint micro-supports for $u\neq v$; it suffices to consider $\mathcal{T}_{B_{v},B_{v}}^{\theta}\left(D\right)$.
Since these are localized near the Reeb orbits, they shall first require
an understanding of the Birkhoff normal form for $D$ near each orbit
done in the next section. We shall return to $\mathcal{T}_{B_{v},B_{v}}^{\theta}\left(D\right)$
in \prettyref{sec:Reduction to S1 times R2m}.

\section{\label{sec: Birkhoff normal form near Reeb orbit}Birkhoff normal
form near a Reeb orbit}

In this section we derive a Birkhoff normal form for the Dirac operator
in a neighborhood of each Reeb orbit. First, consider a Darboux-Reeb
chart near $\gamma$ and choose an orthonormal frame $\left\{ e_{j}=w_{j}^{k}\partial_{x_{k}}\right\} ,0\leq j\leq2m$
for the tangent bundle on $\varOmega_{\gamma}$. Here we use the convention
that $x_{0}=\theta$ is the circular variable on $\varOmega_{\gamma}^{0}\subset S^{1}\times\mathbb{R}^{2m}$
and shall use these interchangeably. We hence have 
\begin{equation}
w_{j}^{k}g_{kl}w_{r}^{l}=\delta_{jr},\label{eq: diagonalizing metric}
\end{equation}
 where $g_{kl}$ is the metric in these coordinates and the Einstein
summation convention is being used. Let $\Gamma_{jk}^{l}$ be the
Christoffel symbols for the Levi-Civita connection in the orthonormal
frame $e_{i}$ satisfying $\nabla_{e_{j}}e_{k}=\Gamma_{jk}^{l}e_{l}$.
This orthonormal frame induces an orthonormal frame $u_{j}$, $1\leq j\leq2^{m}$,
for the spin bundle $S$. We further choose a local orthonormal section
$\mathtt{l}\left(x\right)$ for the Hermitian line bundle $L$ and
define via $\nabla_{e_{j}}^{A_{0}}\mathtt{l}=\Upsilon_{j}\left(x\right)\mathtt{l}$,
$0\leq j\leq2m$ the Christoffel symbols of the unitary connection
$A_{0}$ on $L$. In terms of the induced frame $u_{j}\otimes\mathtt{l}$,
$1\leq j\leq2^{m}$, for $S\otimes L$ the Dirac operator \prettyref{eq:Semiclassical Magnetic Dirac}
has the form (cf. \cite{Berline-Getzler-Vergne} Section 3.3) 
\begin{eqnarray}
D & = & \gamma^{j}w_{j}^{k}P_{k}+h\left(\frac{1}{4}\Gamma_{jk}^{l}\gamma^{j}\gamma^{k}\gamma_{l}+\Upsilon_{j}\gamma^{j}\right),\quad\mbox{where}\label{eq: Dirac in coords}\\
P_{k} & = & h\partial_{x_{k}}+ia_{k},\label{eq: cov diff in cords}
\end{eqnarray}
and the one form $a$ is given by \prettyref{eq: normal form contact a-1}. 

The expression in \prettyref{eq: Dirac in coords} is formally self-adjoint
with respect to the Riemannian density $e^{0}\wedge\ldots\wedge e^{2m}=\sqrt{g}dx\coloneqq\sqrt{g}dx^{0}\wedge\ldots\wedge dx^{2m}$
with $g=\det\left(g_{ij}\right)$. To get an operator self-adjoint
with respect to the Euclidean density $dx$ one expresses the Dirac
operator in the framing $g^{\frac{1}{4}}u_{j}\otimes\mathtt{l},1\leq j\leq2^{m}$.
In this new frame the expression \prettyref{eq: Dirac in coords}
for the Dirac operator needs to be conjugated by $g^{\frac{1}{4}}$
and hence the term $h\gamma^{j}w_{j}^{k}g^{-\frac{1}{4}}\left(\partial_{x_{k}}g^{\frac{1}{4}}\right)$
added. Hence, the Dirac operator in the new frame has the form 
\[
D=\left[\sigma^{j}w_{j}^{k}\left(\xi_{k}+a_{k}\right)\right]^{W}+hE\in\Psi_{\textrm{cl}}^{1}\left(\varOmega_{\gamma}^{0};\mathbb{C}^{2^{m}}\right),
\]
with $\sigma^{j}=i\gamma^{j}$, for some self-adjoint endomorphism
$E\left(x\right)\in C^{\infty}\left(\varOmega_{\gamma}^{0};i\mathfrak{u}\left(\mathbb{C}^{2^{m}}\right)\right)$.

The one form $a$ is given in terms of these Darboux-Reeb coordinates
by the same formula \prettyref{eq: normal form contact a-1}
\[
a=\varphi d\theta+\frac{1}{2}\sum_{j=1}^{m}\left(x_{j}dx_{j+m}-x_{j+m}dx_{j}\right)+a_{\gamma}^{\infty}
\]
with $a_{\gamma}^{\infty}$ denoting a form on $\varOmega_{\gamma}$
vanishing to infinite order along $\gamma$. Picking a cutoff $\chi_{\gamma}\in C_{c}^{\infty}\left(\varOmega_{\gamma}\right)$
that equals $1$ on $\Omega_{\gamma}$ we may extend the one form
to all of $S^{1}\times\mathbb{R}^{2m}$ via 
\[
a=\underbrace{\varphi d\theta+\frac{1}{2}\sum_{j=1}^{m}\left(x_{j}dx_{j+m}-x_{j+m}dx_{j}\right)}_{\eqqcolon a^{0}}+\chi_{\gamma}a_{\gamma}^{\infty}
\]

The functions $w_{j}^{k}$ are extended such that
\[
\left.\left(w_{j}^{k}\partial_{x_{k}}\otimes dx^{j}\right)\right|_{\left(K_{s}^{0}\right)^{c}}=\partial_{x_{0}}\otimes dx^{0}+\sum_{j=1}^{m}\mu_{j}^{\frac{1}{2}}\left(\partial_{x_{j}}\otimes dx^{j}+\partial_{x_{j+m}}\otimes dx^{j+m}\right)
\]
 (and hence $\left.g\right|_{\left(K_{s}^{0}\right)^{c}}=dx_{0}^{2}+\sum_{j=1}^{m}\mu_{j}\left(dx_{j}^{2}+dx_{j+m}^{2}\right)$)
outside a compact neighborhood $\varOmega_{\gamma}^{0}\Subset K_{s}^{0}$.
The endomorphism $E\left(x\right)\in C_{c}^{\infty}\left(\mathbb{R}^{n};i\mathfrak{u}\left(\mathbb{C}^{2^{m}}\right)\right)$
is extended to an arbitrary self-adjoint endomorphism of compact support.
This now gives the operator
\begin{align}
D_{0} & =\left[\sigma^{j}w_{j}^{k}\left(\xi_{k}+a_{k}^{0}\right)\right]^{W}+\chi_{\gamma}\sigma^{j}a_{\gamma,j}^{\infty}+hE\in\Psi_{\textrm{cl}}^{1}\left(S^{1}\times\mathbb{R}^{2m};\mathbb{C}^{2^{m}}\right)\label{eq:Dirac op weyl quantization without error}
\end{align}
as a well defined formally self adjoint operators on $S^{1}\times\mathbb{R}^{2m}$.
Furthermore, the symbols of $D_{0}+i$ being elliptic in the class
$S^{0}\left(g\right)$ for the order functions $g=\sqrt{1+\sum_{k=0}^{2m}\left(\xi_{k}+a_{k}\right)^{2}}$
it is essentially self adjoint (see \cite{Dimassi-Sjostrand} Ch.
8).

\subsection{\label{subsec:Birkhoff-normal-form}Birkhoff normal form for the
Dirac operator}

Next, we derive a Birkhoff normal form for the Dirac operator \prettyref{eq:Dirac op weyl quantization without error}
on $S^{1}\times\mathbb{R}^{2m}$. First consider the function 
\[
f_{0}:=\sum_{j=1}^{m}\left(x_{j}x_{j+m}+\xi_{j}\xi_{j+m}\right)\in C^{\infty}\left(\mathbb{R}^{2m}\right).
\]
If $H_{f_{0}}$ and $e^{tH_{f_{0}}}$ denote the Hamilton vector field
and time $t$ flow of $f_{0}$ respectively then it is easy to compute
\begin{eqnarray*}
e^{\frac{\pi}{4}H_{f_{0}}}\left(x_{j},\xi_{j};x_{j+m}\xi_{j+m}\right) & = & \left(\frac{x_{j}+\xi_{j+m}}{\sqrt{2}},\frac{-x_{j+m}+\xi_{j}}{\sqrt{2}};\frac{x_{j+m}+\xi_{j}}{\sqrt{2}},\frac{-x_{j}+\xi_{j+m}}{\sqrt{2}}\right).
\end{eqnarray*}
We abbreviate $\left(x',\xi'\right)=\left(x_{1},\ldots,x_{m};\xi_{1},\ldots,\xi_{m}\right)$,
\\
$\left(x'',\xi''\right)=\left(x_{m+1},\ldots,x_{2m};\xi_{m+1},\ldots,\xi_{2m}\right)$
and $\left(x,\xi\right)=\left(x_{0},x',x'';\xi_{0},\xi',\xi''\right)$. 

Using Egorov's theorem, the operator \prettyref{eq:Dirac op weyl quantization without error}
is conjugated to 
\begin{align}
e^{\frac{i\pi}{4h}f_{0}^{W}}D_{0}e^{-\frac{i\pi}{4h}f_{0}^{W}} & =d_{0}^{W},\quad\textrm{ with}\label{eq:first conjugation normal form}\\
d_{0} & =\sigma^{j}w_{j,f_{0}}^{0}\left(\xi_{0}+\varphi_{f_{0}}\right)+\sqrt{2}\left(\sigma^{j}w_{j,f_{0}}^{k}\xi_{k}+\sigma^{j}w_{j,f_{0}}^{k+m}x_{k}\right)+\sigma^{j}r_{j}^{\infty}+O\left(h\right)\label{eq:symbol Dirac operator}\\
\textrm{where }\;w_{j,f_{0}}^{k} & =\left(e^{-\frac{\pi}{4}H_{f_{0}}}\right)^{*}w_{j}^{k}\\
\varphi_{f_{0}} & =\left(e^{-\frac{\pi}{4}H_{f_{0}}}\right)^{*}\varphi\nonumber \\
r_{j}^{\infty} & =\left(e^{-\frac{\pi}{4}H_{f_{0}}}\right)^{*}\chi_{\gamma}a_{\gamma,j}^{\infty}
\end{align}
Using the formulas \prettyref{eq: quadratics-1}, \prettyref{eq: function of quadratics-1}
we may also calculate 
\begin{eqnarray*}
\varphi_{f_{0}} & = & T_{\gamma}+\chi^{-}\tilde{Q}^{h,-}+\chi^{+}\varphi^{+}\left(\tilde{Q}\right)\quad\textrm{with},\\
\tilde{Q}_{j}^{e} & = & \frac{1}{4}\left[\left(x_{j}-\xi_{j+m}\right)^{2}+\left(x_{j+m}-\xi_{j}\right)^{2}\right]\\
\tilde{Q}_{j}^{h} & = & \frac{1}{2}\left(x_{N_{e}+j}-\xi_{N_{e}+j+m}\right)\left(x_{N_{e}+j+m}-\xi_{N_{e}+j}\right)\\
\tilde{Q}_{j}^{l,\textrm{Re}} & = & \frac{1}{2}\left(x_{m-2j+2}-\xi_{2m-2j+2}\right)\left(x_{2m-2j+1}-\xi_{m-2j+1}\right)\\
 &  & -\frac{1}{2}\left(x_{m-2j+1}-\xi_{2m-2j+1}\right)\left(x_{2m-2j+2}-\xi_{m-2j+2}\right)\\
\tilde{Q}_{j}^{l,\textrm{Im}} & = & \frac{1}{2}\left(x_{m-2j+1}-\xi_{2m-2j+1}\right)\left(x_{2m-2j+1}-\xi_{m-2j+1}\right)\\
 &  & +\frac{1}{2}\left(x_{m-2j+2}-\xi_{2m-2j+2}\right)\left(x_{2m-2j+2}-\xi_{m-2j+2}\right)\;\textrm{ and}\\
\tilde{Q}^{h,-} & = & \frac{\pi}{4}\sum_{j=1}^{N_{h}^{-}}\left[\left(x_{N_{e}+j}-\xi_{N_{e}+j+m}\right)^{2}+\left(x_{N_{e}+j+m}-\xi_{N_{e}+j}\right)^{2}\right]
\end{eqnarray*}

Next, set 
\begin{eqnarray}
\bar{\varphi}_{f_{0}}=\bar{\varphi} & = & T_{\gamma}+\chi^{-}\bar{Q}^{h,-}+\chi^{+}\varphi^{+}\left(\bar{Q}\right)\quad\textrm{with},\label{eq: effective hamiltonian}\\
\bar{Q}_{j}^{e} & = & \frac{1}{4}\left[\xi_{j+m}^{2}+x_{j+m}^{2}\right]\nonumber \\
\bar{Q}_{j}^{h} & = & -\frac{1}{2}x_{N_{e}+j+m}\xi_{N_{e}+j+m}\nonumber \\
\bar{Q}_{j}^{l,\textrm{Re}} & = & \frac{1}{2}\left(x_{2m-2j+2}\xi_{2m-2j+1}-x_{2m-2j+1}\xi_{2m-2j+2}\right)\nonumber \\
\bar{Q}_{j}^{l,\textrm{Im}} & = & -\frac{1}{2}\left(x_{2m-2j+1}\xi_{2m-2j+1}+x_{2m-2j+2}\xi_{2m-2j+2}\right)\;\textrm{ and}\nonumber \\
\bar{Q}^{h,-} & = & \frac{\pi}{4}\sum_{j=1}^{N_{h}^{-}}\left[\xi_{N_{e}+j+m}^{2}+x_{N_{e}+j+m}^{2}\right]\label{eq: conjugated quadratics}
\end{eqnarray}
Below denote by $o_{N}',o_{N}''\subset S_{\textrm{cl}}^{1}\left(T^{*}S^{1}\times\mathbb{R}^{4m};\mathbb{C}^{l}\right)$
the subspace of self-adjoint symbols $a:\left(0,1\right]_{h}\rightarrow C^{\infty}\left(T^{*}S^{1}\times\mathbb{R}^{4m};i\mathfrak{u}\left(2^{m}\right)\right)$
such that each of the coefficients $a_{k}$, $k=0,1,2,\ldots$ in
its symbolic expansion vanishes to order $N$ in $\left(\xi_{0}+\bar{\varphi},x',\xi'\right)$
and $\left(x'',\xi''\right)$ respectively. We also denote by $o_{N}',o_{N}''$
the space of Weyl quantizations of the respective symbols. One clearly
has $\varphi_{f_{0}}=\bar{\varphi}+o_{1}'o_{1}''$. A Taylor expansion
of $d_{0}$ \prettyref{eq:symbol Dirac operator} now gives $r_{j}^{0}\in o_{2}'$,
$r_{j}^{1}\in o_{1}'o_{1}''$, $r_{j}^{\infty}\in o_{\infty}''$,
$0\leq j\leq2m$, such that 
\begin{eqnarray*}
d_{0} & = & \sqrt{2}\sigma^{j}\left(\bar{w}_{j}^{0}\left(\xi_{0}+\bar{\varphi}\right)+\bar{w}_{j}^{k}\xi_{k}+\bar{w}_{j}^{k+m}x_{k}\right)+\sigma^{j}\left(r_{j}^{0}+r_{j}^{1}+r_{j}^{\infty}\right)+O\left(h\right)
\end{eqnarray*}
and where $\bar{w}_{j}^{k}\left(x_{0}\right)=w_{j}^{k}\left(x_{0},0,0\right)$.

On squaring using \prettyref{eq: diagonalizing metric} we obtain
\begin{align}
\left(d_{0}^{W}\right)^{2} & =Q_{0}^{W}+o_{2}'o_{1}''+o_{\infty}''+O\left(h\right),\:\textrm{ with}\nonumber \\
Q_{0} & =\begin{bmatrix}\xi_{0}+\bar{\varphi} & \xi' & x'\end{bmatrix}\begin{bmatrix}\bar{g}^{00}\left(x_{0}\right) & \bar{g}^{k0}\left(x_{0}\right) & \bar{g}^{\left(k+m\right)0}\left(x_{0}\right)\\
\bar{g}^{0l}\left(x_{0}\right) & \bar{g}^{kl}\left(x_{0}\right) & \bar{g}^{k\left(l+m\right)}\left(x_{0}\right)\\
\bar{g}^{0\left(l+m\right)}\left(x_{0}\right) & \bar{g}^{\left(k+m\right)l}\left(x_{0}\right) & \bar{g}^{\left(k+m\right)\left(l+m\right)}\left(x_{0}\right)
\end{bmatrix}\begin{bmatrix}\xi_{0}+\bar{\varphi}\\
\xi'\\
x'
\end{bmatrix}.\label{eq: Q0 square Dirac}
\end{align}
Here $\bar{g}^{kl}\left(x_{0}\right)=2g^{kl}\left(x_{0},0,0\right)$
and $g^{kl}$ the components of the inverse metric in Reeb Darboux
coordinates along the orbit and 
\[
\bar{g}^{00}\left(x_{0}\right)=\frac{1}{T_{\gamma}^{2}\left|R\right|^{2}}.
\]

Next we consider another function $f_{1}$ of the form 
\[
f_{1}=\frac{1}{2}\begin{bmatrix}x' & \xi'\end{bmatrix}\begin{bmatrix}\alpha_{m\times m}\left(x_{0}\right) & \gamma_{m\times m}\left(x_{0}\right)\\
\gamma_{m\times m}^{t}\left(x_{0}\right) & \beta_{m\times m}\left(x_{0}\right)
\end{bmatrix}\begin{bmatrix}x'\\
\xi'
\end{bmatrix}
\]
where $\alpha,\beta$ and $\gamma$ are matrix valued functions of
the given orders with $\alpha,\beta$ symmetric. An easy computation
now shows 
\begin{eqnarray*}
\left(e^{H_{f_{1}}}\right)^{*}\begin{bmatrix}\xi_{0}+\bar{\varphi}\\
x'\\
\xi'
\end{bmatrix} & = & e^{\Lambda}\begin{bmatrix}\xi_{0}+\bar{\varphi}\\
x'\\
\xi'
\end{bmatrix}+o_{2}'\quad\textrm{with}\\
\Lambda\left(x_{0}\right) & = & \begin{bmatrix}0 & 0 & 0\\
0 & 0 & -I_{m\times m}\\
0 & I_{m\times m} & 0
\end{bmatrix}\begin{bmatrix}0 & 0 & 0\\
0 & \alpha_{m\times m}\left(x_{0}\right) & \gamma_{m\times m}\left(x_{0}\right)\\
0 & \gamma_{m\times m}^{t}\left(x_{0}\right) & \beta_{m\times m}\left(x_{0}\right)
\end{bmatrix}.
\end{eqnarray*}
From the suitability assumption \prettyref{eq:Diagonalizability assumption-1},
we have that there exists a smooth matrix valued functions $\alpha,\beta$
and $\gamma$ such that 
\begin{align}
\left(e^{H_{f_{1}}}\right)^{*}Q_{0} & =\begin{bmatrix}\xi_{0}+\bar{\varphi} & \xi' & x'\end{bmatrix}e^{\Lambda^{t}}\begin{bmatrix}\bar{g}^{00}\left(x_{0}\right) & \bar{g}^{k0}\left(x_{0}\right) & \bar{g}^{\left(k+m\right)0}\left(x_{0}\right)\\
\bar{g}^{0l}\left(x_{0}\right) & \bar{g}^{kl}\left(x_{0}\right) & \bar{g}^{k\left(l+m\right)}\left(x_{0}\right)\\
\bar{g}^{0\left(l+m\right)}\left(x_{0}\right) & \bar{g}^{\left(k+m\right)l}\left(x_{0}\right) & \bar{g}^{\left(k+m\right)\left(l+m\right)}\left(x_{0}\right)
\end{bmatrix}e^{\Lambda}\begin{bmatrix}\xi_{0}+\bar{\varphi}\\
\xi'\\
x'
\end{bmatrix}\nonumber \\
=Q_{1} & \coloneqq\bar{g}^{00}\left(x_{0}\right)\left(\xi_{0}+\bar{\varphi}\right)^{2}+\left[\sum_{j=1}^{m}\mu_{j}\left(x_{j}^{2}+\xi_{j}^{2}\right)\right]\nonumber \\
 & +2\sum_{j=1}^{m}\left(\xi_{0}+\bar{\varphi}\right)\left[h_{j}^{0}\left(x_{0}\right)\xi_{j}+h_{j}^{1}\left(x_{0}\right)x_{j}\right]+o_{3}'\label{eq:Q1 square Dirac}
\end{align}
and where
\begin{eqnarray*}
\begin{bmatrix}\bar{g}^{00}\left(x_{0}\right)\\
h_{j}^{0}\left(x_{0}\right)\\
h_{j}^{1}\left(x_{0}\right)
\end{bmatrix} & = & e^{\Lambda^{t}}\begin{bmatrix}\bar{g}^{00}\left(x_{0}\right)\\
\bar{g}^{0l}\left(x_{0}\right)\\
\bar{g}^{0\left(l+m\right)}\left(x_{0}\right)
\end{bmatrix}.
\end{eqnarray*}
Next, if 
\[
f_{2}=\left(\xi_{0}+\bar{\varphi}\right)\begin{bmatrix}\frac{1}{\mu}\xi' & \frac{1}{\mu}x'\end{bmatrix}\begin{bmatrix}0 & -I_{m\times m}\\
I_{m\times m} & 0
\end{bmatrix}\begin{bmatrix}h_{j}^{0}\left(x_{0}\right)\\
h_{j}^{1}\left(x_{0}\right)
\end{bmatrix}
\]
we may compute 
\begin{equation}
\left(e^{H_{f_{2}}}\right)^{*}Q_{1}=Q_{2}\coloneqq\bar{g}^{00}\left(x_{0}\right)\left(\xi_{0}+\bar{\varphi}\right)^{2}+\left[\sum_{j=1}^{m}\mu_{j}\left(x_{j}^{2}+\xi_{j}^{2}\right)\right]+o_{3}'.\label{eq: Q2 square Dirac}
\end{equation}
Finally, letting $L_{\gamma}$ denote the length of the Reeb orbit
note
\begin{eqnarray*}
L_{\gamma} & = & \exp\left\{ -\frac{1}{2}\frac{\int_{0}^{1}dx_{0}\left(g^{00}\right)^{-1/2}\left(\ln g^{00}\right)}{\int_{0}^{1}dx_{0}\left(g^{00}\right)^{-1/2}}\right\} 
\end{eqnarray*}
and set 
\[
a\left(x_{0}\right)\coloneqq\left(g^{00}\right)^{1/2}\int_{0}^{\theta}d\theta'\left(g^{00}\right)^{-1/2}\ln\left[T_{\gamma}L_{\gamma}\left(g^{00}\right)^{1/2}\right]
\]
to compute 
\begin{equation}
\left(e^{H_{a\xi}}\right)^{*}Q_{2}=\frac{1}{L_{\gamma}^{2}}\left(\xi_{0}+\bar{\varphi}\right)^{2}+\left[\sum_{j=1}^{m}\mu_{j}\left(x_{j}^{2}+\xi_{j}^{2}\right)\right]+o_{3}'.\label{eq: Q3 square Dirac}
\end{equation}

Letting 
\[
H_{2}=\frac{1}{2}\sum_{j=1}^{m}\mu_{j}\left(x_{j}^{2}+\xi_{j}^{2}\right),
\]
 using \prettyref{eq: Q0 square Dirac}, \prettyref{eq:Q1 square Dirac},
\prettyref{eq: Q2 square Dirac} and \prettyref{eq: Q3 square Dirac}
Egorov's theorem now gives
\begin{align}
d_{00}^{W}\coloneqq e^{\frac{i}{h}a\xi^{W}}e^{\frac{i}{h}f_{2}^{W}}e^{\frac{i}{h}f_{1}^{W}}d_{0}^{W}e^{-\frac{i}{h}f_{1}^{W}}e^{-\frac{i}{h}f_{2}^{W}}e^{-\frac{i}{h}a\xi{}^{W}} & =\left(\sum_{j=0}^{2m}\sigma_{j}b_{j}\right)^{W}+ho_{0}\quad\textrm{with }\label{eq:symbol dirac d_1}\\
\sum_{j=0}^{2m}b_{j}^{2} & =\left(\frac{1}{L_{\gamma}^{2}}\left(\xi_{0}+\bar{\varphi}\right)^{2}+2H_{2}\right)^{W}+o_{2}'o_{1}''+o_{\infty}''.\nonumber 
\end{align}
Another Taylor expansion in the variables $\left(\xi_{0}+\bar{\varphi},x',\xi';x'',\xi''\right)$
gives $A=\left(a_{jk}\left(x_{0}\right)\right)\in C^{\infty}\left(S^{1};\mathfrak{so}\left(n\right)\right)$
and $r_{j,0}\in o_{1}'o_{1}''$,$r_{j,1}\in o_{2}'$, $r_{j,\infty}\in o_{\infty}''$,
$j=0,\ldots,2m$, such that 
\[
e^{-A}\begin{bmatrix}b_{0}\\
\vdots\\
b_{2m}
\end{bmatrix}=\begin{bmatrix}\frac{1}{L_{\gamma}}\left(\xi_{0}+\bar{\varphi}\right)\\
\left(2\mu_{1}\right)^{\frac{1}{2}}x_{1}\\
\left(2\mu_{1}\right)^{\frac{1}{2}}\xi_{1}\\
\vdots\\
\left(2\mu_{m}\right)^{\frac{1}{2}}x_{m}\\
\left(2\mu_{m}\right)^{\frac{1}{2}}\xi_{m}
\end{bmatrix}+\begin{bmatrix}r_{0,0}\\
\vdots\\
r_{2m,0}
\end{bmatrix}+\begin{bmatrix}r_{0,1}\\
\vdots\\
r_{2m,1}
\end{bmatrix}+\begin{bmatrix}r_{0,\infty}\\
\vdots\\
r_{2m,\infty}
\end{bmatrix}.
\]
We may now set $c_{A}=\frac{1}{i}a_{jk}\sigma^{j}\sigma^{k}\in C^{\infty}\left(S^{1};i\mathfrak{u}\left(2^{m}\right)\right)$
and compute 
\begin{align}
e^{ic_{A}^{W}}d_{00}^{W}e^{-ic_{A}^{W}} & =d_{1}^{W},\quad\textrm{where}\label{eq: d0 =000026 d1 are conjugate}\\
d_{1} & =H_{1}+\sigma^{j}\left(r_{j,0}+r_{j,1}+r_{j,\infty}\right)+O\left(h\right),\quad\textrm{and}\label{eq:dirac operator for formal birkhoff}\\
H_{1} & \coloneqq\frac{1}{L_{\gamma}}\left(\xi_{0}+\bar{\varphi}\right)\sigma_{0}+\sum_{j=1}^{m}\left(2\mu_{j}\right)^{\frac{1}{2}}\left(x_{j}\sigma_{2j-1}+\xi_{j}\sigma_{2j}\right).\label{eq: model Dirac symbol}
\end{align}
Finally, if we further Taylor expand $r_{0,0}+r_{0,1}=l_{00}\left(x_{0},\frac{1}{L}\left(\xi_{0}+\bar{\varphi}\right),x'',\xi''\right)+x_{1}l_{01}+\xi_{1}l_{02}+\ldots+\xi_{m}l_{0\left(2m\right)}$,
then a further conjugation of $d_{1}^{W}$ by $e^{ic_{2}^{W}}$; $c_{2}=\frac{1}{i}l_{0k}\sigma^{0}\sigma^{k}$,
it is possible to make $r_{0,0}+r_{0,1}$ independent of $\left(x',\xi'\right)$
in \prettyref{eq:dirac operator for formal birkhoff}.

\subsubsection{\label{subsec: Weyl product and Koszul}Weyl product and Koszul complexes}

We now derive a formal Birkhoff normal form for the symbol $d_{1}$
in \prettyref{eq:dirac operator for formal birkhoff}. Since much
of what follows here proceeds in a similar fashion to \cite{Savale2017-Koszul}
Section 5, we refer there for necessary modifications to avoid repetition
of arguments. First denote by $R=C^{\infty}\left(S_{x_{0}}^{1}\right)$
the ring of real valued functions on the circle. Further define 
\[
S\coloneqq R\left\llbracket \xi_{0}+\bar{\varphi},x',\xi';x'',\xi'';h\right\rrbracket 
\]
the ring of formal power series in the further given $4m+2$ variables
with coefficients in $R$. The ring $S\otimes\mathbb{C}$ is now equipped
with the Weyl product 
\[
a\ast b\coloneqq\left[e^{\frac{ih}{2}\left(\partial_{r_{1}}\partial_{s_{2}}-\partial_{r_{2}}\partial_{s_{1}}\right)}\left(a\left(s_{1},r_{1};h\right)b\left(s_{2},r_{2};h\right)\right)\right]_{x=s_{1}=s_{2},\xi=r_{1}=r_{2}},
\]
(again using the convention $\theta=x_{0}$) corresponding to the
composition formula for pseudo-differential operators, with 
\begin{eqnarray*}
\left[a,b\right] & \coloneqq & a\ast b-b\ast a
\end{eqnarray*}
being the corresponding Weyl bracket. It is an easy exercise to show
that for $a,b\in S$ real valued, the commutator $i\left[a,b\right]\in S$
is real valued.

Next, we define a filtration on $S$. Each monomial $h^{k}\left(\xi_{0}+T_{\gamma}\right)^{a}\left(x'\right)^{\alpha'}\left(\xi'\right)^{\beta'}\left(x''\right)^{\alpha''}\left(\xi''\right)^{\beta''}$
in $S$ is given the weight $2k+a+\left|\alpha'\right|+\left|\beta'\right|+\left|\alpha''\right|+\left|\beta''\right|$.
The ring $S$ is equipped with a decreasing filtration 
\begin{eqnarray*}
S=O_{0} & \supset & O_{1}\supset\ldots\supset O_{N}\supset\ldots,\\
\bigcap_{N}O_{N} & = & \left\{ 0\right\} ,
\end{eqnarray*}
where $O_{N}$ consists of those power series with monomials of weight
$N$ or more. Similar filtrations
\begin{align*}
S & =O_{0}'\supset O_{1}'\supset\ldots\supset O_{N}'\supset\ldots\\
S & =O_{0}''\supset O_{1}''\supset\ldots\supset O_{N}''\supset\ldots
\end{align*}
maybe defined with $O_{N}'$, $O_{N}''$ consisting of power series
in those monomials with $2k+a+\left|\alpha'\right|+\left|\beta'\right|\geq N$
and $2k+\left|\alpha''\right|+\left|\beta''\right|\geq N$ respectively.
It is an exercise to show that 
\begin{eqnarray*}
O_{N}\ast O_{M} & \subset & O_{N+M}\\
\left[O_{N},O_{M}\right] & \subset & ihO_{N+M-2}
\end{eqnarray*}
and similar inclusions holding for its primed versions. The associated
grading is given by 
\[
S=\bigoplus_{N=0}^{\infty}S_{N}
\]
where $S_{N}$ consists of those power series with monomials of weight
exactly $N$ . We also define the quotient ring $D_{N}\coloneqq S/O_{N+1}$
whose elements may be identified with the set of homogeneous polynomials
with monomials of weight at most $N$. The ring $D_{N}$ is also similarly
graded and filtered. In similar vein, we may also define the ring
\[
S\left(m\right)=S\otimes\mathfrak{gl}_{\mathbb{C}}\left(2^{m}\right)
\]
of $R\otimes\mathfrak{gl}_{\mathbb{C}}\left(2^{m}\right)$ valued
formal power series in $\left(\xi_{0}+\bar{\varphi},x',\xi';h\right)$.
The ring $S\left(m\right)$ is equipped with an induced product $\ast$
and decreasing filtration
\begin{eqnarray*}
O_{0}\left(m\right) & \supset & O_{1}\left(m\right)\supset\ldots\supset O_{N}\left(m\right)\supset\ldots,\\
\bigcap_{N}O_{N}\left(m\right) & = & \left\{ 0\right\} ,
\end{eqnarray*}
where $O_{N}\left(m\right)=O_{N}\otimes\mathfrak{gl}_{\mathbb{C}}\left(2^{m}\right)$.
It is again a straightforward exercise to show that for $a,b\in S\otimes i\mathfrak{u}_{\mathbb{C}}\left(2^{m}\right)$
self-adjoint, the commutator $i\left[a,b\right]\in S\otimes i\mathfrak{u}_{\mathbb{C}}\left(2^{m}\right)$
is self-adjoint.

\subsubsection{Koszul complexes}

Let us now again consider the $2m$ and $2m+1$ dimensional real inner
product spaces $V=\mathbb{R}\left[e_{1},\ldots,e_{2m}\right]$ and
$W=\mathbb{R}\left[e_{0}\right]\oplus V$ from \prettyref{subsec:Clifford algebra}.
Considering the chain groups $D_{N}\otimes\Lambda^{k}V$, $k=0,1,\ldots,n$,
one may define four differentials 
\begin{eqnarray*}
w_{x}^{0} & = & \sum_{j=1}^{m}\mu_{j}^{\frac{1}{2}}\left(x_{j}e_{2j-1}\land+\xi_{j}e_{2j}\land\right)\\
i_{x}^{0} & = & \sum_{j=1}^{m}\mu_{j}^{\frac{1}{2}}\left(x_{j}i_{e_{2j-1}}+\xi_{j}i_{e_{2j}}\right)\\
w_{\partial}^{0} & = & \sum_{j=1}^{m}\mu_{j}^{\frac{1}{2}}\left(\partial_{x_{j}}e_{2j-1}\land+\partial_{\xi_{j}}e_{2j}\land\right)\\
i_{\partial}^{0} & = & \sum_{j=1}^{m}\mu_{j}^{\frac{1}{2}}\left(\partial_{x_{j}}i_{e_{2j-1}}+\partial_{\xi_{j}}i_{e_{2j}}\right).
\end{eqnarray*}

Similarly, we may consider the chain groups $D_{N}\otimes\Lambda^{k}W$,
$k=0,1,\ldots,n$, one may define four differentials
\begin{eqnarray*}
w_{x} & = & \frac{1}{L_{\gamma}}\left(\xi_{0}+\bar{\varphi}\right)e_{0}\land+2^{\frac{1}{2}}w_{x}^{0}\\
i_{x} & = & \frac{1}{L_{\gamma}}\left(\xi_{0}+\bar{\varphi}\right)i_{e_{0}}+2^{\frac{1}{2}}i_{x}^{0}\\
w_{\partial} & = & \partial_{\xi_{0}}e_{0}\land+2^{\frac{1}{2}}w_{\partial}^{0}\\
i_{\partial} & = & \partial_{\xi_{0}}i_{e_{0}}+2^{\frac{1}{2}}i_{\partial}^{0}.
\end{eqnarray*}

Next, we define twisted Koszul differentials on $D_{N}\otimes\Lambda^{k}V$
via
\begin{eqnarray*}
\tilde{w}_{\partial}^{0} & = & \frac{i}{h}\sum_{j=1}^{m}\mu_{j}^{\frac{1}{2}}\left(\textrm{ad}_{x_{j}}e_{2j-1}\land+\textrm{ad}_{\xi_{j}}e_{2j}\land\right)=\sum_{j=1}^{m}\mu_{j}^{\frac{1}{2}}\left(\partial_{x_{j}}e_{2j}\land-\partial_{\xi_{j}}e_{2j-1}\land\right)\\
\tilde{i}_{\partial}^{0} & = & \frac{i}{h}\sum_{j=1}^{m}\mu_{j}^{\frac{1}{2}}\left(\textrm{ad}_{x_{j}}i_{e_{2j-1}}+\textrm{ad}_{\xi_{j}}i_{e_{2j}}\right)=\sum_{j=1}^{m}\mu_{j}^{\frac{1}{2}}\left(\partial_{x_{j}}i_{e_{2j}}-\partial_{\xi_{j}}i_{e_{2j-1}}\right).
\end{eqnarray*}
We note that the above are symplectic adjoints to their untwisted
counterparts with respect to the symplectic pairing $\sum_{j=1}^{m}e_{2j-1}\wedge e_{2j}$
on $V$. 

Similar twisted Koszul differentials on $D_{N}\otimes\Lambda^{k}W$
are defined via

\begin{eqnarray*}
\tilde{w}_{\partial} & = & \frac{1}{L_{\gamma}}\left(\textrm{ad}_{\xi_{0}+\bar{\varphi}}\right)e_{0}\wedge+2^{\frac{1}{2}}\tilde{w}_{\partial}^{0}\\
\tilde{i}_{\partial} & = & \frac{1}{L_{\gamma}}\left(\textrm{ad}_{\xi_{0}+\bar{\varphi}}\right)i_{e_{0}}+2^{\frac{1}{2}}\tilde{i}_{\partial}^{0}.
\end{eqnarray*}
We note that in what follows works with any leading terms replacing
$e_{0}\wedge$ and $i_{e_{0}}$ above that would serve as differentials. 

We now compute the twisted combinatorial Laplacian to be 
\begin{eqnarray*}
\tilde{\Delta}^{0} & = & \tilde{w}_{\partial}^{0}i_{x}^{0}+i_{x}^{0}\tilde{w}_{\partial}^{0}\\
 & = & -\left(w_{x}^{0}\tilde{i}_{\partial}^{0}+\tilde{i}_{\partial}^{0}w_{x}^{0}\right)\\
 & = & \sum_{j=1}^{m}\mu_{j}\left[\xi_{j}\partial_{x_{j}}-x_{j}\partial_{\xi_{j}}+e_{2j}i_{e_{2j-1}}-e_{2j-1}i_{e_{2j}}\right].
\end{eqnarray*}
One may similarly define $\tilde{\Delta}$. Next, we define the space
of twisted $\tilde{\Delta}^{0}$-harmonic, $\bar{\varphi}$-commuting,
$x_{0}$- independent elements 
\begin{eqnarray*}
\mathcal{H}_{N}^{k} & = & \left\{ \omega\in D_{N}\otimes\Lambda^{k}W|\,\tilde{\Delta}^{0}\omega=0,\,\partial_{x_{0}}\omega=0,\,\textrm{ad}_{\bar{\varphi}}\omega=0\right\} \\
\mathcal{H}^{k} & = & \left\{ \omega\in S\otimes\Lambda^{k}W|\,\tilde{\Delta}^{0}\omega=0,\,\partial_{x_{0}}\omega=0,\,\textrm{ad}_{\bar{\varphi}}\omega=0\right\} .
\end{eqnarray*}
The following version of the Hodge decomposition theorem follows in
a similar fashion to \cite{Savale2017-Koszul} Lemma 5.1. We only
note that the $\xi_{0}$-independence in the definition of $\mathcal{H}_{N}^{k}$
from \cite{Savale2017-Koszul} is here replaced by the condition $\textrm{ad}_{\xi_{0}+\bar{\varphi}}\omega=0$,
which on account of non-resonance is equivalent to $\textrm{ad}_{\xi_{0}}\omega=\partial_{x_{0}}\omega=0,\,\textrm{ad}_{\bar{\varphi}}\omega=0$.
\begin{lem}
\label{lem:Koszul-Hodge decomposition }The $k$-th chain group is
spanned by the three subspaces 
\[
D_{N}\otimes\Lambda^{k}W=\mathbb{R}\left[\textrm{Im}\left(i_{x}\tilde{w}_{\partial}\right),\textrm{Im}\left(\tilde{w}_{\partial}i_{x}\right),\mathcal{H}_{N}^{k}\right].
\]

\end{lem}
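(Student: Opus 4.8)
The plan is to follow the scheme of \cite{Savale2017-Koszul} Lemma 5.1, which establishes the analogous Hodge-type decomposition for the untwisted model; the one genuinely new feature is that the central parameter $\xi_{0}$ of that reference is replaced by $\xi_{0}+\bar{\varphi}$, so that the non-resonance hypothesis has to be brought in precisely where \cite{Savale2017-Koszul} uses $\xi_{0}$-independence. I would first record the structural facts $i_{x}^{2}=\tilde{w}_{\partial}^{2}=0$ --- these follow from the pairwise commutativity in the Weyl algebra of the coefficient functions $\frac{1}{L_{\gamma}}(\xi_{0}+\bar{\varphi}),\ \mu_{j}^{1/2}x_{j},\ \mu_{j}^{1/2}\xi_{j}$ and of the operators $\textrm{ad}_{\xi_{0}+\bar{\varphi}},\ \partial_{x_{j}},\ \partial_{\xi_{j}}$ (which act on disjoint symplectic factors), together with the anticommutativity of the $i_{e_{r}}$ among themselves and of the $e_{r}\wedge$ among themselves --- as well as the commutations $[\textrm{ad}_{\xi_{0}+\bar{\varphi}},i_{x}]=[\textrm{ad}_{\xi_{0}+\bar{\varphi}},\tilde{w}_{\partial}]=0$ and $[\tilde{\Delta}^{0},i_{x}]=[\tilde{\Delta}^{0},\tilde{w}_{\partial}]=0$. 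Consequently $\Box:=i_{x}\tilde{w}_{\partial}+\tilde{w}_{\partial}i_{x}$ preserves the degree $\Lambda^{k}W$, commutes with both $\textrm{ad}_{\xi_{0}+\bar{\varphi}}$ and $\tilde{\Delta}^{0}$, and satisfies $\textrm{Im}(\Box)\subset\textrm{Im}(i_{x}\tilde{w}_{\partial})+\textrm{Im}(\tilde{w}_{\partial}i_{x})$, so it suffices to produce, for every $\omega\in D_{N}\otimes\Lambda^{k}W$, a splitting $\omega=\Box\eta+\kappa$ with $\kappa\in\mathcal{H}_{N}^{k}$.

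Second, I would run this inductively on the filtration degree. Splitting $\Lambda^{*}W=\Lambda^{*}V\oplus e_{0}\wedge\Lambda^{*}V$ and computing blockwise, one finds $\Box=2\tilde{\Delta}^{0}+\mathcal{N}$, where $\mathcal{N}$ is assembled from the commuting pair $\textrm{ad}_{\xi_{0}+\bar{\varphi}}$ (weight-preserving) and left multiplication by $\xi_{0}+\bar{\varphi}$ (strictly weight-raising), hence is nilpotent on each $D_{N}$, while the semisimple operator $2\tilde{\Delta}^{0}$ is the rotation generator $\sum_{j}\mu_{j}[\xi_{j}\partial_{x_{j}}-x_{j}\partial_{\xi_{j}}+e_{2j}i_{e_{2j-1}}-e_{2j-1}i_{e_{2j}}]$. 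On the eigenspaces of $\tilde{\Delta}^{0}$ with nonzero eigenvalue, $\Box=2\tilde{\Delta}^{0}+\mathcal{N}$ is invertible (an invertible semisimple operator plus a commuting nilpotent), and the corresponding component of $\omega$ is removed by $\Box^{-1}$; this reduces matters to $\ker\tilde{\Delta}^{0}$, on which $\Box$ collapses to $\mathcal{N}=\frac{1}{L_{\gamma}^{2}}(\xi_{0}+\bar{\varphi})\ast\textrm{ad}_{\xi_{0}+\bar{\varphi}}$.

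Third --- and here non-resonance enters --- I would analyse $\textrm{ad}_{\xi_{0}+\bar{\varphi}}$ on $\ker\tilde{\Delta}^{0}$ by Floquet theory. Writing the $x_{0}$-dependence of $\bar{\varphi}$ through $\chi^{-}$ on $(0,\frac{1}{2})$ and $\chi^{+}$ on $(\frac{1}{2},1)$ with unit integrals, the monodromy over one period of the flow generated by $\partial_{x_{0}}+\textrm{ad}_{\bar{\varphi}}$ is the linearized return map, namely $P_{\gamma}^{+}$ composed with the $\pi$-rotation of the negative hyperbolic blocks \prettyref{eq: Poincare map is Hamiltonian-1}; its action on monomials is governed by the hyperbolic and loxodromic exponents $\{\alpha_{j}^{\pm},\alpha_{j}^{0}\}$ and the rotation numbers $\{2\pi,\beta_{j},\beta_{j}^{0}\}$. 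The $\mathbb{Q}$-independence of these two sets forces $\textrm{ad}_{\xi_{0}+\bar{\varphi}}\omega=0$ precisely when $\partial_{x_{0}}\omega=0$ and $\textrm{ad}_{\bar{\varphi}}\omega=0$ (the equivalence quoted before the statement), so that $\ker\tilde{\Delta}^{0}\cap\ker(\textrm{ad}_{\xi_{0}+\bar{\varphi}})=\mathcal{H}_{N}^{k}$, while on a complement $\textrm{ad}_{\xi_{0}+\bar{\varphi}}$ is semisimple and invertible; one then peels off the non-harmonic part of $\omega$ through $\mathcal{N}$ order by order, exactly as in \cite{Savale2017-Koszul}. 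Since the statement is at fixed $N$, this completes the argument.

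The step I expect to be the main obstacle is the third one: making the Floquet/non-resonance analysis of $\textrm{ad}_{\xi_{0}+\bar{\varphi}}$ on $\ker\tilde{\Delta}^{0}$ fully rigorous --- in particular ruling out any accidental resonance between an $x_{0}$-Fourier mode and a hyperbolic or loxodromic combination that would enlarge the kernel, and verifying that the remaining non-harmonic directions there are genuinely recovered by the nilpotent operator $\mathcal{N}$ --- and keeping the generalized-eigenspace splitting used in that step compatible both with the filtration and with the $\tilde{\Delta}^{0}$-Hodge decomposition employed in the second step.
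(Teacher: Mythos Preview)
Your proposal is correct and follows essentially the same route as the paper. The paper does not write out a proof but explicitly defers to \cite{Savale2017-Koszul} Lemma~5.1, noting that the only new point is exactly the one you isolate in your third step: the $\xi_{0}$-independence condition from \cite{Savale2017-Koszul} is replaced here by $\textrm{ad}_{\xi_{0}+\bar\varphi}\omega=0$, and non-resonance is what makes this equivalent to $\partial_{x_{0}}\omega=0$ together with $\textrm{ad}_{\bar\varphi}\omega=0$. Your concern about this being the delicate step is well-placed but not an obstacle --- it is precisely the modification the paper singles out, and the rest of the Hodge/Koszul machinery carries over verbatim.
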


\subsubsection{Formal Birkhoff normal form}

As in \cite{Savale2017-Koszul} section 5.2 the Koszul complexes now
allow us to complete the Birkhoff normal form procedure for the symbol
$d_{1}$ in \prettyref{eq:dirac operator for formal birkhoff}. Define
the Clifford quantization of an element in $a\in S\otimes\Lambda^{k}W$,
using \prettyref{eq: clifford quantization} as an element in 
\[
c_{0}\left(a\right)\coloneqq i^{\frac{k\left(k+1\right)}{2}}c\left(a\right)\in S\left(m\right).
\]
This gives an isomorphism
\begin{equation}
c_{0}:S\otimes\Lambda^{\textrm{odd/even}}W\rightarrow S\otimes i\mathfrak{u}_{\mathbb{C}}\left(2^{m}\right)\label{eq: clifford isomorphism}
\end{equation}
 of real elements of the even or odd exterior algebra with self-adjoint
elements in $S\left(m\right)$. In a fashion similar to \cite{Savale2017-Koszul}
we may now prove the following formal Birkhoff normal form for the
symbol $d_{1}$. Below the symbol $H_{1}$is as in \prettyref{eq: model Dirac symbol}.
\begin{prop}
\label{prop: normal form d1}There exist $f\in O_{1}'\cap O_{3}$,
$a\in O_{2}\otimes\Lambda^{\textrm{even}}W$ and $\omega\in\mathcal{H}^{\textrm{odd}}\cap O_{1}'\cap O_{2}$
such that 
\begin{equation}
e^{ic_{0}\left(a\right)}e^{\frac{i}{h}f}d_{1}e^{-\frac{i}{h}f}e^{-ic_{0}\left(a\right)}=H_{1}+c_{0}\left(\omega\right).\label{eq: normal form d1}
\end{equation}
\end{prop}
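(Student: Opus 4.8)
The plan is to construct $f$, $a$ and $\omega$ by a formal iteration over the weight filtration $\{O_N\}$ of $S$, solving a homological equation at each step via the Hodge-type decomposition of \prettyref{lem:Koszul-Hodge decomposition }. This is the scheme of \cite{Savale2017-Koszul} Section 5.2; the only structural change is that the $\xi_0$-translation used there is replaced by $\mathrm{ad}_{\xi_0+\bar\varphi}$, whose relevant algebraic properties are supplied by \emph{non-resonance}, as recorded just before \prettyref{lem:Koszul-Hodge decomposition }.

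First I would isolate the two elementary conjugation moves and their leading effect on $H_1=c_0(\theta_1)$, where $\theta_1=w_x(1)\in S\otimes\Lambda^1W$. A Taylor/Egorov expansion in the filtration shows that conjugating by $e^{\frac{i}{h}f}$ with $f$ scalar changes the symbol, to leading order in the filtration, by $\mp c_0(\tilde w_\partial f)$ — a direct computation, $\{f,\theta_1\}=-\tilde w_\partial f$, using $\{f,x_j\}=\partial_{\xi_j}f$, $\{f,\xi_j\}=-\partial_{x_j}f$ and $\frac{i}{h}\mathrm{ad}_{\xi_0+\bar\varphi}f=\partial_{x_0}f-\{f,\bar\varphi\}$ — while conjugating by $e^{ic_0(a)}$ with $a$ an even form changes it by $\pm 2c_0(i_xa)$, since $[c(a),c(\theta_1)]=2c(\iota_{\theta_1^\sharp}a)$ for $a$ even. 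Thus the $f$-move lands the increment in $\mathrm{Im}(\tilde w_\partial)|_{\Lambda^0}=\mathrm{Im}(\tilde w_\partial i_x)$ and the $a$-move in $\mathrm{Im}(i_x)|_{\Lambda^{\mathrm{even}}}=\mathrm{Im}(i_x\tilde w_\partial)$ (the identifications being the standard adjoint-complex identities $\mathrm{Im}(d)=\mathrm{Im}(dd^\ast)$); between them, and iterated over the degree in $\Lambda^\ast W$, they realise arbitrary increments in the non-harmonic subspace isolated by \prettyref{lem:Koszul-Hodge decomposition }, at the cost of modifying the accumulating harmonic term.

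The induction then runs as follows. Suppose $d_1$ has been conjugated into the form $H_1+c_0(\omega^{(N)})+c_0(\eta)$ with $\omega^{(N)}\in\mathcal H^{\mathrm{odd}}\cap O_1'\cap O_2$ of weight $\le N$ and $\eta\in S\otimes\Lambda^{\mathrm{odd}}W$ of weight $\ge N+1$ and in $O_1'$; the base case $N=1$ is \prettyref{eq:dirac operator for formal birkhoff} once the $o_\infty''$ term (formally zero) is discarded and one notes the remaining error has weight $\ge 2$. Let $\eta_{N+1}$ be the weight-$(N+1)$ part; it is self-adjoint, hence a genuine odd form, and \prettyref{lem:Koszul-Hodge decomposition } gives $\eta_{N+1}=i_x\tilde w_\partial\alpha+\tilde w_\partial i_x\beta+\omega'$ with $\omega'\in\mathcal H^{\mathrm{odd}}$ and $\alpha,\beta$ of weight $N+1$. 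Here non-resonance enters, through the equivalence (recorded before \prettyref{lem:Koszul-Hodge decomposition }) of $\mathrm{ad}_{\xi_0+\bar\varphi}$-closedness with $x_0$-independence together with $\mathrm{ad}_{\bar\varphi}$-closedness, which is what makes the decomposition valid with $\omega'$ genuinely harmonic. Picking increments $f_{N+2}\in O_{N+2}$ (scalar) and $a_{N+1}\in O_{N+1}\otimes\Lambda^{\mathrm{even}}W$ whose moves cancel the two exact pieces, conjugation brings $d_1$ to $H_1+c_0(\omega^{(N)}+\omega')+(\mathrm{weight}\ge N+2)$, and the weight count gives $f_{N+2}\in O_{N+2}\subset O_1'\cap O_3$, $a_{N+1}\in O_{N+1}\subset O_2$ and $\omega'\in O_{N+1}\subset O_1'\cap O_2$ for $N\ge1$ — using that $\eta$ was $O_1'$ and that $\tilde w_\partial$ and $i_x$ are built only from the primed variables and $\mathrm{ad}_{\bar\varphi}$, hence respect $O_1'$. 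Since $\bigcap_N O_N=\{0\}$, the increments sum to $f\in O_1'\cap O_3$, $a\in O_2\otimes\Lambda^{\mathrm{even}}W$ and $\omega=\sum\omega'\in\mathcal H^{\mathrm{odd}}\cap O_1'\cap O_2$, which is \prettyref{eq: normal form d1}.

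The main obstacle I expect is the solvability of the homological equation at each step: making it precise that the two moves, combined and iterated over degree in $\Lambda^\ast W$, together surject onto the non-harmonic subspace $\mathrm{Im}(i_x\tilde w_\partial)\oplus\mathrm{Im}(\tilde w_\partial i_x)$ with cokernel exactly $\mathcal H^{\mathrm{odd}}$. This is delivered by \prettyref{lem:Koszul-Hodge decomposition }, but two points need care. First, the $\mathrm{ad}_{\bar\varphi}$-direction: one must be able to solve away an $\mathrm{ad}_{\bar\varphi}$-coexact term, which rests on non-resonance — the $\mathbb Q$-independence of $\{\alpha_j^{+}\}\cup\{\alpha_j^{-}\}\cup\{\alpha_j^0\}$ and $\{2\pi\}\cup\{\beta_j\}\cup\{\beta_j^0\}$ — forcing $\mathrm{ad}_{\bar\varphi}$ to be invertible on the complement of its invariants within each homogeneous component. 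Second, the bookkeeping keeping every increment inside $O_1'$, $O_3$, $O_2$; in particular the scalar move, which involves differentiation, must not destroy the $O_1'$ property, which is why $\tilde w_\partial$ is arranged to use only the primed variables and $\mathrm{ad}_{\bar\varphi}$.
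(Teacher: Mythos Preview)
Your proposal is essentially the same as the paper's own approach: the paper does not give a detailed proof but simply refers to \cite{Savale2017-Koszul}~Section~5.2, which is exactly the iterative scheme you describe---solve a homological equation at each weight level via the Koszul--Hodge decomposition of \prettyref{lem:Koszul-Hodge decomposition }, using the scalar move $e^{\frac{i}{h}f}$ to kill the $\tilde w_\partial i_x$-part and the Clifford move $e^{ic_0(a)}$ to kill the $i_x\tilde w_\partial$-part, with the non-resonance hypothesis providing the invertibility of $\mathrm{ad}_{\bar\varphi}$ on the complement of its kernel. Your identification of the latter as the key new ingredient (versus \cite{Savale2017-Koszul}, where only $\partial_{x_0}$ played that role) is exactly the point the paper records just above \prettyref{lem:Koszul-Hodge decomposition }.

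One small remark on presentation: the inclusion ``$O_{N+2}\subset O_1'\cap O_3$'' is not literally true (the weight filtration $O_N$ does not embed into $O_1'$), so the $O_1'$-membership of $f$ and $\omega$ really does rest on the separate argument you give---that the error $\eta$ starts in $O_1'$ and the differentials $\tilde w_\partial$, $i_x$ involve only the primed variables together with $\mathrm{ad}_{\xi_0+\bar\varphi}$, hence preserve $O_1'$. Likewise the identification $\mathrm{Im}(i_x)=\mathrm{Im}(i_x\tilde w_\partial)$ (and its twin) is not a tautology in the formal power series setting and deserves a line of justification; it follows from the Koszul homotopy identities recorded in the paper.
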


$\quad$

\section{\label{sec:Reduction to S1 times R2m}Reduction to $S^{1}\times\mathbb{R}^{2m}$}

We now return to the study of the traces $\mathcal{T}_{B_{v},B_{v}}^{\theta}\left(D\right)$
of the fourth kind in \prettyref{eq: four traces}. The asymptotics
of these traces can be reduced to $S^{1}\times\mathbb{R}^{2m}$. This
however first requires a modification lemma as \prettyref{lem:changing symbol near crit set}
and the definition and construction of another trapping time/function. 

Let $\Gamma\subset\Omega\subset\varOmega$ be any subcover $\delta\in\left(0,\frac{1}{2}\right)$
and $\tau>0$ as before. We define an trapping time in a similar fashion
to \prettyref{eq: exit time 1}
\begin{align*}
T_{v}\coloneqq & \frac{1}{\inf_{\left(g,\mathtt{v}\right)\in\mathcal{G}_{v}\times S^{0}\left(X;U\left(S\right)\right)}\left|H_{g,\mathtt{v}}d\right|}\\
\mathcal{G}_{v}\coloneqq & \left\{ g\in S_{\delta}^{0}\left(X;\left[0,1\right]\right)|\left.g\right|_{\Sigma_{\left[-\tau,\tau\right]}^{D}\cap\tilde{\Omega}_{\gamma_{v}}}=1,\;\left.g\right|_{\left(\Sigma_{\left[-8\tau,8\tau\right]}^{D}\cap\tilde{\varOmega}_{\gamma_{v}}\right)^{c}}=0\right\} 
\end{align*}
and set 
\[
T_{\left(\Omega,\varOmega\right)}^{\tau}\coloneqq\underset{1\leq v\leq M}{\min}T_{v}.
\]
We now have an analog of \prettyref{prop: Large Extension time}.
\begin{prop}
Let $\varOmega$ be a collection of Darboux-Reeb charts and $T>0$.
Then for each $\tau$ sufficiently small there exists an open sub-cover
$\Gamma\subset\Omega\subset\varOmega$ such that 
\begin{equation}
T_{\left(\Omega,\varOmega\right)}^{\tau}>T.\label{eq: second extension time}
\end{equation}
\end{prop}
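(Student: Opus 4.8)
The plan is to follow the proof of \prettyref{prop: Large Extension time} almost verbatim, replacing the straight--tube normal form \prettyref{eq: conjugate symbol} by the Birkhoff normal form near a Reeb orbit developed in \prettyref{sec: Birkhoff normal form near Reeb orbit}. Fix the given collection $\varOmega=\cup_{v}\varOmega_{\gamma_{v}}$ of Darboux--Reeb charts and let $\varepsilon_{1}>0$ be a transverse radius with the cylinder $S^{1}\times\{|(x_{1},\dots,x_{2m})|<\varepsilon_{1}\}$ relatively compact in every $\varOmega_{\gamma_{v}}^{0}$. By \prettyref{prop: normal form d1} and the conjugations \prettyref{eq:first conjugation normal form}--\prettyref{eq:dirac operator for formal birkhoff}, there is, microlocally near each $\gamma_{v}$, a composition of Weyl--quantized symplectomorphisms and unitary conjugations after which the symbol $d$ of $D$ has eigenvalues $\pm|\tilde{\theta}|$, where $\tilde{\theta}_{0}=\tfrac{1}{L_{\gamma}}(\xi_{0}+\bar{\varphi})$ and $\tilde{\theta}_{2j-1}=(2\mu_{j})^{1/2}x_{j}+r_{2j-1}$, $\tilde{\theta}_{2j}=(2\mu_{j})^{1/2}\xi_{j}+r_{2j}$; the effective Hamiltonian $\bar{\varphi}$ of \prettyref{eq: effective hamiltonian} depends only on the transverse variables $(x_{m+1},\dots,\xi_{2m})$, through the conserved quadratics \prettyref{eq: conjugated quadratics}, and the $r_{j}$ lie in $o_{2}'$. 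I would first record, as in \prettyref{eq: orders of commutators}--\prettyref{eq: shrinking tau consequences}, the orders of vanishing along $\Sigma_{0}^{D}=\{\tilde{\theta}=0\}$ of the Poisson brackets that will occur, after shrinking $\tau$.

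Next, using \prettyref{lem: almost diagonalization lemma} with a fixed $\rho\in(0,\tfrac18)$, I would build a unitary symbol $\mathtt{v}_{v}\in S_{\delta}^{0}(X;U(S))$ conjugating $d$ to its diagonalization $\textrm{diag}(|\tilde{\theta}|,-|\tilde{\theta}|,\dots)$ on $\{|\tilde{\theta}|>\tau/16\}$ and to the non--singular interpolant $\tilde{\theta}_{0}\sigma_{0}-\sum_{j=1}^{2m}\tilde{\theta}_{j}\sigma_{j}$ on $\{|\tilde{\theta}|\le\tau/16\}$, exactly as in \prettyref{eq: conjugate symbol interpolation}. The trapping function $g_{v}\in\mathcal{G}_{v}$ would be a product of: (i) a regularized energy cutoff $\chi(\beta(\tilde{\theta})/\tau)$ interpolating from $1$ on $\Sigma_{[-\tau,\tau]}^{D}$ to $0$ off $\Sigma_{[-8\tau,8\tau]}^{D}$, with $\beta$ the regularization of $|\tilde{\theta}|$ near the pole $\{\theta_{0}=1\}$ from the proof of \prettyref{prop: Large Extension time}; (ii) for each elliptic block, a cutoff $\chi(\bar{Q}_{j}^{e}/\varepsilon_{1}^{2})$ in the \emph{compact conserved} action, which Poisson--commutes with $\bar{\varphi}$; and (iii) for each hyperbolic, negative--hyperbolic and loxodromic block, a cutoff in the corresponding transverse coordinates interpolating from $1$ on $\tilde{\Omega}_{\gamma_{v}}$ to $0$ off $\tilde{\varOmega}_{\gamma_{v}}$ \emph{over a number $S\sim\log(\varepsilon_{1}/\varepsilon_{0})$ of $e$-foldings of the linearized flow $e^{tH_{\bar{\varphi}}}$ along the contracting directions}, where $\varepsilon_{0}$ is the transverse radius of the sub--chart $\Omega_{\gamma_{v}}$ still to be chosen. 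Crucially $g_{v}$ is taken constant in the circle variable $x_{0}$, since $\gamma_{v}$ wraps $S^{1}$ and $g_{v}$ must equal $1$ on the whole of $\Sigma_{[-\tau,\tau]}^{D}\cap\tilde{\Omega}_{\gamma_{v}}$; this is why, unlike in \prettyref{prop: Large Extension time}, no $x_{0}$-cutoff is available and the decay of the commutator must be produced by (ii)--(iii) instead.

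The bracket $\|\{\mathtt{v}_{v}^{*}d\mathtt{v}_{v},g_{v}\}\|$ is then estimated along the lines of \prettyref{eq: comm comp 1}--\prettyref{eq: comm comp 6}: the elliptic cutoffs contribute $0$; the regularized--energy and $\mathtt{v}_{v}$-interpolation terms near $\{\theta_{0}=1\}$, together with all cross terms produced by the $r_{j}$ and by the error $c_{0}(\omega)$ of \prettyref{prop: normal form d1}, are $<\tfrac{1}{2T}$ once $\tau$ has been shrunk (this is the analogue of the estimates that follow from \prettyref{eq: shrinking tau consequences}); and the hyperbolic and loxodromic cutoffs contribute $O(1/S)=O(1/\log(\varepsilon_{1}/\varepsilon_{0}))$, since $\bar{Q}$ is conserved to leading order and the contracting directions carry the cutoff at speed $O(1/S)$ in $H_{\bar{\varphi}}$-time. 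Choosing first $\tau$ small and then $\Omega=\cup_{v}\Omega_{\gamma_{v}}$ with $\varepsilon_{0}$ small enough that the last contribution is $<\tfrac{1}{2T}$ for all $v$, one gets $|H_{g_{v},\mathtt{v}_{v}}d|<1/T$, i.e.\ $T_{v}>T$ for every $v$ and hence \prettyref{eq: second extension time}. Since any $\varepsilon_{0}>0$ yields an admissible sub--cover $\Gamma\subset\Omega\subset\varOmega$, the smallness of $\varepsilon_{0}$ is no obstruction.

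The main obstacle is step (iii): unlike the elliptic directions, the hyperbolic and loxodromic ones carry no compactly supported cutoff invariant under the linearized Reeb flow, so the associated commutator term cannot be made to vanish as in the elliptic case, and one must instead use that the contracting directions give no dynamical escape and spread the cutoff over logarithmically many $e$-foldings --- affordable precisely because $\Omega$ may be shrunk arbitrarily inside the \emph{fixed} chart $\varOmega$. The delicate points are the control of the error terms $r_{j}$ and $c_{0}(\omega)$ (conserved only to leading order) in the transition region, its interaction with the weighted ellipticity of $D_{0}$ from \prettyref{eq:Dirac op weyl quantization without error} in the large--$\xi''$ regime, and the uniformity in $h$ of every estimate in the exotic class $\Psi_{\delta}^{0}$; all of these are handled by the same bookkeeping as in the proof of \prettyref{prop: Large Extension time}.
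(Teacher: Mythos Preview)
Your proposal is essentially correct and follows the same architecture as the paper: mimic \prettyref{prop: Large Extension time}, replace the straight-tube normal form by the Birkhoff normal form of \prettyref{sec: Birkhoff normal form near Reeb orbit}, drop the $x_{0}$-cutoff since $\gamma_{v}$ wraps $S^{1}$, and compensate by engineering transverse cutoffs whose Poisson bracket with $\bar{\varphi}$ is $O(1/T)$.

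The implementation differs in one point worth noting. You split the transverse cutoff into an elliptic part (ii), built from the \emph{conserved} actions $\bar{Q}_{j}^{e}$ so that the bracket with $\bar{\varphi}$ vanishes, and a hyperbolic/loxodromic part (iii), spread over $S\sim\log(\varepsilon_{1}/\varepsilon_{0})$ many $e$-foldings so that the bracket is $O(1/S)$. The paper instead treats all blocks uniformly by introducing the fractional-power ``escape quadratics'' \prettyref{eq: escape fn quadratics}
\[
\hat{Q}_{j}=\bigl(x_{j+m}^{2}+\xi_{j+m}^{2}\bigr)^{\varepsilon/T},
\]
one per symplectic block, and takes $g_{v}=\chi(\beta(\tilde{\theta})/\tau)\prod_{j}\chi\bigl(\hat{Q}_{j}/(\varepsilon/16m)^{2}\bigr)$ with the sub-cover $\Omega_{\gamma_{v}}=\{\hat{Q}_{j}<(\varepsilon/16m)^{2}\}$. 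The key estimate is then simply $|\{\bar{\varphi},\hat{Q}\}|\le (\varepsilon/T)\,c_{0}$ \prettyref{eq: substitute x'' xi'' estimates 2}, valid for \emph{every} block, which yields \prettyref{eq: shrinking tau consequences-1} and the commutator bounds $|H_{g_{v},\mathtt{v}_{v}}d|=O(1/T)$ in one stroke. In the elliptic case $\hat{Q}_{j}^{e}$ happens to be a function of your conserved $\bar{Q}_{j}^{e}$, so the two constructions coincide there; in the hyperbolic/loxodromic cases the fractional power is exactly a reparametrization of your logarithmic spread (a cutoff in $r^{2\varepsilon/T}$ over an $O(1)$ window is a cutoff in $\log r$ over a window of length $\sim T/\varepsilon$). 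So your case split is correct but unnecessary; the paper's device is cleaner and avoids having to argue separately that the elliptic actions commute with the full $\bar{\varphi}$.

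One remark on your heuristic: the phrase ``the contracting directions give no dynamical escape'' is not the right explanation for why (iii) works. Trajectories in a hyperbolic block with nonzero expanding component \emph{do} escape; what makes the trick go through is that $|\{\bar{\varphi},\log r^{2}\}|$ is bounded by the Lyapunov exponent, so a cutoff spread over $S$ units of $\log r$ has bracket $O(1/S)$. This is precisely what \prettyref{eq: substitute x'' xi'' estimates 2} encodes via the small exponent $\varepsilon/T$.
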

\begin{proof}
The proof is similar to \prettyref{prop: Large Extension time} with
a some modifications that we precise. Let $0<\varepsilon\ll1$, be
sufficiently small such that for each Reeb orbit $\gamma_{v}$ the
set $A_{\varepsilon}\coloneqq S_{x_{0}}^{1}\times B_{\mathbb{R}^{2m}}\left(\varepsilon\right)\subset\varOmega_{\gamma_{v}}^{0}$
is contained inside the Darboux-Reeb chart \prettyref{prop:normal structure contact form}.
Next for $\left(x',\xi'\right)=\left(x_{1},\ldots,x_{m};\xi_{1},\ldots,\xi_{m}\right)$,
$\left(x'',\xi''\right)=\left(x_{m+1},\ldots,x_{2m};\xi_{m+1},\ldots,\xi_{2m}\right)$
set $\tilde{C}_{\varepsilon}\coloneqq\left\{ x''^{2}+\xi''^{2}<\varepsilon^{2}\right\} \subset T^{*}S_{x_{0}}^{1}\times\mathbb{R}_{x',x'',\xi',\xi''}^{4m}$.
Also set 
\[
U_{\varepsilon,\tau}\coloneqq\left\{ \frac{1}{L_{\gamma}^{2}}\left(\xi_{0}+\bar{\varphi}\right)^{2}+2\sum_{j=1}^{m}\mu_{j}\left(x_{j}^{2}+\xi_{j}^{2}\right)<\tau^{2},\:x''^{2}+\xi''^{2}<\varepsilon^{2}\right\} \subset\tilde{C}_{\varepsilon}
\]
 with $\bar{\varphi}=\bar{\varphi}\left(x'',\xi''\right)$ as in \prettyref{eq: effective hamiltonian}.
Also denote by $o_{N}'$, $o_{N}''$ functions which vanish to order
$N$ in $\left(\xi_{0}+\bar{\varphi},x',\xi'\right)$ and $\left(x'',\xi''\right)$
respectively. Then as in \prettyref{subsec:Birkhoff-normal-form}
(eqns \prettyref{eq:first conjugation normal form}, \prettyref{eq:symbol dirac d_1},
\prettyref{eq: d0 =000026 d1 are conjugate}, \prettyref{eq:dirac operator for formal birkhoff}
and \prettyref{eq: model Dirac symbol}) there exists $0<\tau\ll1$
sufficiently small of the following significance: for each $1\leq v\leq M$
there exists a neighborhood $M_{v}\subset\tilde{A}_{\varepsilon}$
of $\tilde{A}_{\frac{\varepsilon}{8}}\cap\Sigma_{0}^{D}$, a Hamiltonian
symplectomorphism
\begin{align*}
\kappa_{v}\coloneqq e^{H_{f_{1}}}\circ e^{H_{f_{0}}} & :U_{\varepsilon,\tau}\rightarrow M_{v}\\
\kappa_{v}\left(x_{0},0,x'';-\bar{\varphi},0,\xi''\right) & =\left(x_{0},-\frac{\xi''}{\sqrt{2}},\frac{x''}{\sqrt{2}};-\bar{\varphi},\frac{x''}{\sqrt{2}},\frac{\xi''}{\sqrt{2}}\right)
\end{align*}
a self-adjoint endomorphism $c_{A}\in C^{\infty}\left(U_{\varepsilon,\tau};i\mathfrak{u}\left(2^{m}\right)\right)$,
functions $r_{j,0}\in o_{1}'o_{1}''$,$r_{j,1}\in o_{2}'$, $r_{j,\infty}\in o_{\infty}''$,
$j=0,\ldots,2m$, such that 
\begin{align}
e^{ic_{A}}\left(\left(e^{H_{f_{1}}}\circ e^{H_{f_{0}}}\right)^{*}d\right)e^{-ic_{A}} & =H_{1}+\sigma^{j}r_{j,0}+\sigma^{j}r_{j,1}+\sigma^{j}r_{j,\infty},\label{eq: conjugate symbol-1}
\end{align}
with $H_{1}$ as in \prettyref{eq: model Dirac symbol}.

Also note that the terms $r_{0,0}+r_{0,1}$ maybe assumed to be $\left(x';\xi'\right)$
independent as observed after \prettyref{eq: model Dirac symbol}.
Now set 
\begin{align}
\left(\tilde{\theta}_{0},\tilde{\theta}_{1},\ldots,\tilde{\theta}_{2m}\right) & =\left(\frac{1}{L_{\gamma}}\left(\xi_{0}+\bar{\varphi}\right),\left(2\mu_{1}\right)^{\frac{1}{2}}x_{1},\left(2\mu_{1}\right)^{\frac{1}{2}}\xi_{1},\ldots,\left(2\mu_{m}\right)^{\frac{1}{2}}x_{m},\left(2\mu_{m}\right)^{\frac{1}{2}}\xi_{m}\right)\label{eq: components symbol-1}\\
 & \quad+\left(r_{0,0},r_{1,0},\ldots,r_{2m,0}\right)+\left(r_{0,1},r_{1,1},\ldots,r_{2m,1}\right)+\left(r_{0,\infty},r_{1,\infty},\ldots,r_{2m,\infty}\right)\nonumber \\
\tilde{\theta}' & =\left(\tilde{\theta}_{1},\ldots,\tilde{\theta}_{2m}\right)\nonumber 
\end{align}
 and note from \prettyref{eq: conjugate symbol} that the eigenvalues
of the symbol $d$ are $\pm\left|\tilde{\theta}\right|$. We clearly
have $U_{\varepsilon,\tau}\cap\Sigma_{0}^{D}=\left\{ \tilde{\theta}=0\right\} \cap\Sigma_{0}^{D}$and
we may set 
\begin{align}
\theta_{j} & =\frac{\tilde{\theta}_{j}}{\left|\tilde{\theta}\right|}\in C^{\infty}\left(U_{\varepsilon,\tau}\setminus\Sigma_{0}^{D};S^{n-1}\right).\label{eq:circle valued function-1}
\end{align}
We now compute
\begin{align}
\left\{ \tilde{\theta}_{0},x_{0}\right\} -\frac{1}{L_{\gamma}} & =o_{1}'+o_{1}''+\mathtt{o}_{\infty}''\nonumber \\
\left\{ \tilde{\theta}_{j},x_{0}\right\}  & =o_{1}'+o_{1}''+\mathtt{o}_{\infty}'',\quad j\geq1,\nonumber \\
\left\{ \tilde{\theta}_{j},x''\right\}  & =o_{1}'+\mathtt{o}_{\infty}'',\quad j\geq1,\nonumber \\
\left\{ \tilde{\theta}_{j},\xi''\right\}  & =o_{1}'+\mathtt{o}_{\infty}'',\quad j\geq1,\nonumber \\
\left\{ \tilde{\theta}_{0},\tilde{\theta}_{j}\right\}  & =o_{2}'+o_{1}'o_{1}''+\mathtt{o}_{\infty}'',\quad j\geq0,\nonumber \\
\left\{ \tilde{\theta}_{j},\tilde{\theta}_{k}\right\} \quad\textrm{or }\left\{ \tilde{\theta}_{j},\tilde{\theta}_{k}\right\} -1 & =o_{1}'+o_{1}''+\mathtt{o}_{\infty}''\quad k>j\geq0,\label{eq: orders of commmutators 2}
\end{align}
similar to \prettyref{eq: orders of commutators}. Note that the bracket
$\left\{ \tilde{\theta}_{0},\tilde{\theta}_{j}\right\} $ is still
$o_{2}+\mathtt{o}_{\infty}$ due to the $\left(x';\xi'\right)$-independence
of $r_{0}$ in $\tilde{\theta}_{0}$. In this case however, unlike
\prettyref{eq: orders of commutators} the brackets $\left\{ \tilde{\theta}_{0},x''\right\} ,\left\{ \tilde{\theta}_{0},\xi''\right\} $
may not be $o_{1}'+\mathtt{o}_{\infty}''$ due to the presence of
the $\bar{\varphi}\left(x'',\xi''\right)$ term in $\tilde{\theta}_{0}$
. However the quadratics
\begin{eqnarray}
\hat{Q}_{j}^{e} & = & \left(\xi_{j+m}^{2}+x_{j+m}^{2}\right)^{\frac{\varepsilon}{T}}\nonumber \\
\hat{Q}_{j}^{h} & = & \left(x_{N_{e}+j+m}^{2}+\xi_{N_{e}+j+m}^{2}\right)^{\frac{\varepsilon}{T}}\nonumber \\
\hat{Q}_{j}^{l,\textrm{Re}} & = & \left(x_{2m-2j+1}^{2}+x_{2m-2j+2}^{2}\right)^{\frac{\varepsilon}{T}}\nonumber \\
\hat{Q}_{j}^{l,\textrm{Im}} & = & \left(\xi_{2m-2j+1}^{2}+\xi_{2m-2j+2}^{2}\right)^{\frac{\varepsilon}{T}}\label{eq: escape fn quadratics}
\end{eqnarray}
are seen to satisfy
\begin{align}
\left\{ \tilde{\theta}_{0},\hat{Q}\right\} -\frac{1}{L}\left\{ \bar{\varphi},\hat{Q}\right\}  & =o_{1}'+\mathtt{o}_{\infty}''\label{eq: substitute x'' xi'' estimates 1}\\
\left|\left\{ \bar{\varphi},\hat{Q}\right\} \right| & \leq\frac{\varepsilon}{T}\underbrace{\left(m\sup_{\left(x'',\xi''\right)\leq\varepsilon}\left|\partial_{\bar{Q}}\bar{\varphi}\right|\right)}_{\eqqcolon c_{0}},\quad\hat{Q}\neq0,\label{eq: substitute x'' xi'' estimates 2}
\end{align}
where $\bar{\varphi}$ is considered as a function of the quadratics
$\bar{Q}$ as in \prettyref{eq: effective hamiltonian}. Hence for
$\varepsilon,\tau$ sufficiently small, the bracket relations \prettyref{eq: orders of commmutators 2},
\prettyref{eq: substitute x'' xi'' estimates 1} and \prettyref{eq: substitute x'' xi'' estimates 2}
again imply 
\begin{align}
\left|\left\{ \tilde{\theta}_{j},x_{0}\right\} \right| & \leq2,\qquad j\geq0,\nonumber \\
\left|\left\{ \left|\tilde{\theta}\right|,x_{0}\right\} \right| & \leq2,\qquad\left|\tilde{\theta}\right|\neq0,\nonumber \\
\left|\left\{ \frac{\tilde{\theta}_{j}}{\left|\tilde{\theta}\right|},x_{0}\right\} \right| & \leq\frac{4}{\left|\tilde{\theta}\right|},\qquad\left|\tilde{\theta}\right|\neq0,\,j\geq0,\nonumber \\
\left|\frac{1}{\varepsilon}\left\{ \tilde{\theta}_{j},\hat{Q}\right\} \right| & \leq\frac{2c_{0}}{LT},\qquad j\geq0,\nonumber \\
\left|\frac{1}{\varepsilon}\left\{ \left|\tilde{\theta}\right|,\hat{Q}\right\} \right| & \leq\frac{2c_{0}}{LT},\qquad\left|\tilde{\theta}\right|\neq0,\nonumber \\
\left|\frac{1}{\varepsilon}\left\{ \frac{\tilde{\theta}_{j}}{\left|\tilde{\theta}\right|},\hat{Q}\right\} \right| & \leq\frac{2c_{0}}{LT\left|\tilde{\theta}\right|},\qquad\left|\tilde{\theta}\right|\neq0,\,j\geq0,\nonumber \\
\left|\left\{ \tilde{\theta}_{0},\tilde{\theta}_{j}\right\} \right| & \leq\frac{\left|\tilde{\theta}\right|}{T},\qquad j\geq0,\nonumber \\
\left|\left\{ \tilde{\theta}_{0},\left|\tilde{\theta}\right|\right\} \right| & \leq\frac{\left|\tilde{\theta}\right|}{T},\qquad\left|\tilde{\theta}\right|\neq0,\ \ \nonumber \\
\left|\left\{ \tilde{\theta}_{0},\frac{\tilde{\theta}_{j}}{\left|\tilde{\theta}\right|}\right\} \right| & \leq\frac{1}{T},\qquad\left|\tilde{\theta}\right|\neq0,\,j\geq0,\nonumber \\
\frac{1}{4}\left[\left(\xi_{0}+\bar{\varphi}\right)^{2}+2\sum_{j=1}^{m}\mu_{j}\left(x_{j}^{2}+\xi_{j}^{2}\right)\right] & \leq\sum_{j=0}^{2m}\tilde{\theta}_{j}^{2}\leq4\left[\left(\xi_{0}+\bar{\varphi}\right)^{2}+2\sum_{j=1}^{m}\mu_{j}\left(x_{j}^{2}+\xi_{j}^{2}\right)\right]\label{eq: shrinking tau consequences-1}
\end{align}
on $U_{\varepsilon,\tau}$. Again define
\[
\tilde{U}_{\varepsilon,\tau}\coloneqq\left\{ \sum_{j=0}^{2m}\tilde{\theta}_{j}^{2}<\tau^{2},\:x''^{2}+\xi''^{2}<\varepsilon^{2}\right\} \subset U_{\varepsilon,\tau}.
\]

We now set 
\begin{equation}
\Omega_{\gamma_{v}}\coloneqq\left\{ \hat{Q}_{j}<\left(\frac{\varepsilon}{16m}\right)^{2}\right\} \label{eq: choice subcover}
\end{equation}

To verify \prettyref{eq: second extension time} again let $\chi\in C_{c}^{\infty}\left(\left[-4,4\right];\left[0,1\right]\right)$,
be a cutoff such that $\chi=1$ on $\left[-2,2\right]$ and $\left|\chi'\right|\leq1$.
Also for $\rho\in\left(0,\frac{1}{8}\right)$ fixed, define a function
$\varphi_{\rho}\in C^{\infty}\left(\left[-1,1\right]_{\theta_{0}};\left[0,1\right]\right)$
such that $\varphi_{\rho}\left(\theta_{0}\right)=\begin{cases}
1; & \textrm{for }\theta_{0}\in\left[1-\rho,1\right]\\
0; & \textrm{for }\theta_{0}\in\left[-1,1-2\rho\right]
\end{cases}$ and $\left|\varphi_{\rho}'\right|\leq\frac{2}{\rho}$ . The trapping
function in this case is now modified to 
\begin{align*}
g_{v}\coloneqq & \chi\left(\frac{\beta\left(\tilde{\theta}\right)}{\tau}\right)\left[\prod_{j}\chi\left(\frac{\hat{Q}_{j}}{\left(\varepsilon/16m\right)^{2}}\right)\right]\in C_{c}^{\infty}\left(\Sigma_{\left[-8\tau,8\tau\right]}^{D}\cap\tilde{\varOmega}_{\gamma_{v}}\right)\quad\textrm{ where}\\
\beta\left(\tilde{\theta}\right)\coloneqq & \sqrt{\left|\tilde{\theta}\right|^{2}-\varphi_{\rho}\left(\theta_{0}\right)\left|\tilde{\theta}'\right|^{2}}\\
= & \sqrt{\left|\tilde{\theta}_{0}\right|^{2}+\left(1-\varphi_{\rho}\right)\left|\tilde{\theta}'\right|^{2}}\quad\textrm{ satisfying}\\
\frac{\left|\tilde{\theta}\right|}{2}\leq & \beta\left(\tilde{\theta}\right)\leq\left|\tilde{\theta}\right|
\end{align*}
as before in terms of the relevant coordinates on $\tilde{U}_{\varepsilon,\tau}$.
With $\mathtt{v}_{u}$ now defined in a similar fashion to \prettyref{eq: actual conjugate symbol},
one may again estimate $\left|H_{g_{u},\mathtt{v}_{u}}\left(d\right)\right|=O\left(\frac{1}{T}\right)$
as in \prettyref{eq: comm comp 1}-\prettyref{eq: comm comp 6} using
\prettyref{eq: substitute x'' xi'' estimates 1} and \prettyref{eq: shrinking tau consequences-1}
to complete the proof.
\end{proof}
Next; we have a lemma reducing the trace asymptotics to $S^{1}\times\mathbb{R}^{2m}$.
First choose $T$ sufficiently large such that $\textrm{spt \ensuremath{\left(\theta\right)}}\subset\left[-T,T\right]$.
Then choose $\tau$ sufficiently small and an open sub-cover $\Gamma\subset\Omega\subset\varOmega$
with $T_{\left(\Omega,\varOmega\right)}^{\tau}>T$. Finally and as
observed before, by choosing $\tau$ even smaller if necessary, one
may also find an $\left(\Omega,\tau,\delta\right)$ partition to arrange
$T_{\left(\mathcal{P};\mathcal{V},\mathcal{W}\right)}>T$; reducing
us to study of the asymptotics of $\mathcal{T}_{B_{v},B_{v}}^{\theta}\left(D\right)$.
We now show that \prettyref{eq: second extension time} allows a further
reduction to $S^{1}\times\mathbb{R}^{2m}$. Below, the operator $D_{0}$
is as in \prettyref{eq:Dirac op weyl quantization without error}.
\begin{prop}
\label{prop:Reduction to S1 X R^2m}For each $1\leq v\leq M$, one
has 
\[
\mathcal{T}_{B_{v},B_{v}}^{\theta}\left(D\right)=\underbrace{\textrm{tr}\left[B_{v}^{0}f\left(\frac{D_{0}}{\sqrt{h}}\right)\check{\theta}\left(\frac{\lambda\sqrt{h}-D_{0}}{h}\right)B_{v}^{0}\right]}_{\coloneqq\mathcal{T}_{B_{v}^{0},B_{v}^{0}}^{\theta}\left(D_{0}\right)}\quad\textrm{mod }h^{\infty}
\]
for cutoffs $B_{v}^{0}\in\Psi_{\delta}^{0}\left(S^{1}\times\mathbb{R}^{2m}\right)$,
with $WF\left(B_{v}^{0}\right)\Subset\Sigma_{\left[-\tau_{\delta},\tau_{\delta}\right]}^{D_{0}}\cap\tilde{\Omega}_{\gamma_{v}}^{\delta}$
.
\end{prop}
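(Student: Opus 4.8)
The plan is to transfer the trace $\mathcal{T}_{B_{v},B_{v}}^{\theta}\left(D\right)$, which is microlocalized in a neighbourhood of the Reeb orbit $\gamma_{v}\subset X$, to the model manifold $S^{1}\times\mathbb{R}^{2m}$ via the Darboux--Reeb chart of \prettyref{prop:normal structure contact form}, exploiting that the globally defined operator $D_{0}$ of \prettyref{eq:Dirac op weyl quantization without error} was manufactured so as to coincide with $D$ (read in Darboux--Reeb coordinates and the spin trivialization of \prettyref{subsec:Birkhoff-normal-form}) on that chart.

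First I would fix the Darboux--Reeb chart $\kappa\colon\varOmega_{\gamma_{v}}^{0}\to\varOmega_{\gamma_{v}}$ together with the orthonormal spin trivialization over it, and transport $B_{v}$ to a cutoff $B_{v}^{0}\in\Psi_{\delta}^{0}\left(S^{1}\times\mathbb{R}^{2m}\right)$. On a fixed neighbourhood of $\gamma_{v}$ the framing $w_{j}^{k}$, the endomorphism $E$, and the one form $a=\varphi\,d\theta+\tfrac12\sum_{j}\left(x_{j}dx_{j+m}-x_{j+m}dx_{j}\right)+a_{\gamma}^{\infty}$ used to build $D_{0}$ are exactly those of $D$ (the cutoff $\chi_{\gamma}$ being $\equiv1$ there), so $D=D_{0}$ as operators near $\gamma_{v}$; in particular $D=D_{0}$ microlocally on $\tilde{\Omega}_{\gamma_{v}}^{\delta}$, and one may take $B_{v}^{0}$ with $WF\left(B_{v}^{0}\right)\Subset\Sigma_{\left[-\tau_{\delta},\tau_{\delta}\right]}^{D_{0}}\cap\tilde{\Omega}_{\gamma_{v}}^{\delta}$, as asserted. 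It then remains only to see that the two traces agree modulo $h^{\infty}$.

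Next I would prove the relevant modification lemma, which is the exact analogue of \prettyref{lem:changing symbol near crit set} in the situation where the two operators now live on the two manifolds $X$ and $S^{1}\times\mathbb{R}^{2m}$, identified on a neighbourhood of $\gamma_{v}$: if $D'\in\Psi_{\textrm{cl}}^{1}$ is essentially self-adjoint and agrees with $D_{0}$ (equivalently with $D$) microlocally on $\Sigma_{\left[-8\tau,8\tau\right]}^{D_{0}}\cap\tilde{\varOmega}_{\gamma_{v}}$, then $\mathcal{T}_{B_{v}^{0},B_{v}^{0}}^{\theta}\left(D'\right)$ and $\mathcal{T}_{B_{v},B_{v}}^{\theta}\left(D\right)$ agree modulo $h^{\infty}$ for every $\theta\in C_{c}^{\infty}\left(\left(T_{0},T_{v}\right)\right)$. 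The proof is that of \prettyref{lem:changing symbol near crit set}, hence of \cite{Savale2017-Koszul} Lemma 3.3: one expands the traces through the Helffer--Sj\"ostrand formula as in \prettyref{eq: four traces}, inserts the trapping/microlocal weight function $g_{v}\in\mathcal{G}_{v}$ of the preceding proposition together with the almost-diagonalizing unitary symbol $\mathtt{v}$ of \prettyref{lem: almost diagonalization lemma}, conjugates all operators in sight by a unitary $\mathtt{V}\in\Psi_{\delta}^{0}\left(X;U\left(S\right)\right)$ with $\sigma\left(\mathtt{V}\right)=\left[\mathtt{v}\right]$ so that $H_{g_{v},\mathtt{v}}d=H_{g_{v}}\left(\mathtt{V}^{*}d\mathtt{V}\right)$, and runs the commutator/propagation estimate of \cite{Savale2017-Koszul} Lemma 3.3 with the conjugated operators. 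The hypothesis $\textrm{spt}\left(\theta\right)\subset\left(T_{0},T_{v}\right)$ together with $\left|H_{g_{v},\mathtt{v}}d\right|<1/T_{v}$ forces $B_{v}^{0}\left(\tfrac{1}{\sqrt{h}}D'-z\right)^{-1}B_{v}^{0}$ to have Schwartz kernel concentrated, modulo $h^{\infty}$, inside the chart, where $D'=D$; consequently the $B_{v}^{0}$-localized trace feels neither the passage to the other manifold nor any modification of the operator outside $\Sigma_{\left[-8\tau,8\tau\right]}^{D_{0}}\cap\tilde{\varOmega}_{\gamma_{v}}$. Applying this with $D'=D_{0}$ gives the proposition, once we invoke $T_{v}\geq T_{\left(\Omega,\varOmega\right)}^{\tau}>T>\sup\,\textrm{spt}\left(\theta\right)$ from \prettyref{eq: second extension time}.

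The hard part is that the principal symbol $d=\sigma\left(D\right)$ is not smooth: its eigenvalues $\pm\left|\tilde{\theta}\right|$ are merely Lipschitz and genuinely singular along $\Sigma_{0}^{D}$, so Egorov's theorem and the usual propagation of singularities are unavailable, and ``the dynamics stays inside $\tilde{\varOmega}_{\gamma_{v}}$ up to time $T_{v}$'' has no literal meaning. This is precisely the point repaired by the trapping-function construction of the preceding proposition: the weight $g_{v}\in\mathcal{G}_{v}$, the almost-diagonalization of \prettyref{lem: almost diagonalization lemma}, and the resulting bound $\left|H_{g_{v},\mathtt{v}}d\right|<1/T_{v}$ replace the multivalued, non-smooth Hamiltonian flow by a controlled commutator estimate that confines the propagated wavefront set to $\tilde{\varOmega}_{\gamma_{v}}$ for $\left|t\right|\leq T_{v}$. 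Granting that input, everything else is the bookkeeping of \cite{Savale2017-Koszul} Lemma 3.3, essentially word for word as in the proof of \prettyref{lem:changing symbol near crit set}.
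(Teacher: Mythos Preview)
Your proposal is correct and follows essentially the same line as the paper: one invokes an analog of \prettyref{lem:changing symbol near crit set} (i.e.\ of \cite{Savale2017-Koszul} Lemma 3.3/Prop.\ 4.1) with the trapping function $g_{v}\in\mathcal{G}_{v}$ and conjugating unitary $\mathtt{V}$, uses that $D=D_{0}$ on $\varOmega_{\gamma_{v}}$ by construction \prettyref{eq:Dirac op weyl quantization without error} and hence microlocally on $\Sigma_{\left[-8\tau,8\tau\right]}^{D}\cap\tilde{\varOmega}_{\gamma_{v}}$, and finally appeals to $T_{v}\geq T_{\left(\Omega,\varOmega\right)}^{\tau}>T\supset\mathrm{spt}\,\theta$ from \prettyref{eq: second extension time}. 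The paper's proof is terser but identical in substance; your added discussion of why the non-smoothness of $d$ forces the trapping-function machinery in place of a na\"{\i}ve Egorov argument is a welcome clarification.
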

\begin{proof}
The proof is again similar to \cite{Savale2017-Koszul} Prop. 4.1,
provided the smallness of $\textrm{spt \ensuremath{\left(\theta\right)}}$
is quantified. First one has an analog of \prettyref{lem:changing symbol near crit set}:
for $D'\in\Psi_{\textrm{cl}}^{1}\left(X;S\right)$ essentially self-adjoint,
with $D=D'$ microlocally on $\Sigma_{\left[-8\tau,8\tau\right]}^{D}\cap\tilde{\varOmega}_{\gamma_{v}}$
, and $\theta\in C_{c}^{\infty}\left(\left(T'h^{\epsilon},T_{v}\right);\left[0,1\right]\right)$
one has 
\[
\mathcal{T}_{B_{v},B_{v}}^{\theta}\left(D\right)=\mathcal{T}_{B_{v},B_{v}}^{\theta}\left(D'\right)\quad\textrm{mod }h^{\infty}
\]
(since $B_{v}$ has microsupport in $\Sigma_{\left[-\tau_{\delta},\tau_{\delta}\right]}^{D_{1}}\cap\tilde{\Omega}_{\gamma_{v}}^{\delta}$
and hence on $\Sigma_{\left[-\tau,\tau\right]}^{D_{1}}\cap\tilde{\Omega}_{\gamma_{v}}$).
Now as $D=D_{0}$ on $\varOmega_{\gamma_{v}}$ by construction \prettyref{eq:Dirac op weyl quantization without error}
and hence microlocally on $\Sigma_{\left[-8\tau,8\tau\right]}^{D}\cap\tilde{\varOmega}_{\gamma_{v}}$;
the proof in \cite{Savale2017-Koszul} is seen to carry through provided
$\textrm{spt}\left(\theta\right)$ is contained in each of $\left\{ \left(T'h^{\epsilon},T_{v}\right)\right\} $
, $1\leq v\leq M$. But this is guaranteed by our choice of an appropriate
subcover $\Gamma\subset\Omega\subset\varOmega$ satisfying \prettyref{eq: second extension time}and
$\textrm{spt \ensuremath{\left(\theta\right)}}\subset\left[-T,T\right]$.
\end{proof}
Next, we show how the Birkhoff normal form maybe used to perform a
further reduction on the trace. First note that we may similarly use
\prettyref{eq: clifford quantization} to define a self-adjoint Clifford-Weyl
quantization map 
\[
c_{0}^{W}\coloneqq\textrm{Op}\otimes c_{0}:S_{\textrm{cl}}^{0}\left(T^{*}S^{1}\times\mathbb{R}^{4m};\mathbb{C}\right)\otimes\Lambda^{\textrm{odd/even}}W\rightarrow\Psi_{\textrm{cl}}^{0}\left(S^{1}\times\mathbb{R}^{2m};\mathbb{C}^{2^{m}}\right)
\]
which maps real valued symbols $S_{\textrm{cl}}^{0}\left(T^{*}S^{1}\times\mathbb{R}^{4m};\mathbb{R}\right)\otimes\Lambda^{\textrm{odd/even}}W$
to self-adjoint operators in $\Psi_{\textrm{cl}}^{0}\left(S^{1}\times\mathbb{R}^{2m};\mathbb{C}^{2^{m}}\right)$.
Similarly we define a space of real-valued, twisted $\tilde{\Delta}^{0}$-harmonic,
$\bar{\varphi}$-commuting, $x_{0}$- independent symbols 
\[
\mathcal{H}^{k}S_{\textrm{cl}}^{0}\coloneqq\left\{ \omega\in S_{\textrm{cl}}^{0}\left(T^{*}S^{1}\times\mathbb{R}^{4m};\mathbb{R}\right)\otimes\Lambda^{k}W|\,\tilde{\Delta}^{0}\omega=0,\,H_{\bar{\varphi}}\omega=0,\,\partial_{x_{0}}\omega=0\right\} .
\]
Next, an application of Borel's lemma by virtue of \prettyref{eq:first conjugation normal form},
\prettyref{eq: d0 =000026 d1 are conjugate} and \prettyref{eq: normal form d1}
gives the existence of
\begin{align*}
\bar{a}\sim & \sum_{j=0}^{\infty}h^{j}\bar{a}_{j}\in S_{\textrm{cl}}^{0}\left(T^{*}S^{1}\times\mathbb{R}^{4m};\mathbb{R}\right)\otimes\Lambda^{\textrm{odd}}W\\
\bar{r}\sim & \sum_{j=0}^{\infty}h^{j}\bar{r}_{j}\in S_{\textrm{cl}}^{0}\left(T^{*}S^{1}\times\mathbb{R}^{4m};\mathbb{R}\right)\otimes\Lambda^{\textrm{odd}}W\\
\bar{f}\sim & \sum_{j=0}^{\infty}h^{j}\bar{f}_{j}\in S_{\textrm{cl}}^{0}\left(T^{*}S^{1}\times\mathbb{R}^{4m};\mathbb{R}\right)\\
\bar{\omega}\sim & \sum_{j=0}^{\infty}h^{j}\bar{\omega}_{j}\in\mathcal{H}^{\textrm{odd}}S_{\textrm{cl}}^{0}
\end{align*}
such that 
\begin{equation}
e^{ic_{0}^{W}\left(\bar{a}\right)}e^{\frac{i}{h}\bar{f}^{W}}d_{0}^{W}e^{-\frac{i}{h}\bar{f}^{W}}e^{-ic_{0}^{W}\left(\bar{a}\right)}=\underbrace{H_{1}^{W}+c_{0}^{W}\left(\bar{\omega}\right)}_{\coloneqq\bar{D}}+c_{0}^{W}\left(\bar{r}\right)\label{eq:normal form conjugation}
\end{equation}
on $S^{1}\times\mathbb{R}^{2m}$. Here $\left\{ \bar{r}_{j}\right\} _{j\in\mathbb{N}_{0}},$$\bar{f}_{0}$,
$\bar{\omega}_{0}$ vanish to infinite, second and second order respectively
along 
\[
\varGamma=\left\{ \xi_{0}+\bar{\varphi}=x'=\xi'=x''=\xi''=0\right\} .
\]
Moreover $\bar{f}_{0}$, $\bar{\omega}_{0}$ vanish to first order
along 
\[
\varGamma'=\left\{ \xi_{0}+\bar{\varphi}=x'=\xi'\right\} .
\]
We may hence choose $\bar{\omega}_{0}$ having an expansion 
\begin{equation}
\bar{\omega}_{0}=\left(\xi_{0}+\bar{\varphi}\right)\omega_{00}+\sum_{j=1}^{m}\left(\omega_{0j}z_{j}+\bar{\omega}_{0j}\bar{z}_{j}\right)\label{eq: omega0 expansion}
\end{equation}
 in terms of the complex coordinates $z_{j}=x'_{j}+iy_{j}'$ with
\[
\left\Vert \bar{\omega}_{0j}\right\Vert _{C^{0}}\leq\varepsilon
\]
arbitrarily small. 

Next we show that one may pass from the trace asymptotics of $D_{0}$
to $\bar{D}$\prettyref{eq:Dirac op weyl quantization without error}.
Below we set $\bar{B}_{v}=e^{ic_{0}^{W}\left(\bar{a}\right)}e^{\frac{i}{h}\bar{f}^{W}}B_{v}^{0}e^{-\frac{i}{h}\bar{f}^{W}}e^{-ic_{0}^{W}\left(\bar{a}\right)}$.
Note that $\bar{B}_{v}=1$ on an $h^{\delta}$ size neighborhood of
$\varGamma$ by construction.
\begin{prop}
\label{prop: last reduction of trace}For each $1\leq v\leq M$, we
have 
\[
\mathcal{T}_{B_{v}^{0},B_{v}^{0}}^{\theta}\left(D_{0}\right)=\mathcal{T}_{\bar{B}_{v},\bar{B}_{v}}^{\theta}\left(\bar{D}\right)\quad\textrm{mod }h^{\infty}.
\]
\end{prop}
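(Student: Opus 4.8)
The plan is to transport the trace across the unitary conjugation that puts $D_0$ into Birkhoff normal form, and then to absorb both the change of cutoff and the infinitely flat remainder $c_0^W(\bar r)$ by a ``changing the operator near the critical set'' argument of the type in \prettyref{lem:changing symbol near crit set}, carried out this time on $S^1\times\mathbb{R}^{2m}$ for the model operator $\bar D$, following \cite{Savale2017-Koszul} Prop.\ 4.1 but now with the enlarged support of $\theta$ allowed.

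First I would assemble the conjugating unitary. Composing \prettyref{eq:first conjugation normal form} with \prettyref{eq:normal form conjugation} gives $\mathcal{V}D_0\mathcal{V}^{\ast}=\bar D+c_0^W(\bar r)$ for $\mathcal{V}\coloneqq e^{ic_0^W(\bar a)}e^{\frac{i}{h}\bar f^W}e^{\frac{i\pi}{4h}f_0^W}$. Each factor is unitary on $L^2(S^1\times\mathbb{R}^{2m};\mathbb{C}^{2^m})$: $f_0^W$ is essentially self-adjoint, being a quadratic form of harmonic-oscillator type, while $\bar f^W$ and $c_0^W(\bar a)$ are bounded and self-adjoint since $\bar f\in S_{\textrm{cl}}^{0}$ is real-valued and $\bar a\in S_{\textrm{cl}}^{0}\otimes\Lambda^{\textrm{odd}}W$. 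By the spectral theorem $\mathcal{V}f(D_0/\sqrt h)\mathcal{V}^{\ast}=f\bigl((\bar D+c_0^W(\bar r))/\sqrt h\bigr)$ and similarly with $\check\theta$, so cyclicity of the trace gives $\mathcal{T}^{\theta}_{B_v^0,B_v^0}(D_0)=\mathcal{T}^{\theta}_{B_v',B_v'}\bigl(\bar D+c_0^W(\bar r)\bigr)$ with $B_v'\coloneqq\mathcal{V}B_v^0\mathcal{V}^{\ast}$. Since $e^{\frac{\pi}{4}H_{f_0}}$ fixes $\varGamma$ pointwise (the Hamilton field $H_{f_0}$ vanishes where $x'=\xi'=x''=\xi''=0$) and the two remaining conjugations are near-identity along $\varGamma$ by the Egorov-type estimates used to derive the normal form, $B_v'$ is, like $\bar B_v$, an element of $\Psi_{\delta}^{0}$ equal to $1$ microlocally on some $h^{\delta}$-tube about $\varGamma$ and microsupported in $\Sigma^{\bar D}_{[-2\tau_{\delta},2\tau_{\delta}]}$ intersected with an $h^{\delta}$-neighborhood of $\varGamma$.

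It then remains to replace $B_v'$ by $\bar B_v$ and to delete $c_0^W(\bar r)$, both of which go through the $S^1\times\mathbb{R}^{2m}$-analogue of \prettyref{lem:changing symbol near crit set} for $\bar D$: if $\bar D=\bar D'$ microlocally, modulo $h^{\infty}$, on a neighborhood of $\varGamma$ containing the relevant microsupports and $\theta$ is supported in the trapping window governed by the time $T_v$ and weight $g_v$ constructed earlier in this section, then the corresponding traces agree modulo $h^{\infty}$ and cutoffs equal to $1$ there may be interchanged. For the cutoff one notes that $B_v'$ and $\bar B_v$ differ only by the inner metaplectic conjugation by $e^{\frac{i\pi}{4h}f_0^W}$, which does not move the $h^{\delta}$-tube about $\varGamma$; for the remainder one notes that each $\bar r_j$ vanishes to infinite order along $\varGamma$, hence after the $h^{\delta}$-dilation $c_0^W(\bar r)$ has all its $\Psi_{\delta}^{0}$-seminorms of size $O(h^{\infty})$ on that tube, so that $\bar D$ and $\bar D+c_0^W(\bar r)$ are microlocally equal there modulo $h^{\infty}$. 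With $\theta$ chosen as in \prettyref{prop:Reduction to S1 X R^2m}, supported in $[-T,T]$ with $T<T_v$ for all $v$, two applications of the modification lemma give the claim.

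The conjugation step is routine; the substance lies in the modification lemma on the non-compact model. The principal difficulties are: showing that for $\theta$ in the enlarged time window the trace receives no contribution from outside an $h^{\delta}$-tube about $\varGamma$, which is precisely where the trapping-function estimates of this section enter and where the argument of \cite{Savale2017-Koszul} Prop.\ 4.1 must be re-run with the quantitative control \prettyref{eq: second extension time}; controlling the passage of infinitely flat symbols through the $h^{\delta}$-dilation in the exotic $\Psi_{\delta}$-calculus; and ensuring that $\bar D$ on $S^1\times\mathbb{R}^{2m}$ stays essentially self-adjoint with its resolvent and half-wave propagator under control, which follows from the confining, harmonic-oscillator-type behaviour of $\bar D$ transverse to $\varGamma$ and keeps all operators entering the Helffer--Sj\"ostrand representation of the trace trace class.
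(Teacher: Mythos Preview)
Your approach is viable but takes a genuinely different route from the paper. You propose to re-run the trapping-function/modification-lemma machinery on $S^{1}\times\mathbb{R}^{2m}$ for $\bar D$, showing that $c_{0}^{W}(\bar r)$ is $O(h^{\infty})$ in the $\Psi_{\delta}$-calculus on the $h^{\delta}$-tube about $\varGamma$ and that the cutoffs may be exchanged. The paper instead bypasses all propagation arguments and exploits the algebraic structure of the normal form directly: it inserts a \emph{functional} cutoff $A=\chi\bigl((\bar D^{2}+(x''^{2}+\xi''^{2})^{W})/h^{2\delta}\bigr)$ built from the commuting pair $\bar D$, $(x''^{2}+\xi''^{2})^{W}$, writes the trace difference via the resolvent identity as an integral of $\bar B_{v}\,\varDelta_{z}A\,\bar B_{v}$ with $\varDelta_{z}$ containing the factor $c_{0}^{W}(\bar r)$, and then uses the infinite-order vanishing to factor $c_{0}^{W}(\bar r)=R_{N}\bigl[\bar D^{N}+(\bar\varphi^{W})^{N}\bigr]$. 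The key commutation $[\bar D,\bar\varphi^{W}]=0$ lets these powers slide through the $\bar D$-resolvent onto $A$, producing $h^{2\delta N}\chi_{N}\bigl((\bar D^{2}+(x''^{2}+\xi''^{2})^{W})/h^{2\delta}\bigr)$ and hence $O(h^{2\delta N})$ for every $N$. This is a two-line computation once the factorization is observed, whereas your route requires re-establishing the full modification lemma with quantitative trapping-time control for $\bar D$ on the non-compact model---exactly the principal difficulty you flag. Your argument would buy more generality (it does not use $[\bar D,\bar\varphi^{W}]=0$), but here the normal form has been arranged precisely so that this commutation holds, and the paper cashes it in.
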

\begin{proof}
By choosing an appropriately small $\Omega$ in terms of Reeb Darboux
coordinates as in \prettyref{eq: choice subcover}, we may find a
cutoff of the form $A=\chi\left(\frac{\bar{D}^{2}+\left(x''^{2}+\xi''^{2}\right)^{W}}{h^{2\delta}}\right)$,
$\chi\in C_{c}^{\infty}\left(\mathbb{R}\right)$, that is microlocally
$1$ on $WF\left(\bar{B}_{v}\right)$. We then have by the Helffer-Sjöstrand
formula
\begin{equation}
\mathcal{T}_{B_{v}^{0},B_{v}^{0}}^{\theta}\left(D_{0}\right)-\mathcal{T}_{\bar{B}_{v},\bar{B}_{v}}^{\theta}\left(\bar{D}\right)=\frac{1}{\pi}\int_{\mathbb{C}}\bar{\partial}\tilde{f}\left(z\right)\check{\theta}\left(\frac{\lambda-z}{\sqrt{h}}\right)\textrm{tr }\left[\bar{B}_{v}\varDelta_{z}A\bar{B}_{v}\right]dzd\bar{z}\quad\textrm{mod }h^{\infty},\label{eq: difference traces}
\end{equation}
with 
\[
\varDelta_{z}=\left(\frac{1}{\sqrt{h}}\left(\bar{D}+c_{0}^{W}\left(\bar{r}\right)\right)-z\right)^{-1}c_{0}^{W}\left(\bar{r}\right)\left(\frac{1}{\sqrt{h}}\left(\bar{D}\right)-z\right)^{-1}.
\]
Since $\bar{r}$ vanishes to infinite order along $\varGamma$, symbolic
calculus gives 
\[
c_{0}^{W}\left(\bar{r}\right)=R_{N}\left[\bar{D}^{N}+\left(\bar{\varphi}^{W}\right)^{N}\right]\quad\forall N,
\]
for some $R_{N}\in\Psi_{\textrm{cl}}^{0}\left(S^{1}\times\mathbb{R}^{2m};\mathbb{C}^{2^{m}}\right)$.
From which the commutation $\left[\bar{D},\bar{\varphi}^{W}\right]=0$
gives 
\[
\varDelta_{z}=\left(\frac{1}{\sqrt{h}}\left(\bar{D}+c_{0}^{W}\left(\bar{r}\right)\right)-z\right)^{-1}S_{N}\left(\frac{1}{\sqrt{h}}\left(\bar{D}\right)-z\right)^{-1}\left[\bar{D}^{2}+\left(x''^{2}+\xi''^{2}\right)^{W}\right]^{N}\quad\forall N,
\]
for some $S_{N}\in\Psi_{\textrm{cl}}^{0}\left(S^{1}\times\mathbb{R}^{2m};\mathbb{C}^{2^{m}}\right)$.
Now 
\[
\varDelta_{z}A=\left(\frac{1}{\sqrt{h}}\left(\bar{D}+c_{0}^{W}\left(\bar{r}\right)\right)-z\right)^{-1}S_{N}\left(\frac{1}{\sqrt{h}}\left(\bar{D}\right)-z\right)^{-1}h^{2\delta N}\chi_{N}\left(\frac{\bar{D}^{2}+\left(x''^{2}+\xi''^{2}\right)^{W}}{h^{2\delta}}\right)\quad\forall N,
\]
for $\chi_{N}\left(x\right)=x^{N}\chi\left(x\right)\in C_{c}^{\infty}\left(\mathbb{R}\right)$.
Plugging this last equation into \prettyref{eq: difference traces}
gives the result.
\end{proof}

\section{\label{sec: Trace Formula}Trace Asymptotics}

In this section we finish the proof of \prettyref{lem: O(h infty) LEMMA}
and hence \prettyref{thm:main trace expansion-1}. By the reductions
\prettyref{prop:Reduction to S1 X R^2m} and \prettyref{prop: last reduction of trace}
of the last section it suffices to consider the trace $\mathcal{T}_{\bar{B}_{v},\bar{B}_{v}}^{\theta}\left(\bar{D}\right)$. 
\begin{proof}[Proof of \prettyref{lem: O(h infty) LEMMA}]
 We begin with the orthogonal Landau decomposition \prettyref{eq: Landau Levels}
\begin{align}
L^{2}\left(S^{1}\times\mathbb{R}^{2m};\mathbb{C}^{2^{m}}\right) & =L^{2}\left(S_{x_{0}}^{1}\times\mathbb{R}_{x''}^{m}\right)\otimes\underbrace{\left(\mathbb{C}\left[\psi_{0,0}\right]\oplus\bigoplus_{\varLambda\in\mu.\left(\mathbb{N}_{0}^{m}\setminus0\right)}\left[E_{\varLambda}^{\textrm{even}}\oplus E_{\varLambda}^{\textrm{odd}}\right]\right)}_{=L^{2}\left(\mathbb{R}_{x'}^{m};\mathbb{C}^{2^{m}}\right)}\;\textrm{where}\label{eq: Landau Decomposition}\\
E_{\varLambda}^{\textrm{even}} & \coloneqq\bigoplus_{\begin{subarray}{l}
\tau\in\mathbb{N}_{0}^{m}\setminus0\\
\varLambda=\mu.\tau
\end{subarray}}E_{\tau}^{\textrm{even}}\nonumber \\
E_{\varLambda}^{\textrm{odd}} & \coloneqq\bigoplus_{\begin{subarray}{l}
\tau\in\mathbb{N}_{0}^{m}\setminus0\\
\varLambda=\mu.\tau
\end{subarray}}E_{\tau}^{\textrm{odd}}\nonumber 
\end{align}
according to the eigenspaces of the squared magnetic Dirac operator
$D_{\mathbb{R}^{m}}^{2}$ \prettyref{eq: magnetic Dirac Rm} on $\mathbb{R}^{m}$.
It is clear from \prettyref{eq: model Dirac symbol} that 
\[
H_{1}^{W}=\frac{1}{L_{\gamma}}\left(\xi_{0}+\bar{\varphi}\right)^{W}\sigma_{0}+D_{\mathbb{R}^{m}}
\]
in terms of the above decomposition. Furthermore one has the commutation
relations 
\begin{eqnarray*}
\left[\sigma_{0},D_{\mathbb{R}^{m}}^{2}\right] & = & 0\\
\left[c_{0}^{W}\left(\bar{\omega}\right),D_{\mathbb{R}^{m}}^{2}\right] & = & ihc_{0}^{W}\left(\tilde{\Delta}^{0}\bar{\omega}\right)=0
\end{eqnarray*}
since $\bar{\omega}$ in \prettyref{eq:normal form conjugation} is
$\tilde{\Delta}^{0}$-harmonic. Hence $\bar{D}$ preserves the decomposition
\prettyref{eq: Landau Decomposition} and we may consider the restriction
of its traces to the eigenspaces of $D_{\mathbb{R}^{m}}^{2}$. Namely,
let $E_{0}\coloneqq\mathbb{C}\left[\psi_{0,0}\right],\;E_{\varLambda}\coloneqq E_{\varLambda}^{\textrm{even}}\oplus E_{\varLambda}^{\textrm{odd}},\;E_{>0}\coloneqq\bigoplus_{\varLambda\in\mu.\left(\mathbb{N}_{0}^{m}\setminus0\right)}E_{\varLambda}$
and $\mathtt{P}_{0},\:\mathtt{P}_{\varLambda},\:\mathtt{P}_{>0}\coloneqq\bigoplus_{\varLambda\in\mu.\left(\mathbb{N}_{0}^{m}\setminus0\right)}\mathtt{P}_{\varLambda}$
denote the summands and the corresponding projections of \prettyref{eq: Landau Decomposition}.
It is then clear that $\mathcal{T}_{\bar{B}_{v},\bar{B}_{v}}^{\theta}\left(\bar{D}\right)=\mathcal{T}_{\bar{B}_{v},\bar{B}_{v}}^{\theta}\left(\mathtt{P}_{0}\bar{D}\mathtt{P}_{0}\right)+\mathcal{T}_{\bar{B}_{v},\bar{B}_{v}}^{\theta}\left(\mathtt{P}_{>0}\bar{D}\mathtt{P}_{>0}\right)$.

Set 
\begin{eqnarray*}
\bar{D}_{0}\coloneqq\mathtt{P}_{0}\bar{D}\mathtt{P}_{0} & : & L^{2}\left(S_{x_{0}}^{1}\times\mathbb{R}_{x''}^{m}\right)\rightarrow L^{2}\left(S_{x_{0}}^{1}\times\mathbb{R}_{x''}^{m}\right)\\
\bar{D}_{\varLambda}\coloneqq\mathtt{P}_{\varLambda}\bar{D}\mathtt{P}_{\varLambda} & : & L^{2}\left(S_{x_{0}}^{1}\times\mathbb{R}_{x''}^{m};E_{\varLambda}^{\textrm{even}}\oplus E_{\varLambda}^{\textrm{odd}}\right)\rightarrow L^{2}\left(S_{x_{0}}^{1}\times\mathbb{R}_{x''}^{m};E_{\varLambda}^{\textrm{even}}\oplus E_{\varLambda}^{\textrm{odd}}\right),\;\varLambda>0.
\end{eqnarray*}
 The restrictions of the $c_{0}^{W}\left(\bar{\omega}\right)$ term
in $\bar{D}$ are
\begin{eqnarray*}
\varOmega_{0}\coloneqq\mathtt{P}_{0}c_{0}^{W}\left(\bar{\omega}\right)\mathtt{P}_{0} & : & L^{2}\left(S_{x_{0}}^{1}\times\mathbb{R}_{x''}^{m}\right)\rightarrow L^{2}\left(S_{x_{0}}^{1}\times\mathbb{R}_{x''}^{m}\right)\\
\varOmega_{\varLambda}\coloneqq\mathtt{P}_{\varLambda}c_{0}^{W}\left(\bar{\omega}\right)\mathtt{P}_{\varLambda} & : & L^{2}\left(S_{x_{0}}^{1}\times\mathbb{R}_{x''}^{m};E_{\varLambda}^{\textrm{even}}\oplus E_{\varLambda}^{\textrm{odd}}\right)\rightarrow L^{2}\left(S_{x_{0}}^{1}\times\mathbb{R}_{x''}^{m};E_{\varLambda}^{\textrm{even}}\oplus E_{\varLambda}^{\textrm{odd}}\right),\;\varLambda>0.
\end{eqnarray*}
The operator $\varOmega_{0}=\alpha_{0}^{W}\in\Psi_{\textrm{cl}}^{0}\left(S_{x_{0}}^{1}\times\mathbb{R}_{x''}^{m}\right)$
is pseudo-differential operator with symbol vanishing to second order
along $\varGamma''=\left\{ \xi_{0}+\bar{\varphi}=x''=\xi''=0\right\} $.
Also, quantizing the expansion \prettyref{eq: omega0 expansion} gives
\[
c_{0}^{W}\left(\bar{\omega}\right)=\left(\xi_{0}+\bar{\varphi}\right)^{W}\underbrace{c_{0}^{W}\left(\omega_{00}\right)}_{=O_{L^{2}\rightarrow L^{2}}\left(\varepsilon\right)}+\sum_{j=1}^{m}\left[\underbrace{c_{0}^{W}\left(\omega_{0j}\right)}_{=O_{L^{2}\rightarrow L^{2}}\left(\varepsilon\right)}A_{j}+A_{j}^{*}\underbrace{c_{0}^{W}\left(\bar{\omega}_{0j}\right)}_{=O_{L^{2}\rightarrow L^{2}}\left(\varepsilon\right)}\right]+O\left(h\right)
\]
Knowing the action of the lowering and raising operators $A_{j}$,
$A_{j}^{*}$ on each eigenstate \prettyref{eq: Hermite functions}
of $D_{\mathbb{R}^{m}}^{2}$ then gives the estimate 
\begin{equation}
\varOmega_{\varLambda}=\left(\xi_{0}+\bar{\varphi}\right)^{W}O_{L^{2}\rightarrow L^{2}}\left(\varepsilon\right)+O_{L^{2}\rightarrow L^{2}}\left(\varepsilon\sqrt{\varLambda h}\right)+O_{L^{2}\rightarrow L^{2}}\left(h\right)\label{eq: bound Omega N}
\end{equation}
with all constants above being uniform in $\varLambda>0$.

Next, we consider $\mathcal{T}_{\bar{B}_{v},\bar{B}_{v}}^{\theta}\left(\mathtt{P}_{>0}\bar{D}\mathtt{P}_{>0}\right)$
by computing the restriction of $\left(\frac{1}{\sqrt{h}}\bar{D}-z\right)$,
$z\in\mathbb{C}$, to each $E_{\varLambda}$, $\varLambda>0$, eigenspace
in \prettyref{eq: Landau Decomposition}. Using \prettyref{eq: Dirac operator 2 by 2 block}
this has the form 
\begin{eqnarray*}
\bar{D}_{\varLambda}\left(z\right) & \coloneqq & \mathtt{P}_{\varLambda}\left(\frac{1}{\sqrt{h}}\bar{D}-z\right)\mathtt{P}_{\varLambda}\\
 & = & \frac{1}{\sqrt{h}}\begin{bmatrix}-\left(\xi_{0}+\bar{\varphi}\right)-z\sqrt{h} & \left(\sqrt{2\varLambda h}\right)^{W}\\
\left(\sqrt{2\varLambda h}\right)^{W} & \xi_{0}+\bar{\varphi}-z\sqrt{h}
\end{bmatrix}+\frac{1}{\sqrt{h}}\varOmega_{\varLambda}
\end{eqnarray*}
with respect to the $\mathbb{Z}_{2}$- grading $E_{\varLambda}=E_{\varLambda}^{\textrm{even}}\oplus E_{\varLambda}^{\textrm{odd}}$.
Here we leave the identification $\mathtt{i}_{\tau}$ in \prettyref{eq: Dirac operator 2 by 2 block}
between the odd and even parts as being understood. Let $\varepsilon_{0}>0$
be such that $f\in C_{c}^{\infty}\left(-\sqrt{2\mu_{1}}+\varepsilon_{0},\sqrt{2\mu_{1}}-\varepsilon_{0}\right)$.
Set $R_{\varLambda}\left(z\right)=\left[r_{\varLambda}\left(z\right)\right]^{W}$
\[
r_{\varLambda}\left(z\right)\coloneqq\frac{\sqrt{h}\begin{bmatrix}-\left(\xi_{0}+\bar{\varphi}\right)-z\sqrt{h} & \left(\sqrt{2\varLambda h}\right)\\
\left(\sqrt{2\varLambda h}\right) & \xi_{0}+\bar{\varphi}-z\sqrt{h}
\end{bmatrix}}{z^{2}h-\left(\xi_{0}+\bar{\varphi}\right)^{2}-2\varLambda h}
\]
which is well defined for $\left|\textrm{Re}z\right|\leq\sqrt{2\mu_{1}}-\varepsilon_{0}<\inf_{\mathbb{R}^{n}}\sqrt{2\varLambda}$,
and $h$ sufficiently small. We now compute 
\begin{eqnarray*}
\left\Vert R_{\varLambda}\left(z\right)\bar{D}_{\varLambda}\left(z\right)-I\right\Vert  & \leq & C\varepsilon+O\left(h\right)\\
\left\Vert \bar{D}_{\varLambda}\left(z\right)R_{\varLambda}\left(z\right)-I\right\Vert  & \leq & C\varepsilon+O\left(h\right)
\end{eqnarray*}
using \prettyref{eq: bound Omega N} with the constants above being
uniform in $\varLambda$. Choosing $\varepsilon$ sufficiently small
in \prettyref{eq: bound Omega N} shows that the inverse $\bar{D}_{\varLambda}\left(z\right)^{-1}$
exists and is $O\left(R_{\varLambda}\left(z\right)\right)=O\left(h^{-\frac{1}{2}}\right)$
uniformly. Thus the resolvent $\left(\mathtt{P}_{>0}\bar{D}\mathtt{P}_{>0}-z\right)^{-1}$
extends holomorphically to the strip $\left|\textrm{Re}z\right|\leq\sqrt{2\mu_{1}}-\varepsilon_{0}$
and picking an almost analytic continuation for $f$ in the Helffer-Sjöstrand
formula \prettyref{eq: four traces} supported in this strip gives
$\mathcal{T}_{\bar{B}_{v},\bar{B}_{v}}^{\theta}\left(\mathtt{P}_{>0}\bar{D}\mathtt{P}_{>0}\right)=0$. 

We now consider $\mathcal{T}_{\bar{B}_{v},\bar{B}_{v}}^{\theta}\left(\mathtt{P}_{0}\bar{D}\mathtt{P}_{0}\right)$.
The cutoffs maybe taken to be of the form $\bar{B}_{v}=\chi\left(\frac{\left(x''^{2}+\xi''^{2}\right)^{W}}{h^{2\delta}}\right)\chi\left(\frac{\mathtt{H}_{2}+\left(\left(\xi_{0}+\bar{\varphi}\right)^{W}\right)^{2}}{h^{2\delta}}\right)$,
with $\mathtt{H}_{2}$ being the harmonic oscillator \prettyref{eq:Harmonic oscillator},
to compute 
\begin{align}
\mathcal{T}_{\bar{B}_{v},\bar{B}_{v}}^{\theta}\left(\mathtt{P}_{0}\bar{D}\mathtt{P}_{0}\right) & =\frac{1}{\pi}\int_{\mathbb{C}}\bar{\partial}\tilde{f}\left(z\right)\check{\theta}\left(\frac{\lambda-z}{\sqrt{h}}\right)\textrm{tr }\left[\bar{B}_{v}^{0}\left(\frac{1}{\sqrt{h}}\bar{D}_{0}-z\right)^{-1}\bar{B}_{v}^{0}\right]dzd\bar{z}\label{eq: reduction scalar trace}
\end{align}
where $\bar{B}_{v}^{0}=\chi\left(\frac{\left(x''^{2}+\xi''^{2}\right)^{W}}{h^{2\delta}}\right)\chi\left(\frac{\left(\left(\xi_{0}+\bar{\varphi}\right)^{W}\right)^{2}}{h^{2\delta}}\right)$
and
\[
\bar{D}_{0}=-\frac{1}{L_{\gamma}}\left(\xi_{0}+\bar{\varphi}\right)^{W}+\alpha_{0}^{W}
\]
being the effective Hamiltonian. The above being a scalar operator,
\prettyref{eq: reduction scalar trace} now reduces to the usual trace
formula microlocalized near the Hamiltonian trajectory $\varGamma''=\left\{ \xi_{0}+\bar{\varphi}=x''=\xi''=0\right\} $
of $\frac{1}{L_{\gamma}}\left(\xi_{0}+\bar{\varphi}\right)$. The
formula \prettyref{eq: non-local dynamical trace} now follows on
identifying the period, symplectic action and return map of this trajectory
to be $L_{\gamma}$, $T_{\gamma}$ and $P_{\gamma}^{+}$ respectively
(cf. \cite{DeGosson-90-maslovindex,degosson-book} Ch 7. for an identification
of the Maslov index in terms of the metaplectic group). 
\end{proof}

\section{\label{sec:Local trace expansion second term}Local trace expansion:
computation of the second coefficient}

In this section we study the trace expansion of a function of the
operator $\frac{D}{\sqrt{h}}$. We first recall the following which
appears as Proposition 7.1 of \cite{Savale2017-Koszul}. 
\begin{prop}
\label{prop: local trace expansion}There exist tempered distributions
$u_{j}\in\mathcal{S}'\left(\mathbb{R}_{s}\right)$, $j=0,1,2,\ldots$,
such that one has a trace expansion
\begin{equation}
\textrm{tr }\phi\left(\frac{D}{\sqrt{h}}\right)=h^{-n/2}\left(\sum_{j=0}^{N}u_{j}\left(\phi\right)h^{j/2}\right)+h^{\left(N+1-n\right)/2}O\left(\sum_{k=0}^{n+1}\left\Vert \left\langle \xi\right\rangle ^{N}\hat{\phi}^{\left(k\right)}\right\Vert _{L^{1}}\right)\label{eq: local trace expansion}
\end{equation}
for each $N\in\mathbb{N}$, $\phi\in\mathcal{S}\left(\mathbb{R}_{s}\right)$. 
\end{prop}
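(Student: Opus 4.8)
The plan is to establish \prettyref{prop: local trace expansion} by reducing $\phi\left(D/\sqrt{h}\right)$ to a model operator near the characteristic set $\Sigma_{0}^{D}=\left\{ \xi=-a\left(x\right)\right\} $, via the preliminary Birkhoff normal form of \cite{Savale2017-Koszul} together with the rescaling $\left(x',\xi'\right)\mapsto\sqrt{h}\left(x',\xi'\right)$ in the contact directions, and then computing the rescaled trace by a semiclassical parametrix for the half-wave group $e^{itD/\sqrt{h}}$.

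\emph{Localization and normal form.} The operator $D$ has semiclassical principal symbol $\sigma\left(D\right)=ic\left(\xi+a\right)$, hence is semiclassically elliptic of order $1$ away from $\Sigma_{0}^{D}$. Writing $\phi\left(D/\sqrt{h}\right)=\psi_{h}\left(D\right)$ with $\psi_{h}\left(\lambda\right)=\phi\left(\lambda/\sqrt{h}\right)$, the Helffer--Sj\"ostrand formula together with resolvent bounds on $\left(D-z\right)^{-1}$ for $\left|\operatorname{Re}z\right|$ bounded away from $0$ and the Schwartz decay of $\phi$ gives, modulo $O\left(h^{\infty}\right)$ in trace norm, the reduction to $\textrm{tr}\left[\chi^{W}\phi\left(D/\sqrt{h}\right)\right]$ for a fixed $\chi\in C_{c}^{\infty}\left(T^{*}X\right)$ equal to $1$ near $\Sigma_{0}^{D}$. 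On a finite cover of $\Sigma_{0}^{D}$ by Darboux charts I would then invoke the preliminary Birkhoff normal form of \cite{Savale2017-Koszul} Sec.~5 --- which requires only the strong suitability of $g^{TX}$, not the non-resonance assumption --- producing microlocally unitary operators $U_{u}$ (quantizing a symplectomorphism composed with a conjugation by $e^{ic_{A}}$ as in \prettyref{eq: conjugate symbol}) with $U_{u}^{*}DU_{u}=H_{1}^{W}+R_{u}$ microlocally near $\Sigma_{0}^{D}=\left\{ \xi_{0}=x'=\xi'=0\right\} $, where $H_{1}=\xi_{0}\sigma_{0}+\sum_{j=1}^{m}\left(2\mu_{j}\right)^{1/2}\left(x_{j}\sigma_{2j-1}+\xi_{j}\sigma_{2j}\right)$ and $R_{u}$ has full symbol vanishing to second order along $\Sigma_{0}^{D}$. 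Invariance of the trace under conjugation and a subordinate partition of unity reduce the problem to a sum of model traces $\textrm{tr}\left[\tilde{\chi}_{u}^{W}\phi\left(\left(H_{1}^{W}+R_{u}\right)/\sqrt{h}\right)\right]$, the cutoffs $\tilde{\chi}_{u}^{W}$ being what renders each such trace finite.

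\emph{Rescaling and parametrix.} Rescaling $\left(x',\xi'\right)\mapsto\sqrt{h}\left(x',\xi'\right)$ and keeping $x_{0}$ with semiclassical parameter $\sqrt{h}$, the operator $H_{1}^{W}/\sqrt{h}$ becomes $\sqrt{h}D_{x_{0}}\sigma_{0}+2D_{\mathbb{R}^{m}}^{(1)}$ with $D_{\mathbb{R}^{m}}^{(1)}$ the $h=1$ Euclidean magnetic Dirac operator of \prettyref{eq: magnetic Dirac Rm}, which is $h$-independent and has the discrete spectrum of \prettyref{prop:eigenspaces magnetic Dirac}; simultaneously $R_{u}/\sqrt{h}$, after rescaling and Taylor expansion along $\Sigma_{0}^{D}$, reorganizes into a formal series $\sum_{j\geq1}h^{j/2}P_{j}$ whose coefficients $P_{j}$ are polynomial-coefficient differential operators in the rescaled variables (the $O\left(h\right)$ subprincipal Weyl terms contributing the even powers). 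For each $N$ I would then build an $N$-term parametrix for $e^{it\left(H_{1}^{W}+R_{u}\right)/\sqrt{h}}$ by the transport hierarchy over the model propagator $e^{it\left(\sqrt{h}D_{x_{0}}\sigma_{0}+2D_{\mathbb{R}^{m}}^{(1)}\right)}$, substitute into $\phi\left(\cdot/\sqrt{h}\right)=\frac{1}{2\pi}\int\hat{\phi}\left(t\right)e^{it\cdot/\sqrt{h}}\,dt$, and estimate the remainder by Duhamel's formula: the $\langle\xi\rangle^{N}$ weight arises from the order of the last parametrix term, the $n+1$ derivatives from making the spatial trace of the kernel absolutely convergent, and the $L^{1}$ norm from the $t$-integral, giving exactly $h^{\left(N+1-n\right)/2}O\bigl(\sum_{k=0}^{n+1}\Vert\langle\xi\rangle^{N}\hat{\phi}^{(k)}\Vert_{L^{1}}\bigr)$. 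Each of the finitely many leading terms is evaluated by first taking the finite-dimensional trace over the Landau eigenspaces $E_{\tau}^{\pm}$, reducing to a one-dimensional semiclassical ($\sqrt{h}$) trace in $x_{0}$ which the elementary $1$D stationary-phase trace formula turns into $h^{-1/2}$ times a phase-space integral; undoing the rescaling restores a factor $h^{m}$, and reassembling the charts and the sum over $\tau$ converts the accumulated integrals into universal coefficients $u_{j}\left(\phi\right)$, tempered-distributional in $\phi$ (convergence of the $\tau$-sum coming from the Schwartz decay of $\phi$ against the $\dim E_{\tau}^{\pm}$), with leading power $h^{-m}\cdot h^{-1/2}=h^{-n/2}$ and $j$-th correction carrying the extra $h^{j/2}$.

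\emph{Main obstacle.} The delicate step is the parametrix construction after the $\sqrt{h}$-rescaling: the model $\sqrt{h}D_{x_{0}}\sigma_{0}+2D_{\mathbb{R}^{m}}^{(1)}$ is not a classical semiclassical symbol (its harmonic-oscillator-type part has polynomially growing coefficients, lying at the borderline of the $\Psi_{\delta}$ calculus of the excerpt at $\delta=\tfrac12$), so one must set up a functional and propagator calculus for it --- and for the $O\left(\sqrt{h}\right)$ perturbation $\sum_{j\geq1}h^{j/2}P_{j}$ --- that is genuinely uniform in $h$, in which the Duhamel/perturbation series reorganizes into powers of $h^{1/2}$ with the stated $\hat{\phi}$-explicit trace-norm remainder. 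Controlling the $\langle t\rangle$-growth of the Duhamel iterates (the source of the $\langle\xi\rangle^{N}$ factor) while retaining the trace-class bounds and the absolute convergence of the diagonal integral is where the real work lies; everything else is bookkeeping of powers of $h$ and routine stationary phase.
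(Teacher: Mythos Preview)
Your approach is genuinely different from the paper's. The proposition is not proved here but quoted from \cite{Savale2017-Koszul} Proposition 7.1; the paper only \emph{recalls} the construction of the $u_{j}$ in order to compute $u_{1}$. That construction is a direct Getzler-type rescaling carried out pointwise on the base: at each $p\in X$ one takes geodesic coordinates and a radially parallel frame, cuts off to a model operator $\mathbb{D}$ on $\mathbb{R}^{n}$ \prettyref{eq: Localized Dirac}, invokes finite propagation to get $\phi(D/\sqrt{h})(p,p)=\phi(\mathbb{D}/\sqrt{h})(0,0)$ mod $h^{\infty}$ \prettyref{eq: finite propagation}, and then rescales \emph{all} $n$ base variables via $\mathscr{R}$, yielding $\mathscr{R}\mathbb{D}\mathscr{R}^{-1}=\sqrt{h}\,\mathtt{D}$ with $\mathtt{D}=\sum_{j\le N}h^{j/2}\mathtt{D}_{j}+h^{(N+1)/2}E_{N+1}$ in classical polynomial-coefficient differential operators \prettyref{eq: Taylor expansion Dirac-1}. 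The $u_{j,p}$ are then read off from the resolvent iterates $\mathtt{C}_{j}^{z}$ \prettyref{eq: jth term kernel expansion} via the Helffer--Sj\"ostrand formula and integrated over $X$. No Fourier integral operators, no microlocal cutoffs on $T^{*}X$, no Birkhoff normal form enter; the argument is entirely local on the base and produces the coefficients directly as smooth densities $u_{j,p}$, which is exactly what the subsequent computation of $u_{1}$ in \prettyref{sec:Local trace expansion second term} requires.

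Your route through the preliminary normal form and an $(x',\xi')$-only rescaling is viable in principle but more involved, and there is a slip in the writeup: the $(x'',\xi'')$ directions, which parametrize $\Sigma_{0}^{D}$ in the normal-form chart of \prettyref{eq: conjugate symbol}, vanish between your Landau-trace step and the claimed ``one-dimensional semiclassical trace in $x_{0}$''. After the Landau decomposition one is left with an operator on $L^{2}$ of an $(m+1)$-dimensional space $(x_{0},x'')$, not just $x_{0}$; the missing $h^{-m}$ in your power count comes from the phase-space integral in $(x'',\xi'')$ against the microlocal cutoff $\tilde{\chi}_{u}^{W}$, not from ``undoing the rescaling'' of $(x',\xi')$ as you write. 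Moreover the FIO conjugations $U_{u}$ are not local on $X$, so reassembling the chartwise traces into an integral over $X$ and identifying the result with pointwise densities $u_{j,p}$ is extra bookkeeping you would still need to supply. None of this is fatal, but the paper's geodesic-coordinate rescaling is both shorter and better adapted to the pointwise heat-kernel computation that follows.
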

The coefficient $u_{0}$ in \prettyref{eq: local trace expansion}
was computed in Proposition 7.4 of \cite{Savale2017-Koszul}. Our
main task here is the computation of the next coefficient $u_{1}$.
The calculation here is similar to that of the second coefficient
of the symplectic Bergman kernel (cf. \cite{Ma-Marinescu} Ch. 8)
using the local index theory method. 

To this end we first briefly recall the construction of the distributions
$u_{j}.$ Fixing the point $p\in X$ there is an orthonormal basis
$e_{0,p}=\frac{R}{\left|R\right|}$,$\left\{ e_{j,p},\,e_{j+m,p}\right\} _{j=1}^{m}\in R^{\perp}$,
of the tangent space at $p$ consisting of eigenvectors of $\mathfrak{J}_{p}$
with eigenvalues $0,\pm i\mu_{j}$, $j=1,\ldots,m$, such that 
\begin{equation}
da\left(p\right)=\sum_{j=1}^{m}\mu_{j}e_{j,p}^{*}\wedge e_{j+m,p}^{*}.\label{eq: da diagonal form}
\end{equation}
Using the parallel transport from this basis fix a geodesic coordinate
system $\left(x_{0},\ldots,x_{2m}\right)$ on an open neighborhood
of $p\in\Omega$. Let $e_{j}=w_{j}^{k}\partial_{x_{k}}$, $0\leq j\leq2m$,
be the local orthonormal frame of $TX$ obtained by parallel transport
of $e_{j,p}=\left.\partial_{x_{j}}\right|_{p}$,$0\leq j\leq2m$,
along geodesics. Hence we again have $w_{j}^{k}g_{kl}w_{r}^{l}=\delta_{jr}$;
$\left.w_{j}^{k}\right|_{p}=\delta_{j}^{k}$ with $g_{kl}$ being
the components of the metric in these coordinates. Choose an orthonormal
basis $\left\{ s_{j,p}\right\} _{j=1}^{2^{m}}$for $S_{p}$ in which
Clifford multiplication
\begin{equation}
\left.c\left(e_{j}\right)\right|_{p}=\gamma_{j}\label{eq: Clifford multiplication standard}
\end{equation}
is standard. Choose an orthonormal basis $\mathtt{l}_{p}$ for $L_{p}$.
Parallel transport the bases $\left\{ s_{j,p}\right\} _{j=1}^{2^{m}}$,
$\mathtt{l}_{p}$ along geodesics using the spin connection $\nabla^{S}$
and unitary family of connections $\nabla^{h}=A_{0}+\frac{i}{h}a$
to obtain trivializations $\left\{ s_{j}\right\} _{j=1}^{2^{m}}$,
$\mathtt{l}$ of $S$, $L$ on $\Omega$. Since Clifford multiplication
is parallel, the relation \prettyref{eq: Clifford multiplication standard}
now holds on $\Omega$. The connection $\nabla^{S\otimes L}=\nabla^{S}\otimes1+1\otimes\nabla^{h}$
can be expressed in this frame and these coordinates as
\begin{equation}
\nabla^{S\otimes L}=d+A_{j}^{h}dx^{j}+\Gamma_{j}dx^{j},
\end{equation}
 where each $A_{j}^{h}$ is a Christoffel symbol of $\nabla^{h}$
and each $\Gamma_{j}$ is a Christoffel symbol of the spin connection
$\nabla^{S}$. Since the section $\mathtt{l}$ is obtained via parallel
transport along geodesics, the connection coefficient $A_{j}^{h}$
maybe written in terms of the curvature $F_{jk}^{h}dx^{j}\wedge dx^{k}$
of $\nabla^{h}$ via
\begin{equation}
A_{j}^{h}(x)=\int_{0}^{1}d\rho\left(\rho x^{k}F_{jk}^{h}\left(\rho x\right)\right).
\end{equation}
The dependence of the curvature coefficients $F_{jk}^{h}$ on the
parameter $h$ is seen to be linear in $\frac{1}{h}$ via 
\begin{equation}
F_{jk}^{h}=F_{jk}^{0}+\frac{i}{h}\left(da\right){}_{jk}\label{eq:curvature linear in 1/h}
\end{equation}
 despite the fact that they are expressed in the $h$ dependent frame
$\mathtt{l}$. This is because a gauge transformation from an $h$
independent frame into $\mathtt{l}$ changes the curvature coefficient
by conjugation. Since $L$ is a line bundle this is conjugation by
a function and hence does not change the coefficient. Furthermore,
the coefficients in the Taylor expansion of \prettyref{eq:curvature linear in 1/h}
at $0$ maybe expressed in terms of the covariant derivatives $\left(\nabla^{A_{0}}\right)^{l}F_{jk}^{0},$
$\left(\nabla^{A_{0}}\right)^{l}\left(da\right){}_{jk}$ evaluated
at $p$. Next, using the Taylor expansion
\begin{equation}
\left(da\right){}_{jk}=\left(da\right){}_{jk}\left(0\right)+x^{l}a_{jkl},\label{eq: Taylor expansion da}
\end{equation}
we see that the connection $\nabla^{S\otimes L}$ has the form
\begin{equation}
\nabla^{S\otimes L}=d+\left[\frac{i}{h}\left(\frac{x^{k}}{2}\left(da\right){}_{jk}\left(0\right)+x^{k}x^{l}A_{jkl}\right)+x^{k}A_{jk}^{0}+\Gamma_{j}\right]dx^{j}\label{eq: connection in geodesic}
\end{equation}
where 
\begin{eqnarray*}
A_{jk}^{0} & = & \int_{0}^{1}d\rho\left(\rho F_{jk}^{0}\left(\rho x\right)\right)\\
A_{jkl} & = & \int_{0}^{1}d\rho\left(\rho a_{jkl}\left(\rho x\right)\right)
\end{eqnarray*}
and $\Gamma_{j}$ are all independent of $h$. Finally from \prettyref{eq: Clifford multiplication standard}
and \prettyref{eq: connection in geodesic} may write down the expression
for the Dirac operator \prettyref{eq:Semiclassical Magnetic Dirac}
also given as $D=hc\circ\left(\nabla^{S\otimes L}\right)$ in terms
of the chosen frame and coordinates to be 
\begin{align}
D & =\gamma^{r}w_{r}^{j}\left[h\partial_{x_{j}}+i\frac{x^{k}}{2}\left(da\right){}_{jk}\left(0\right)+ix^{k}x^{l}A_{jkl}+h\left(x^{k}A_{jk}^{0}+\Gamma_{j}\right)\right]\label{eq: Dirac operator geodesic coordinates}\\
 & =\gamma^{r}\left[w_{r}^{j}h\partial_{x_{j}}+iw_{r}^{j}\frac{x^{k}}{2}\left(da\right){}_{jk}\left(0\right)+\frac{1}{2}hg^{-\frac{1}{2}}\partial_{x_{j}}\left(g^{\frac{1}{2}}w_{r}^{j}\right)\right]+\\
 & \gamma^{r}\left[iw_{r}^{j}x^{k}x^{l}A_{jkl}+hw_{r}^{j}\left(x^{k}A_{jk}^{0}+\Gamma_{j}\right)-\frac{1}{2}hg^{-\frac{1}{2}}\partial_{x_{j}}\left(g^{\frac{1}{2}}w_{r}^{j}\right)\right]\in\Psi_{\textrm{cl}}^{1}\left(\Omega_{s}^{0};\mathbb{C}^{2^{m}}\right)\nonumber 
\end{align}
In the second expression above both square brackets are self-adjoint
with respect to the Riemannian density $e^{1}\wedge\ldots\wedge e^{n}=\sqrt{g}dx\coloneqq\sqrt{g}dx^{1}\wedge\ldots\wedge dx^{n}$
with $g=\det\left(g_{ij}\right)$. Again one may obtain an expression
self-adjoint with respect to the Euclidean density $dx$ in the framing
$g^{\frac{1}{4}}u_{j}\otimes\mathtt{l},1\leq j\leq2^{m}$, with the
result being an addition of the term $h\gamma^{j}w_{j}^{k}g^{-\frac{1}{4}}\left(\partial_{x_{k}}g^{\frac{1}{4}}\right)$. 

Let $i_{g}$ be the injectivity radius of $g^{TX}$ . Define the cutoff
$\chi\in C_{c}^{\infty}\left(-1,1\right)$ such that $\chi=1$ on
$\left(-\frac{1}{2},\frac{1}{2}\right)$. We now modify the functions
$w_{j}^{k}$, outside the ball $B_{i_{g}/2}\left(p\right)$, such
that $w_{j}^{k}=\delta_{j}^{k}$ (and hence $g_{jk}=\delta_{jk}$)
are standard outside the ball $B_{i_{g}}\left(p\right)$ of radius
$i_{g}$ centered at $p$. This again gives 
\begin{align}
\mathbb{D} & =\gamma^{r}\left[w_{r}^{j}h\partial_{x_{j}}+iw_{r}^{j}\frac{x^{k}}{2}\left(da\right){}_{jk}\left(0\right)+\frac{1}{2}hg^{-\frac{1}{2}}\partial_{x_{j}}\left(g^{\frac{1}{2}}w_{r}^{j}\right)\right]+\label{eq: Localized Dirac}\\
 & \chi\left(\left|x\right|/i_{g}\right)\gamma^{r}\left[iw_{r}^{j}x^{k}x^{l}A_{jkl}+hw_{r}^{j}\left(x^{k}A_{jk}^{0}+\Gamma_{j}\right)-\frac{1}{2}hg^{-\frac{1}{2}}\partial_{x_{j}}\left(g^{\frac{1}{2}}w_{r}^{j}\right)\right]\nonumber \\
 & \qquad\qquad\qquad\qquad\qquad\qquad\qquad\qquad\qquad\in\Psi_{\textrm{cl}}^{1}\left(\mathbb{R}^{n};\mathbb{C}^{2^{m}}\right)\nonumber 
\end{align}
as a well defined operator on $\mathbb{R}^{n}$ formally self adjoint
with respect to $\sqrt{g}dx$. Again $\mathbb{D}+i$ being elliptic
in the class $S^{0}\left(m\right)$ for the order function 
\[
m=\sqrt{1+g^{jl}\left(\xi_{j}+\frac{x^{k}}{2}\left(da\right)_{jk}\left(0\right)\right)\left(\xi_{l}+\frac{x^{r}}{2}\left(da\right)_{lr}\left(0\right)\right)},
\]
the operator $\mathbb{D}$ is essentially self adjoint. Also as observed
in \cite{Savale2017-Koszul} Section 7 
\begin{equation}
\textrm{tr }\phi\left(\frac{D}{\sqrt{h}}\right)\left(p,\cdot\right)=\textrm{tr }\phi\left(\frac{\mathbb{D}}{\sqrt{h}}\right)\left(0,\cdot\right)\label{eq: finite propagation}
\end{equation}
mod $h^{\infty}$. 

We now introduce the rescaling operator $\mathscr{R}:C^{\infty}\left(\mathbb{R}^{n};\mathbb{C}^{2^{m}}\right)\rightarrow C^{\infty}\left(\mathbb{R}^{n};\mathbb{C}^{2^{m}}\right)$;
$\left(\mathscr{R}s\right)\left(x\right)\coloneqq s\left(\frac{x}{\sqrt{h}}\right)$.
Conjugation by $\mathscr{R}$ amounts to the rescaling of coordinates
$x\rightarrow x\sqrt{h}$. A Taylor expansion in \prettyref{eq: Localized Dirac}
now gives the existence of classical ($h$-independent) self-adjoint,
first-order differential operators $\mathrm{\mathtt{D}}_{j}=a_{j}^{k}\left(x\right)\partial_{x_{k}}+b_{j}\left(x\right)$,
$j=0,1\ldots$, with polynomial coefficients (of degree at most $j+1$)
as well as $h$-dependent self-adjoint, first-order differential operators
$\mathrm{E}_{N+1}=\sum_{\left|\alpha\right|=N+1}x^{\alpha}\left[c_{\alpha}^{k}\left(x;h\right)\partial_{x_{k}}+d_{\alpha}\left(x;h\right)\right]$,
$N\in\mathbb{N}$, with uniformly $C^{\infty}$ bounded coefficients
$c_{j,\alpha}^{k},\,d_{j,\alpha}$ such that 
\begin{eqnarray}
\mathscr{R}\mathbb{D}\mathscr{R}^{-1} & = & \sqrt{h}\mathrm{\mathtt{D}}\quad\textrm{ with}\label{eq: rescaled Dirac-1}\\
\mathrm{\mathtt{D}} & = & \left(\sum_{j=0}^{N}h^{j/2}\mathrm{\mathtt{D}}_{j}\right)+h^{\left(N+1\right)/2}\mathrm{E}_{N+1},\;\forall N.\label{eq: Taylor expansion Dirac-1}
\end{eqnarray}
The coefficients of the polynomials $a_{j}^{k}\left(x\right),\,b_{j}\left(x\right)$
again involve the covariant derivatives of the curvatures $F^{TX},F^{A_{0}}$
and $da$ evaluated at $p$. It is now clear from \prettyref{eq: rescaled Dirac-1}
that 
\begin{equation}
\phi\left(\frac{\mathbb{D}}{\sqrt{h}}\right)\left(x,x'\right)=h^{-n/2}\phi\left(\mathrm{\mathtt{D}}\right)\left(\frac{x}{\sqrt{h}},\frac{x'}{\sqrt{h}}\right).\label{eq: rescaling Schw kernel}
\end{equation}
Next, let $I_{j}=\left\{ k=\left(k_{0},k_{1},\ldots\right)|k_{\alpha}\in\mathbb{N},\:\sum k_{\alpha}=j\right\} $
denote the set of partitions of the integer $j$ and set 
\begin{equation}
\mathtt{C}_{j}^{z}=\sum_{k\in I_{j}}\left(z-\mathrm{\mathtt{D}}_{0}\right)^{-1}\left[\Pi_{\alpha}\mathrm{\mathtt{D}}_{k_{\alpha}}\left(z-\mathrm{\mathtt{D}}_{0}\right)^{-1}\right].\label{eq: jth term kernel expansion}
\end{equation}
The coefficient $u_{j}$ in the expansion \prettyref{eq: local trace expansion}
is now the total integral over $X$ of a smooth family of distributions
$u_{j,p}\in C^{\infty}\left(X;\mathcal{S}'\left(\mathbb{R}_{s}\right)\right)$
parametrized by $X$ 
\begin{eqnarray*}
u_{j} & = & \int_{X}u_{j,p},\quad\textrm{ where}\\
u_{j,p} & = & \textrm{tr }U_{j,p}\:\textrm{ and }\\
U_{j,p}\left(\phi\right) & = & -\frac{1}{\pi}\int_{\mathbb{C}}\bar{\partial}\tilde{\phi}\left(z\right)\mathtt{C}_{j}^{z}\left(0,0\right)dzd\bar{z}\in\textrm{End}S_{p}^{TX}.
\end{eqnarray*}

It was further shown in \cite{Savale2017-Koszul} that each $u_{j,p}$
is point-wise given by a linear combination of the following elementary
distributions
\begin{align}
v_{a}\left(s\right) & \coloneqq s^{a},\;a\in\mathbb{N}_{0}\label{eq: elementary distribution 1}\\
v_{a,b,c,\varLambda}\left(s\right) & \coloneqq\partial_{s}^{a}\left[\left|s\right|s^{b}\left(s^{2}-2\varLambda\right)^{c-\frac{1}{2}}H\left(s^{2}-2\varLambda\right)\right],\label{eq: elementary distribution 2}\\
 & \;\qquad\qquad\qquad\qquad\left(a,b,c;\varLambda\right)\in\mathbb{N}_{0}\times\mathbb{Z}\times\mathbb{N}_{0}\times\mu.\left(\mathbb{N}_{0}^{m}\setminus0\right).\nonumber 
\end{align}
To now state the computation of $u_{1}$; first define $\mathscr{P}_{j}^{\pm}:T_{p}X\rightarrow\textrm{ker }\left(\pm i\mu_{j}-\mathfrak{J}\right)$,
$1\leq j\leq m,$ the projections onto the eigenspaces of $\mathfrak{J}$
with eigenvalue $\pm i\mu_{j}$ respectively in \prettyref{eq:Diagonalizability assumption-1}.
Also set $\frac{d_{j}}{2}=d_{j}^{+}=d_{j}^{-}=\textrm{dim ker }\left(\pm i\mu_{j}-\mathfrak{J}\right)$
and $\mathscr{P}_{j}\coloneqq\mathscr{P}_{j}^{+}+\mathscr{P}_{j}^{-}$.
Next, define the endomorphism 
\begin{eqnarray*}
\left(\nabla^{TX}\mathfrak{J}\right)^{0}:T_{p}X & \rightarrow & T_{p}X\\
\left(\nabla^{TX}\mathfrak{J}\right)^{0}v & \coloneqq & \left(\nabla_{v}^{TX}\mathfrak{J}\right)R,\quad v\in T_{p}X,
\end{eqnarray*}
agreeing with \prettyref{eq: abs. ctct. end.} on $R^{\perp}$, and
set $\left(\nabla^{TX}\mathfrak{J}\right)_{j}\coloneqq\mathscr{P}_{j}\left(\nabla^{TX}\mathfrak{J}\right)^{0}\mathscr{P}_{j}$
, $1\leq j\leq m$. 

We then have the following.
\begin{prop}
The second coefficient $u_{1}$ of \prettyref{eq: local trace expansion}
is given by 
\begin{align}
u_{1,p}\left(s\right) & =c_{1;1}v_{1}+\sum_{\begin{subarray}{l}
\varLambda\in\mu.\left(\mathbb{N}_{0}^{m}\setminus0\right)\end{subarray}}c_{1;1,-2,0,\varLambda}\left(p\right)v_{1,-2,0,\varLambda}\left(s\right)\nonumber \\
 & +\sum_{\begin{subarray}{l}
\varLambda\in\mu.\left(\mathbb{N}_{0}^{m}\setminus0\right)\end{subarray}}c_{1;0,-3,0,\varLambda}\left(p\right)v_{0,-3,0,\varLambda}\left(s\right),\;\textrm{ where}\label{eq: u1 formula}\\
c_{1;1} & =-\frac{\left(\Pi_{j=1}^{m}\mu_{j}\right)}{\left(2\pi\right)^{m+1}}\left[\textrm{tr }\mathfrak{J}^{-2}\left(\nabla^{TX}\mathfrak{J}\right)^{0}\right]\;\textrm{ and}\label{eq: first coeff u1}\\
c_{1;1,-2,0,\varLambda}\left(p\right)=c_{1;0,-3,0,\varLambda}\left(p\right) & =\begin{cases}
-\frac{\left(\Pi_{j=1}^{m}\mu_{j}\right)}{\left(2\pi\right)^{m+1}}\tau\left[\frac{1}{d_{j}}\textrm{tr }\left(\nabla^{TX}\mathfrak{J}\right)_{j}\right]; & \textrm{ if }\varLambda=\mu_{j}\tau\textrm{ for some }j,\\
0; & \textrm{ otherwise}.
\end{cases}\label{eq: 2nd 3rd coeff u1}
\end{align}
\end{prop}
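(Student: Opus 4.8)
The plan is to evaluate the resolvent expression defining $u_{1,p}$ directly, using the explicit spectral theory of the model operator $\mathtt{D}_{0}$. Since the set of partitions $I_{1}$ of $1$ has a single element, \prettyref{eq: jth term kernel expansion} reduces to $\mathtt{C}_{1}^{z}=\left(z-\mathtt{D}_{0}\right)^{-1}\mathtt{D}_{1}\left(z-\mathtt{D}_{0}\right)^{-1}$, so that
\[
u_{1,p}=\textrm{tr}_{S_{p}}\left(-\frac{1}{\pi}\int_{\mathbb{C}}\bar{\partial}\tilde{\phi}\left(z\right)\left[\left(z-\mathtt{D}_{0}\right)^{-1}\mathtt{D}_{1}\left(z-\mathtt{D}_{0}\right)^{-1}\right]\left(0,0\right)dzd\bar{z}\right).
\]
The first input is the structure of $\mathtt{D}_{0}$: by \prettyref{eq: da diagonal form} and the rescaling \prettyref{eq: rescaled Dirac-1}, the leading term $\mathtt{D}_{0}=\gamma^{0}\partial_{x_{0}}+\sum_{j=1}^{m}\left[\gamma^{j}\left(\partial_{x_{j}}+\tfrac{i\mu_{j}}{2}x_{j+m}\right)+\gamma^{j+m}\left(\partial_{x_{j+m}}-\tfrac{i\mu_{j}}{2}x_{j}\right)\right]$ is the flat magnetic Dirac operator on $\mathbb{R}^{n}$ with constant field $da\left(p\right)$; its square $\mathtt{D}_{0}^{2}=-\partial_{x_{0}}^{2}+\mathtt{H}_{2}-i\mathtt{R}_{2m+1}$ is a shifted harmonic oscillator as in \prettyref{eq:square Euclidean Dirac}, whose $L^{2}$-spectral decomposition is given by \prettyref{prop:eigenspaces magnetic Dirac} and the Landau decomposition \prettyref{eq: Landau Levels} after a Fourier transform in the Reeb variable $x_{0}$ (introducing $\xi_{0}$) and the guiding-centre reduction.

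The second input is $\mathtt{D}_{1}$. In the geodesic coordinates and parallel trivializations of \prettyref{sec:Local trace expansion second term} all Christoffel symbols of $\nabla^{TX},\nabla^{S},A_{0}$ and the first derivatives of $w_{j}^{k}$ and of $g$ vanish at $p$; tracking the powers of $\sqrt{h}$ generated by $x\mapsto\sqrt{h}x$ in \prettyref{eq: Dirac operator geodesic coordinates} one checks that the only $O\left(h^{1/2}\right)$ term in \prettyref{eq: Taylor expansion Dirac-1} is the quadratic multiplication operator $\mathtt{D}_{1}=i\gamma^{j}x^{k}x^{l}A_{jkl}\left(0\right)$ coming from the quadratic part of the magnetic connection coefficient, where $A_{jkl}\left(0\right)$ is a fixed numerical multiple of $\partial_{x_{l}}\left(da\right)_{jk}\left(p\right)=\left(\nabla_{e_{l}}da\right)\left(e_{j},e_{k}\right)\left(p\right)=\left\langle e_{j},\left(\nabla_{e_{l}}^{TX}\mathfrak{J}\right)e_{k}\right\rangle _{p}$, using $\nabla^{TX}g=0$. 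Thus $\mathtt{D}_{1}$ is encoded by the restriction to $p$ of the $(0,3)$-tensor $\nabla da$, whose $R$-components reproduce $\left(\nabla^{TX}\mathfrak{J}\right)^{0}$ of \prettyref{eq: abs. ctct. end.} and, after projection onto the $\mathfrak{J}$-eigenblocks, the endomorphisms $\left(\nabla^{TX}\mathfrak{J}\right)_{j}$.

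The evaluation then proceeds by the local index method. Writing $\left(z-\mathtt{D}_{0}\right)^{-1}=\left(z+\mathtt{D}_{0}\right)\left(z^{2}-\mathtt{D}_{0}^{2}\right)^{-1}$ and using \prettyref{prop:eigenspaces magnetic Dirac}, the kernel $\left(z-\mathtt{D}_{0}\right)^{-1}\left(x,y\right)$ is an explicit sum over Landau levels $\varLambda\in\mu.\left(\mathbb{N}_{0}^{m}\setminus0\right)$ of Hermite Gaussians times rational functions of $\left(z,\xi_{0}\right)$ with denominators $z^{2}-\xi_{0}^{2}-2\varLambda$, plus a zeroth-level piece with denominator $z-\xi_{0}$. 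Inserting $\mathtt{D}_{1}\left(w\right)\propto w^{k}w^{l}$ and carrying out the Gaussian $w$-integral picks out the finitely many resolvent matrix elements between Landau levels related by the quadratic; one then takes the fibre trace using the Clifford identities of \prettyref{subsec:Clifford algebra}, in particular $\textrm{tr}\left[\gamma_{0}\ldots\gamma_{2m}\right]=2^{m}/i^{m+1}$ (the reason for the convention \prettyref{eq: Clifford relations}). Performing the contour integral in $z$ and the integrals in $\xi_{0}$ and in the guiding-centre variables (the latter producing the density factor $\left(\Pi_{j}\mu_{j}\right)/\left(2\pi\right)^{m+1}$), the zeroth-level contribution becomes $v_{1}\left(s\right)=s$ of \prettyref{eq: elementary distribution 1} and each $\varLambda$-th level contribution becomes $v_{1,-2,0,\varLambda}$ and $v_{0,-3,0,\varLambda}$ of \prettyref{eq: elementary distribution 2}; comparing coefficients, and using that by the structure of $u_{1,p}$ and the explicit form of $\mathtt{D}_{1}$ no other elementary distributions can occur, yields \prettyref{eq: first coeff u1} and \prettyref{eq: 2nd 3rd coeff u1}.

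The main obstacle will be the bookkeeping in this last step: identifying precisely which Landau-level matrix elements of the quadratic insertion survive the $w$-integral and the spinor trace, and assembling the resulting $\mu_{j}$-dependent prefactors with the correct normalization (complicated by the fact that the $x_{0}$-components of $\mathtt{D}_{1}$ become $\partial_{\xi_{0}}$ after the Fourier transform). Two structural points require care. First, the equality $c_{1;1,-2,0,\varLambda}=c_{1;0,-3,0,\varLambda}$ should be read off from the $2\times2$ off-diagonal block form of $\mathtt{D}_{0}$ on each positive Landau level, where a single $z$-contour integral of the $\left(z^{2}-\xi_{0}^{2}-2\varLambda\right)^{-1}$-type kernels produces both distributions at once. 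Second, reducing the index contractions of $\nabla da$ that appear to $\textrm{tr}\,\mathfrak{J}^{-2}\left(\nabla^{TX}\mathfrak{J}\right)^{0}$ and $\tfrac{1}{d_{j}}\textrm{tr}\left(\nabla^{TX}\mathfrak{J}\right)_{j}$ uses the $\mathfrak{J}$-block decomposition of $R^{\perp}$ together with the closedness relation $\partial_{l}\left(da\right)_{jk}+\partial_{j}\left(da\right)_{kl}+\partial_{k}\left(da\right)_{lj}=0$ at $p$ to discard the contractions that are not of the stated form.
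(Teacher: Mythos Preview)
Your outline describes a genuinely different route from the paper. The paper does \emph{not} compute the resolvent kernel of $\mathtt{D}_{0}$ or perform contour integrals in $z$; instead it works entirely on the heat-kernel side. Concretely, it evaluates $u_{1}$ on the two one-parameter families of test functions $e^{-ts^{2}}$ and $se^{-ts^{2}}$ via Duhamel's principle applied to $e^{-t\mathtt{D}^{2}}$ and $\mathtt{D}e^{-t\mathtt{D}^{2}}$, together with Mehler's explicit formula for $e^{-t\mathtt{D}_{0}^{2}}$. The first family gives $u_{1}(e^{-ts^{2}})=0$ by a one-line parity argument (the Duhamel integrand is odd in the intermediate variable), which immediately encodes that $u_{1}$ is an odd distribution. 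For $u_{1}(se^{-ts^{2}})$ the paper rewrites the $O(\sqrt{h})$ term symmetrically, reduces by Clifford-trace identities to a small list of nonvanishing index patterns $(j,k,l)$ with exactly one zero index, and then evaluates a handful of explicit $s$- and $x$-integrals of products of hyperbolic functions, arriving at the closed form $-\mathtt{A}_{j0j}\,\tfrac{\Pi\mu_{j}}{(2\pi)^{m}}\,\tfrac{1}{2\mu_{j}\sqrt{4\pi t}}\bigl[\tfrac{1}{\mu_{j}t}-\tfrac{\mu_{j}t}{(\sinh\mu_{j}t)^{2}}\bigr]$. Splitting this into its two summands and taking inverse Laplace transforms matches the elementary distributions $v_{1}$ and $v_{1,-2,0,\varLambda}+v_{0,-3,0,\varLambda}$; density of $\{s^{k}e^{-s^{2}}\}$ in $\mathcal{S}$ finishes the identification.

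What your approach would buy, if carried out, is a direct organization by Landau level from the start. What the paper's approach buys is that Mehler's formula packages the entire Landau tower into a single Gaussian kernel, so the ``bookkeeping'' you flag collapses to a few elementary hyperbolic integrals rather than an infinite sum of level-by-level contributions. Two cautions about your sketch. First, the attribution of the $v_{1}$ piece to the ``zeroth-level contribution'' is not how the paper's answer is organized: there $v_{1}$ arises from the $1/(\mu_{j}t)$ part of the \emph{same} expression whose remainder produces the $\varLambda$-sum, and the coefficient $\textrm{tr}\,\mathfrak{J}^{-2}(\nabla^{TX}\mathfrak{J})^{0}=\sum_{j}\mathtt{A}_{j0j}/\mu_{j}^{2}$ visibly involves all eigenblocks; you should check whether your resolvent computation really isolates $v_{1}$ at level zero or whether it reappears as a regularizing counterterm from the higher levels. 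Second, the paper does not invoke the Bianchi identity $d(da)=0$; the only tensor symmetry used is the antisymmetry $\mathtt{A}_{jkl}=-\mathtt{A}_{kjl}$ together with the $k\leftrightarrow l$ symmetry of the integrals, which already kills all contractions except $\mathtt{A}_{j0j}$. As written, your proposal is a plausible strategy but not yet a proof: the steps you label ``bookkeeping'' are exactly where the content lies, and none of them are carried out.
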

\begin{proof}
We begin by noting the first two terms in \prettyref{eq: Taylor expansion Dirac-1}
\begin{align}
\mathrm{\mathtt{D}}_{0} & =\gamma^{j}\left[\partial_{x_{j}}+i\frac{x^{k}}{2}\left(da\right){}_{jk}\left(0\right)\right]\label{eq: leading term rescaled Dirac}\\
 & =\gamma^{0}\partial_{x_{0}}+\underbrace{\gamma^{j}\left[\partial_{x_{j}}+\frac{i\mu_{j}\left(p\right)}{2}x_{j+m}\right]+\gamma^{j+m}\left[\partial_{x_{j+m}}-\frac{i\mu_{j}\left(p\right)}{2}x_{j}\right]}_{\coloneqq\mathrm{\mathtt{D}}_{00}}\label{eq: leading term rescaled Dirac-1}\\
\mathrm{\mathtt{D}}_{1} & =\frac{i}{3}\gamma^{j}x^{k}x^{l}\underbrace{\left(\nabla_{e_{l}}da\right)_{jk}\left(0\right)}_{\eqqcolon\mathtt{A}_{jkl}}\label{eq: second term Dirac Taylor}\\
 & =\frac{i}{3}\gamma^{j}x^{k}x^{l}\underbrace{g^{TX}\left(e_{j},\left(\nabla_{e_{l}}\mathfrak{J}\right)e_{k}\right)}_{\eqqcolon\mathtt{A}_{jkl}}
\end{align}
using \prettyref{eq: da diagonal form}, \prettyref{eq: Taylor expansion da}.
For future reference we also note that 
\begin{eqnarray*}
\mathrm{\mathtt{D}}_{0}^{2} & = & -\partial_{x_{0}}^{2}+\underbrace{\sum_{j=1}^{m}\left[-\partial_{x_{j}}^{2}-\partial_{x_{j+m}}^{2}+i\mu_{j}\left(x_{j+m}\partial_{x_{j}}-x_{j}\partial_{x_{j+m}}\right)+\frac{1}{4}\left(x_{j}^{2}+x_{j+m}^{2}\right)\right]-i\mathtt{F}_{m}}_{\eqqcolon\mathrm{\mathtt{D}}_{00}^{2}}\\
\mathtt{F}_{m} & = & \mu_{j}\left[\sum_{j=1}^{m}\gamma^{j}\gamma^{j+m}\right]
\end{eqnarray*}
gives the complex harmonic oscillator.

As in the computation for $u_{0}$ in \cite{Savale2017-Koszul}, we
compute $u_{1}$ by computing the expansions of the heat traces $\textrm{ tr }e^{-t\mathrm{\mathtt{D}}^{2}},\textrm{ tr }\mathrm{\mathtt{D}}e^{-t\mathrm{\mathtt{D}}^{2}}$.
First note that following \prettyref{eq: rescaled Dirac-1}, \prettyref{eq: Taylor expansion Dirac-1}
we may compute 
\[
\mathrm{\mathtt{D}}^{2}=\mathrm{\mathtt{D}}_{0}^{2}+\sqrt{h}\left\{ \mathrm{\mathtt{D}}_{0},\mathrm{\mathtt{D}}_{1}\right\} +O\left(h\right).
\]
 An application of Duhamel's principle then yields
\begin{equation}
e^{-t\mathrm{\mathtt{D}}^{2}}=e^{-t\mathrm{\mathtt{D}}_{0}^{2}}-\sqrt{h}\left(\underbrace{\int_{0}^{t}e^{-\left(t-s\right)\mathrm{\mathtt{D}}_{0}^{2}}\left\{ \mathrm{\mathtt{D}}_{0},\mathrm{\mathtt{D}}_{1}\right\} e^{-s\mathrm{\mathtt{D}}_{0}^{2}}ds}_{\eqqcolon\mathtt{U_{10}}}\right)+O\left(h\right).\label{eq: expansion e^-tD2}
\end{equation}
We compute
\begin{eqnarray}
\left\{ \mathrm{\mathtt{D}}_{0},\mathrm{\mathtt{D}}_{1}\right\}  & = & \frac{i}{3}\mathtt{A}_{jkl}\left\{ -2x^{k}x^{l}\partial_{x_{j}}+\gamma^{k}\gamma^{j}x^{l}+\gamma^{l}\gamma^{j}x^{k}-2\left(ia_{j}\right)x^{k}x^{l}\right\} .\label{eq: commutator D0 D1}
\end{eqnarray}
Next set $\mu_{j+m}=\mu_{j},\,1\leq j\leq m$, and note Mehler's formula
\begin{align}
e^{-t\mathrm{\mathtt{D}}_{0}^{2}}\left(x,y\right) & =e^{t\partial_{x_{0}}^{2}}e^{-t\mathrm{\mathtt{D}}_{00}^{2}}\label{eq: Mehler's formula}\\
\nonumber \\
 & =\frac{e^{-\frac{\left(x_{0}-y_{0}\right)^{2}}{4t}}}{\sqrt{4\pi t}}\left(\prod_{j=1}^{m}\frac{\mu_{j}}{4\pi\sinh\mu_{j}t}\right)m_{t}\left(x',y'\right)e^{it\mathtt{F}_{m}},\qquad\nonumber \\
m_{t}\left(x',y'\right) & =\exp\left\{ -\frac{\mu_{j}}{4\tanh\mu_{j}t}\left(\left(x_{j}-y_{j}\right)^{2}+\left(x_{j+m}-y{}_{j+m}\right)^{2}\right)\right.\nonumber \\
 & \qquad\qquad\left.+\frac{\mu_{j}}{2}\tanh\left(\frac{\mu_{j}t}{2}\right)\left(x_{j}y_{j}+x_{j+m}y_{j+m}\right)\right\} \\
 & =\exp\left\{ -\frac{\mu_{j}}{4\tanh\mu_{j}t}\left(x_{j}{}^{2}+x_{j+m}^{2}+y{}_{j}^{2}+y{}_{j+m}^{2}\right)+\frac{\mu_{j}}{2\sinh\mu_{j}t}\left(x_{j}y_{j}+x_{j+m}y_{j+m}\right)\right\} ,\nonumber 
\end{align}
where $\left(x';y'\right)=\left(x_{1},\ldots,x_{2m};y_{1},\ldots,y_{2m}\right)$.
We may now substitute \prettyref{eq: commutator D0 D1} and\prettyref{eq: Mehler's formula}
into \prettyref{eq: expansion e^-tD2}. This gives a formula for $\mathtt{U_{10}}\left(0,0\right)$
as an integral over $s$ and $x$. Furthermore one observes that the
$x$ -integral is an odd integral which must evaluate to $0$. Hence
we have 
\begin{equation}
u_{1}\left(e^{-ts^{2}}\right)=-\textrm{tr }\mathtt{U_{10}}\left(0,0\right)=0.\label{eq: tr U10}
\end{equation}

We now compute the second term in $\textrm{ tr }\mathrm{\mathtt{D}}e^{-t\mathrm{\mathtt{D}}^{2}}$.
First differentiate \prettyref{eq: expansion e^-tD2} using \prettyref{eq: Taylor expansion Dirac-1}
to obtain 
\begin{eqnarray*}
\mathtt{D}e^{-t\mathrm{\mathtt{D}}^{2}} & = & \mathrm{\mathtt{D}}_{0}e^{-t\mathrm{\mathtt{D}}_{0}^{2}}\\
 &  & -\sqrt{h}\left(\mathrm{\mathtt{D}}_{0}\int_{0}^{t}e^{-\left(t-s\right)\mathrm{\mathtt{D}}_{0}^{2}}\left\{ \mathrm{\mathtt{D}}_{0},\mathrm{\mathtt{D}}_{1}\right\} e^{-s\mathrm{\mathtt{D}}_{0}^{2}}ds-\mathrm{\mathtt{D}}_{1}e^{-t\mathrm{\mathtt{D}}_{0}^{2}}\right)+O\left(h\right).
\end{eqnarray*}
The $O\left(\sqrt{h}\right)$ term above maybe rewritten symmetrically
\begin{eqnarray}
 &  & \mathtt{U}_{11}\nonumber \\
 & \coloneqq & \mathrm{\mathtt{D}}_{0}\int_{0}^{t}e^{-\left(t-s\right)\mathrm{\mathtt{D}}_{0}^{2}}\left\{ \mathrm{\mathtt{D}}_{0},\mathrm{\mathtt{D}}_{1}\right\} e^{-s\mathrm{\mathtt{D}}_{0}^{2}}ds-\mathrm{\mathtt{D}}_{1}e^{-t\mathrm{\mathtt{D}}_{0}^{2}}\\
 & = & \underbrace{\int_{0}^{t}\left(\mathrm{\mathtt{D}}_{0}e^{-\left(t-s\right)\mathrm{\mathtt{D}}_{0}^{2}}\right)\mathrm{\mathtt{D}}_{1}\left(\mathrm{\mathtt{D}}_{0}e^{-s\mathrm{\mathtt{D}}_{0}^{2}}\right)ds}_{\eqqcolon\mathtt{K}_{1}}\nonumber \\
 &  & +\underbrace{\frac{1}{2}\int_{0}^{t}e^{-\left(t-s\right)\mathrm{\mathtt{D}}_{0}^{2}}\left\{ \mathrm{\mathtt{D}}_{0}^{2},\mathrm{\mathtt{D}}_{1}\right\} e^{-s\mathrm{\mathtt{D}}_{0}^{2}}ds}_{\eqqcolon\mathtt{K}_{2}}\nonumber \\
 &  & -\frac{1}{2}\left(\mathrm{\mathtt{D}}_{1}e^{-t\mathrm{\mathtt{D}}_{0}^{2}}+e^{-t\mathrm{\mathtt{D}}_{0}^{2}}\mathrm{\mathtt{D}}_{1}\right)\label{eq:computation O h1/2 term}
\end{eqnarray}
using an integration by parts argument. It is clear from \prettyref{eq: second term Dirac Taylor}
that 
\[
\mathrm{\mathtt{D}}_{1}e^{-t\mathrm{\mathtt{D}}_{0}^{2}}\left(0,0\right)=0
\]
with the same being true of its adjoint 
\[
e^{-t\mathrm{\mathtt{D}}_{0}^{2}}\mathrm{\mathtt{D}}_{1}\left(0,0\right)=0.
\]
Similarly the adjointness property for
\begin{eqnarray*}
\int_{0}^{t}e^{-\left(t-s\right)\mathrm{\mathtt{D}}_{0}^{2}}\left(\mathrm{\mathtt{D}}_{0}^{2}\mathrm{\mathtt{D}}_{1}\right)e^{-s\mathrm{\mathtt{D}}_{0}^{2}}ds &  & \textrm{and}\\
\int_{0}^{t}e^{-\left(t-s\right)\mathrm{\mathtt{D}}_{0}^{2}}\left(\mathrm{\mathtt{D}}_{1}\mathrm{\mathtt{D}}_{0}^{2}\right)e^{-s\mathrm{\mathtt{D}}_{0}^{2}}ds
\end{eqnarray*}
gives 
\[
\mathtt{K}_{2}\left(0,0\right)=\left[\int_{0}^{t}e^{-\left(t-s\right)\mathrm{\mathtt{D}}_{0}^{2}}\left(\mathrm{\mathtt{D}}_{1}\mathrm{\mathtt{D}}_{0}^{2}\right)e^{-s\mathrm{\mathtt{D}}_{0}^{2}}ds\right]\left(0,0\right).
\]
We now compute 
\begin{eqnarray*}
 &  & \mathtt{K}_{1}\\
 & = & \int_{0}^{t}ds\left(\mathrm{\mathtt{D}}_{0}e^{-\left(t-s\right)\mathrm{\mathtt{D}}_{0}^{2}}\right)\mathrm{\mathtt{D}}_{1}\left(\mathrm{\mathtt{D}}_{0}e^{-s\mathrm{\mathtt{D}}_{0}^{2}}\right)\\
 & = & \int_{0}^{t}ds\,e^{-\left(t-s\right)\mathrm{\mathtt{D}}_{0}^{2}}\left[\gamma^{\mu}\left(\partial_{x_{\mu}}+ia_{\mu}\right)\right]\left(\frac{i}{3}\gamma^{j}x^{k}x^{l}\mathtt{A}_{jkl}\right)\left(\mathrm{\mathtt{D}}_{0}e^{-s\mathrm{\mathtt{D}}_{0}^{2}}\right)\\
 & = & \int_{0}^{t}ds\,e^{-\left(t-s\right)\mathrm{\mathtt{D}}_{0}^{2}}\left(\frac{i}{3}\gamma^{k}\gamma^{j}x^{l}\mathtt{A}_{jkl}\right)\left(\mathrm{\mathtt{D}}_{0}e^{-s\mathrm{\mathtt{D}}_{0}^{2}}\right)\\
 &  & +\int_{0}^{t}ds\,e^{-\left(t-s\right)\mathrm{\mathtt{D}}_{0}^{2}}\left(\frac{i}{3}\gamma^{l}\gamma^{j}x^{k}\mathtt{A}_{jkl}\right)\left(\mathrm{\mathtt{D}}_{0}e^{-s\mathrm{\mathtt{D}}_{0}^{2}}\right)\\
 &  & -\int_{0}^{t}ds\,2e^{-\left(t-s\right)\mathrm{\mathtt{D}}_{0}^{2}}\left(\frac{i}{3}x^{k}x^{l}\mathtt{A}_{jkl}\right)\left(\partial_{x_{j}}+ia_{j}\right)\left(\mathrm{\mathtt{D}}_{0}e^{-s\mathrm{\mathtt{D}}_{0}^{2}}\right)\\
 &  & -\underbrace{\int_{0}^{t}ds\,e^{-\left(t-s\right)\mathrm{\mathtt{D}}_{0}^{2}}\left(\frac{i}{3}\gamma^{j}x^{k}x^{l}\mathtt{A}_{jkl}\right)\left[\gamma^{\mu}\left(\partial_{x_{\mu}}+ia_{\mu}\right)\right]\left(\mathrm{\mathtt{D}}_{0}e^{-s\mathrm{\mathtt{D}}_{0}^{2}}\right)}_{=\mathtt{K}_{2}}.
\end{eqnarray*}
Hence we now simplify \prettyref{eq:computation O h1/2 term} to 
\begin{eqnarray}
 &  & \mathtt{U}_{11}\nonumber \\
 & = & \underbrace{\int_{0}^{t}ds\,e^{-\left(t-s\right)\mathrm{\mathtt{D}}_{0}^{2}}\left(\frac{i}{3}\gamma^{k}\gamma^{j}x^{l}\mathtt{A}_{jkl}\right)\left(\mathrm{\mathtt{D}}_{0}e^{-s\mathrm{\mathtt{D}}_{0}^{2}}\right)}_{\eqqcolon\mathtt{L}_{1}}\nonumber \\
 &  & +\underbrace{\int_{0}^{t}ds\,e^{-\left(t-s\right)\mathrm{\mathtt{D}}_{0}^{2}}\left(\frac{i}{3}\gamma^{l}\gamma^{j}x^{k}\mathtt{A}_{jkl}\right)\left(\mathrm{\mathtt{D}}_{0}e^{-s\mathrm{\mathtt{D}}_{0}^{2}}\right)}_{\eqqcolon\mathtt{L}_{2}}\nonumber \\
 &  & -\underbrace{2\int_{0}^{t}ds\,e^{-\left(t-s\right)\mathrm{\mathtt{D}}_{0}^{2}}\left(\frac{i}{3}x^{k}x^{l}\mathtt{A}_{jkl}\right)\left(\partial_{x_{j}}+ia_{j}\right)\left(\mathrm{\mathtt{D}}_{0}e^{-s\mathrm{\mathtt{D}}_{0}^{2}}\right)}_{\eqqcolon\mathtt{L}_{3}}\label{eq: breakup L1 L2 L3}
\end{eqnarray}
We now evaluate traces of each of the kernels $\mathtt{L}_{1},\mathtt{L}_{2}$
and $\mathtt{L}_{3}$. 

First compute 
\begin{align}
\mathrm{\mathtt{D}}_{0}e^{-t\mathrm{\mathtt{D}}_{0}^{2}}\left(x,0\right) & =-\left[\frac{\gamma^{0}x_{0}}{2t}+\sum_{\mu=1}^{m}\frac{\mu_{\mu}}{2\tanh\mu_{\mu}t}\left(\gamma^{\mu}x_{\mu}+\gamma^{\mu+m}x_{\mu+m}\right)\right]\frac{e^{-\frac{x_{0}^{2}}{4t}}}{\sqrt{4\pi t}}m_{t}\left(x',0\right)e^{it\mathtt{F}_{m}}\nonumber \\
 & +\left[\sum_{\mu=1}^{m}\frac{i\mu_{\mu}}{2}\left(\gamma^{\mu}x_{\mu+m}-\gamma^{\mu+m}x_{\mu}\right)\right]\frac{e^{-\frac{x_{0}^{2}}{4t}}}{\sqrt{4\pi t}}m_{t}\left(x',0\right)e^{it\mathtt{F}_{m}}.\label{eq: D0 etD0^2}
\end{align}
and set 
\begin{eqnarray*}
\tilde{m}_{t}\left(x,y\right) & \coloneqq & \frac{e^{-\frac{\left(x_{0}-y_{0}\right)^{2}}{4t}}}{\sqrt{4\pi t}}m_{t}\left(x',y'\right)\\
E\left(x';s,t\right) & \coloneqq & m_{t-s}\left(0,x'\right)m_{s}\left(x',0\right)\\
\tilde{E}\left(x;s,t\right) & \coloneqq & \tilde{m}_{t-s}\left(0,x\right)\tilde{m}_{s}\left(x,0\right)\\
\frac{1}{\rho_{\mu}\left(t\right)} & \coloneqq & \begin{cases}
\frac{1}{2t}; & \mu=0,\\
\frac{\mu_{\mu}}{2\tanh\mu_{\mu}t}; & 1\leq\mu\leq2m.
\end{cases}
\end{eqnarray*}

Plugging \prettyref{eq: Mehler's formula} and \prettyref{eq: D0 etD0^2}
into \prettyref{eq: breakup L1 L2 L3} gives 
\begin{eqnarray}
\textrm{tr }\mathtt{L}_{1}\left(0,0\right) & = & \mathtt{A}_{jkl}\left\{ -\underbrace{\int_{0}^{t}ds\int dxE\left(x;s,t\right)\frac{x_{\mu}x_{l}}{\rho_{\mu}\left(s\right)}\textrm{tr}\left[\frac{i}{3}\gamma^{k}\gamma^{j}\gamma^{\mu}e^{it\mathtt{F}_{m}}\right]}_{\eqqcolon\mathtt{l}_{10}^{jkl}}\right.\nonumber \\
 &  & \left.+\underbrace{\int_{0}^{t}ds\int dxE\left(x;s,t\right)\left(ia_{\mu}x_{l}\right)\textrm{tr}\left[\frac{i}{3}\gamma^{k}\gamma^{j}\gamma^{\mu}e^{it\mathtt{F}_{m}}\right]}_{\eqqcolon\mathtt{l}_{11}^{jkl}}\right\} \label{eq: L1}\\
\textrm{tr }\mathtt{L}_{2}\left(0,0\right) & = & \mathtt{A}_{jkl}\left\{ -\underbrace{\int_{0}^{t}ds\int dxE\left(x;s,t\right)\frac{x_{\mu}x_{k}}{\rho_{\mu}\left(s\right)}\textrm{tr}\left[\frac{i}{3}\gamma^{l}\gamma^{j}\gamma^{\mu}e^{it\mathtt{F}_{m}}\right]}_{\eqqcolon\mathtt{l}_{20}^{jkl}}\right.\nonumber \\
 &  & \left.+\underbrace{\int_{0}^{t}ds\int dxE\left(x;s,t\right)\left(ia_{\mu}x_{k}\right)\textrm{tr}\left[\frac{i}{3}\gamma^{l}\gamma^{j}\gamma^{\mu}e^{it\mathtt{F}_{m}}\right]}_{\eqqcolon\mathtt{l}_{21}^{jkl}}\right\} .\label{eq: L2}
\end{eqnarray}
Since the function $E\left(x;s,t\right)$ is an even function in $x$,
we must have $\mu=l$ for the $x$ integral in $\mathtt{L}_{10}^{jkl}$
to be non-zero. Similarly, we must have $\mu,l>0$ with $\left|\mu-l\right|=m$
for the $x$ integral in $\mathtt{l}_{11}^{jkl}$ to be non-zero.
We now note that for indices $p<q<r$; 
\begin{eqnarray*}
\textrm{tr}\left[i\gamma^{p}e^{it\mathtt{F}_{m}}\right] & = & \begin{cases}
2^{m}\left(\Pi_{j=1}^{m}\sinh\mu_{j}t\right); & p=0\\
0 & \textrm{ otherwise}.
\end{cases}\\
\textrm{tr}\left[i\gamma^{p}\gamma^{q}\gamma^{r}e^{it\mathtt{F}_{m}}\right] & = & \begin{cases}
-i2^{m}\frac{\left(\Pi_{j=1}^{m}\sinh\mu_{j}t\right)}{\tanh\left(\mu_{q}t\right)}; & p=0\textrm{ and }r-q=m\\
0 & \textrm{ otherwise}.
\end{cases}
\end{eqnarray*}
This now implies that the coefficient of $\mathtt{A}_{jkl}$ in $\mathtt{L}_{1}$
is zero unless exactly one of the indices$\left(i,j,k\right)$ is
zero; and the other two are either equal or differ by $m$. A similar
analysis also give that the coefficient of $\mathtt{A}_{jkl}$ in
$\mathtt{L}_{2}$, $\mathtt{L}_{3}$ is zero unless exactly one of
$\left(i,j,k\right)$ is zero. Furthermore 
\begin{eqnarray}
\textrm{tr }\mathtt{L}_{3}\left(0,0\right) & = & 2\mathtt{A}_{jkl}\left\{ \underbrace{\int_{0}^{t}ds\int dxE\left(x;s,t\right)x_{k}x_{l}\left(\frac{\delta^{0j}}{2s}-\frac{x_{0}x_{j}}{2s\rho_{j}\left(s\right)}\right)\textrm{tr}\left[\frac{i}{3}\gamma^{0}e^{it\mathtt{F}_{m}}\right]}_{\eqqcolon\mathtt{l}_{30}^{jkl}}\right.\nonumber \\
 &  & \left.+\underbrace{\int_{0}^{t}ds\int dxE\left(x;s,t\right)\left(\frac{ia_{j}x_{0}x_{k}x_{l}}{2s}\right)\textrm{tr}\left[\frac{i}{3}\gamma^{0}e^{it\mathtt{F}_{m}}\right]}_{\eqqcolon\mathtt{l}_{31}^{jkl}}\right\} .\label{eq: L3}
\end{eqnarray}
For future reference we define $\mathtt{l}_{1}^{jkl}=\mathtt{l}_{10}^{jkl}+\mathtt{l}_{11}^{jkl}$,
$\mathtt{l}_{2}^{jkl}=\mathtt{l}_{20}^{jkl}+\mathtt{l}_{21}^{jkl}$,
$\mathtt{l}_{3}^{jkl}=\mathtt{l}_{30}^{jkl}+\mathtt{l}_{31}^{jkl}$
and $\mathtt{u}_{11}^{jkl}\coloneqq\mathtt{l}_{1}^{jkl}+\mathtt{l}_{2}^{jkl}+\mathtt{l}_{3}^{jkl}$. 

We may now make the three cases. 

\textbf{Case (i) }$j=0$\\
Again as observed before we must have either $k=l$ or $\left|k-l\right|=m$.
If $1\leq k=l\leq m$, we compute 
\begin{align}
 & \mathtt{l}_{1}^{0kk}+\mathtt{l}_{2}^{0kk}\nonumber \\
 & \coloneqq\mathtt{l}_{10}^{0kk}+\mathtt{l}_{11}^{0kk}+\mathtt{l}_{20}^{0kk}+\mathtt{l}_{21}^{0kk}\nonumber \\
 & =-\int_{0}^{t}ds\int dx\tilde{E}\left(x;s,t\right)\frac{\mu_{k}x_{k}^{2}}{\tanh\left(\mu_{k}s\right)}\textrm{tr}\left[\frac{i}{3}\gamma^{k}\gamma^{0}\gamma^{k}e^{it\mathtt{F}_{m}}\right]\nonumber \\
 & -\int_{0}^{t}ds\int dx\tilde{E}\left(x;s,t\right)\left(i\mu_{k}x_{k}^{2}\right)\textrm{tr}\left[\frac{i}{3}\gamma^{k}\gamma^{0}\gamma^{k+m}e^{it\mathtt{F}_{m}}\right]\nonumber \\
 & =\frac{1}{3}2^{m}\left(\Pi_{j=1}^{m}\sinh\mu_{j}t\right)\int_{0}^{t}ds\left[\frac{\mu_{k}}{\tanh\left(\mu_{k}t\right)}-\frac{\mu_{k}}{\tanh\left(\mu_{k}s\right)}\right]\left(\int dx\,x_{k}^{2}\tilde{E}\left(x;s,t\right)\right)\nonumber \\
 & =\frac{1}{3}\frac{1}{\sqrt{4\pi t}}2^{m}\left(\Pi_{j=1}^{m}\sinh\mu_{j}t\right)\int_{0}^{t}ds\left[\frac{\mu_{k}}{\tanh\left(\mu_{k}t\right)}-\frac{\mu_{k}}{\tanh\left(\mu_{k}s\right)}\right]\left(\int dx'\,x_{k}^{2}E\left(x';s,t\right)\right)\label{eq: x integral 1}\\
 & =\frac{1}{3}\frac{2\left(\Pi_{j=1}^{m}\mu_{j}\right)}{\left(2\pi\right)^{m}}\frac{1}{\sqrt{4\pi t}}\int_{0}^{t}ds\left[\frac{\mu_{k}}{\tanh\left(\mu_{k}t\right)}-\frac{\mu_{k}}{\tanh\left(\mu_{k}s\right)}\right]\left[\frac{\sinh\mu_{k}s\,\sinh\mu_{k}\left(t-s\right)}{\mu_{k}\sinh\mu_{k}t}\right]\label{eq: s-integral 1}\\
 & =\frac{1}{3}\frac{\left(\Pi_{j=1}^{m}\mu_{j}\right)}{\left(2\pi\right)^{m}}\frac{1}{\mu_{k}}\frac{1}{\sqrt{4\pi t}}\frac{1}{\sinh\mu_{k}t}\left[\frac{\left(\mu_{k}t\right)\cosh\left(\mu_{k}t\right)-\sinh\left(\mu_{k}t\right)}{\tanh\left(\mu_{k}t\right)}-\left(\mu_{k}t\right)\sinh\left(\mu_{k}t\right)\right]\label{eq: L1+L2 (0kk)}
\end{align}
and 
\begin{align}
 & \mathtt{l}_{3}^{0kk}\nonumber \\
 & =\frac{1}{t}\frac{1}{\sqrt{4\pi t}}\int E\left(x';s,t\right)x_{k}^{2}\textrm{tr}\left[\frac{i}{3}\gamma^{0}e^{it\mathtt{F}_{m}}\right]\nonumber \\
 & =\frac{1}{3}\frac{1}{t}\frac{1}{\sqrt{4\pi t}}2^{m}\left(\Pi_{j=1}^{m}\sinh\mu_{j}t\right)\int ds\int dx'E\left(x';s,t\right)x_{k}^{2}\label{eq: x integral 2}\\
 & =\frac{1}{3}\frac{2\left(\Pi_{j=1}^{m}\mu_{j}\right)}{\left(2\pi\right)^{m}}\frac{1}{t}\frac{1}{\sqrt{4\pi t}}\int_{0}^{t}ds\left[\frac{\sinh\mu_{k}s\,\sinh\mu_{k}\left(t-s\right)}{\mu_{k}\sinh\mu_{k}t}\right]\label{eq: s integral 2}\\
 & =\frac{1}{3}\frac{\left(\Pi_{j=1}^{m}\mu_{j}\right)}{\left(2\pi\right)^{m}}\frac{1}{\mu_{k}}\frac{1}{\sqrt{4\pi t}}\frac{1}{\left(\mu_{k}t\right)\sinh\mu_{k}t}\left[\left(\mu_{k}t\right)\cosh\left(\mu_{k}t\right)-\sinh\left(\mu_{k}t\right)\right].\label{eq: L3 0kk}
\end{align}
Here we have used one of the integrals 
\begin{eqnarray*}
\int dx'\,E\left(x';s,t\right) & = & \prod_{j=1}^{m}\frac{\mu_{j}}{4\pi\sinh\mu_{j}t}\\
\int dx'\,x_{k}^{2}E\left(x';s,t\right) & = & 2\left(\prod_{j=1}^{m}\frac{\mu_{j}}{4\pi\sinh\mu_{j}t}\right)\left[\frac{\sinh\mu_{k}s\,\sinh\mu_{k}\left(t-s\right)}{\mu_{k}\sinh\mu_{k}t}\right]\\
\int dx'\,x_{k}^{2}x_{l}^{2}E\left(x';s,t\right) & = & 4\left(\prod_{j=1}^{m}\frac{\mu_{j}}{4\pi\sinh\mu_{j}t}\right)\left[\frac{\sinh\mu_{k}s\,\sinh\mu_{k}\left(t-s\right)}{\mu_{k}\sinh\mu_{k}t}\right]\left[\frac{\sinh\mu_{l}s\,\sinh\mu_{l}\left(t-s\right)}{\mu_{l}\sinh\mu_{l}t}\right]\\
\int dx'\,x_{k}^{4}E\left(x';s,t\right) & = & 12\left(\prod_{j=1}^{m}\frac{\mu_{j}}{4\pi\sinh\mu_{j}t}\right)\left[\frac{\sinh\mu_{k}s\,\sinh\mu_{k}\left(t-s\right)}{\mu_{k}\sinh\mu_{k}t}\right]^{2}
\end{eqnarray*}
in \prettyref{eq: x integral 1}, \prettyref{eq: x integral 2} and
one of 
\begin{eqnarray*}
\int_{0}^{t}ds\,\sinh\mu_{k}s\,\sinh\mu_{k}\left(t-s\right) & = & \frac{1}{2\mu_{k}}\left[\left(\mu_{k}t\right)\cosh\left(\mu_{k}t\right)-\sinh\left(\mu_{k}t\right)\right]\\
\int_{0}^{t}ds\,\cosh\mu_{k}s\,\sinh\mu_{k}\left(t-s\right) & = & \frac{1}{2\mu_{k}}\left(\mu_{k}t\right)\sinh\left(\mu_{k}t\right)\\
\int_{0}^{t}ds\,s\,\sinh\mu_{k}s\,\sinh\mu_{k}\left(t-s\right) & = & \frac{1}{\left(2\mu_{k}\right)^{2}}\left(\mu_{k}t\right)\left[\left(\mu_{k}t\right)\cosh\left(\mu_{k}t\right)-\sinh\left(\mu_{k}t\right)\right]\\
\int_{0}^{t}ds\,s\,\sinh\mu_{k}s\,\cosh\mu_{k}\left(t-s\right) & = & \frac{1}{\left(2\mu_{k}\right)^{2}}\left[\left(\mu_{k}t\right)\cosh\left(\mu_{k}t\right)-\sinh\left(\mu_{k}t\right)+\left(\mu_{k}t\right)^{2}\sinh\left(\mu_{k}t\right)\right]
\end{eqnarray*}
in \prettyref{eq: s-integral 1}, \prettyref{eq: s integral 2}. The
sum of \prettyref{eq: L1+L2 (0kk)} and \prettyref{eq: L3 0kk} now
gives 
\begin{eqnarray}
\mathtt{u}_{11}^{0kk}\left(0,0\right) & \coloneqq & \mathtt{l}_{1}^{0kk}+\mathtt{l}_{2}^{0kk}+\mathtt{l}_{3}^{0kk}\nonumber \\
 & = & \frac{1}{3}\frac{\left(\Pi_{j=1}^{m}\mu_{j}\right)}{\left(2\pi\right)^{m}}\frac{1}{\mu_{k}}\frac{1}{\sqrt{4\pi t}}\left[\frac{\mu_{k}t}{\left(\sinh\mu_{k}t\right)^{2}}-\frac{1}{\mu_{k}t}\right].\label{eq: u11 0kk}
\end{eqnarray}
A similar computation yields the same answer for $\mathtt{u}_{11}^{0kk}\left(0,0\right)$
if $k>m$. 

We now consider the possibility$l=k+m$ and compute 
\begin{align*}
 & \mathtt{l}_{1}^{0k\left(k+m\right)}+\mathtt{l}_{2}^{0k\left(k+m\right)}\\
 & =\mathtt{l}_{10}^{0kk}+\mathtt{l}_{11}^{0kk}+\mathtt{l}_{20}^{0kk}+\mathtt{l}_{21}^{0kk}\\
 & =-\int_{0}^{t}ds\int dxE\left(x;s,t\right)\frac{x_{k+m}^{2}}{\rho_{k+m}\left(s\right)}\textrm{tr}\left[\frac{i}{3}\gamma^{k}\gamma^{0}\gamma^{k+m}e^{it\mathtt{F}_{m}}\right]\\
 & +\int_{0}^{t}ds\int dxE\left(x;s,t\right)\left(i\mu_{k}x_{k+m}^{2}\right)\textrm{tr}\left[\frac{i}{3}\gamma^{k}\gamma^{j}\gamma^{k}e^{it\mathtt{F}_{m}}\right]\\
 & -\int_{0}^{t}ds\int dxE\left(x;s,t\right)\frac{x_{k}^{2}}{\rho_{k}\left(s\right)}\textrm{tr}\left[\frac{i}{3}\gamma^{k+m}\gamma^{j}\gamma^{k}e^{it\mathtt{F}_{m}}\right]\\
 & +\int_{0}^{t}ds\int dxE\left(x;s,t\right)\left(-i\mu_{k}x_{k}^{2}\right)\textrm{tr}\left[\frac{i}{3}\gamma^{k+m}\gamma^{j}\gamma^{k+m}e^{it\mathtt{F}_{m}}\right]\\
 & =0
\end{align*}
and 
\begin{eqnarray*}
 &  & \mathtt{l}_{3}^{0k\left(k+m\right)}\\
 & = & \frac{1}{t}\frac{1}{\sqrt{4\pi t}}\int E\left(x';s,t\right)x_{k}x_{k+m}\textrm{tr}\left[\frac{i}{3}\gamma^{0}e^{it\mathtt{F}_{m}}\right]\\
 & = & 0.
\end{eqnarray*}
Hence
\begin{equation}
\mathtt{u}_{11}^{0k\left(k+m\right)}\left(0,0\right)=0.\label{eq: u11 0k k+m}
\end{equation}
 A similar computation in the case $k=l+m$ shows $\mathtt{u}_{11}^{0\left(k+m\right)k}\left(0,0\right)=0$
.

\textbf{Case (ii) $k=0$}\\
Again as observed before we must have either $j=l$ or $\left|j-l\right|=m$.
If $1\leq j=l\leq m$, we compute
\begin{align}
 & \mathtt{l}_{1}^{j0j}+\mathtt{l}_{2}^{j0j}\nonumber \\
 & \coloneqq\mathtt{l}_{10}^{j0j}+\mathtt{l}_{11}^{j0j}+\mathtt{l}_{20}^{j0j}+\mathtt{l}_{21}^{j0j}\nonumber \\
 & =\int_{0}^{t}ds\int dx\tilde{E}\left(x;s,t\right)\left(\frac{\mu_{j}}{2\tanh\left(\mu_{j}s\right)}\right)x_{j}^{2}\textrm{tr}\left[\frac{i}{3}\gamma^{0}e^{it\mathtt{F}_{m}}\right]\nonumber \\
 & -\int_{0}^{t}ds\int dx\tilde{E}\left(x;s,t\right)\left(i\frac{\mu_{j}}{2}x_{j}^{2}\right)\textrm{tr}\left[\frac{i}{3}\gamma^{0}\gamma^{j}\gamma^{j+m}e^{it\mathtt{F}_{m}}\right]\nonumber \\
 & +\int_{0}^{t}ds\int dx\tilde{E}\left(x;s,t\right)\frac{x_{0}^{2}}{2s}\textrm{tr}\left[\frac{i}{3}\gamma^{0}e^{it\mathtt{F}_{m}}\right]\nonumber \\
 & =\frac{1}{3}\frac{1}{\sqrt{4\pi t}}\frac{\left(\Pi_{j=1}^{m}\mu_{j}\right)}{\left(2\pi\right)^{m}}\int_{0}^{t}ds\left(\frac{\mu_{j}}{\tanh\left(\mu_{j}s\right)}\right)\left[\frac{\sinh\mu_{j}s\,\sinh\mu_{j}\left(t-s\right)}{\mu_{j}\sinh\mu_{j}t}\right]\nonumber \\
 & -\frac{1}{3}\frac{1}{\sqrt{4\pi t}}\frac{\left(\Pi_{j=1}^{m}\mu_{j}\right)}{\left(2\pi\right)^{m}}\int_{0}^{t}ds\frac{\mu_{j}}{\tanh\left(\mu_{j}t\right)}\left[\frac{\sinh\mu_{j}s\,\sinh\mu_{j}\left(t-s\right)}{\mu_{j}\sinh\mu_{j}t}\right]\nonumber \\
 & +\frac{1}{3}\frac{1}{\sqrt{4\pi t}}\frac{\left(\Pi_{j=1}^{m}\mu_{j}\right)}{\left(2\pi\right)^{m}}\int_{0}^{t}ds\left(\frac{t-s}{t}\right)\nonumber \\
 & =\frac{1}{3}\frac{1}{\sqrt{4\pi t}}\frac{\left(\Pi_{j=1}^{m}\mu_{j}\right)}{\left(2\pi\right)^{m}}\frac{t}{2}\nonumber \\
 & -\frac{1}{3}\frac{1}{\sqrt{4\pi t}}\frac{\left(\Pi_{j=1}^{m}\mu_{j}\right)}{\left(2\pi\right)^{m}}\frac{1}{2\mu_{j}}\frac{1}{\tanh\left(\mu_{j}t\right)}\frac{1}{\sinh\mu_{j}t}\left[\left(\mu_{j}t\right)\cosh\mu_{j}t-\sinh\mu_{j}t\right]\nonumber \\
 & +\frac{1}{3}\frac{1}{\sqrt{4\pi t}}\frac{\left(\Pi_{j=1}^{m}\mu_{j}\right)}{\left(2\pi\right)^{m}}\frac{t}{2}\label{eq: L1 j0j + L2 j0j}
\end{align}
and 
\begin{align}
 & \mathtt{l}_{3}^{j0j}\nonumber \\
 & =-\int_{0}^{t}ds\int dx\tilde{E}\left(x;s,t\right)\frac{1}{2s}\frac{\mu_{j}}{\tanh\mu_{j}s}x_{0}^{2}x_{j}^{2}\textrm{tr}\left[\frac{i}{3}\gamma^{0}e^{it\mathtt{F}_{m}}\right]\nonumber \\
 & =-\frac{1}{3}\frac{2}{\sqrt{4\pi t}}\frac{\left(\Pi_{j=1}^{m}\mu_{j}\right)}{\left(2\pi\right)^{m}}\int_{0}^{t}ds\left(\frac{t-s}{t}\right)\left[\frac{\cosh\mu_{j}s\,\sinh\mu_{j}\left(t-s\right)}{\sinh\mu_{j}t}\right]\nonumber \\
 & =-\frac{1}{3}\frac{1}{\sqrt{4\pi t}}\frac{\left(\Pi_{j=1}^{m}\mu_{j}\right)}{\left(2\pi\right)^{m}}\frac{1}{\left(\mu_{j}t\right)\sinh\mu_{j}t}\frac{1}{2\mu_{j}}\left[\left(\mu_{j}t\right)\cosh\mu_{j}t-\sinh\mu_{j}t+\mu_{j}^{2}t^{2}\sinh\mu_{j}t\right]\label{eq: L3 j0j}
\end{align}
The sum of \prettyref{eq: L1 j0j + L2 j0j} and \prettyref{eq: L3 j0j}
now gives 
\begin{equation}
\mathtt{u}_{11}^{j0j}\left(0,0\right)=\frac{1}{3}\frac{\left(\Pi_{j=1}^{m}\mu_{j}\right)}{\left(2\pi\right)^{m}}\frac{1}{2\mu_{j}}\frac{1}{\sqrt{4\pi t}}\left[\frac{1}{\mu_{j}t}-\frac{\mu_{j}t}{\left(\sinh\mu_{j}t\right)^{2}}\right]\label{eq: u11 j0j}
\end{equation}

If $l=j+m$, we compute
\begin{align}
 & \mathtt{l}_{1}^{j0\left(j+m\right)}+\mathtt{l}_{2}^{j0\left(j+m\right)}\nonumber \\
 & \coloneqq\mathtt{l}_{10}^{j0\left(j+m\right)}+\mathtt{l}_{11}^{j0\left(j+m\right)}+\mathtt{l}_{20}^{j0\left(j+m\right)}+\mathtt{l}_{21}^{j0\left(j+m\right)}\nonumber \\
 & =-\int_{0}^{t}ds\int dx\tilde{E}\left(x;s,t\right)\left(\frac{\mu_{j}}{2\tanh\left(\mu_{j}s\right)}\right)x_{j+m}^{2}\textrm{tr}\left[\frac{i}{3}\gamma^{0}\gamma^{j}\gamma^{j+m}e^{it\mathtt{F}_{m}}\right]\nonumber \\
 & -\int_{0}^{t}ds\int dx\tilde{E}\left(x;s,t\right)\left(i\frac{\mu_{j}}{2}x_{j+m}^{2}\right)\textrm{tr}\left[\frac{i}{3}\gamma^{0}e^{it\mathtt{F}_{m}}\right]\nonumber \\
 & -\int_{0}^{t}ds\int dx\tilde{E}\left(x;s,t\right)\frac{x_{0}^{2}}{2s}\textrm{tr}\left[\frac{i}{3}\gamma^{j+m}\gamma^{j}\gamma^{0}e^{it\mathtt{F}_{m}}\right]\nonumber \\
 & =\frac{1}{3}\frac{i}{\sqrt{4\pi t}}\frac{\left(\Pi_{j=1}^{m}\mu_{j}\right)}{\left(2\pi\right)^{m}}\int_{0}^{t}ds\frac{\mu_{j}}{\tanh\left(\mu_{j}s\right)}\frac{1}{\tanh\left(\mu_{j}t\right)}\left[\frac{\sinh\mu_{j}s\,\sinh\mu_{j}\left(t-s\right)}{\mu_{j}\sinh\mu_{j}t}\right]\nonumber \\
 & -\frac{1}{3}\frac{i}{\sqrt{4\pi t}}\frac{\left(\Pi_{j=1}^{m}\mu_{j}\right)}{\left(2\pi\right)^{m}}\int_{0}^{t}ds\left[\frac{\sinh\mu_{j}s\,\sinh\mu_{j}\left(t-s\right)}{\mu_{j}\sinh\mu_{j}t}\right]\nonumber \\
 & -\frac{1}{3}\frac{i}{\sqrt{4\pi t}}\frac{\left(\Pi_{j=1}^{m}\mu_{j}\right)}{\left(2\pi\right)^{m}}\int_{0}^{t}ds\left(\frac{t-s}{t}\right)\frac{1}{\tanh\left(\mu_{j}t\right)}\nonumber \\
 & =-\frac{1}{3}\frac{i}{\sqrt{4\pi t}}\frac{\left(\Pi_{j=1}^{m}\mu_{j}\right)}{\left(2\pi\right)^{m}}\frac{1}{2\mu_{j}}\frac{\mu_{j}t}{\tanh\left(\mu_{j}t\right)}\nonumber \\
 & -\frac{1}{3}\frac{i}{\sqrt{4\pi t}}\frac{\left(\Pi_{j=1}^{m}\mu_{j}\right)}{\left(2\pi\right)^{m}}\frac{1}{2\mu_{j}^{2}\sinh\mu_{j}t}\left[\left(\mu_{j}t\right)\cosh\mu_{j}t-\sinh\mu_{j}t\right]\nonumber \\
 & \frac{1}{3}\frac{i}{\sqrt{4\pi t}}\frac{\left(\Pi_{j=1}^{m}\mu_{j}\right)}{\left(2\pi\right)^{m}}\frac{t}{2\tanh\left(\mu_{j}t\right)}\nonumber \\
 & =-\frac{1}{3}\frac{i}{\sqrt{4\pi t}}\frac{\left(\Pi_{j=1}^{m}\mu_{j}\right)}{\left(2\pi\right)^{m}}\frac{1}{2\mu_{j}}\left[\left(\mu_{j}t\right)\cosh\mu_{j}t-\sinh\mu_{j}t\right]\label{eq: l1 j0(j+m) + l2 j0(j+m)}
\end{align}
and 
\begin{align}
 & \mathtt{l}_{3}^{j0\left(j+m\right)}\nonumber \\
 & =\mathtt{l}_{31}^{j0\left(j+m\right)}\nonumber \\
 & =2\int_{0}^{t}ds\int dxE\left(x;s,t\right)\left(\frac{i\mu_{j}x_{0}^{2}x_{j+m}^{2}}{4s}\right)\textrm{tr}\left[\frac{i}{3}\gamma^{0}e^{it\mathtt{F}_{m}}\right]\nonumber \\
 & =\frac{1}{3}\frac{2i}{\sqrt{4\pi t}}\frac{\left(\Pi_{j=1}^{m}\mu_{j}\right)}{\left(2\pi\right)^{m}}\int_{0}^{t}ds\left(\frac{t-s}{t}\right)\left[\frac{\sinh\mu_{j}s\,\sinh\mu_{j}\left(t-s\right)}{\mu_{j}\sinh\mu_{j}t}\right]\nonumber \\
 & =\frac{1}{3}\frac{i}{\sqrt{4\pi t}}\frac{\left(\Pi_{j=1}^{m}\mu_{j}\right)}{\left(2\pi\right)^{m}}\frac{1}{2\mu_{j}^{2}\sinh\mu_{j}t}\left[\left(\mu_{j}t\right)\cosh\mu_{j}t-\sinh\mu_{j}t\right]\label{eq: l3 j0(j+m)}
\end{align}
The sum of \prettyref{eq: l1 j0(j+m) + l2 j0(j+m)} and \prettyref{eq: l3 j0(j+m)}
gives
\begin{equation}
\mathtt{u}_{11}^{j0\left(j+m\right)}\left(0,0\right)=0.\label{eq: u11 j0 j+m}
\end{equation}
 A similar computation also yields $\mathtt{u}_{11}^{\left(j+m\right)0j}\left(0,0\right)=0$.

\textbf{Case (iii) $l=0$}\\
Again as observed before we must have either $j=k$ or $\left|j-k\right|=m$.
The tensor $\mathtt{A}_{jkl}$ in \prettyref{eq: second term Dirac Taylor}
being anti-symmetric in $j,k$; we have $\mathtt{u}_{11}^{jj0}\left(0,0\right)=0$.
On the other hand, the expressions for $\mathtt{l}_{1}^{jkl}+\mathtt{l}_{2}^{jkl}$
and $\mathtt{l}_{3}^{jkl}$ are symmetric in $k,l$. Hence we find
\begin{eqnarray}
\mathtt{u}_{11}^{j\left(j+m\right)0}\left(0,0\right) & = & \mathtt{u}_{11}^{j0\left(j+m\right)}\left(0,0\right)=0\label{eq: u11 j j+m 0}\\
\mathtt{u}_{11}^{\left(j+m\right)j0}\left(0,0\right) & = & \mathtt{u}_{11}^{\left(j+m\right)0j}\left(0,0\right)=0\label{eq: u11 j+m j 0}
\end{eqnarray}
as in the previous case.

To sum up, from \prettyref{eq:computation O h1/2 term}, \prettyref{eq: breakup L1 L2 L3},
\prettyref{eq: L1}, \prettyref{eq: L2}, \prettyref{eq: L3}, \prettyref{eq: u11 0kk},
\prettyref{eq: u11 0k k+m}, \prettyref{eq: u11 j0j}, \prettyref{eq: u11 j0 j+m},
\prettyref{eq: u11 j j+m 0} and \prettyref{eq: u11 j+m j 0} we have
finally have 
\begin{eqnarray}
u_{1}\left(se^{-ts^{2}}\right) & = & -\textrm{tr }\mathtt{U_{11}}\left(0,0\right)\nonumber \\
 & = & -\mathtt{A}_{jkl}\mathtt{u}_{11}^{jkl}\nonumber \\
 & = & -\frac{\mathtt{A}_{0kk}}{3}\frac{\left(\Pi_{j=1}^{m}\mu_{j}\right)}{\left(2\pi\right)^{m}}\frac{1}{\mu_{k}}\frac{1}{\sqrt{4\pi t}}\left[\frac{\mu_{k}t}{\left(\sinh\mu_{k}t\right)^{2}}-\frac{1}{\mu_{k}t}\right]\nonumber \\
 &  & -\frac{\mathtt{A}_{j0j}}{3}\frac{\left(\Pi_{j=1}^{m}\mu_{j}\right)}{\left(2\pi\right)^{m}}\frac{1}{2\mu_{j}}\frac{1}{\sqrt{4\pi t}}\left[\frac{1}{\mu_{j}t}-\frac{\mu_{j}t}{\left(\sinh\mu_{j}t\right)^{2}}\right]\nonumber \\
 & = & -\mathtt{A}_{j0j}\frac{\left(\Pi_{j=1}^{m}\mu_{j}\right)}{\left(2\pi\right)^{m}}\frac{1}{2\mu_{j}}\frac{1}{\sqrt{4\pi t}}\left[\frac{1}{\mu_{j}t}-\frac{\mu_{j}t}{\left(\sinh\mu_{j}t\right)^{2}}\right].\label{eq: tr u11}
\end{eqnarray}

A simple computation using Laplace transforms now shows 
\begin{align}
-\mathtt{A}_{j0j}\frac{\left(\Pi_{j=1}^{m}\mu_{j}\right)}{\left(2\pi\right)^{m}}\frac{1}{2\mu_{j}^{2}t}\frac{1}{\sqrt{4\pi t}} & =-\frac{\left(\Pi_{j=1}^{m}\mu_{j}\right)}{\left(2\pi\right)^{m+1}}\,.\underbrace{\frac{\mathtt{A}_{j0j}}{\mu_{j}^{2}}}_{=\textrm{tr }\mathfrak{J}^{-2}\left(\nabla^{TX}\mathfrak{J}\right)^{0}}.\,v_{1}\left(se^{-ts^{2}}\right)\label{eq: first summand u1}\\
\mathtt{A}_{j0j}\frac{\left(\Pi_{j=1}^{m}\mu_{j}\right)}{\left(2\pi\right)^{m}}\frac{1}{2\mu_{j}}\frac{1}{\sqrt{4\pi t}}\frac{\mu_{j}t}{\left(\sinh\mu_{j}t\right)^{2}} & =\frac{\left(\Pi_{j=1}^{m}\mu_{j}\right)}{\left(2\pi\right)^{m}}\mathtt{A}_{j0j}\frac{\sqrt{t}}{\sqrt{\pi}}\left[\sum_{\tau=1}^{\infty}\tau e^{-2\tau\mu_{j}t}\right]\nonumber \\
 & =-\frac{\left(\Pi_{j=1}^{m}\mu_{j}\right)}{\left(2\pi\right)^{m+1}}\underbrace{\mathtt{A}_{j0j}}_{=\frac{1}{d_{j}}\textrm{tr }\left(\nabla^{TX}\mathfrak{J}\right)_{j}}\left[\sum_{\tau=1}^{\infty}\tau\left(v_{1,-2,0,\varLambda}+v_{0,-3,0,\varLambda}\right)\left(se^{-ts^{2}}\right)\right]\label{eq: second summand u1}
\end{align}
where $2\varLambda=2\tau\mu_{j}=2\tau\mu_{j}$ in the last equation
\prettyref{eq: second summand u1} above. 

Thus, \prettyref{eq: tr U10}, \prettyref{eq: tr u11}, \prettyref{eq: first summand u1}
and \prettyref{eq: second summand u1} show that the two sides of
\prettyref{eq: u1 formula} evaluate equally on test functions $e^{-ts^{2}}$,
$se^{-ts^{2}}$. Differentiating $k$ times and setting $t=1$; they
evaluate equally on test functions $s^{2k}e^{-s^{2}},$ $s^{2k+1}e^{-s^{2}}$
for each $k$. The density of this set of functions in Schwartz space
$\mathcal{S}\left(\mathbb{R}\right)$ now gives the result.
\end{proof}
We end with a corollary of the above computation useful in the next
section.
\begin{cor}
\label{cor: convergence improper integral}The improper integral converges
\[
\int_{0}^{\infty}u_{1}\left(se^{-ts^{2}}\right)\frac{dt}{\sqrt{\pi t}}=-\frac{1}{2}\frac{1}{\left(2\pi\right)^{m+1}}\frac{1}{m!}\int_{X}\left[\textrm{tr }\frac{1}{\left|\mathfrak{J}\right|}\left(\nabla^{TX}\mathfrak{J}\right)^{0}\right]a\wedge\left(da\right)^{m}.
\]
\end{cor}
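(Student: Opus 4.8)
The plan is to feed the explicit formula for $u_{1,p}$ from the preceding proposition into the integral, reduce the $t$-integration to a single elementary one-dimensional integral, and then rewrite the resulting pointwise prefactor as the integrand appearing on the right.

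First I would take as input the formula \prettyref{eq: tr u11} from the proof of the preceding proposition, which evaluates $u_{1,p}$ on the Gaussians $s\mapsto se^{-ts^{2}}$: pointwise at $p$,
\[
u_{1,p}\!\left(se^{-ts^{2}}\right)=-\frac{\mathtt{A}_{j0j}}{2\mu_{j}}\,\frac{\left(\Pi_{j=1}^{m}\mu_{j}\right)}{\left(2\pi\right)^{m}}\,\frac{1}{\sqrt{4\pi t}}\left[\frac{1}{\mu_{j}t}-\frac{\mu_{j}t}{\left(\sinh\mu_{j}t\right)^{2}}\right],
\]
summed over $j=1,\ldots,2m$ with $\mu_{j+m}\coloneqq\mu_{j}$ and $\mathtt{A}_{j0j}=g^{TX}\!\left(e_{j},\left(\nabla_{e_{j}}\mathfrak{J}\right)e_{0}\right)$ as in \prettyref{eq: second term Dirac Taylor}. (Equivalently I could start from the distributional description \prettyref{eq: u1 formula}--\prettyref{eq: 2nd 3rd coeff u1}, pair the elementary distributions $v_{1}$, $v_{1,-2,0,\varLambda}$, $v_{0,-3,0,\varLambda}$ against $se^{-ts^{2}}$, and sum the geometric series over $\varLambda$; both routes give the same answer.)

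Next I would settle convergence. A Taylor expansion shows $\frac{1}{\mu_{j}t}-\frac{\mu_{j}t}{(\sinh\mu_{j}t)^{2}}=O(t)$ as $t\to0$, so $u_{1,p}(se^{-ts^{2}})=O(\sqrt{t})$ near $t=0$ (in agreement with the small-time bound \prettyref{eq:pointwise trace asymp as t->0}), and hence the integrand $u_{1,p}(se^{-ts^{2}})/\sqrt{\pi t}=O(1)$ is integrable at $0$; as $t\to\infty$ the bracket equals $\frac{1}{\mu_{j}t}+O(e^{-2\mu_{j}t})$, so the integrand is $O(t^{-2})$, integrable at $\infty$. This gives absolute convergence, uniformly in $p\in X$. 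For the value, the substitution $\tau=\mu_{j}t$ turns the $t$-integral into
\[
\int_{0}^{\infty}\frac{1}{\sqrt{4\pi t}}\left[\frac{1}{\mu_{j}t}-\frac{\mu_{j}t}{\left(\sinh\mu_{j}t\right)^{2}}\right]\frac{dt}{\sqrt{\pi t}}=\frac{1}{2\pi}\int_{0}^{\infty}\left[\frac{1}{\tau^{2}}-\frac{1}{\sinh^{2}\tau}\right]d\tau=\frac{1}{2\pi}\left[\,\coth\tau-\frac{1}{\tau}\,\right]_{0}^{\infty}=\frac{1}{2\pi},
\]
using $\coth\tau-\tfrac1\tau\to0$ as $\tau\to0$ and $\to1$ as $\tau\to\infty$, so that $\int_{0}^{\infty}u_{1,p}(se^{-ts^{2}})\,\frac{dt}{\sqrt{\pi t}}=-\frac12\,\frac{(\Pi_{j=1}^{m}\mu_{j})}{(2\pi)^{m+1}}\,\frac{\mathtt{A}_{j0j}}{\mu_{j}}$.

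Finally I would identify this with the stated integrand and integrate over $X$. Since $e_{0,p}=R/|R|$, \prettyref{eq: abs. ctct. end.} gives $\left(\nabla^{TX}\mathfrak{J}\right)^{0}e_{j}=\left(\nabla_{e_{j}}\mathfrak{J}\right)R$ and $|\mathfrak{J}|^{-1}$ is diagonal with eigenvalue $1/\mu_{j}$ on $\textrm{span}(e_{j},e_{j+m})$, so $\frac{\mathtt{A}_{j0j}}{\mu_{j}}=\frac{1}{|R|}\,\textrm{tr}\,\frac{1}{|\mathfrak{J}|}\left(\nabla^{TX}\mathfrak{J}\right)^{0}$ at $p$; and because the $\mu_{j}$ are the fixed constants of \prettyref{eq:Diagonalizability assumption-1}, \prettyref{eq: da diagonal form} together with the contact orientation gives the pointwise identity $a\wedge\left(da\right)^{m}=\frac{m!\,(\Pi_{j=1}^{m}\mu_{j})}{|R|}\,dx^{0}\wedge\cdots\wedge dx^{2m}$, the last factor being the Riemannian volume. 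As the Schwartz kernels, hence the densities $u_{j,p}$, are normalized against the Riemannian volume, integrating the pointwise identity for $\int_{0}^{\infty}u_{1,p}(se^{-ts^{2}})\,\frac{dt}{\sqrt{\pi t}}$ over $X$ and using $u_{1}=\int_{X}u_{1,p}$ yields exactly
\[
\int_{0}^{\infty}u_{1}\!\left(se^{-ts^{2}}\right)\frac{dt}{\sqrt{\pi t}}=-\frac12\,\frac{1}{(2\pi)^{m+1}}\,\frac{1}{m!}\int_{X}\left[\textrm{tr}\,\frac{1}{|\mathfrak{J}|}\left(\nabla^{TX}\mathfrak{J}\right)^{0}\right]a\wedge\left(da\right)^{m},
\]
the $|R|$ cancelling between the two ingredients. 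The only step that is not purely mechanical is this last one — the cancellation of $|R|$ (so that neither $|R|$ nor the failure of $H\perp R$ shows up) and the interchange of $\int_{0}^{\infty}dt$ with $\textrm{tr}$ and with $\int_{X}$, which is justified by the uniform convergence above; I do not anticipate any real obstacle, since the corollary is essentially a bookkeeping consequence of the computation of $u_{1}$ already carried out.
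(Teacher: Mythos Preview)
Your proposal is correct and follows essentially the same route as the paper: start from the explicit formula \prettyref{eq: tr u11}, reduce the $t$-integral via $\tau=\mu_{j}t$ to the single elementary integral $\int_{0}^{\infty}\bigl(\tau^{-2}-\sinh^{-2}\tau\bigr)d\tau=1$, then identify $\frac{\mathtt{A}_{j0j}}{\mu_{j}}=\frac{1}{|R|}\,\textrm{tr}\,|\mathfrak{J}|^{-1}(\nabla^{TX}\mathfrak{J})^{0}$ and $(\Pi_{j}\mu_{j})\,dx=\frac{1}{m!}|R|\,a\wedge(da)^{m}$ so that $|R|$ cancels. The only cosmetic difference is your choice of antiderivative $\coth\tau-\tfrac{1}{\tau}$ versus the paper's $-\tfrac{1}{u}+\tfrac{2}{e^{2u}-1}$ (they differ by the constant $1$), and your explicit discussion of convergence at $0$ and $\infty$, which the paper leaves implicit in the evaluation of the antiderivative.
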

\begin{proof}
This is a calculation from \prettyref{eq: tr u11}
\begin{eqnarray*}
\int_{0}^{\infty}u_{1}\left(se^{-ts^{2}}\right)\frac{dt}{\sqrt{\pi t}} & = & -\frac{1}{2}\int_{X}dx\mathtt{A}_{j0j}\frac{\left(\Pi_{j=1}^{m}\mu_{j}\right)}{\left(2\pi\right)^{m+1}}\,\int_{0}^{\infty}\frac{1}{\mu_{j}t}\left[\frac{1}{\mu_{j}t}-\frac{\mu_{j}t}{\left(\sinh\mu_{j}t\right)^{2}}\right]dt\\
 & = & -\frac{1}{2}\int_{X}dx\frac{\left(\Pi_{j=1}^{m}\mu_{j}\right)}{\left(2\pi\right)^{m+1}}\frac{\mathtt{A}_{j0j}}{\mu_{j}}\int_{0}^{\infty}\frac{1}{u}\left[\frac{1}{u}-\frac{u}{\left(\sinh u\right)^{2}}\right]du\\
 & = & -\frac{1}{2}\int_{X}dx\frac{\left(\Pi_{j=1}^{m}\mu_{j}\right)}{\left(2\pi\right)^{m+1}}\frac{\mathtt{A}_{j0j}}{\mu_{j}}\,\left[-\frac{1}{u}+\frac{2}{e^{2u}-1}\right]_{0}^{\infty}\\
 & = & \frac{1}{2}\int_{X}dx\frac{\left(\Pi_{j=1}^{m}\mu_{j}\right)}{\left(2\pi\right)^{m+1}}\frac{\mathtt{A}_{j0j}}{\mu_{j}}\,\left[\lim_{u\rightarrow0}\frac{1+2u-e^{2u}}{u\left(e^{2u}-1\right)}\right]\\
 & = & -\frac{1}{2}\frac{1}{\left(2\pi\right)^{m+1}}\int_{X}\underbrace{\frac{\mathtt{A}_{j0j}}{\mu_{j}}}_{=\textrm{tr }\frac{1}{\left|R\right|\left|\mathfrak{J}\right|}\left(\nabla^{TX}\mathfrak{J}\right)^{0}}\underbrace{\left(\Pi_{j=1}^{m}\mu_{j}\right)dx}_{=\frac{1}{m!}\left|R\right|a\wedge\left(da\right)^{m}}.
\end{eqnarray*}
\end{proof}

\section{\label{sec:Semiclassical-limit-of eta}Semiclassical limit of the
eta invariant}

In this section we prove the semiclassical limit formula for the eta
invariant of \prettyref{thm: eta semiclassical limit}. First, from
\cite{Savale2017-Koszul} Cor. 7.3, the distributions $u_{j}\in\mathcal{S}'\left(\mathbb{R}\right)$
of \prettyref{eq: local trace expansion} are smooth near $0$. Hence
\[
u_{j}^{\pm}\left(x\right)\coloneqq1_{\left[0,\infty\right)}\left(\pm x\right)u_{j}\left(x\right)\in\mathcal{S}'\left(\mathbb{R}\right)
\]
are well defined tempered distributions and we similarly define $f^{\pm}$
for any $f\in\mathcal{S}\left(\mathbb{R}\right)$. We now have two
term asymptotics for irregular functional traces similar to \prettyref{prop: local trace expansion}.
\begin{lem}
For any $f\in\mathcal{S}\left(\mathbb{R}\right)$,
\begin{equation}
\textrm{tr }f^{\pm}\left(\frac{D}{\sqrt{h}}\right)=h^{-m-\frac{1}{2}}u_{0}^{\pm}\left(f\right)+h^{-m}u_{1}^{\pm}\left(f\right)+o\left(h^{-m}\right).\label{eq: irreg funct tr exp}
\end{equation}
\end{lem}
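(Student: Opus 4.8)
The plan is as follows. The only reason $f^{\pm}$ fails to be a Schwartz function is its jump at the origin, while by \cite{Savale2017-Koszul} Cor.~7.3 the distributions $u_{j}$ are smooth near $0$; so the strategy is to isolate this jump and treat everything else by \prettyref{prop: local trace expansion}. First I would fix $\chi\in C_{c}^{\infty}(-\varepsilon_{0},\varepsilon_{0})$ \emph{even}, equal to $1$ near $0$, with $\varepsilon_{0}<\sqrt{2\mu_{1}}$ small enough that each $u_{j}$ is smooth on $(-\varepsilon_{0},\varepsilon_{0})$, and split $f=f\chi+f(1-\chi)$. For the piece $f(1-\chi)$ the truncation $(f(1-\chi))^{\pm}=f(1-\chi)\mathbf 1_{\pm x\ge 0}$ is again Schwartz (the indicator is locally constant on its support), so \prettyref{prop: local trace expansion} with $N=1$ gives $\textrm{tr}\,(f(1-\chi))^{\pm}(D/\sqrt h)=h^{-m-1/2}u_{0}((f(1-\chi))^{\pm})+h^{-m}u_{1}((f(1-\chi))^{\pm})+o(h^{-m})$, and $u_{j}((f(1-\chi))^{\pm})=u_{j}^{\pm}(f(1-\chi))$ by definition of $u_{j}^{\pm}$. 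This reduces the lemma to the case where $f$ is supported in $(-\varepsilon_{0},\varepsilon_{0})$.

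For such $f$, note $f^{+}+f^{-}=f$ off the single point $0$, so $\textrm{tr}\,f^{+}(D/\sqrt h)+\textrm{tr}\,f^{-}(D/\sqrt h)=\textrm{tr}\,f(D/\sqrt h)+f(0)k_{h}=\textrm{tr}\,f(D/\sqrt h)+o(h^{-m})$ by the little-$o$ estimate \prettyref{eq:dimension kernel estimate}; combining with \prettyref{prop: local trace expansion} for $\textrm{tr}\,f(D/\sqrt h)$, it suffices to compute the antisymmetric combination $\textrm{tr}\,f^{+}(D/\sqrt h)-\textrm{tr}\,f^{-}(D/\sqrt h)=\textrm{tr}[\,\textrm{sgn}(D)f(D/\sqrt h)\,]=\textrm{tr}\,\psi(D/\sqrt h)$ with $\psi(x)=\textrm{sgn}(x)f(x)$. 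I would then Taylor expand $\psi$ at $0$ against the model singularities $m_{k}(x):=x^{k}\textrm{sgn}(x)\chi(x)$: one has $\psi=\sum_{k=0}^{2}\tfrac{f^{(k)}(0)}{k!}m_{k}+\psi_{\mathrm{rem}}$, where $\psi_{\mathrm{rem}}=\textrm{sgn}(x)\bigl(f(x)-\chi(x)\sum_{k\le 2}\tfrac{f^{(k)}(0)}{k!}x^{k}\bigr)$ vanishes like $|x|^{3}$ near $0$, hence is $C^{2}$, Schwartz, with $\widehat{\psi_{\mathrm{rem}}}(\xi)=O(\langle\xi\rangle^{-3})$; thus $\textrm{tr}\,\psi_{\mathrm{rem}}(D/\sqrt h)$ and $\textrm{tr}\,m_{2}(D/\sqrt h)$ are covered by \prettyref{prop: local trace expansion} with $N=1$ (two terms), and $\textrm{tr}\,m_{1}(D/\sqrt h)$ by the same with $N=0$ (one term) — which suffices because $u_{1}$ is odd (visible from the explicit formula for $u_{1,p}$ proved above, whose every summand $v_{1},v_{1,-2,0,\Lambda},v_{0,-3,0,\Lambda}$ is odd) while $m_{1}=|x|\chi$ is even, so $\langle u_{1},m_{1}\rangle=0$ and the missing $h^{-m}$-term would be zero anyway. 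Everything is now reduced to the single quantity $\textrm{tr}[\,(\textrm{sgn}\cdot\chi)(D/\sqrt h)\,]=\textrm{tr}\,m_{0}(D/\sqrt h)$.

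The claim for this last term is $\textrm{tr}\,m_{0}(D/\sqrt h)=h^{-m-1/2}\langle u_{0},m_{0}\rangle+h^{-m}\langle u_{1},m_{0}\rangle+o(h^{-m})$, and in fact the first term vanishes since $u_{0}$ is even (\cite{Savale2017-Koszul} Prop.~7.4) and $m_{0}=\textrm{sgn}\cdot\chi$ is odd. To prove this I would, for fixed $\delta>0$, replace $\textrm{sgn}$ by a smooth odd $\textrm{sgn}_{\delta}$ agreeing with it for $|x|\ge\delta$: then $\textrm{sgn}_{\delta}\chi$ is Schwartz, \prettyref{prop: local trace expansion} gives $\textrm{tr}[(\textrm{sgn}_{\delta}\chi)(D/\sqrt h)]=h^{-m}\langle u_{1},\textrm{sgn}_{\delta}\chi\rangle+O_{\delta}(h^{1/2-m})$ (again using $u_{0}$ even), and $\langle u_{1},\textrm{sgn}_{\delta}\chi\rangle\to\langle u_{1},m_{0}\rangle$ as $\delta\to 0$ since $u_{1}$ is smooth near $0$. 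It then remains to show that the remainder $\textrm{tr}[((\textrm{sgn}-\textrm{sgn}_{\delta})\chi)(D/\sqrt h)]$ is $o(h^{-m})$ with an implied bound tending to $0$ as $\delta\to 0$; this is an odd function of $D$ microsupported in an $O(\delta\sqrt h)$-window about the zero energy, i.e. a sharp control on the spectral asymmetry of $D$ near $0$. I would derive it from the Gutzwiller trace formula \prettyref{thm:main trace expansion-1} applied with $\theta$ supported in $(-L_{\min},L_{\min})$, $L_{\min}=\min_{\gamma}L_{\gamma}$ — so that $\theta(L_{\gamma})=0$ for every Reeb orbit and all the dynamical contributions \prettyref{eq: gutzwiller dynamical term} drop out — which leaves a clean expansion of the $\sqrt h$-smoothed signed spectral density of $D$ near $0$; a two-term Tauberian argument in the spirit of H\"ormander and Safarov--Vassiliev, using monotonicity of the counting functions of $\pm D$, then upgrades this to the sharp asymptotics of those counting functions near $0$ with $o(h^{-m})$ error, whence the required estimate.

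The main obstacle is precisely this last step. The naive bound on $\textrm{tr}[((\textrm{sgn}-\textrm{sgn}_{\delta})\chi)(D/\sqrt h)]$ by the number of eigenvalues of $D$ in an $O(\delta\sqrt h)$-neighbourhood of $0$ is only $O(h^{-m})$, of the same order as the term we are extracting, so one cannot argue termwise; one must exploit the cancellation between the positive and negative parts of the spectrum, which is exactly the information carried by \prettyref{thm:main trace expansion-1} and not by the $D^{2}$-heat-kernel expansion underlying \prettyref{prop: local trace expansion} alone. Making the Tauberian step precise — checking that the smoothed-density expansion of \prettyref{thm:main trace expansion-1} (with, say, $N=3$) has remainder $o(h^{-m-1/2})$ and feeding it together with monotonicity into the Tauberian lemma — is where the real work lies; once it is in place, reassembling the three preceding paragraphs gives \prettyref{eq: irreg funct tr exp}.
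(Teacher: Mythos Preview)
Your overall strategy---localize near $0$, split into symmetric and antisymmetric parts, and reduce the antisymmetric piece to the model singularities $m_k=x^{k}\textrm{sgn}(x)\chi(x)$---is reasonable and genuinely different from the paper's route. The paper never Taylor-expands: it writes the two-term form of \prettyref{thm:main trace expansion-1} as $\mathfrak{M}_f\ast(\mathcal{F}_h^{-1}\theta_{1/2})(\lambda)=h^{-m-1/2}\bigl[f u_0+h^{1/2}f u_1+O(h)\bigr]$, integrates in $\lambda$ over $(-\infty,0]$, and observes that $\int_{-\infty}^{0}(\mathcal{F}_h^{-1}\theta_{1/2})(\lambda-\lambda')\,d\lambda=1_{(-\infty,0]}(\lambda')+\phi(\lambda'/\sqrt h)$ for a single rapidly decaying correction $\phi$. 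That correction is then killed by a two-parameter smoothing ($R,\epsilon$) combined with the improved local Weyl law $N(-\epsilon h,\epsilon h)=O(\epsilon h^{-m})$, which the paper derives first from \prettyref{thm:main trace expansion-1}. This packages all the ``Tauberian'' work into one estimate and never forces a case analysis of $m_0,m_1,m_2$.

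There is, however, a real gap in your $m_1$ step. You apply \prettyref{prop: local trace expansion} with $N=0$ and then say this ``suffices because $\langle u_1,m_1\rangle=0$ and the missing $h^{-m}$-term would be zero anyway''. That is a non-sequitur: the $N=0$ remainder is only $O(h^{(1-n)/2})=O(h^{-m})$, and the vanishing of the \emph{next coefficient} does not upgrade an $O(h^{-m})$ remainder to $o(h^{-m})$. For that you would need the $N=1$ expansion, but the relevant seminorm $\|\langle\xi\rangle\widehat{m_1}\|_{L^1}$ diverges (since $m_1'=\textrm{sgn}(x)\chi+\cdots$ has a jump, so $\widehat{m_1}\sim c\xi^{-2}$). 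Concretely, your target for $m_1=|x|\chi$ is $\textrm{tr}\,m_1(D/\sqrt h)=h^{-m-1/2}\langle u_0,m_1\rangle+o(h^{-m})$, while \prettyref{prop: local trace expansion} alone only gives $h^{-m-1/2}\langle u_0,m_1\rangle+O(h^{-m})$. The fix is to treat $m_1$ by the \emph{same} smoothing-plus-Tauberian device you already propose for $m_0$ (it is in fact easier, the singularity being one order milder); once you accept that, there is no saving over doing the Tauberian argument once for the whole of $\psi$, which is essentially what the paper does.

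A smaller point: for $m_2$ and $\psi_{\mathrm{rem}}$ you are invoking \prettyref{prop: local trace expansion} for functions that are not in $\mathcal{S}(\mathbb{R})$. This is legitimate by density---both sides depend continuously on the finitely many seminorms appearing in the remainder---but you should say so.
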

\begin{proof}
We begin by proving an improved local Weyl law. To this end, choose
$\theta\in C_{c}^{\infty}\left(\mathbb{R};\left[0,1\right]\right)$
such that $\theta\left(x\right)=1$ near $0$ and $\check{\theta}\left(\xi\right)\geq\frac{1}{4}$
for $\left|\xi\right|\leq1$ in \prettyref{eq: Main trace expansion-1}.
For each $\epsilon>0$, set $\theta_{\epsilon}\left(x\right)=\theta\left(\epsilon x\right)$
and let $N\left(a,b\right)$ denote the number of eigenvalues of $D_{h}$
in the interval $\left(a,b\right)$. Choosing $f\left(x\right)\geq0$
with $f\left(0\right)=1$, the trace expansion \prettyref{eq: Main trace expansion-1}
with $\lambda=0$ now gives 
\[
\frac{1}{\epsilon h}N\left(-\epsilon h,\epsilon h\right)\left(\frac{1}{4}+O\left(\sqrt{\epsilon h}\right)\right)\leq\textrm{tr}\left[f\left(\frac{D}{\sqrt{h}}\right)\frac{1}{\epsilon h}\check{\theta}\left(\frac{-D}{\epsilon h}\right)\right]=h^{-m-1}\left[u_{0}\left(0\right)+O_{\epsilon}\left(h\right)\right].
\]
Hence for $\epsilon>0$ fixed and $h\ll1$ depending on $\epsilon$,
we have an improved local Weyl law
\begin{equation}
N\left(-\epsilon h,\epsilon h\right)=O\left(\epsilon h^{-m}\right).\label{eq: improves local Weyl law}
\end{equation}
From here \prettyref{eq:dimension kernel estimate} follows. 

Now, to prove \prettyref{eq: irreg funct tr exp} first observe that
by virtue of \prettyref{prop: local trace expansion} we may assume
$f\in C_{c}^{\infty}\left(-\sqrt{2\mu_{1}},\sqrt{2\mu_{1}}\right)$.
Next define the spectral measure $\mathfrak{M}_{f}\left(\lambda'\right)\coloneqq\sum_{\lambda\in\textrm{Spec}\left(\frac{D}{\sqrt{h}}\right)}f\left(\lambda\right)\delta\left(\lambda-\lambda'\right)$.
It is clear that the expansion \prettyref{eq: Main trace expansion-1}
to its first two terms may be written as 
\[
\mathfrak{M}_{f}\ast\left(\mathcal{F}_{h}^{-1}\theta_{\frac{1}{2}}\right)\left(\lambda\right)=h^{-m-\frac{1}{2}}\left(f\left(\lambda\right)u_{0}\left(\lambda\right)+h^{1/2}f\left(\lambda\right)u_{1}\left(\lambda\right)+O\left(h\right)\right)
\]
where $\theta_{\frac{1}{2}}\left(x\right)=\theta\left(\frac{x}{\sqrt{h}}\right)$.
Both sides above involving Schwartz functions in $\lambda$, the remainder
maybe replaced by $O\left(\frac{h}{\left\langle \lambda\right\rangle ^{2}}\right)$.
We may then integrate to obtain 
\begin{equation}
\int_{-\infty}^{0}d\lambda\int d\lambda'\left(\mathcal{F}_{h}^{-1}\theta_{\frac{1}{2}}\right)\left(\lambda-\lambda'\right)\mathfrak{M}_{f}\left(\lambda'\right)=h^{-m-\frac{1}{2}}\left(\int_{-\infty}^{0}d\lambda f\left(\lambda\right)u_{0}\left(\lambda\right)+h^{1/2}\int_{-\infty}^{0}d\lambda f\left(\lambda\right)u_{1}\left(\lambda\right)+O\left(h\right)\right).\label{eq:main trace exp integrated}
\end{equation}
Now note 
\begin{equation}
\int_{-\infty}^{0}d\lambda\left(\mathcal{F}_{h}^{-1}\theta_{\frac{1}{2}}\right)\left(\lambda-\lambda'\right)=1_{\left(-\infty,0\right]}\left(\lambda'\right)+\phi\left(\frac{\lambda'}{\sqrt{h}}\right)\label{eq: definition remainder}
\end{equation}
where $\phi\left(x\right)\coloneqq\int_{-\infty}^{0}dt\check{\theta}\left(t-x\right)-1_{\left(-\infty,0\right]}\left(x\right)$
is a function that is rapidly decaying with all derivatives, odd and
smooth on $\mathbb{R}_{x}\setminus0$. Next let $\chi\in C_{c}^{\infty}\left(\mathbb{R};\left[0,1\right]\right)$
be an even function equal to $1$ near $0$ and set $\phi_{R}\left(x\right)=\chi\left(\frac{x}{R}\right)\phi\left(x\right)$
for each $R>0$. We now compute 
\begin{align}
 & \int d\lambda'\left[\phi\left(\frac{\lambda'}{\sqrt{h}}\right)-\phi_{R}\left(\frac{\lambda'}{\sqrt{h}}\right)\right]\mathfrak{M}_{f}\left(\lambda'\right)\nonumber \\
= & \int d\lambda'\left[1-\chi\left(\frac{\lambda'}{R\sqrt{h}}\right)\right]\phi\left(\frac{\lambda'}{\sqrt{h}}\right)\mathfrak{M}_{f}\left(\lambda'\right)\nonumber \\
= & O\left(h^{-m}\sum_{k\geq R}\left\langle k\right\rangle ^{-\infty}\right)\nonumber \\
= & O\left(\frac{h^{-m}}{R}\right)\label{eq: cutoff remainder}
\end{align}
from the local Weyl law \prettyref{eq: improves local Weyl law}. 

Next for $\epsilon>0$, we observe
\begin{align}
 & \left|\phi_{R}\left(x\right)-\phi_{R}\ast\check{\theta}_{\epsilon}\left(x\right)\right|\nonumber \\
= & \left|\int dy\left[\phi_{R}\left(x\right)-\phi_{R}\left(x-\epsilon y\right)\right]\check{\theta}\left(y\right)\right|\nonumber \\
\leq & O_{N}\left(1\right)\left[\left\langle \frac{x}{\epsilon}\right\rangle ^{-N}+\epsilon\left\langle x\right\rangle ^{-N}\right]\quad\forall N\in\mathbb{N}.\label{eq: convolution distribution estimate}
\end{align}
Now consider a pairing corresponding to the first term above with
$\mathfrak{M}_{f}\left(\lambda'\right)$
\begin{align}
 & \int d\lambda'\left\langle \frac{\lambda'}{\epsilon\sqrt{h}}\right\rangle ^{-N}\mathfrak{M}_{f}\left(\lambda'\right)\nonumber \\
= & \int d\lambda'1_{\left[-R',R'\right]}\left(\frac{\lambda'}{\epsilon\sqrt{h}}\right)\left\langle \frac{\lambda'}{\epsilon\sqrt{h}}\right\rangle ^{-N}\mathfrak{M}_{f}\left(\lambda'\right)\nonumber \\
 & +\int d\lambda'\left(1-1_{\left[-R',R'\right]}\right)\left(\frac{\lambda'}{\epsilon\sqrt{h}}\right)\left\langle \frac{\lambda'}{\epsilon\sqrt{h}}\right\rangle ^{-N}\mathfrak{M}_{f}\left(\lambda'\right).\label{eq: first term}
\end{align}
The support of $1_{\left[-R',R'\right]}\left(\frac{\lambda'}{\epsilon\sqrt{h}}\right)$
can be covered by $O\left(R'\right)$ intervals of size $\epsilon\sqrt{h}$,
which combined with the local Weyl law gives that the first term above
is $O\left(R'\epsilon h^{-m}\right)$. The second term on the other
hand, observing $\left(1-1_{\left[-R',R'\right]}\right)\left(\frac{\lambda'}{\epsilon\sqrt{h}}\right)\left\langle \frac{\lambda'}{\epsilon\sqrt{h}}\right\rangle ^{-N}=O\left(\frac{1}{R'}\left\langle \frac{\lambda'}{\sqrt{h}}\right\rangle ^{-N+1}\right)$,
is $O\left(\frac{1}{R'}h^{-m}\right)$. On choosing $R'=\frac{1}{\sqrt{\epsilon}}$,
this gives \prettyref{eq: first term} is $O\left(\sqrt{\epsilon}h^{-m}\right)$.
A similar estimate 
\begin{equation}
\int d\lambda'\epsilon\left\langle \frac{\lambda'}{\sqrt{h}}\right\rangle ^{-N}\mathfrak{M}_{f}\left(\lambda'\right)=O\left(\epsilon h^{-m}\right)\label{eq: second term}
\end{equation}
combined with \prettyref{eq: convolution distribution estimate} gives
\begin{equation}
\int d\lambda'\left[\phi_{R}\left(\frac{\lambda'}{\sqrt{h}}\right)-\phi_{R}\ast\check{\theta}_{\epsilon}\left(\frac{\lambda'}{\sqrt{h}}\right)\right]\mathfrak{M}_{f}\left(\lambda'\right)=O_{R}\left(\sqrt{\epsilon}h^{-m}\right).\label{eq: convolution diff trace est}
\end{equation}
The second term above has an expansion on integrating \prettyref{eq: Main trace expansion-1}
against $\phi_{R}$ 
\begin{align}
\int d\lambda'\phi_{R}\ast\check{\theta}_{\epsilon}\left(\frac{\lambda'}{\sqrt{h}}\right)\mathfrak{M}_{f}\left(\lambda'\right) & =h^{-m}\left[\intop d\lambda\phi_{R}\left(\lambda\right)f\left(0\right)u_{0}\left(0\right)+O_{R,\epsilon}\left(h\right)\right]\nonumber \\
 & =O_{R,\epsilon}\left(h^{-m+1}\right).\label{eq: smoothed function expansion}
\end{align}
Finally putting together \prettyref{eq:main trace exp integrated},
\prettyref{eq: definition remainder}, \prettyref{eq: cutoff remainder},
\prettyref{eq: convolution diff trace est} and \prettyref{eq: smoothed function expansion}
gives 
\begin{align*}
\textrm{tr }f^{-}\left(\frac{D}{\sqrt{h}}\right) & =\int d\lambda'1_{\left(-\infty,0\right]}\left(\lambda'\right)\mathfrak{M}_{f}\left(\lambda'\right)\\
 & =h^{-m-\frac{1}{2}}\left(\int_{-\infty}^{0}d\lambda f\left(\lambda\right)u_{0}\left(\lambda\right)+h^{1/2}\int_{-\infty}^{0}d\lambda f\left(\lambda\right)u_{1}\left(\lambda\right)\right)\\
 & \quad+O\left(\frac{h^{-m}}{R}\right)+O_{R}\left(\sqrt{\epsilon}h^{-m}\right)+O_{R,\epsilon}\left(h^{-m+1}\right)
\end{align*}
from which \prettyref{eq: irreg funct tr exp} follows on choosing
each of $\frac{1}{R}$, $\epsilon$, $h$ sufficiently small depending
on the preceding parameters.
\end{proof}
We now come to the proof of \prettyref{thm: eta semiclassical limit}.
\begin{proof}[Proof of \prettyref{thm: eta semiclassical limit}]
 We begin by using the invariance of $\eta$ under positive scaling
to write 
\begin{eqnarray}
\eta_{h}=\eta\left(\frac{D}{\sqrt{h}}\right) & = & \int_{0}^{\infty}dt\frac{1}{\sqrt{\pi t}}\textrm{ tr}\left[\frac{D}{\sqrt{h}}e^{-\frac{t}{h}D^{2}}\right]\nonumber \\
 & = & \int_{0}^{\varepsilon}dt\frac{1}{\sqrt{\pi t}}\textrm{ tr}\left[\frac{D}{\sqrt{h}}e^{-\frac{t}{h}D^{2}}\right]+\int_{\varepsilon}^{\infty}dt\frac{1}{\sqrt{\pi t}}\textrm{ tr}\left[\frac{D}{\sqrt{h}}e^{-\frac{t}{h}D^{2}}\right].\label{eq: eta integral break up}
\end{eqnarray}
The equation 4.5 pg. 859 of \cite{Savale-Asmptotics} with $r=\frac{1}{h}$
translates to the estimate 
\begin{equation}
\textrm{ tr}\left[\frac{D}{\sqrt{h}}e^{-\frac{t}{h}D^{2}}\right]=O\left(h^{-m}e^{ct}\right)\label{eq: estimate on odd trace}
\end{equation}
giving that the first integral of \prettyref{eq: eta integral break up}
is $O\left(\sqrt{\varepsilon}h^{-m}\right)$. The second integral
is evaluated to be $\textrm{tr }E_{\varepsilon}\left(\frac{D}{\sqrt{h}}\right)=\textrm{tr }\frac{1}{\varepsilon}E\left(\frac{\varepsilon D}{\sqrt{h}}\right)$
where 
\[
E(x)=\text{sign}(x)\text{erfc}(|x|)=\text{sign}(x)\cdot\frac{2}{\sqrt{\pi}}\int_{|x|}^{\infty}e^{-s^{2}}ds
\]
with the convention $\text{sign}(0)=0$. The functions $E$, $E_{\varepsilon}$
are rapidly decaying with all derivatives, odd and smooth on $\mathbb{R}_{x}\setminus0$.
Hence \prettyref{eq: irreg funct tr exp} gives 
\[
\textrm{tr }E_{\varepsilon}\left(\frac{D}{\sqrt{h}}\right)=h^{-m-\frac{1}{2}}\left[u_{0}\left(E_{\varepsilon}\right)\right]+h^{-m}\left[u_{1}\left(E_{\varepsilon}\right)\right]+o\left(h^{-m}\right)
\]
where the evaluations above again make sense on account of the smoothness
of $u_{0}$, $u_{1}$ near $0$. As observed from \cite{Savale2017-Koszul}
Prop. 7.4, the coefficient $u_{0}$ is an even function of $\lambda$.
Since $E_{\varepsilon}$ is odd, the first evaluation above is $0$.
The second is evaluated from definition to 
\begin{align*}
u_{1}\left(E_{\varepsilon}\right) & =\int_{\varepsilon}^{\infty}u_{1}\left(se^{-ts^{2}}\right)\frac{dt}{\sqrt{\pi t}}\\
 & =-\frac{1}{2}\frac{1}{\left(2\pi\right)^{m+1}}\frac{1}{m!}\int_{X}\left[\textrm{tr }\frac{1}{\left|\mathfrak{J}\right|}\left(\nabla^{TX}\mathfrak{J}\right)^{0}\right]a\wedge\left(da\right)^{m}+O\left(\varepsilon\right)
\end{align*}
following the Corollary \prettyref{cor: convergence improper integral}.
Choosing $\varepsilon$ sufficiently small and putting everything
together 
\[
\eta_{h}=h^{-m}\left(-\frac{1}{2}\frac{1}{\left(2\pi\right)^{m+1}}\frac{1}{m!}\int_{X}\left[\textrm{tr }\frac{1}{\left|\mathfrak{J}\right|}\left(\nabla^{TX}\mathfrak{J}\right)^{0}\right]a\wedge\left(da\right)^{m}\right)+o\left(h^{-m}\right)
\]
as required.
\end{proof}
\textbf{Acknowledgments.} The author would like to thank the anonymous
referee for a careful reading and several constructive suggestions
and improvements. 

\bibliographystyle{siam}
\bibliography{biblio}

\end{document}